\documentclass[11pt]{amsart}
\usepackage{graphicx} 

\usepackage[english]{babel}
\usepackage{xcolor}
\definecolor{myteal}{RGB}{53,150,166}
\usepackage[colorlinks,citecolor=myteal,urlcolor=blue,linkcolor=purple]{hyperref}

\usepackage[letterpaper,top=1.25in,bottom=1in,left=1in,right=1in,marginparwidth=2cm]{geometry}


\usepackage{lmodern}

\usepackage{amsmath}
\usepackage{graphicx}
\usepackage{amssymb}
\usepackage{wasysym}
\usepackage{bbm}
\usepackage{todonotes}
\usepackage{float}
\usepackage{enumerate}
\usepackage{dsfont}

\usepackage{imakeidx}


\usepackage{amsthm}
\newtheorem{thm}{Theorem}[section]
\newtheorem{lemma}[thm]{Lemma}
\newtheorem{prop}[thm]{Proposition}
\newtheorem{cor}[thm]{Corollary}

\theoremstyle{definition}
\newtheorem{definition}[thm]{Definition}

\newtheorem{rem}[thm]{Remark}
\newtheorem{question}[thm]{Question}

\numberwithin{equation}{section}
\newcommand{\mc}[1]{\mathcal{#1}}
\newcommand{\m}[1]{\mathbb{#1}}
\newcommand{\bcc}{\text{BCC}}
\newcommand{\dd}{\mathrm{d}}

\newcommand{\pd}[2]{\frac{\partial #1}{\partial #2}}

\newcommand{\be}{\begin{equation}}
\newcommand{\ee}{\end{equation}} 

\newcommand{\old}[1]{}

\newcommand{\Hess}{\text{Hess}}
\newcommand{\Ent}{\mathrm{Ent}}

\newcommand{\R}{{\mathbb R}}
\newcommand{\Z}{{\mathbb Z}}
\newcommand{\T}{{\mathbb T}}

\newcommand{\NN}{{\mathbf N}}

\newcommand{\AD}{\text{AD}}

\newcommand{\GN}{G_{\NN}}

\renewcommand{\H}{\mathbb{H}}

\title{The multinomial dimer model}
\author{Richard Kenyon \and Catherine Wolfram}

\address{Richard Kenyon\\Department of Mathematics, Yale University, New Haven, 06511}
\email{richard dot kenyon at yale.edu}
\address{Catherine Wolfram\\Department of Mathematics, Yale University, New Haven, 06511}
\email{catherine dot wolfram at yale.edu}

\date{\today}
\begin{document}

\begin{abstract}

The dimer model is a classical statistical mechanics model which is exactly solvable in two dimensions, but about which little is known in higher dimensions. In analogy with large $N$ limits in lattice gauge theory, we study a large $N$ limit of the dimer model in any dimension $d$. The dependence on $N$ comes from the multinomial tiling model introduced by Kenyon and Pohoata, which gives a general framework for adding a dependence on $N$ to a tiling model. We study the behavior of this model on periodic bipartite graphs in ${\mathbb R}^d$, in the scaling limit as the multiplicity $N$ and then the size of the graph go to infinity. 

In this iterated limit, in any dimension $d$, we prove a variational principle and show that random configurations concentrate on a limit shape which is the unique solution to an associated system of Euler-Lagrange equations. The rate function of the variational principle is the integral of a surface tension function, which we can compute explicitly for lattices in any dimension $d$ as the Legendre dual of the free energy for the model on the torus. 
We give a unified methodology for computing the surface tension and Euler-Lagrange equations in any dimension $d$.

A new structure called the \textit{critical gauge} also emerges in the large $N$ limit. We show that the critical gauge functions converges in the scaling limit to a limiting gauge function which is the unique solution to a dual Euler-Lagrange equation. This limiting gauge function determines the limit shape and vice versa.

We further use our techniques to compute explicit limit shapes in some two and three dimensional examples, such as the Aztec diamond and ``Aztec cuboid". This is one of the first stat mech models in dimensions $d\ge3$ where limit shapes can be computed explicitly. 
\end{abstract}
\maketitle

\tableofcontents

\section{Introduction}

A \textit{dimer cover} of a graph $G=(V,E)$ is a collection of edges $\tau\subset E$ such that every vertex of $G$ is covered by exactly one edge in $\tau$. The \textit{dimer model} on a lattice $\Lambda$ is the study of random dimer covers of subgraphs $G\subset \Lambda$. The lattices we consider in this paper are always bipartite. This is ``$d$-dimensional'' if $\Lambda$ is a lattice which spans $\m R^d$. 

A classical question in statistical mechanics is to understand the behavior of the model in scaling limits, i.e.\ the behavior of random dimer covers on a sequence of graphs $R_n\subset \frac{1}{n}\Lambda$ as $n\to\infty$.

\begin{figure}
    \centering
    \includegraphics[width=0.45\linewidth]{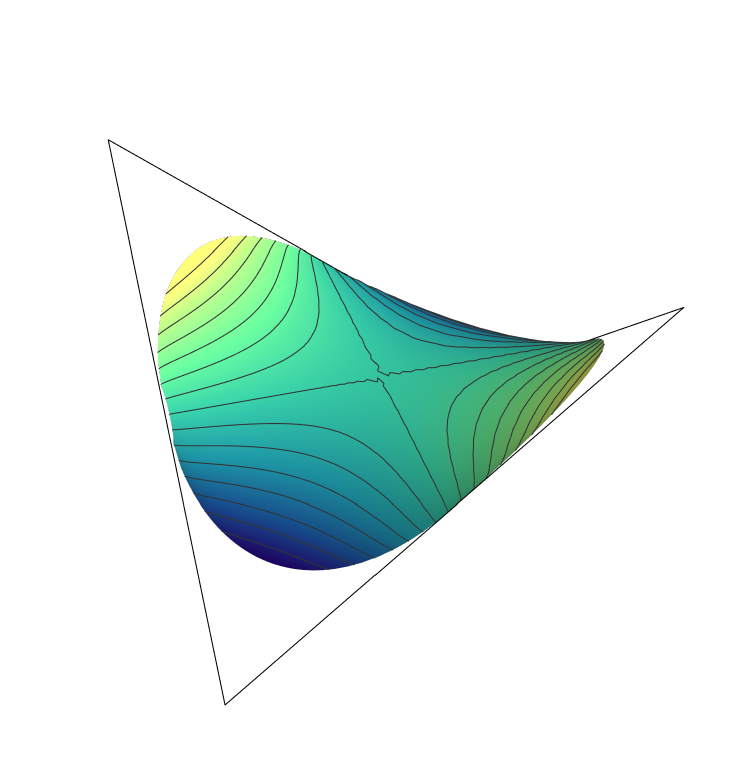}\includegraphics[width=0.45\linewidth]{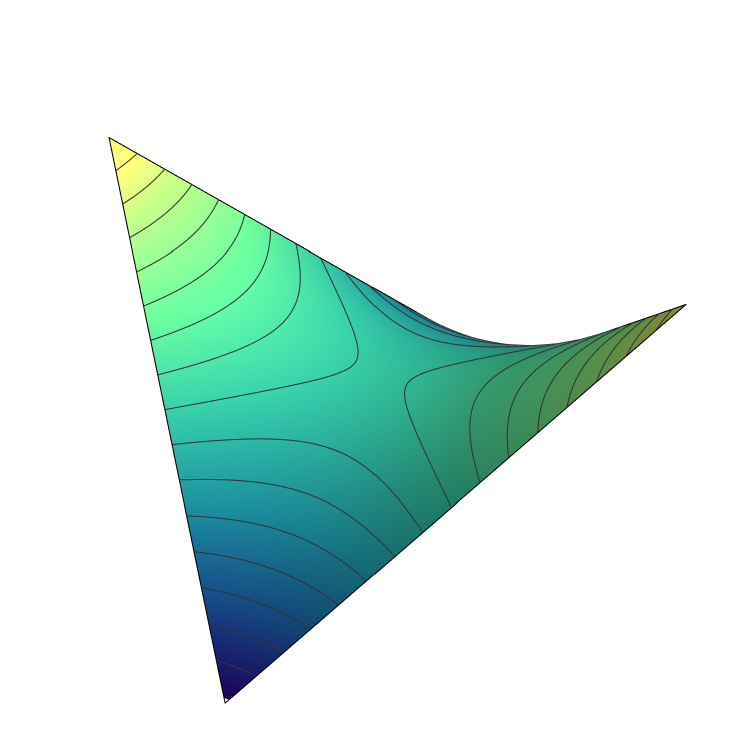}
    \caption{The limit shape height functions on the Aztec diamond for the dimer model (left) and the multinomial dimer model in the large $N$ limit (right).}
    \label{fig:aztecs}
\end{figure}

In two dimensions, there is a correspondence between dimer covers and Lipschitz \textit{height functions} \cite{Thurston}, and these scaling limit questions can be made precise in terms of the corresponding height functions. In 2001, Cohn, Kenyon and Propp showed that if $R_n\subset \frac{1}{n}\Z^2$ approximate a compact region $R\subset \m R^2$ and boundary condition as $n\to \infty$, then interpreted as a random function, uniform random dimer covers concentrate on a unique \textit{limit shape} height function given the boundary conditions \cite{cohn2001variational}. They further prove a full large deviation principle, and explicitly compute the rate function which measures the exponentially small probability that a scaling limit lies close to any other possible limiting height function. See the left half of Figure~\ref{fig:aztecs} for a graph of the limit shape height function on the Aztec diamond; this function is smooth inside the circle (``artic curve'') and linear in a region around the four corners \cite{jockusch1998random}.

Understanding the limit shapes for the two dimensional dimer model, their fluctuations, and other properties touches on many different areas of mathematics, including combinatorics, complex analysis, hyperbolic geometry, PDEs, and algebraic geometry, see e.g.\ \cite{CohnElkiesPropp,jockusch1998random,kenyon2000conformal,dimersGFF2000,cohn2001variational,AST_2005__304__R1_0,burgers2007,kenyon2006dimers,kenyon2014conformal,dimerscircles,kuchumov22}, and connects it to the Gaussian free field, Schramm-Loewner evolutions, and conformal field theory, see e.g.\  \cite{dimersGFF2000,kenyon2014conformal,Dubedat2014DoubleDC,Berestycki-imaginary,HonglerCFT}. See also \cite{kenyon2009lectures,gorin2021lectures} for more general overviews. The dimer model played a pivotal role in the development of the field of random conformal geometry in two dimensions, as it was one of the first statistical mechanics models where conformal invariance in its scaling limit was rigorously proved \cite{kenyon2000conformal}, confirming physics predictions from the 1980s \cite{BPZ}. 

One of the things that makes the two dimensional dimer model so tractable is that it is \textit{exactly solvable}. This stems from the Kasteleyn determinant formula, due to Kasteleyn \cite{Kasteleyn} and independently \cite{TemperleyFisher} in the 1960s, that the number of dimer covers of a bipartite planar graph can be computed as a determinant. This has in turn made it possible to compute many other key statistical quantities in the model, including the rate function for the large deviation principle for two dimensional dimers.

Like many statistical mechanics models, much less is known about the dimer model in higher dimensions. In dimensions $d>2$, the dimer model does not appear to be exactly solvable, and the correspondence with height functions also does not extend to higher dimensions. In fact it is provably known that the Kasteleyn formula does not hold for almost any non-planar graph \cite{LITTLE1975187}. Despite these challenges, some things are known about dimers in higher dimensions. See e.g. \cite{taggi2022uniformly,quitmann2022macroscopicdimer} on correlation functions on the torus, \cite{freedman2011weakly,milet2014domino,freire2022connectivity,klivans2020domino,demarreiros2025} for a small subset of works on ``local moves'' and topological invariants that arise from the failure of determinants formulas, and \cite{3Ddimers} on scaling limits for dimer tilings of $\m Z^3$. See also \cite{LindaMooreNordahl,arctic_oct_MC,kozma,SheffieldYadin,Lammers,quitmann2022macroscopicloops} for a small selection of works on related three-dimensional models.

To make sense of the scaling limit questions without height functions, one can use a general correspondence, which holds in any dimension, between dimer covers and \textit{discrete flows}. Using discrete flows, Chandgotia, Sheffield and Wolfram \cite{3Ddimers} proved a three-dimensional analog of \cite{cohn2001variational}, i.e., they showed that the scaling limit of random dimer covers of subgraphs $R_n\subset \frac{1}{n}\m Z^3$, interpreted as a random flow, also concentrate on a unique limit shape given boundary conditions, and prove a large deviation principle. However, no explicit formulas for the rate function or any other statistical quantities are known, and nothing is known rigorously about what these limit shapes look like. 

The main subject of this paper is a a \textit{large $N$ limit} of the dimer model, which we call the multinomial dimer model, which is exactly solvable in any dimension $d$ in the large $N$ limit.  This can be seen in analogy with large $N$ limits in lattice gauge theory, see e.g.\ \cite{thooft,gaugetheorylargeNsurvey,ChatterjeelargeN,BCSlargeN}. Combinatorially, an $N$-dimer cover is a collection of edges such that every vertex is contained in exactly $N$ edges (with multiplicity) in the collection. We call this the multinomial dimer model because it is an instance of the mulitnomial tiling model introduced by Kenyon and Pohoata in 2022, who give a general framework for adding a dependence on $N$ to a tiling model \cite{KenyonPohoata}.  These formulas do \textit{not} involve determinants, and instead come from the fact that the partition functions for multinomial dimer measures have a generating function in $N$ which can be expanded around infinity. 

Our main goal will be to prove variational principles for dimers in this large $N$ limit, analogous to \cite{cohn2001variational} and \cite{3Ddimers}, but for lattices in any dimension, in the iterated limit as $N$ and then the size of the graph go to infinity, and to go beyond this to develop the theory of limit shapes and Euler-Lagrange equations for this model.  

See the right half of Figure~\ref{fig:aztecs} for the limit shape height function of this model on the Aztec diamond, which happens to be especially simple:  $h(x,y) = \frac{1}{4}(2x-1)(2y-1)$. The analogous solution in the weighted case (see (\ref{wAD})) is significantly more interesting. 

To compute limit shapes from our explicit formulas, the most straightforward method would be to solve the family of Euler-Lagrange equations, which we can write explicitly. In two dimensions, where this reduces to the usual gradient variational problem, this can be done in substantial generality using the \textit{tangent plane method} of \cite{KenyonPrause}. In higher dimensions, this remains challenging. 

In fact, our current examples of limit shapes primarily come from new structure in the large $N$ limit, given in terms of dual \textit{gauge} variables and their scaling limits. See Section~\ref{sec:critical_gauge_intro} (this gauge structure is especially simple for the Aztec diamond; this is related to why this limit shape formula is so simple in this case) and Section~\ref{sec:limit_shapes}.


\subsection{Scalings limits in general dimension} In any dimension, there is a correspondence between dimer covers and \textit{discrete flows.} Discrete flows will play the role analogous to height functions in two dimensions. This correspondence with discrete flows was used in \cite{3Ddimers} to establish the variational principle for dimer tilings of $\m Z^3$, and we use this same general framework here. Recall that all the graphs we consider are bipartite, with bipartition into ``white'' and ``black'' vertices. Given an $N$-dimer cover $M$, the corresponding \textit{discrete flow} $\omega_M$ is given by 
\begin{align*}
    \omega_M(e) = M_e/N \in [0,1]
\end{align*}
if $e$ is oriented from its white vertex to its black vertex (and $\omega_M(-e) = -\omega_M(e)$ if $-e$ denotes $e$ with reversed orientation). Here $M_e$ is the multiplicity of $e$ in $M$, so if $M$ is a single dimer cover, then $\omega_M(e)\in \{0,1\}$. By construction, the divergences $\sum_{e\ni v} \omega_M(e)$ of $\omega_M$ are $\pm 1$ for all vertices $v$. If $M,M'$ are dimer covers on the same graph, then we can compare them using the sup norm on the edges of the graph. 

We now specialize to the case of dimer covers of subgraphs of lattices. Throughout, we will assume that the bipartite lattices $\Lambda$ that we consider have various symmetries that simplify our exposition. In particular we assume that $\Lambda$ has a ``bipartite transitive" $\Z^d$ action by translations, that is, transitive on white vertices and transitive on black vertices. We let
$D$ denote the common degree, and incident edge directions at white vertices are denoted $e_1,\dots, e_D$. See Section~\ref{sec:subgraphs_boundaryconditions}.

To compare dimer covers of a sequence of graphs $R_n\subset \frac{1}{n}\Lambda\subset \m R^d$, we use the \textit{weak topology on flows}, which is described in Section~\ref{sec:vector_fields}. This is the same topology used in \cite{3Ddimers} to compare dimer covers of $\m Z^3$ (and obtain a large deviations principle), but we streamline its description in adapting it from $\m Z^3$ to more general lattices. In this topology, the $n\to \infty$ scaling limits of discrete flows on $R_n$ are \textit{asymptotic flows} (see Theorem~\ref{thm:scaling_limits_asymptotic}), namely, they are measurable divergence-free vector fields supported in $R$ and valued in the \textit{Newton polytope} $\mc N(\Lambda)$, defined in our setting by
\begin{align*}
    \mc N(\Lambda) = \text{convex hull}\{e_1,\dots,e_D\}.
\end{align*}
We call a point $s\in \mc N(\Lambda)$ a \textit{slope.} See Section~\ref{sec:slope}.

This flow description makes sense in any dimension, in particular it still makes sense in two dimensions where the height function does exist. In this case, the relationship between divergence-free flows and height functions is simple: if $\omega =(\omega_1,\omega_2)$ is divergence-free, then its curl-free dual is $(-\omega_2,\omega_1)=\nabla h$, where $h$ is the height function. In order to give a unified description, we use the discrete flow correspondence to formulate the large deviation principle for multinomial dimers in any dimensions $d$.

\subsection{Large deviations and variational principle}\label{sec:ldp_intro}

Fix a lattice $\Lambda$ in $\m R^d$ as above. Let $R\subset \m R^d$ be a compact region which is the closure of its interior and has piecewise smooth boundary, and fix a boundary condition $b$ on $\partial R$ (e.g., the boundary value of an asymptotic flow). Finally, let $R_n\subset \frac{1}{n}\Lambda$ be a sequence of graphs which approximate $R$ in Hausdorff distance
and whose boundaries approximate $b$ in the appropriate sense. 
\old{
We define the boundary of $R_n$ in $\Lambda$ to be 
\begin{align*}
    \partial R_n = \{ v\in R_n : \exists u\in \Lambda \setminus R_n, (u,v)\in E(\Lambda)\}.
\end{align*}
The interior of $R_n$ is $R_n^\circ := R_n \setminus \partial R_n$. We take the vertex multiplicities $\mathbf N =(N_v)_{v\in R_n}$ on $R_n$ such that (i) for $v\in R_n^\circ$, $N_v=N$, (ii) for $v\in \partial R_n$, $N_v$ is fixed between $1$ and $N$, (iii) the pair $(R_n,\mathbf N)$ is \textit{feasible}, i.e., there exists an $\mathbf N$ dimer cover of $R_n$, and further for all edges $e$, there exists an $\mathbf N$ dimer cover of $R_n$ containing $e$. 

Finally, to simplify some technicalities, we choose suitable thresholds $\theta_n = O(1/n)$, and define $\rho_{n,N}$ to be the multinomial dimer measure (with uniform edge weights) on $R_n$ with multiplicites $N$ for $v\in R_n^\circ$, and in $[N_v-N \theta_n,N_v+N\theta_n]\cap [1,N]$ for $v\in \partial R_n$. See Section~\ref{sec:background} for further elaboration on the definition of multinomial measures, and the beginning of Section~\ref{sec:ldp} for the definition of $\rho_{n,N}$ in particular.  
}
Let $\rho_{n,N}$ be the multinomial dimer measure on $R_n$.
We then have
\begin{thm}[Rough statement; see Theorem~\ref{thm:ldp}]\label{thm:ldp_intro}
    Let $(R,b)$, $R_n, \rho_{n,N}$ be as above. In the iterated limit as $N$ and then $n$ go to infinity, the measures $\rho_{n,N}$ satisfy a large deviation principle in the weak topology on flows with good rate function $I_b(\cdot)$ which up to an additive constant is
    $$\int_R \sigma(\omega(x))\, \dd x$$
    if $\omega$ is an asymptotic flow, where $\sigma$ is the surface tension determined by $\Lambda$, and is given explicitly as the Legendre dual of the \textit{free energy} $F$ for $\Lambda$ given in \eqref{Fdef}. If $\omega$ is not an asymptotic flow, then $I_b(\omega) = \infty$. 
\end{thm}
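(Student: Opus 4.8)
The natural route is to prove the large deviation principle in the iterated limit by first establishing an LDP for fixed $n$ as $N\to\infty$ (a "torus-level" or finite-graph LDP), and then taking $n\to\infty$ using a $\Gamma$-convergence / approximation argument. For fixed $R_n$, the multinomial dimer measure is a sum over $\mathbf N$-dimer covers with weights governed by multinomial coefficients, and by the generating-function observation of \cite{KenyonPohoata} its partition function has an $N\to\infty$ expansion whose leading exponential rate is the Legendre dual of an explicit free energy on each edge/vertex. So the first step is: identify the large-$N$ rate function on $R_n$ as a sum over edges of a convex local energy, using Stirling's formula on the multinomial coefficients together with the generating-function expansion; this gives an LDP on $R_n$ at speed $N$ whose rate function, evaluated at a discrete flow $\omega$, is $\sum_e f(\omega(e)) + (\text{boundary terms})$ for an explicit convex $f$, minus the normalizing free energy. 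The contraction principle / exponential tilting here is standard once the combinatorial asymptotics are in hand.

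The second step is to pass from the finite-graph rate functions to the continuum functional $\int_R \sigma(\omega(x))\,dx$. Here I would use the weak topology on flows (Section~\ref{sec:vector_fields}) and show $\Gamma$-convergence of $\tfrac{1}{n^d}\sum_{e\in R_n} f(\omega(e))$ to $\int_R \sigma(\omega)$, where $\sigma$ is obtained from $f$ by the standard cell-problem / patching construction: on a mesoscopic box one optimizes the sum of edge energies subject to a prescribed average slope $s\in\mc N(\Lambda)$, and the per-volume minimum converges to $\sigma(s)$; the identification of $\sigma$ with the Legendre dual of the torus free energy comes from solving this cell problem explicitly (the torus being the extremal geometry). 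The $\Gamma$-$\liminf$ is a lower-semicontinuity estimate using convexity of $f$ and Jensen on mesoscopic averages; the $\Gamma$-$\limsup$ requires constructing, for any target asymptotic flow $\omega$, discrete flows on $R_n$ approximating it weakly with the right energy — mollify $\omega$, use the cell-problem minimizers locally, and glue them together correcting divergences with $O(1)$-cost local adjustments. Matching the boundary condition $b$ forces care near $\partial R$, which is where the thresholds $\theta_n = O(1/n)$ and the feasibility hypotheses on $(R_n,\mathbf N)$ are used.

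Combining the two steps: since for fixed $n$ we have an LDP at speed $N$ with rate function $I^{(n)}$, and $I^{(n)} \to I_b$ in the sense of $\Gamma$-convergence (plus equicoercivity/exponential tightness uniform in $n$, which follows because flows are valued in the compact polytope $\mc N(\Lambda)$), a standard diagonal argument upgrades this to the iterated LDP for $\rho_{n,N}$ with rate function $I_b(\omega) = \int_R \sigma(\omega)\,dx - \min_{\omega'} \int_R \sigma(\omega')\,dx$ on asymptotic flows and $+\infty$ otherwise; lower-semicontinuity and coercivity of $I_b$ (hence "good rate function") follow from convexity and continuity of $\sigma$ on the compact set $\mc N(\Lambda)$, which in turn come from its being a Legendre dual of a smooth strictly convex free energy.

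The main obstacle I expect is the $\Gamma$-$\limsup$ / upper-bound construction near the boundary: producing discrete flows on $R_n$ that simultaneously (i) approximate a given asymptotic flow $\omega$ in the weak topology, (ii) exactly satisfy the divergence ($\mathbf N$-dimer) constraints at every vertex, (iii) respect the prescribed boundary data $b$ on $\partial R_n$, and (iv) have energy converging to $\int_R\sigma(\omega)$. Interior gluing of cell-problem minimizers with $O(1)$ divergence corrections is routine, but controlling the accumulated boundary error when $R$ has only piecewise-smooth boundary, and verifying feasibility of the corrected configurations, is the delicate part; this is presumably where the $O(1/n)$ boundary thresholds and the "feasible pair" hypotheses do their work.
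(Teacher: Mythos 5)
Your route is genuinely different from the paper's. You propose (i) a fixed-$n$ LDP at speed $N$ whose rate function is an explicit edge-local entropy (coming from Stirling applied to $|\pi^{-1}(M)|=\prod_v N_v!/\prod_e M_e!$, giving $-\sum_e\omega(e)\log\omega(e)$ up to normalization), followed by (ii) a $\Gamma$-convergence/homogenization step in which a cell problem identifies the per-volume optimal entropy at slope $s$ with $-\sigma(s)$. The paper instead computes the free energy of the \emph{torus} via the generating function and steepest descent (Theorem~\ref{thm:free_energy_torus}), identifies $\sigma$ as its Legendre dual through an entropy computation with a variance estimate (Lemma~\ref{lem:mult_concentration}, Theorem~\ref{cor:almost_legendre}), and then proves the lower bound by patching torus samples into a piecewise-constant approximation of $\omega$ (Theorem~\ref{thm:patching_limit}, via Hall's theorem) and the upper bound by reflecting boxes into the torus. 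Your route would have a real advantage for the upper bound: since your discrete rate function is edge-local and concave, Jensen's inequality on mesoscopic boxes gives the upper bound directly, bypassing the reflection trick and hence the reflection-symmetry assumption on $\Lambda$ (cf.\ Remark~\ref{rem:reflection}). The cell problem you describe is also trivially solvable here (maximize $-\sum_i p_i\log p_i$ subject to $\sum p_i=1$, $\sum p_ie_i=s$), and its value matches the Legendre-dual formula.

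The gap is in the lower bound / recovery-sequence step, which you correctly flag as delicate but do not actually supply; this is where most of the paper's work lives. Three things must be verified and are not routine. First, ``gluing cell-problem minimizers with $O(1)$-cost divergence corrections'' must produce a discrete flow that is strictly positive on every edge and has exact divergences $\pm1$ in the interior with boundary multiplicities in $(0,N]$ landing inside the thresholds $\theta_n$ --- positivity is essential because realizing a discrete flow as a limit of actual $N$-dimer covers (the analogue of Lemma~\ref{lem:anything_is_limit}) requires $\omega(e)\neq0$ so that $\omega$ can serve as an edge-weight function whose critical gauge is trivial. This forces the preliminary step of mollifying $\omega$ and retracting it into the interior of $\mc N(\Lambda)$, and a careful bookkeeping of how the divergence corrections propagate to $\partial R_n$ (the paper does this along paths in Theorem~\ref{thm:discrete_approximation}). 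Second, the fixed-$n$ LDP lives on discrete flows on $R_n$ while the target LDP is in the weak (Wasserstein) topology on flows on $R$; to convert neighborhoods in one topology into neighborhoods in the other you need quantitative comparisons such as Lemma~\ref{lem:wasserstein_bound_from_local} and Proposition~\ref{prop:Linfty_vs_weak}, and the ``standard diagonal argument'' combining fixed-$n$ LDPs with $\Gamma$-convergence into an \emph{iterated} LDP is not standard: you must prove the two explicit inequalities $\liminf_n$ of the lower bounds and $\limsup_n$ of the upper bounds over weak-topology balls, which is exactly Lemmas~\ref{lem:lower} and~\ref{lem:upper}. Third, for the lower bound you must show that the set of $N$-dimer covers near your recovery flow carries the full exponential weight $\exp(Nn^d(\Ent(\omega)+o(1)))$ \emph{within the measure $\rho_{n,N}$}, i.e.\ with the prescribed boundary multiplicities; counting configurations near the glued flow is not enough unless you also show they are compatible with the boundary data, which is what the Hall's-theorem patching argument accomplishes. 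None of these is an obstruction in principle, but as written the proposal is a strategy outline whose decisive constructions remain to be done.
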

For general background on the theory of large deviations, see e.g.\ \cite{dembo2009large,varadhan2016large}. 
We also show that $\sigma$ is strictly convex (Theorem~\ref{thm:strict_convex}), and hence the rate function has a unique minimizer. This minimizer is the \textit{limit shape} for $(R,b)$. As such, the large deviation principle implies a variational principle, that random samples from these multinomial measures concentrate the limit shape, which is the unique solution of a minimization problem.

\begin{cor}[Rough statement; see Corollary~\ref{cor:limit_shape}]\label{cor:limit_shape_intro}
Fix $(R,b)$, $R_n$, and $\rho_{n,N}$ as above. As $n,N$ go to infinity, random $N$-dimer covers of $R_n$ sampled from $\rho_{n,N}$ concentrate exponentially fast on the limit shape, i.e., the unique asymptotic flow $\omega$ with boundary condition $b$ that minimizes $I_b$. 
\end{cor}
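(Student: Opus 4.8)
The plan is to deduce the statement from the large deviation principle of Theorem~\ref{thm:ldp_intro} together with the strict convexity of $\sigma$ from Theorem~\ref{thm:strict_convex}, via the standard argument that an LDP whose good rate function has a unique zero forces exponential concentration at that zero. No new ingredient is needed; the content lies in the two cited theorems.

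First I would identify the limit shape as the unique minimizer of $I_b$. The set of asymptotic flows on $R$ with boundary data $b$ is convex, being cut out by the linear conditions of being divergence-free, supported in $R$, valued in the convex polytope $\mc{N}(\Lambda)$, and having boundary value $b$; and two distinct such flows differ on a set of positive Lebesgue measure. Strict convexity of $\sigma$ therefore upgrades to strict convexity of $\omega \mapsto \int_R \sigma(\omega(x))\,\dd x$, and hence of $I_b$, on this set, so $I_b$ has a unique minimizer $\omega^*$. Since the $\rho_{n,N}$ are probability measures, applying the LDP upper bound to the whole (closed) space of flows gives $\inf I_b \le 0$; as $I_b \ge 0$ by construction, $\inf I_b = 0$, attained exactly at $\omega^*$.

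Next, fix an open neighborhood $U$ of $\omega^*$ in the weak topology on flows; it suffices to show that $\rho_{n,N}(U^c)$ decays exponentially in the iterated limit. The set $U^c$ is closed and does not contain $\omega^*$, and I claim $c := \inf_{U^c} I_b > 0$. Indeed, were $c = 0$, we could pick $\omega_k \in U^c$ with $I_b(\omega_k) \to 0$; these eventually lie in the sublevel set $\{I_b \le 1\}$, which is compact since $I_b$ is a \emph{good} rate function, so along a subsequence $\omega_k \to \omega'$, and lower semicontinuity gives $I_b(\omega') \le \liminf_k I_b(\omega_k) = 0$, forcing $\omega' = \omega^*$ by uniqueness --- contradicting $\omega' \in U^c$. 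Applying the upper bound of Theorem~\ref{thm:ldp_intro} to the closed set $U^c$ then gives, in the iterated limit,
$$\limsup_{n \to \infty}\ \limsup_{N \to \infty}\ \frac{1}{n^d}\,\log \rho_{n,N}(U^c)\ \le\ -c\ <\ 0 .$$
Since $U$ was an arbitrary neighborhood of $\omega^*$, this is exactly the assertion that the measures $\rho_{n,N}$ concentrate exponentially fast on the limit shape $\omega^*$.

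I do not anticipate a real obstacle: the argument is purely soft once Theorems~\ref{thm:ldp} and~\ref{thm:strict_convex} are available. The one step that uses more than lower semicontinuity of $I_b$ is the strict positivity $\inf_{U^c} I_b > 0$, which genuinely relies on the compactness of the sublevel sets of $I_b$ (the ``good'' in ``good rate function''); in our setting this compactness is in any case automatic, as the space of flows valued in the bounded polytope $\mc{N}(\Lambda)$ is weak-$*$ compact. The only remaining care is to match the precise iterated-limit formulation of Theorem~\ref{thm:ldp}, which is a cosmetic matter.
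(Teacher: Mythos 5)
Your argument is correct and is essentially the paper's own proof of Corollary~\ref{cor:limit_shape}: both establish uniqueness of the minimizer from compactness of $\text{AF}(\Lambda,R,b)$ together with strict convexity of $\sigma$, and both deduce exponential concentration by applying the LDP upper bound to the closed complement of a neighborhood of $\omega_{\min}$ (the paper via a finite cover by balls on which $I$ exceeds its minimum, you via the equivalent observation that $\inf_{U^c} I_b>0$ for a good rate function). The only slip is the normalization in your displayed bound, which should be $\frac{1}{Nn^d}\log\rho_{n,N}(U^c)$ rather than $\frac{1}{n^d}\log\rho_{n,N}(U^c)$.
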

\begin{figure}
    \centering
    \includegraphics[width=0.4\linewidth]{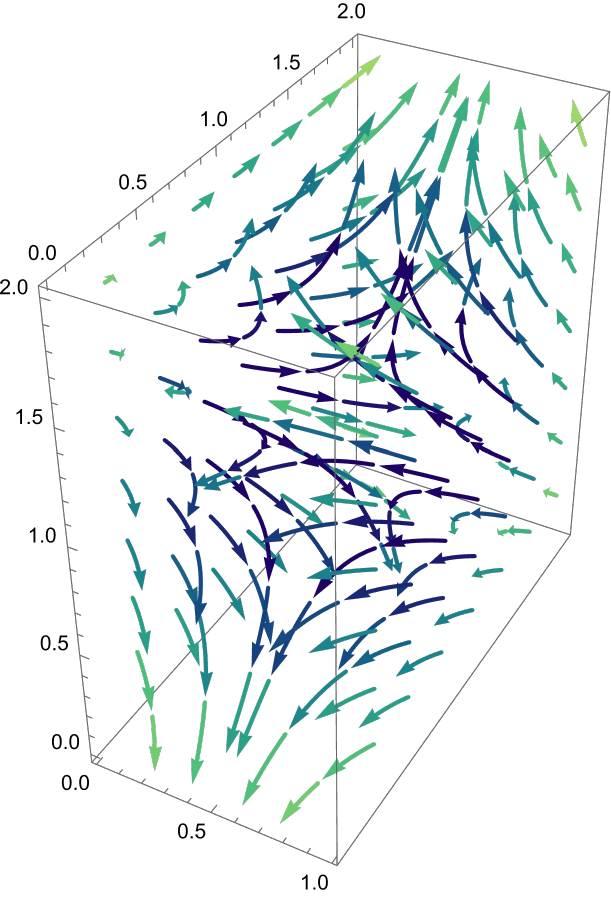}
    \caption{Flow lines for the Aztec cuboid limit shape divergence free flow $\omega = (-\frac{2x}A+1, \frac{2y}B-1, \frac{2z}C-1)$ with $A=1$ and $B=C=2$.}
    \label{fig:cuboid}
\end{figure}
See Figure~\ref{fig:aztecs} for the multinomial limit shape height function on the Aztec diamond, and Figure~\ref{fig:cuboid} for an example of a three-dimensional example called the \textit{Aztec cuboid} in the body-centered cubic ($\bcc)$ lattice. See Section~\ref{sec:limit_shapes} for more on these and other examples. 

The form of Theorem~\ref{thm:ldp_intro} and Corollary~\ref{cor:limit_shape_intro}---that the rate function is the integral of a surface tension function which depends only on slope, and that this rate function has a unique minimizer and hence configurations concentrate on a limit shape---is exactly the same as the form of the large deviation principles for single dimer covers of $\m Z^2$ \cite{cohn2001variational} and of $\m Z^3$ \cite{3Ddimers}. While the microscopic fluctuations of the model are suppressed in the large $N$ limit, this theorem shows that at the ``mesoscopic'' scale, the multinomial dimer model is governed in the same way, where the growth rate of the number of tilings depends on the slope $s\in \mc N(\Lambda)$. 

For the standard dimer model, the surface tension function for dimer tilings of $\m Z^2$ has an explicit formula, also given by taking the Legendre dual of the corresponding free energy on the torus \cite{cohn2001variational,kenyon2006dimers}. For dimer tilings of $\m Z^3$, the surface tension is only known abstractly as the maximum specific entropy of an ergodic Gibbs measure for the model of slope $s$ \cite{3Ddimers}, and correspondingly it is very difficult to say more about this function, or its minimizers (the limit shapes).

The proof of Theorem~\ref{thm:ldp_intro} uses a variety of different tools, which make use of a mixture of techniques, including explicit formulas, and and general combinatorial constructions that hold for the dimer model in any dimension developed in \cite{3Ddimers}, but where the details were worked out only in dimension three. We can substantially simplify these combinatorial arguments using the large $N$ structure, thereby allowing us to make the arguments rigorous in any dimension $d$. In particular:
\begin{itemize}
    \item We use the generating function for multinomial dimer partition functions, found in \cite{KenyonPohoata}, to explicitly compute the free energy $F$ for multinomial dimer measures on the torus in the large $N$ limit (see Theorem~\ref{thm:free_energy_torus}).
    \item We show that the surface tension $\sigma$ is the Legendre dual of $F$ (see Theorem~\ref{thm:legendre_duality}). The general outline of this argument mimics the one in \cite{cohn2001variational}, but the key variance estimate (Lemma~\ref{lem:mult_concentration}) instead uses the large $N$ structure. 
    \item We prove a version of the \textit{patching theorem} for the large $N$ limit. Analogous to the patching theorem of \cite{3Ddimers}, the main tool is Hall's matching theorem. The patching theorem of \cite{3Ddimers} is only proved for $\m Z^3$, is quite technical, and relies on analysis of the geometry certain ``minimal discrete surfaces'' built out of lattice squares. Using the large $N$ structure, we can prove a simpler, unified version of this result that holds for more general lattices and dimensions. See Section~\ref{sec:patching}.
    \item We prove that any asymptotic flow can be approximated by a discrete flow corresponding to an $N$-dimer cover with a new construction, which makes use of the large $N$ structure (see Theorem~\ref{thm:discrete_approximation}). This replaces the very involved ``shining light construction'' of \cite{3Ddimers}, which is specific to $\m Z^3$. 
\end{itemize}

\subsection{Explicit formulas: free energy and surface tension}

\begin{figure}
    \centering
    \includegraphics[width=0.45\linewidth]{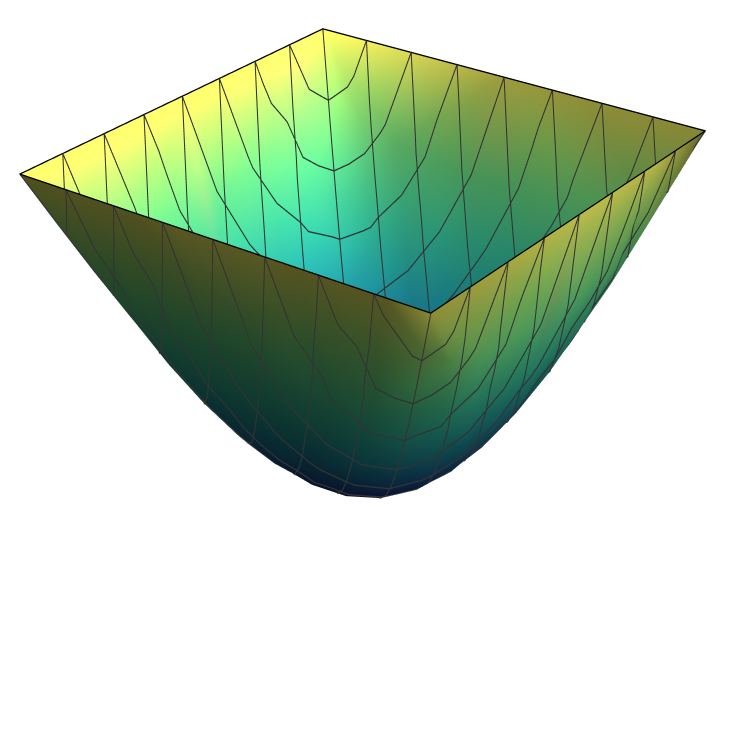} \includegraphics[width=0.45\linewidth]{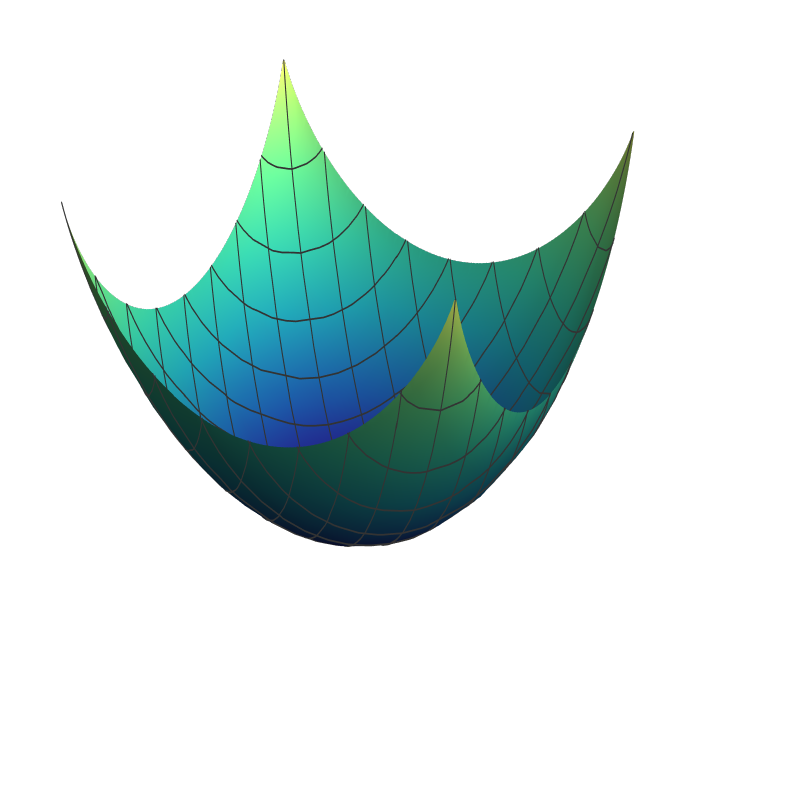}
    \caption{The surface tension functions for $\m Z^2$ for the standard dimer model (left) and the multinomial dimer model in the large $N$ limit (right). Notice that the multinomial surface tension is strictly convex everywhere, whereas the standard dimer one is strictly convex in the interior. Both have maximum value zero.}
    \label{fig:surface_tensions}
\end{figure}

The formulas for the free energy $F$ are especially simple and explicit. Namely, if $\Lambda$ has edges in directions $e_1,\dots,e_D$ at white vertices, then $F:\m R^d\to \m R$ is 
\begin{align}\label{Fdef}
    F(\alpha) = \log \bigg(\sum_{j=1}^D \exp(e_j \cdot \alpha)\bigg).
\end{align}
See Section~\ref{sec:torus_asymptotics}. The \textit{amoeba} $\mc A=\mc A(\Lambda)$ of $\Lambda$ is the largest subset of $\m R^d$ on which $F$ is strictly convex. Unlike for the dimer model in two dimensions, where $\mc A$ is a strict subset, here $\mc A = \m R^d$ for any $d$-dimensional lattice $\Lambda$, and $F$ is analytic. The surface tension $\sigma$ is the Legendre dual of $F$, i.e., 
\begin{align*}
    \sigma(s) = \max_{\alpha\in \m R^d} - F(\alpha) + \sum_{i=1}^d s_i \alpha_i. 
\end{align*}
The domain of the surface tension is the Newton polytope $\mc N(\Lambda)$. 

Unlike the free energy, the surface tension is sometimes simple and sometimes very complicated, depending on the lattice. For $\m Z^2$ it has a simple formula: 

\begin{align}\label{eq:multinomial_Z2}
    \sigma(s_1,s_2) &= \frac{1+2s_1}{2}\log\bigg(\frac{1+2s_1}{2}\bigg) + \frac{1-2s_1}{2}\log\bigg(\frac{1-2s_1}{2}\bigg) \\
    &+ \frac{1+2s_2}{2}\log\bigg(\frac{1+2s_2}{2}\bigg) + \frac{1-2s_2}{2}\log\bigg(\frac{1-2s_2}{2}\bigg),\nonumber
\end{align}
See also Figure~\ref{fig:surface_tensions} for graphs comparing this surface tension with the one for standard dimers on $\m Z^2$. 

For $\m Z^3$, the free energy is simple, while the surface tension is very complicated. The free energy is 
\begin{align*}
    F(\alpha_1,\alpha_2,\alpha_3) = \log (\mathrm{e}^{\alpha_1/2}+\mathrm{e}^{-\alpha_1/2}+\mathrm{e}^{\alpha_2/2}+\mathrm{e}^{-\alpha_2/2}+\mathrm{e}^{\alpha_3/2}+\mathrm{e}^{-\alpha_3/2}).
\end{align*}
In this case the Legendre dual $\sigma$ is valued in the Newton polytope $\mc N(\m Z^3) = \{s\in \m R^3: |s_1|+|s_2|+|s_3| \leq 1/2\}$, which is an octahedron. Restricted to any triangular face of the octahedron, it is equal to the multinomial surface tension for the honeycomb lattice. However, the formula on all of $\mc N(\m Z^3)$ is computationally complicated, involving the root of a degree 8 polynomial. For a numerically-generated discrete approximation of the multinomial limit shape for the \textit{Aztec octahedron} graph in $\m Z^3$, see Figure~\ref{fig:aztec_octahedron}.

However, for the three-dimensional \textit{body-centered cubic lattice} ($\bcc$), the surface tension does have a simple closed formula:
\begin{align*}
    F(\alpha_1,\alpha_2,\alpha_3) &= \log \bigg( (\mathrm{e}^{\alpha_1/2}+\mathrm{e}^{-\alpha_1/2}) (\mathrm{e}^{\alpha_2/2}+\mathrm{e}^{-\alpha_2/2})(\mathrm{e}^{\alpha_3/2}+\mathrm{e}^{-\alpha_3/2})\bigg)\\
    \sigma(s_1,s_2,s_3) &= \sum_{i=1}^3 \frac{1+2s_i}{2}\log\bigg(\frac{1+2s_i}{2}\bigg) + \frac{1-2s_i}{2}\log\bigg(\frac{1-2s_i}{2}\bigg).
\end{align*}

\subsection{Limit shape regularity} 

The limit shape $\omega$, which exists by Corollary~\ref{cor:limit_shape_intro}, is a measurable divergence-free vector field on $\m R^d$. In two dimensions, where the height function is defined, the limit shape height function $h$ is related to $\omega=(\omega_1,\omega_2)$ by $\nabla h = (-\omega_2,\omega_1)$.

One of the iconic properties of limit shapes for standard dimers is the existence of \textit{facets}. In terms of slopes, a facet corresponds to a region where the limit shape divergence-free flow $\omega$ takes values in $\partial \mc N(\Lambda)$, the boundary of the set of allowed slopes (and/or certain integer interior points). In Figure~\ref{fig:aztecs}, the facets are the four regions, one at each corner, where the height function is linear. Single dimer covers of regions like the Aztec diamond in $\m Z^3$ also appear to have facets in simulations, but it is not known how to prove this rigorously (see \cite[Problem 9.0.12]{3Ddimers}). In contrast, we show the following for the multinomial model in the large $N$ limit:
\begin{thm}[See Theorem~\ref{thm:no_facets}]
    For any $(R,b)\subset \m R^d$ flexible (a mild regularity condition, see Definition~\ref{def:flexible}), the $N\to \infty$ multinomial dimer limit shape does not have facets.
\end{thm}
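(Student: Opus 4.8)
The plan is to show that the limit shape $\omega$ — the unique asymptotic flow with boundary condition $b$ minimizing $I_b(\cdot)=\int_R\sigma(\omega(x))\,\dd x$ (Corollary~\ref{cor:limit_shape_intro}) — satisfies $\omega(x)\in\operatorname{int}\mc N(\Lambda)$ for a.e.\ $x\in R$. Since $F$ is analytic and $\sigma$ is strictly convex (Theorem~\ref{thm:strict_convex}), $\sigma$ is in fact strictly convex and real-analytic throughout $\operatorname{int}\mc N(\Lambda)$, so it has no singular interior slopes; hence this is exactly what ``no facets'' amounts to. The mechanism is the logarithmic steepness of $\sigma$ at $\partial\mc N(\Lambda)$, which I will extract from the entropy form of the Legendre dual of $F$ in \eqref{Fdef}: writing $F(\alpha)=\log\sum_j e^{e_j\cdot\alpha}$ as the log-Laplace transform of the counting measure on $\{e_1,\dots,e_D\}$, convex duality gives
\begin{equation}\label{eq:entropy-rep}
    \sigma(s)\;=\;\min\!\Big\{\,\textstyle\sum_{j=1}^{D} q_j\log q_j \;:\; q_j\ge 0,\ \textstyle\sum_j q_j=1,\ \textstyle\sum_j q_j e_j=s\,\Big\}.
\end{equation}
In particular $\sigma$ is bounded on $\mc N(\Lambda)$ (indeed $\sigma\in[-\log D,0]$ there), the minimizing $q$ for an interior $s$ is the unique, full-support optimizer and depends continuously on $s$, and the minimizing $q$ for $s\in\partial\mc N(\Lambda)$ is supported on the proper face of $\mc N(\Lambda)$ containing $s$, so at least one coordinate of $q$ vanishes.

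Now suppose for contradiction that $A:=\{x\in R:\omega(x)\in\partial\mc N(\Lambda)\}$ has positive Lebesgue measure. By flexibility of $(R,b)$ (Definition~\ref{def:flexible}) fix an asymptotic flow $\bar\omega$ with the same boundary data $b$ whose values lie in a fixed compact set $K\Subset\operatorname{int}\mc N(\Lambda)$; this is precisely the content of flexibility (or follows from it together with the discrete-approximation construction of Theorem~\ref{thm:discrete_approximation}). For $t\in(0,1)$ set $\omega_t=(1-t)\omega+t\bar\omega$, which is again a divergence-free, $\mc N(\Lambda)$-valued asymptotic flow with boundary data $b$. For each $x$ let $q^{(0)}(x)$ and $q^{(1)}(x)$ attain the minimum in \eqref{eq:entropy-rep} for $\omega(x)$ and $\bar\omega(x)$, and test \eqref{eq:entropy-rep} at $\omega_t(x)$ with the feasible point $(1-t)q^{(0)}(x)+tq^{(1)}(x)$. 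Convexity of $u\mapsto u\log u$ (applied coordinatewise, together with $u\log u\le 0$ on $[0,1]$) gives, for the coordinates with $q^{(0)}_j(x)>0$, an increase at most $t/e$, and for an off-face coordinate $q^{(0)}_j(x)=0$ an increase $t q^{(1)}_j(x)\log\!\big(t q^{(1)}_j(x)\big)\le t q^{(1)}_j(x)\log t$. Summing,
\begin{equation}\label{eq:gain}
    \sigma(\omega_t(x)) - \sigma(\omega(x)) \;\le\; \tfrac{D}{e}\,t \;+\; \Big(\textstyle\sum_{j:\,q^{(0)}_j(x)=0} q^{(1)}_j(x)\Big)\, t\log t .
\end{equation}

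For $x\in A$ there is at least one off-face coordinate, and since $\bar\omega(x)\in K$ every coordinate of $q^{(1)}(x)$ is bounded below by some $\delta_0=\delta_0(K)>0$ (full-support optimizer, continuous on the compact $K$); hence the parenthesized sum in \eqref{eq:gain} is $\ge\delta_0$, and the last term is a genuine negative $t\log t$ gain. For $x\in R\setminus A$, convexity of $t\mapsto\sigma(\omega_t(x))$ and boundedness of $\sigma$ give $\sigma(\omega_t(x))-\sigma(\omega(x))\le t\big(\sigma(\bar\omega(x))-\sigma(\omega(x))\big)\le t\log D$. Integrating over $R$,
\begin{equation*}
    I_b(\omega_t)-I_b(\omega) \;\le\; \Big(\tfrac{D}{e}+\log D\Big)|R|\,t \;+\; \delta_0\,|A|\,t\log t .
\end{equation*}
Because $t\log t\to 0^-$ while $|t\log t|/t\to\infty$, and $|A|>0$, the right-hand side is strictly negative for all sufficiently small $t>0$, contradicting minimality of $\omega$. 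Hence $|A|=0$, i.e.\ $\omega(x)\in\operatorname{int}\mc N(\Lambda)$ a.e., so the limit shape has no boundary-type facet. (For completeness: once $\omega$ avoids $\partial\mc N(\Lambda)$, a standard interior bootstrap — on compact subsets of $\operatorname{int}R$ the Euler–Lagrange equation is a real-analytic, locally uniformly elliptic equation since $\nabla^2 F>0$ is bounded above and below there — shows the limit shape is real-analytic in $\operatorname{int}R$, so it is not affine on any proper subregion unless it is globally affine; thus there are no interior-slope facets either.)

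The main obstacle is the single external input of the middle step: producing the competitor $\bar\omega$ with boundary data $b$ whose slope stays in a fixed compact subset of $\operatorname{int}\mc N(\Lambda)$, together with the \emph{uniform} lower bound $\delta_0$ on the off-face mass over the a priori unknown set $A$, which may touch $\partial R$. This is exactly where flexibility is needed and where there is real content: one must construct a divergence-free field realizing the prescribed boundary data and never reaching $\partial\mc N(\Lambda)$, e.g.\ by a small inward perturbation of an arbitrary asymptotic flow realizing $b$ (using that flexibility forces $b$ itself to be interior-valued) or via the patching/discrete-approximation machinery. Granting such a $\bar\omega$, everything else is the elementary convexity computation around \eqref{eq:entropy-rep}–\eqref{eq:gain}.
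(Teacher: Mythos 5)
Your argument is correct and has the same skeleton as the paper's proof: assume the facet set $A$ has positive measure, use flexibility to produce a competitor $\omega_0\in\text{AF}(\Lambda,R,b)$ valued in $\mc N(\Lambda)^\circ$, form the convex combination $(1-t)\omega+t\omega_0$, and show that a superlinear-in-$t$ entropy gain on $A$ beats the linear-in-$t$ loss on $R\setminus A$. The difference is in how the superlinear gain is produced. The paper proves an abstract blow-up lemma (Lemma~\ref{lem:blowup}): since $\nabla\sigma$ is a bijection from $\mc N(\Lambda)^\circ$ onto $\mc A(\Lambda)^\circ=\m R^d$ and $\mc N(\Lambda)$ is compact, the one-sided directional derivative of $\sigma$ at any boundary slope is $-\infty$, giving a decrease $-\varepsilon K(\varepsilon)$ with $K(\varepsilon)\to\infty$. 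You instead use the explicit entropy representation $\sigma(s)=\min\{\sum_j q_j\log q_j:\ q\ge0,\ \sum q_j=1,\ \sum q_je_j=s\}$ (which is indeed the Legendre dual of \eqref{Fdef}, and matches Theorem~\ref{cor:almost_legendre}) together with the fact that any feasible $q$ for a boundary slope must vanish off the supporting face, to get the quantitative rate $t\log t$. Your route buys an explicit rate and makes the $x$-uniformity of the gain transparent; the paper's route is shorter and generalizes to any strictly convex $F$ with $\mc A=\m R^d$ without needing a formula for $\sigma$. One small repair: flexibility (Definition~\ref{def:flexible}, Lemma~\ref{lem:equivalent_flexible}) only gives $\omega_0$ with values in $\mc N(\Lambda)^\circ$ almost everywhere, not in a fixed compact $K\Subset\mc N(\Lambda)^\circ$, so your uniform $\delta_0$ is not immediate; but this is harmless — since $\min_j q^{(1)}_j(x)>0$ a.e., one can pass to a positive-measure subset $A'=A\cap\{\min_j q^{(1)}_j\ge 1/m\}$ for $m$ large and run the same computation there, so the obstacle you flag at the end is not a genuine gap. (The paper's own proof elides the analogous pointwise-to-integrated uniformity in the same spirit.) Your closing remark about interior-slope facets via ellipticity is consistent with Corollary~\ref{cor:2Dsmooth}, though the theorem as stated only concerns boundary-valued facets.
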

This result holds in any dimension. The proof of this theorem comes from a perturbative argument, and uses the fact that the surface tension gradient blows up at $\partial \mc N(\Lambda).$

This ``surface tension blow up'' can also be used to explain why, e.g.\ for uniform random dimer tilings of $\m Z^2$, we see only facets with \textit{one} color of tile (corresponding to a vertex of $\partial \mc N$), and never two (corresponding to a point on an edge of $\partial \mc N$). (With doubly-periodic weights, for example, the surface tension changes and it is possible to see two colors.) We conjecture that facets exist in this model for all finite $N$, with size shrinking as $N\to \infty$; see see Question~\ref{q:multinomial_finiteN}. For any dimension $d>2$, this existence is in fact a conjecture even for $N=1$. 

In two dimensions, elliptic regularity allows us to conclude the following.
\begin{cor}[See Corollary~\ref{cor:2Dsmooth}]\label{cor:smooth_intro}
    In two dimensions, $N\to \infty$ multinomial dimer limit shapes are smooth. 
\end{cor}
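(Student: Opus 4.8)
The plan is to reduce the two-dimensional limit-shape problem to a single scalar, uniformly elliptic PDE on the interior of $R$ and then run the classical De Giorgi--Nash--Moser and Schauder bootstrap; the reduction to a \emph{scalar} equation is exactly what is special to $d=2$ (in higher dimensions the limit shape is a genuine divergence-free field, and only partial regularity would be available). Recall that in two dimensions the limit-shape flow $\omega=(\omega_1,\omega_2)$ of Corollary~\ref{cor:limit_shape_intro} is dual to a height function $h$ via $\nabla h=(-\omega_2,\omega_1)$, and under this correspondence the rate function becomes $\int_R\tilde\sigma(\nabla h)\,\dd x$, where $\tilde\sigma(a,b):=\sigma(b,-a)$ is the surface tension precomposed with a rotation. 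Since $\sigma$ is the Legendre dual of the analytic free energy $F$ of \eqref{Fdef}, which has positive-definite Hessian everywhere on $\m R^2$ (its amoeba), the function $\tilde\sigma$ is real-analytic with everywhere positive-definite Hessian on the interior of the (rotated) Newton polytope $\mc N(\Lambda)$, while $\nabla\tilde\sigma$ blows up as one approaches $\partial\mc N(\Lambda)$. Thus the variational integrand for $h$ is smooth and locally uniformly convex on the interior of $\mc N(\Lambda)$, but degenerates at its boundary.

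The first and main step is to upgrade the no-facets property to a \emph{local, quantitative} interior bound: for every ball $B$ with $\overline B\subset R^\circ$ there should be $\delta_B>0$ with $\operatorname{dist}\bigl(\nabla h(x),\partial\mc N(\Lambda)\bigr)\ge\delta_B$ for a.e.\ $x\in B$. Theorem~\ref{thm:no_facets} (for $(R,b)$ flexible) already rules out an open set on which $\nabla h\in\partial\mc N(\Lambda)$; what is needed is its localization. I expect this to come from exactly the perturbative mechanism behind Theorem~\ref{thm:no_facets}, since that energy-comparison argument is local in nature: on a ball $\overline B\subset R^\circ$, if $\nabla h$ came within $\eps$ of $\partial\mc N(\Lambda)$ on a set of positive measure, then the unboundedness of $\nabla\tilde\sigma$ near $\partial\mc N(\Lambda)$ would allow one to build a competitor agreeing with $h$ on $\partial B$ but of strictly smaller energy, contradicting minimality of the limit shape. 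I expect this step to be the main obstacle, and the cleanest route is probably to state and prove Theorem~\ref{thm:no_facets} directly in this local form.

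Granting the local bound, fix a ball $B$ with $\overline B\subset R^\circ$. The values of $\nabla h$ on $B$ then lie in a compact subset of the interior of $\mc N(\Lambda)$, so there are constants $0<c_B\le C_B<\infty$ with $c_B\,\mathrm{Id}\le\Hess\tilde\sigma(\nabla h(x))\le C_B\,\mathrm{Id}$ on $B$. Consequently the perturbations $h+t\varphi$ with $\varphi\in C_c^\infty(B)$ are admissible for small $t$, and the first variation of $\int_B\tilde\sigma(\nabla h)$ shows $h$ is a weak solution of the Euler--Lagrange equation $\operatorname{div}\bigl(\nabla\tilde\sigma(\nabla h)\bigr)=0$ in $B$ (for $\m Z^2$ this is the equation $\tfrac{4h_{xx}}{1-4h_x^2}+\tfrac{4h_{yy}}{1-4h_y^2}=0$ recorded earlier). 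From here the argument is standard: $h\in W^{1,\infty}(B)$ is a bounded minimizer of a smooth uniformly convex integrand, so difference quotients give $h\in H^2_{\mathrm{loc}}(B)$, each $w=\partial_k h$ solves the linear divergence-form equation $\operatorname{div}\bigl(\Hess\tilde\sigma(\nabla h)\,\nabla w\bigr)=0$ with bounded measurable, uniformly elliptic coefficients, and De Giorgi--Nash--Moser gives $w\in C^{0,\alpha}_{\mathrm{loc}}(B)$, i.e.\ $h\in C^{1,\alpha}_{\mathrm{loc}}$. The coefficients $\Hess\tilde\sigma(\nabla h)$ are then $C^{0,\alpha}_{\mathrm{loc}}$, so Schauder estimates upgrade $h$ to $C^{2,\alpha}_{\mathrm{loc}}$; iterating — the coefficients gain one H\"older derivative per pass because $\tilde\sigma\in C^\infty$ on the interior of $\mc N(\Lambda)$ — yields $h\in C^\infty(R^\circ)$, hence $\omega\in C^\infty(R^\circ)$ as a rotation of $\nabla h$. (Since $\tilde\sigma$ is in fact real-analytic on the interior of $\mc N(\Lambda)$, Morrey's analytic-hypoellipticity theorem gives that $h$, and hence $\omega$, is even real-analytic in $R^\circ$; smoothness is all that is claimed.) This parallels the treatment of the liquid region for the standard two-dimensional dimer model in \cite{cohn2001variational}.
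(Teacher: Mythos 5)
Your proposal is correct and takes essentially the same route as the paper: Corollary~\ref{cor:2Dsmooth} is proved by invoking Theorem~\ref{thm:no_facets} to place $\nabla h$ locally in a compact subset of $\mc N(\Lambda)^\circ$, where $\sigma$ is smooth and uniformly convex, and then citing elliptic regularity for the resulting uniformly elliptic Euler--Lagrange equation, exactly your De Giorgi--Nash--Moser plus Schauder bootstrap. You are also right to flag the quantitative localization of the no-facets statement (a pointwise interior distance bound to $\partial\mc N(\Lambda)$, rather than the almost-everywhere statement of Theorem~\ref{thm:no_facets}) as the one step that needs an argument; the paper asserts this localization in a single sentence without spelling out the competitor construction you sketch.
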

Note that for the standard dimer model, limit shapes are often not smooth, and are not (or are not expected to be) smooth in interesting examples such as the Aztec diamond (or its three-dimensional analogs). Indeed, if the limit shape has facets then it has a singular locus (along the boundary of the facet) where $\omega$ and $\nabla h$ need not even be continuous. For discussion of this, see e.g.\ \cite[Section 1.3]{astala2023dimermodelsconformalstructures}. We expect $N\to \infty$ multinomial limit shapes to be smooth in higher dimensions as well, see Question~\ref{q:always_smooth}.  

\subsection{Euler-Lagrange equations}

Subject to boundary conditions, the limit shape divergence-free flow $\omega$ on $(R,b)$ is the unique minimizer of 
\begin{align*}
    \int_{R}\sigma(\omega(x))\, \dd x
\end{align*}
and so can also be described as the solution to a system of Euler-Lagrange equations. 

For a divergence-free flow in dimension $d$, the Euler-Lagrange equations have the following general form.
\begin{thm}[See Theorem~\ref{thm:EL_equations}]\label{thm:EL_intro}
    The limit shape flow $\omega$ for $(R,b)$ is the solution to the following system of differential equations:
    \begin{itemize}
        \item the divergence equation, 
        \begin{align*}
            \text{div}(\omega(x)) = \sum_{i=1}^d \pd{\omega_i}{x_i}(x) = 0;
        \end{align*}
        \item a collection of ${d\choose 2}$ equations:
        \begin{align}\label{eq:flow_mixedpartials}
            \pd{}{x_i} \pd{}{s_j}\sigma(\omega(x)) - \pd{}{x_j}\pd{}{s_i}\sigma(\omega(x)) =0 
        \end{align}
        for all $i\neq j$ between $1$ and $d$.
    \end{itemize}
\end{thm}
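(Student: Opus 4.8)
The plan is to derive the Euler--Lagrange equations from first variation, being careful to respect the divergence-free constraint. Since $\omega$ minimizes $\int_R \sigma(\omega(x))\,\dd x$ among divergence-free vector fields with fixed boundary data $b$, the admissible variations are $\omega + t\,\eta$ where $\eta$ is divergence-free, compactly supported in the interior of $R$, and smooth. For such $\eta$ the first-order condition reads
\begin{align*}
    0 = \frac{\dd}{\dd t}\bigg|_{t=0}\int_R \sigma(\omega + t\eta)\,\dd x = \int_R \sum_{i=1}^d \pd{\sigma}{s_i}(\omega(x))\,\eta_i(x)\,\dd x.
\end{align*}
So the statement reduces to a Hodge-type fact: a vector field $\psi := \nabla_s\sigma(\omega(\cdot))$ (with components $\psi_i = \pd{}{s_i}\sigma(\omega)$) is $L^2$-orthogonal to all divergence-free compactly supported $\eta$ if and only if $\psi$ is a gradient, $\psi = \nabla \Phi$ for some scalar function $\Phi$ (the ``critical gauge function'' alluded to in the abstract). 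The first step, then, is to justify this orthogonal decomposition. The space of compactly supported divergence-free fields has $L^2$-orthogonal complement exactly the gradients (of $H^1_{loc}$ functions), by the Helmholtz/Hodge decomposition; this is where one needs the mild regularity of $(R,b)$ and enough smoothness of $\sigma\circ\omega$ to make the integration by parts legitimate. I expect to invoke the regularity already established (Corollary~\ref{cor:smooth_intro} in two dimensions, and the flexibility/no-facets input in general, which keeps $\omega$ away from $\partial\mc N(\Lambda)$ where $\nabla\sigma$ blows up, so that $\nabla_s\sigma(\omega)$ is at least locally bounded and weakly differentiable).

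Next I would translate ``$\psi$ is curl-free'' into the coordinate form stated in the theorem. Saying $\psi = \nabla\Phi$ is equivalent, on a simply connected domain, to the vanishing of all the mixed ``curls''
\begin{align*}
    \pd{\psi_j}{x_i} - \pd{\psi_i}{x_j} = 0, \qquad 1\le i<j\le d,
\end{align*}
which upon substituting $\psi_i = \pd{}{s_i}\sigma(\omega(x))$ is exactly \eqref{eq:flow_mixedpartials}. Together with the divergence constraint $\text{div}(\omega)=0$, which $\omega$ satisfies by virtue of being an asymptotic flow, this gives the full system. On a region $R$ that is not simply connected one only gets closedness of the form $\sum_i\psi_i\,\dd x_i$ rather than exactness, but the differential equations \eqref{eq:flow_mixedpartials} are local and hence still hold; if the paper wants the global gauge function it presumably assumes $R$ is the closure of its interior with mild topology, so I would note that and move on.

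Finally I would argue the converse direction (that a divergence-free flow solving this system, with the right boundary data, is the minimizer), which is needed to say $\omega$ is characterized by the equations. This follows from convexity: $\sigma$ is strictly convex by Theorem~\ref{thm:strict_convex}, so the functional $\int_R\sigma(\omega)\,\dd x$ is strictly convex on the affine space of divergence-free fields with boundary data $b$, hence any critical point (a solution of the first-variation equation, equivalently of the EL system) is the unique global minimizer. The main obstacle I anticipate is the regularity/functional-analytic bookkeeping in the first paragraph: one must ensure $\nabla_s\sigma(\omega(\cdot))$ is regular enough for the Hodge decomposition and the integration by parts to apply, and in dimension $\ge 3$ this leans on the no-facets theorem to avoid the boundary of the Newton polytope; near $\partial R$, where $\omega$ may approach $\partial\mc N(\Lambda)$, the equations should be interpreted in a suitable weak or interior sense. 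Everything else is a routine unwinding of definitions.
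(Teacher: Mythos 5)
Your argument is correct, and it reaches the same equations by a recognizably different handling of the divergence-free constraint. The paper works ``from below'': since $\omega$ is divergence-free it is dual to a closed $(d-1)$-form $A$, so $A=\dd B$ for a $(d-2)$-form $B$ with components $B_{i,j}$, and the proof perturbs one component $B_{k,\ell}\mapsto B_{k,\ell}+\delta\eta$ at a time. Each such perturbation changes only $\omega_k$ and $\omega_\ell$ (by $\pm\delta\,\partial\eta/\partial x_\ell$ and $\pm\delta\,\partial\eta/\partial x_k$), stays automatically divergence-free, and a single integration by parts yields exactly the $(k,\ell)$ equation of \eqref{eq:flow_mixedpartials}. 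You instead work ``from above'': you test against \emph{all} compactly supported divergence-free $\eta$, obtain $L^2$-orthogonality of $\nabla_s\sigma(\omega)$ to that space, and then invoke the de Rham/Helmholtz characterization of the annihilator as the space of gradients, from which curl-freeness and hence \eqref{eq:flow_mixedpartials} follow. The two are dual: the paper's perturbations are precisely (a spanning subclass of) the test fields whose orthogonality you use, so the paper in effect proves the one direction of the Hodge-type lemma it needs by hand rather than citing it. What your route buys is (i) an explicit identification of the potential $\Phi$ with the limiting gauge function, foreshadowing Theorem~\ref{thm:EL_equations_gauge}, and (ii) the converse statement --- that any solution of the system with the right boundary data is the unique minimizer --- via strict convexity of $\sigma$, which the paper's proof of Theorem~\ref{thm:EL_equations} does not spell out. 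What the paper's route buys is economy: no appeal to a functional-analytic decomposition theorem, and each of the $\binom{d}{2}$ equations is produced directly by its own perturbation. Your regularity caveats (no-facets keeping $\omega$ away from $\partial\mc N(\Lambda)$ so that $\nabla_s\sigma(\omega)$ is defined, interpreting the equations weakly near $\partial R$) are appropriate and, if anything, more careful than the paper's formal derivation.
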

Since $\sigma$ has a known formula, these PDEs are completely explicit. For example, for the $\bcc$ lattice, they are
\begin{align*}
      \frac{4}{1-4\omega_1^2} \pd{\omega_1}{y} - \frac{4}{1-4 \omega_2^2} \pd{\omega_2}{x}= 0, \quad \frac{4}{1-4\omega_1^2} \pd{\omega_1}{z} - \frac{4}{1-4 \omega_3^2} \pd{\omega_3}{x}= 0, \quad \frac{4}{1-4\omega_2^2} \pd{\omega_2}{z} - \frac{4}{1-4 \omega_3^2} \pd{\omega_3}{y}= 0.
\end{align*}
When $d=2$, we can immediately derive the familiar Euler-Lagrange equations for a height function $h$ from Theorem~\ref{thm:EL_equations} using the relationship $\nabla h= (-\omega_2,\omega_1)$. For example, for $\m Z^2$, the Euler-Lagrange equation for the height function is 
$$\frac{4h_{xx}}{1-4h_x^2} + \frac{4h_{yy}}{1-4h_y^2} =0.
$$
A recent work of Prause and the first author \cite{KenyonPrause} gives a general method for describing the solutions to gradient variational problems in terms of their tangent planes; we apply this to 2D multinomial dimers in Section~\ref{sec:tangent_plane}.

We also observe a general principle, stemming from Legendre duality, which transforms the equations in Theorem~\ref{thm:EL_intro} into a new gradient variational problem in any dimension. In general, Legendre duality implies that $\nabla F$ and $\nabla \sigma$ are inverses of each other, and we have the relationships:
\begin{align*}
    \nabla F(a) = s \iff \nabla \sigma(s) = a.
\end{align*}
Hence if we define $\alpha:= \nabla \sigma(\omega)$, where $\omega$ is the limit shape divergence-free flow, then the equations in \eqref{eq:flow_mixedpartials} imply that $\alpha=\nabla H$ is the gradient of a function. We call $\alpha$ the \textit{gauge flow} and $H$ the \textit{limiting gauge function}. The divergence free equation for $\omega$ then corresponds to a PDE for $H$.

\begin{thm}[See Theorem~\ref{thm:EL_equations_gauge}, Corollary~\ref{cor:EL_gauge_cor}]
     Fix $R\subset \m R^d$ and a suitable boundary function $H_b$ on $\partial R$. If $H$ is a weak solution to the differential equation 
    \begin{align}\label{eq:gauge_PDE_intro}
         \text{div} (\nabla F(\nabla H)) = 0
    \end{align}
    with $H\mid_{\partial R} = H_b$, the limiting gauge flow is $\alpha = \nabla H$.
\end{thm}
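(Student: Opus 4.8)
The plan is to recognize the PDE \eqref{eq:gauge_PDE_intro} as nothing more than the Euler--Lagrange system of Theorem~\ref{thm:EL_intro} rewritten in the dual variable, and then to close the argument with a convexity/uniqueness statement for the dual variational problem. Recall the gauge flow is \emph{defined} by $\alpha := \nabla\sigma(\omega)$, where $\omega$ is the limit shape divergence-free flow of Corollary~\ref{cor:limit_shape_intro}. The first step is to observe that $\alpha$ is curl-free. Indeed $\alpha_j(x) = \pd{}{s_j}\sigma(\omega(x))$, so the $\binom{d}{2}$ mixed-partial equations \eqref{eq:flow_mixedpartials} assert precisely that $\pd{\alpha_j}{x_i} = \pd{\alpha_i}{x_j}$ for all $i\ne j$. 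Since $R$ is the closure of its interior (passing, if necessary, to a simply connected exhaustion of the interior), a curl-free field is a gradient, so $\alpha = \nabla H_0$ for some scalar potential $H_0$, unique up to an additive constant.

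The second step feeds this back through Legendre duality. Because $\mc A(\Lambda)=\R^d$, the function $F$ in \eqref{Fdef} is strictly convex and analytic on all of $\R^d$, and the identity $\nabla F(a)=s \iff \nabla\sigma(s)=a$ gives, pointwise in $R$, $\omega = \nabla F(\alpha) = \nabla F(\nabla H_0)$. Hence the one remaining Euler--Lagrange equation $\operatorname{div}(\omega)=0$ becomes exactly $\operatorname{div}(\nabla F(\nabla H_0))=0$: that is, $H_0$ is a (weak) solution of \eqref{eq:gauge_PDE_intro}. One must then match boundary data: the flux boundary condition $b$ imposed on $\omega$ along $\partial R$ translates, via $\alpha=\nabla\sigma(\omega)$, into a prescription of $\nabla H_0$ along $\partial R$, i.e.\ (after integrating tangential derivatives) of $H_0\mid_{\partial R}$ up to a constant; this trace is the ``suitable boundary function'' $H_b$ in the statement. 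Theorem~\ref{thm:no_facets} is used here: for flexible $(R,b)$ the limit shape $\omega$ avoids $\partial\mc N(\Lambda)$ in the interior, so $\nabla\sigma$ is evaluated only where it is finite and smooth and $\alpha$, hence $H_0$, is genuinely well-defined; in two dimensions Corollary~\ref{cor:smooth_intro} makes all of this classical, and in higher dimensions it is carried out at the level of the weak formulations.

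For the third step, uniqueness, note that \eqref{eq:gauge_PDE_intro} is the Euler--Lagrange equation of the functional $J(H) = \int_R F(\nabla H(x))\,\dd x$. Since $F$ is strictly convex on $\R^d$, the map $\nabla F$ is strictly monotone and $J$ is strictly convex on the affine space $\{H : H\mid_{\partial R}=H_b\}$; hence $J$ has a unique minimizer, and (critical points of a convex functional being global minima) any weak solution of \eqref{eq:gauge_PDE_intro} with this Dirichlet datum coincides with it. Therefore any admissible $H$ equals $H_0$ up to a constant, so $\nabla H = \nabla H_0 = \alpha$, which is the assertion. (If one also wants $\alpha$ characterized as the scaling limit of the discrete critical gauge functions of the abstract, one combines this with the convergence of those discrete objects, which identifies their limit as a weak solution of the same PDE with the same boundary datum, hence with $H_0$ by the uniqueness just proved.)

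The step I expect to be the main obstacle is the boundary analysis: making rigorous the passage from the flux datum $b$ for $\omega$ to the Dirichlet datum $H_b$ for $H$, and controlling $\alpha = \nabla\sigma(\omega)$ near $\partial R$, where $\omega$ may \emph{approach} $\partial\mc N(\Lambda)$ (so that $\nabla\sigma$ can blow up) even though Theorem~\ref{thm:no_facets} forbids it from being attained in the interior; one needs enough control there to know $J(H_0)<\infty$ and that the weak formulation of \eqref{eq:gauge_PDE_intro} truly applies. The convexity/uniqueness part is then routine once the function space is fixed, and the curl-free $\Rightarrow$ gradient and Legendre-duality manipulations are formal. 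A secondary point to treat carefully is regularity: for $d\ge 3$ one lacks the elliptic smoothness of Corollary~\ref{cor:smooth_intro}, so \eqref{eq:flow_mixedpartials}, the relation $\omega=\nabla F(\nabla H_0)$, and $\operatorname{div}(\omega)=0$ must all be interpreted and manipulated distributionally throughout.
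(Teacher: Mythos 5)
Your proposal is correct and follows essentially the same route as the paper: the Legendre-duality relations $\omega=\nabla F(\alpha)\iff\alpha=\nabla\sigma(\omega)$ turn the mixed-partial equations \eqref{eq:flow_mixedpartials} into the statement that $\alpha$ is curl-free (hence $\alpha=\nabla H$), and the divergence constraint on $\omega$ into $\operatorname{div}(\nabla F(\nabla H))=0$; the paper presents exactly this transformation and explicitly calls it ``purely formal.'' Your added uniqueness step via strict convexity of $J(H)=\int_R F(\nabla H)$ is a sensible way to make precise what the paper buries in the word ``suitable,'' and the obstacle you flag — rigorously converting the flux datum $b$ into the Dirichlet datum $H_b$ and controlling $\nabla\sigma(\omega)$ near $\partial R$ — is precisely the point the paper's own remarks admit to leaving unaddressed.
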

This relationship between divergence-free and gradient variational problems seems to be well-known to experts in PDEs. The relationship between these equations is purely formal, and also holds for other Legendre dual pairs. It appears that this dual gauge perspective was also studied in the two-dimensional dimer context in \cite{bobenko2024dimersmcurves}, and we suggest that it may be useful in studying the standard dimer model in any dimension as well. 

The gauge PDE for the $\bcc$ lattice in $\R^3$ is 
\begin{align*}
     \frac{\mathrm{e}^{H_{x}}H_{xx}}{(1+\mathrm{e}^{H_{x}})^2} +  \frac{\mathrm{e}^{H_{y}}H_{yy}}{(1+\mathrm{e}^{H_{y}})^2} + \frac{\mathrm{e}^{H_{z}}H_{zz}}{(1+\mathrm{e}^{H_{z}})^2} =0.
\end{align*}
A particular benefit of the gauge PDE is that it is in terms of the free energy $F$ (which has a simple expression (\ref{Fdef})), so it is simple even if the surface tension is not. Unlike the two dimensional height function $h$, which is Lipschitz, the limiting gauge function $H$ does not have any a priori regularity (it follows, however, from Corollary~\ref{cor:smooth_intro} that $H$ is smooth in two dimensions). The limiting gauge function $H$ for the Aztec cuboids $\mathrm{AC}(a,b,c)$, with $(a/n,b/n,c/n)\to (A,B,C)$ as $n\to \infty$, is
\begin{align*}
    H(x,y,z)=\;\;&A \log A- x\log x - (A-x)\log(A-x) \\
    - &B\log B + y \log y + (B-y) \log (B-y) \\
    - &C\log C + z\log z + (C-z)\log(C-z).
\end{align*}
The limit shape for a sequence of Aztec cuboids is shown in Figure~\ref{fig:cuboid}. See Section~\ref{sec:aztec_cuboid} for more details on this example. 

\subsection{Critical gauge}\label{sec:critical_gauge_intro} The critical gauge is a new structure which arises in the large $N$ limit: the behavior of the multinomial dimer model as $N\to \infty$ on a fixed finite graph is captured by the \textit{critical gauge} \cite{KenyonPohoata}.  We show that the critical gauge has a very simple relationship to the limiting gauge function $H$ that we saw exists above. This is how we compute the limit shapes for the Aztec diamond and Aztec cuboid above, which we show in Section~\ref{sec:limit_shapes} have especially clean closed formulas for their critical gauge.

Suppose that $G = (V=B\cup W,E)$ is a (nice) bipartite graph, with all vertex multiplicities $N$. Then there exist \textit{critical gauge functions} $f:W \to \m R_+$ and $g:B\to \m R_+$ such that as $N\to \infty$, random $N$-dimer covers sampled from multinomial dimer measures concentrate on the discrete flow proportional to
\begin{align*}
    \{c(e) = f(u)g(v)\}_{e=(u,v)\in E}.
\end{align*}
These are called the \textit{critical edge weights}. The gauge functions $f,g$ are unique up to the transformation $f\mapsto C f, g\mapsto g/C$ for some constant $C>0$, and are the unique solutions (up to this transformation) to the \textit{critical gauge equations}: 
\begin{align*}
    &\sum_{u\in W:(u,v)\in E} f(u) g(v) = 1 \qquad \forall v\in B\\
    &\sum_{v\in B:(u,v)\in E} f(u) g(v) = 1 \qquad \forall u\in W.
\end{align*}
For more detail, see Section~\ref{sec:KP_results}. The critical gauge can be efficiently computed numerically using Sinkhorn's algorithm \cite{Sinkhorn}. To see this, note that if $G$ is a finite graph that admits $N$-dimer covers, then it has the same number of white and black vertices. We let $A$ denote the corresponding bipartite adjacency matrix of $G$. Solving the critical gauge equations is then the same as finding diagonal matrices $D_1,D_2$ such that $D_1 A D_2$ is bistochastic, i.e., has both row and column sums which are $1$.

Given $A$, Sinkhorn's algorithm finds these matrices $D_1,D_2$ by alternately rescaling all rows of $A$ (i.e., changing all the $f(u)$'s), so that the critical gauge equations are solved at all white vertices, and then rescaling all columns of $A$ (i.e., changing all the $g(v)$'s), breaking the critical gauge equations at white vertices but solving them at the black ones. Viewing these two updates as one step, each step produces a matrix that is closer to being bistochastic, see \cite{KenyonPohoata}.

See Figures~\ref{fig:sinkhorn_application} and \ref{fig:aztec_octahedron} for applications of Sinkhorn's algorithm. Empirically, critical gauge functions are almost always algebraic of high degree. We do however know of a small number of examples where they take rational values, in particular the Aztec diamond and Aztec cuboid. See Section~\ref{sec:limit_shapes}. It would be interesting to understand more generally which graphs have simple descriptions of their critical gauge functions or critical edge weights, see Question~\ref{q:closed_form_critical_gauge}.

We show that given a sequence of graphs $R_n$ as in the large deviation principle, the scaling limit of the critical gauge functions $g_n$ (resp.\ $f_n$) is the limiting gauge function $H$ (resp.\ $-H$), i.e., a solution to \eqref{eq:gauge_PDE_intro}.

\begin{thm}[See Theorem~\ref{thm:critical_gauge_scaling_limit}]
     Fix a $d$-dimensional lattice $\Lambda$ and $R\subset \m R^d$ as before. Suppose that $H$ is $C^1$ and solves the gauge PDE: $\text{div}(\nabla F(\nabla H))=0$, and that $g_n$ is the critical gauge function on black vertices of $R_n$ with limiting multiplicity $1$ everywhere on $R_n$. Further suppose that
    \begin{align*}
        \sup_{u\in B\cap\partial R_n} |(1/n) \log g_n(u) - H(u)| = O(1/n).
    \end{align*}
    We equate $(1/n) \log g_n$ defined on $B\cap R_n$ with its linear interpolation to a continuous function. Then
    \begin{align*}
        \lim_{n\to \infty} (1/n) \log g_n 
    \end{align*}
    exists in the topology induced by the supremum norm on continuous functions and is equal to $H$. Similarly, $\lim_{n\to \infty} (1/n) \log f_n = -H.$
\end{thm}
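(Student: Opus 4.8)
\emph{Overall approach and Step 1 (elimination, and the discrete scheme).} The plan is to decouple the Sinkhorn-type critical gauge equations into a single difference equation for $g_n$ alone, recognize it as a monotone (degenerate-elliptic) finite-difference scheme whose continuum limit is exactly the gauge PDE, and then run the standard convergence theory for such schemes. On $R_n\subset\frac1n\Lambda$ the critical gauge equation at a white vertex $u$ is $f_n(u)=\big(\sum_k g_n(u+e_k/n)\big)^{-1}$ and at a black vertex $v$ is $g_n(v)=\big(\sum_j f_n(v-e_j/n)\big)^{-1}$, the sums over incident edges inside $R_n$. Substituting the first into the second eliminates $f_n$: for every interior black $v$,
$$\frac{1}{g_n(v)}=\sum_{j=1}^D\Big(\sum_{k=1}^D g_n\big(v+(e_k-e_j)/n\big)\Big)^{-1}.$$
With $H_n:=(1/n)\log g_n$ this reads $\mathcal S_n[H_n](v)=1$, where $\mathcal S_n[\phi](v):=\sum_{j}\big(\sum_{k}\exp(n[\phi(v+(e_k-e_j)/n)-\phi(v)])\big)^{-1}$. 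I would record three structural facts: $\mathcal S_n[\cdot](v)$ depends only on the increments $\phi(w)-\phi(v)$; it is monotone (nondecreasing in $\phi(v)$, nonincreasing in each $\phi(w)$, $w\neq v$); and affine functions $\ell(x)=a\cdot x$ are \emph{exact} solutions, since $\sum_j e^{a\cdot e_j}/\sum_k e^{a\cdot e_k}=1$. Substituting in the opposite order shows $f_n$ solves the reflected equation $f_n(u)^{-1}=\sum_j(\sum_k f_n(u+(e_j-e_k)/n))^{-1}$, whose analysis is identical after $e_k\mapsto -e_k$ (equivalently $F(\alpha)\mapsto F(-\alpha)$); this is the source of the sign in $(1/n)\log f_n\to -H$, once the hypothesis on $g_n|_{\partial R_n}$ is translated through the white-vertex relation into the analogous control of $f_n|_{\partial R_n}$.

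\emph{Step 2 (consistency).} For smooth $\phi$, I would Taylor-expand $\mathcal S_n[\phi](v)$ to second order in $1/n$ and use the cumulant identities for $F(\alpha)=\log\sum_k e^{\alpha\cdot e_k}$: $\nabla F(\alpha)=\sum_k\mu_k e_k$ and $\Hess F(\alpha)=\sum_k\mu_k(e_k-\bar e)(e_k-\bar e)^{T}$, with $\mu_k=e^{\alpha\cdot e_k}/\sum_l e^{\alpha\cdot e_l}$ and $\bar e=\nabla F(\alpha)$. A short computation — the cross term in $\sum_{j,k}\mu_j\mu_k(e_k-e_j)^{T}M(e_k-e_j)$ drops because $\sum_k\mu_k(e_k-\bar e)=0$, leaving $2\,\mathrm{tr}(M\Hess F(\alpha))$ — gives
$$\mathcal S_n[\phi](v)=1-\frac1n\sum_{i,l=1}^d\big(\Hess F(\nabla\phi(v))\big)_{il}\,\frac{\partial^2\phi}{\partial x_i\,\partial x_l}(v)+O(n^{-2})=1-\frac1n\,\mathrm{div}\!\big(\nabla F(\nabla\phi)\big)(v)+O(n^{-2}).$$
Thus $\mathcal S_n[\cdot]=1$ is a consistent discretization of $\mathrm{div}(\nabla F(\nabla H))=0$, with $O(1/n)$ leading correction whose sign is governed by the everywhere positive-definite $\Hess F$.

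\emph{Step 3 (a priori bounds and passage to the limit).} I would first show the interpolated $H_n$ are uniformly bounded and equicontinuous on $\overline R$: uniform bounds and a modulus of continuity via the boundary hypothesis together with the discrete maximum/comparison principle for the monotone scheme (comparing $H_n$ against affine exact solutions, or against translates of $H$-based barriers), which simultaneously confines $\nabla H_n$ to a compact set on which $\Hess F\geq c>0$, i.e. on which the scheme is uniformly elliptic. Then $\{H_n\}$ is precompact in $C^0(\overline R)$; any subsequential limit $H_\infty$ agrees with $H$ on $\partial R$, and by Step 2 and monotonicity — the half-relaxed-limits argument of Barles--Souganidis, or a direct argument using the strictly convex functional $\int_R F(\nabla\cdot)$ — $H_\infty$ is a weak/viscosity solution of $\mathrm{div}(\nabla F(\nabla H_\infty))=0$. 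Strict convexity of $F$ makes the solution with given boundary data unique, so $H_\infty=H$; since every subsequence has a further subsequence converging to $H$, the whole sequence $(1/n)\log g_n$ converges uniformly to $H$, and the reflected scheme gives $(1/n)\log f_n\to -H$. When $H\in C^2$ (as in all explicit examples) Step 3 has an elementary version: $H\pm\varepsilon|x|^2+C_\varepsilon/n$ are strict discrete super/subsolutions for large $n$ (the continuum strict margin is $\Theta(\varepsilon)$ by positivity of $\Hess F$, while the scheme error is $O(n^{-2})$), the discrete maximum principle sandwiches $H_n$ between them, and one lets $\varepsilon\to0$.

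\emph{Main obstacle.} Steps 1--2 are short and essentially algebraic; the real work is the a priori estimate in Step 3 — ruling out large gradients or oscillations of the discrete gauge functions $H_n$, which is simultaneously what keeps the scheme uniformly elliptic and what extracts a convergent subsequence. When $H$ is genuinely $C^2$ this can be bootstrapped from $H$ itself by comparison, but for $H$ merely $C^1$ it is delicate: mollifying $H$ introduces divergence errors in the PDE that do not vanish, so one must work with the half-relaxed limits directly and lean on uniqueness of weak solutions of the strictly convex variational problem. A secondary, off-the-shelf input is the comparison/uniqueness principle for the quasilinear elliptic equation $\mathrm{div}(\nabla F(\nabla H))=0$ with prescribed boundary data.
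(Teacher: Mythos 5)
Your Steps 1 and 2 reproduce the paper's setup almost exactly: the eliminated single equation for $g_n$ is the paper's operator $\mc C_n$, your consistency expansion is Proposition~\ref{prop:expanding_critical_equations} verbatim (including the cumulant bookkeeping for $F$), and the monotonicity of the scheme you record is the content of the ratio-form maximum principle, Proposition~\ref{prop:maximum_principle}. The divergence, and the gaps, are in Step 3. In your ``elementary'' $C^2$ version the global barrier $H\pm\varepsilon|x|^2$ does not give a sign-definite continuum margin: expanding $\mathrm{div}(\nabla F(\nabla(H+\varepsilon q)))$ to first order in $\varepsilon$ produces, besides the good term $\varepsilon\,\mathrm{tr}\!\left(\Hess F(\nabla H)\,\Hess q\right)>0$, a drift term $\varepsilon\sum_{i,j,k}\partial^3_{\alpha_i\alpha_j\alpha_k}F(\nabla H)\,\partial^2_{x_ix_j}H\,\partial_{x_k}q$ which is also $\Theta(\varepsilon)$, has uncontrolled sign, and for $q=|x|^2$ carries a coefficient $|\nabla q|$ of order $\mathrm{diam}(R)$, so it can overwhelm the good term. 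Your claim that ``the continuum strict margin is $\Theta(\varepsilon)$ by positivity of $\Hess F$'' silently drops this term. The paper's entire device is to localize: it takes $q_U(x)=\tfrac12|x-x_0|^2$ on balls of radius $\delta_2$ so that $|\nabla q_U|\le\delta_2$ tames the drift, and then chains the discrete maximum principle through $O(\delta_2^{-d})$ balls from any interior point to $\partial R_n$, with perturbation size $\delta=n^{-1/2}$, which also yields the quantitative rate $O(n^{-1/2})$. Your barrier is repairable by the classical exponential trick ($q=e^{\lambda x_1}$ with $\lambda$ large so $(\Hess F)_{11}\lambda^2$ dominates the drift), but as written the step fails.

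The second gap is in your general ($C^1$) route: the Barles--Souganidis argument needs an a priori modulus of continuity for the interpolated $H_n=(1/n)\log g_n$ (to extract uniformly convergent subsequences and to confine $\nabla H_n$ to a set where the scheme is uniformly elliptic) together with a comparison/uniqueness principle for the degenerate quasilinear equation $\mathrm{div}(\nabla F(\nabla H))=0$. You correctly identify this as the real work, but you supply no mechanism for either ingredient, and obtaining equicontinuity of $H_n$ by barriers would run into exactly the localization issue above. The paper sidesteps compactness of $H_n$ entirely: it compares $H_n$ directly against explicit barriers built from $H$ itself, using the observation (recorded in a remark) that $H\in C^1$ plus the PDE forces $H$ to be real analytic, so the second and third derivatives of $H$ appearing in the drift term are genuinely available. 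If you keep your framing, you must either prove the a priori estimate on $H_n$ or adopt the paper's direct two-sided comparison.
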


This theorem establishes rigorously the intuitive result that the scaling limit of the critical edge weights (which capture the large $N$ limiting behavior on each fixed graph) should be the limit shape divergence-free flow. We apply this in Section~\ref{sec:limit_shapes} to compute limit shapes in some examples. This also means that we can use Sinkhorn's algorithm to numerically approximate the limit shape divergence-free flow. See Figures~\ref{fig:sinkhorn_application} and \ref{fig:aztec_octahedron} for discrete, numerical approximations of the divergence free flow limit shape, generated with Sinkhorn's algorithm for a fixed $n$.

The proof of this theorem uses a collection of new ideas, in particular a version of the maximum principle for solutions to the critical gauge equations (which are discrete difference equations), and an expansion which relates the critical gauge equations to the gauge PDE. See Section~\ref{sec:critical_gauge_scaling}.

Since $F$ is analytic for this model, the condition that $H$ is $C^1$ is equivalent to requiring that the limit shape divergence-free flow $\omega$ (that is, $\nabla h$) is continuous, and this holds for the multinomial dimer model in two dimensions by Corollary~\ref{cor:smooth_intro}. While we believe the $C^1$ condition holds in general dimension for the large $N$ limit, this is an open question in dimensions $d>2$; see Question~\ref{q:always_smooth}.

\begin{figure}
    \centering
    \includegraphics[width=0.5\linewidth]{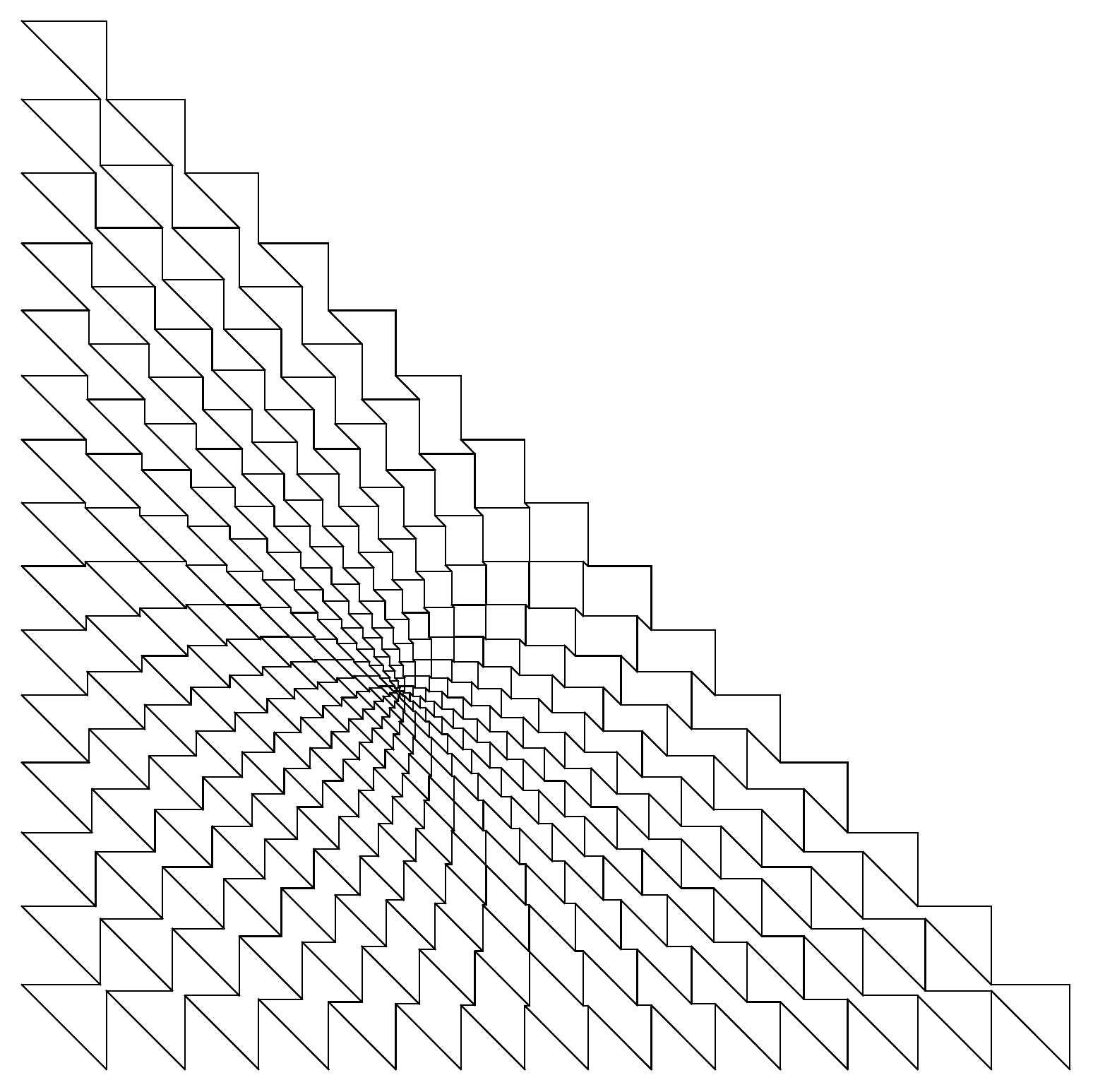}
    \caption{An application of Sinkhorn's algorithm gives a ``discrete approximation'' of the limit shape divergence-free flow for the hexagon graph $G_{n}$ in the honeycomb lattice, $n=30$. Namely, we embed the $G_{30}$ in the Newton polygon $\mc N$ for the lattice, where each vertex $v$ of the graph is embedded at the corresponding average slope $s(v)\in \mc N$, which is, up to a multiplicative constant, $\sum_{e\ni v} c(e) e$. For the hexagon graph, this embedding is a double cover.}
    \label{fig:sinkhorn_application}
\end{figure}

\begin{figure}
    \centering
    \includegraphics[width=0.5\linewidth]{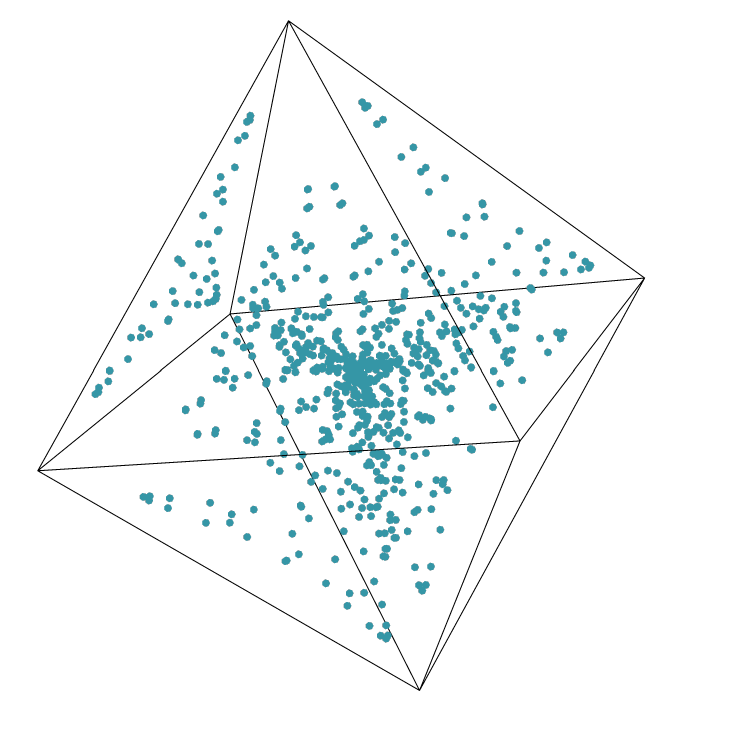}\includegraphics[width=0.5\linewidth]{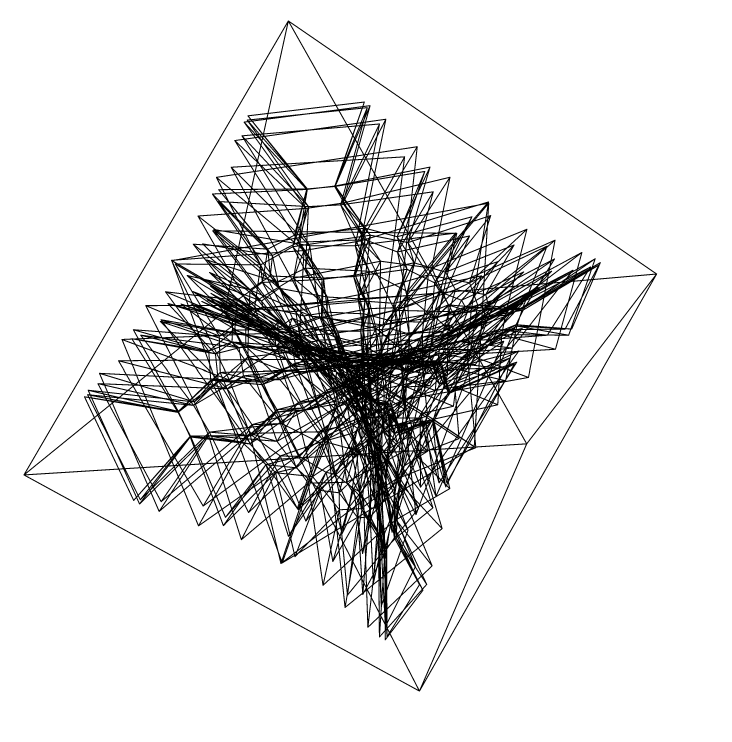}
\caption{The numerically computed exact critical gauge for the octahedron graph in $\m Z^3$, with side length $n=10$,
is used to define an embedding of the graph into the Newton polytope for $\m Z^3$ (which is also the octahedron). The left is the image of the vertices and the right is the image of the edges of the graph in this embedding.
This gives an approximation to the $n=\infty$ limit shape flow.}
    \label{fig:aztec_octahedron}
\end{figure}
\bigskip

\noindent \textbf{Outline of paper}: In Section~\ref{sec:background}, we define the model precisely and summarize some relevant background. In Section~\ref{sec:vector_fields}, we explain the topology used to compare dimer covers of different graphs via discrete flows. In Section~\ref{sec:torus_asymptotics}, we compute the free energy per dimer on the torus, and show that the surface tension is the Legendre dual of the free energy. In Section~\ref{sec:ldp}, we formulate and prove the large deviation principle in the iterated limit as $N$ and then the size of the graph go to infinity, with rate function given by the integral of the surface tension (up to an additive constant), and show that samples from multinomial measures concentrate on a unique limit shape satisfying the boundary conditions. Before that in Section~\ref{sec:patching}, we prove a ``patching theorem'' which is the key tool in the proof of the LDP. In Section~\ref{sec:euler_lagrange}, we formulate the Euler-Lagrange equations for the limit shape, show that multinomial limit shapes do not have facets, and formulate the dual Euler-Lagrange equations solved by the limiting gauge function. In Section~\ref{sec:critical_gauge_scaling}, we show that the scaling limit of the discrete critical gauge functions is the limiting gauge function. In Section~\ref{sec:limit_shapes} we explicitly compute multinomial limit shapes in some examples using a few methods, and in Section~\ref{sec:questions} we state some open questions and further directions. 

\bigskip

\noindent\textbf{Acknowledgments:} We thank Nishant Chandgotia, Scott Sheffield, Istvan Prause, Charlie Smart, and Matthew Nicoletti for helpful conversations and Felix Otto for a comment after a talk at the Princeton probability seminar, which motivated further understanding of the dual Euler-Lagrange equations. R.K. is supported by the Simons Foundation grant 327929. C.W. is supported by NSF grant DMS-2401750.

\section{Background and preliminaries}\label{sec:background}

The multinomial dimer model is a generalization of the dimer model and a special case of the multinomial tiling model introduced in \cite{KenyonPohoata}. 

\subsection{Definition of the model}\label{sec:deifnition_of_model}

\subsubsection{The dimer model.} Let $G = (V,E)$ be a finite bipartite graph. A \emph{dimer cover} or \emph{perfect matching} $\tau$ of $G$ is a collection of edges $\tau \subset E$ so that every vertex $v\in V$ is contained in exactly one edge $e\in \tau$. We let $\Omega_1(G)$
denote the set of dimer covers of $G$.

Given a collection of positive edge weights $(w_e)_{e\in E}$, the \emph{Boltzmann measure} is the 
associated probability measure on $\Omega_1(G)$ where the probability of a 
dimer cover $\tau$ is proportional to the product of 
its edge weights:
\begin{align*}
    \m P(\tau) =\frac1Z \prod_{e\in \tau} w_e.
\end{align*}
The normalizing constant $Z = \sum_{\tau} \prod_{e\in \tau} w_e$ is called the \emph{partition function}. 

\subsubsection{Blow-up graph.} 
Let $\NN = (N_v)_{v\in V}$ be a collection of positive integer \emph{vertex multiplicities} on the vertices of $G$. 

The \emph{$\NN$-fold blow-up} of $G$, denoted $\GN$, is the graph where each vertex $v\in V$ is replaced by $N_v$ vertices, called lifts of $v$, and each edge $e=(u,v)\in E$ is replaced by the complete bipartite graph on lifts of $u,v$. 
We denote the vertices and edges of $\GN$ by $(V_{\NN},E_{\NN})$. 
If $\NN\equiv 1$, then the blow-up graph is just the original graph $G$. 
Let $\pi: \GN\to G$ be the projection map.

\subsubsection{The multinomial dimer model.} The multinomial tiling model, introduced in \cite{KenyonPohoata}, studies the behavior of tilings of blow-up graphs $\GN$. The \emph{multinomial dimer model} is the special case where tiles are dimers, that is, the multinomial dimer model is the dimer model on $\GN$.

An \emph{$\NN$-dimer cover} of $G$ is a function $M:E\to\{0,1,2,\dots\}$ summing to $N_v$ at each vertex $v$,
that is, such that $\sum_{u:u\sim v}M_{(u,v)}=N_v$. We let $\Omega_{\NN}$ denote the set of $\NN$-dimer covers of $G$.
Note that when $\NN\equiv 1$, then $\Omega_{\NN}(G)=\Omega_1(G)$ is the set of dimer covers of $G$. 
If $\tau$ is a dimer cover of $\GN$, let $M := \pi(\tau)\in \Omega_{\NN}$ be its projection: for each edge $e\in E$, the multiplicity $M_e$ is the number of lifts of $e$ in $\tau$. We fix the notational convention throughout that $\tau$ is a dimer cover of a blow-up graph $G_{\NN}$, and 
$M\in\Omega_{\NN}$ is its projection $M=\pi(\tau)$. Note that while $\tau$ determines $M$, $M$ does not determine $\tau$. 

Given positive edge weights $(w_e)_{e\in E}$, we endow $G_{\NN}$ with edge weights such that all lifts of $e$ have weight $w_e$. This defines the Boltzmann measure on dimer covers of $G_{\NN}$ where the probability of a tiling $\tau$ is given by
\begin{align}\label{eq:lifted_measures}
   \m P(\tau) =\frac1Z \prod_{e\in E} w_e^{M_e}
\end{align}
where $Z$ is the partition function.

\begin{definition}\label{def:multinomial_measures}
Let $G = (V,E)$ be a bipartite graph with 
 vertex multiplicities $\NN = (N_v)_{v\in V}$. Given edge weights $(w_e)_{e\in E}$, the corresponding \textit{multinomial dimer measure} is the probability measure on $\Omega_{\NN}$ where
\begin{align*}
    \m P(M) = \frac{\left|\pi^{-1}(M)\right|}{Z} \prod_{e\in M} w_e^{M_e}
\end{align*}
where $Z=Z_{G,\NN}$ is the same partition function, given below. In other words, the multinomial dimer measure is the measure on $\Omega_{\NN}$ which is the projection of the Boltzmann measure on dimer covers of $G_{\NN}$ defined in \eqref{eq:lifted_measures}. We also refer to the measure in \eqref{eq:lifted_measures} as the \textit{lifted} version of the multinomial dimer measure. 
\end{definition}
The partition function of either measure is
\be
 \label{Zinit}   Z_{G,\NN}({w}) = \sum_{M\in \Omega_{\NN}(G)} |\pi^{-1}(M)|\prod_{e\in E} w_e^{M_e}.
\ee
If $\NN \equiv 1$, this measures is just the Boltzmann measure on $\Omega_1(G)$, the dimer covers $G$. 
For any $\NN$, if $w_e\equiv 1$, then the lifted measure is the uniform measure on dimer covers of $\GN$. However it is important to note that in this case the projection is \emph{not} the
uniform measure on $\NN$-dimer covers of $G$. The probability of an $\NN$-dimer cover $M$ is proportional to
$|\pi^{-1}(M)|$, which depends on the edge multiplicities $M_e$.  
In general, the number of lifts of $M$ is given by
\begin{align*}
    |\pi^{-1}(M)| = \frac{\prod_{v\in V}N_v!}{\prod_{e\in E} M_e!}.
\end{align*}
See Figure~\ref{fig:square_example} for an example.
(To see this, note that at each vertex $v$ with edges $e_1,\dots,e_d$ we must split the $N_v$ vertices over $v$ into subsets of sizes $M_{e_1},\dots, M_{e_d}$, and then for each edge $e$ pick a pairing of the $M_e$ vertices on each side.
This results in, for each vertex, a multinomial coefficient $\binom{N_v}{M_{e_1},\dots,M_{e_d}}$ then for each edge $e$
a set of $M_e!$ choices.)

  \begin{figure}
      \centering
     \includegraphics[width=0.3\linewidth]{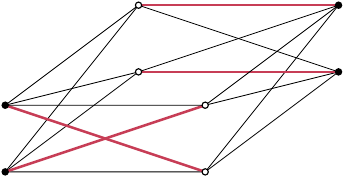}\hspace{1cm} \includegraphics[width=0.6\linewidth]{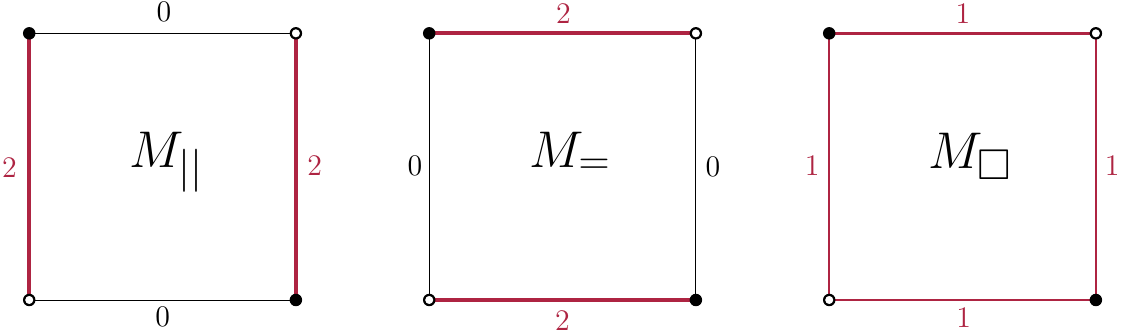}
      \caption{Here $G$ is a square and $\NN \equiv 2$. On the left is the blow-up graph $G_\mathbf{2}$ and a dimer cover of it. On the right are the three double dimer covers of $G$. There are $4,4,16$ ways to lift $M_{||},M_{=},M_{\square}$ respectively to dimer covers of $G_{\mathbf{2}}$. For the multinomial measure with uniform edge weights $w_e\equiv 1$, they have respective probabilities $\frac1{24}\{4,4,16\}=\{\frac16,\frac16,\frac23\}$ of occurring.}
      \label{fig:square_example}
  \end{figure}

\subsection{Lattices and multinomial dimers on subgraphs}\label{sec:subgraphs_boundaryconditions}

Let $\Lambda=(V=B\cup W,E)$ be a $D$-regular bipartite graph embedded in $\R^d$, with $\Z^d$ acting by translation isomorphisms, transitively on $W$ and on $B$. In other words we assume $W=\Z^d$ and $B=\Z^d+v_0$ where $v_0$ is some vector in $[0,1]^d$ minus its vertices. Edges connect $w$ to $w+e_1,\dots,w+e_D$ if $w\in W$, and therefore connect $b$ to $b-e_1,\dots,b-e_D$. We call such a graph $\Lambda$ a \emph{lattice} in $\R^d$. An edge $e$ connecting $w$ to $w+e_i$ is said to be \emph{of type $e_i$}. 

We assume that $\Lambda$ is connected (which implies that $\{e_1,\dots,e_D\}$ span $\R^d$) and \emph{harmonically embedded}, that is, $\sum_{i=1}^D e_i = 0$. We call $W$ the ``white vertices'' and $B$ the ``black vertices'' of the lattice. We also consider lattices $\Lambda$ which are a linear image of ones of the form described above. As a technical point, for simplicity, we assume that the edges of $\Lambda$ are embedded so that they are monotone and {don't contain other vertices of $\Lambda$ in their interior} (for the examples we consider in Section~\ref{sec:limit_shapes} this simply holds when all the edges are embedded as straight lines.) We also assume that $\Lambda$ is symmetric under reflection. This condition is not necessary, but simplifies one part of our argument, see Remark~\ref{rem:reflection}. 

Our results hold in any dimension $d$. They should also hold more generally for periodic graphs with larger fundamental domains, but we restrict to the family of lattices above to simplify the exposition. Examples of lattices satisfying the above conditions are $\Z^d$ for any $d$ and the body-centered cubic lattice ($\bcc$) in three dimensions. (See Section~\ref{sec:aztec_cuboid} for an example in the $\bcc$ lattice.)

If $G\subset \Lambda$ is a finite subgraph, we define the \emph{boundary} $\partial G$ of $G$ in $\Lambda$ to be its vertices that have neighbors in $\Lambda\setminus G$:
\begin{align*}
    \partial G := \{v\in G : \exists\, u\in \Lambda\setminus G, (u,v) \in E(\Lambda)\}. 
\end{align*}
We call $G^\circ = G\setminus \partial G$ the \textit{interior} of $G$. In this paper we study multinomial dimers on subgraphs $G\subset \Lambda$ where the multiplicity function $\NN$ is constant in $G^\circ$ but allowed to vary on the boundary:
\begin{itemize}
    \item for all $v\in G^\circ$, $N_v = N$;
    \item for $v\in \partial G$, $N_v\in \{1,\dots, N\}$;
    \item the collection of multiplicities $\NN = (N_v)_{v\in V}$ is \textit{feasible}, i.e. (i) $G$ has an $\NN$-dimer cover and (ii) for every each $e\in E$, there exists an $\NN$-dimer cover $M$ of $G$ with $M_e >0$. 
\end{itemize}
Note that the feasible condition depends only on $(G,\NN)$, and is independent of the edge weights. See Figure~\ref{fig:not_feasible} for an example of $(G,\NN)$ which is not feasible. 

\begin{figure}
    \centering
    \includegraphics[width=0.6\linewidth]{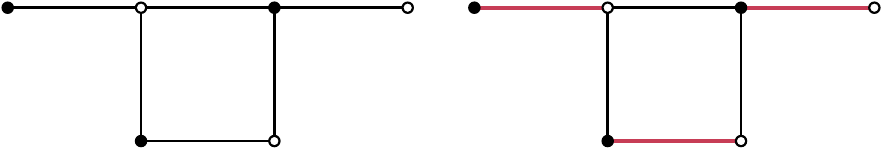}
    \caption{A subgraph $G\subset \m Z^2$ where for any $N$ there exists an $\NN\equiv N$ dimer cover, but $(G,\NN)$ is not feasible because not all edges can occur.}
    \label{fig:not_feasible}
\end{figure}

\subsection{Asymptotics as $\NN\to \infty$ on a fixed graph}\label{sec:KP_results}

\subsubsection{Limiting boundary conditions}

Fixing $G\subset \Lambda$ and letting $\NN\to\infty$, 
the \textit{limiting covering multiplicity} $\beta_v$ of a vertex $v\in G$ is defined to be $$\beta_v := \lim_{N\to \infty} N_v/N$$
which we assume exists (we only take limits $\NN\to\infty$ where this limit exists).
This multiplicity is $1$ for all $v\in G^\circ$ and has $\beta_v \in [0,1]$ for $v\in \partial G$. We refer to $(\beta_v)_{v\in \partial G}$ as the \textit{limiting boundary condition}. We say that $(\beta_v)_{v\in G}$ is feasible if it can be approximated by $(N_v)_{v\in V}$ feasible.

\subsubsection{Asymptotics of multinomial tilings}

Here we summarize some key definitions and results from \cite{KenyonPohoata} about the multinomial dimer model as $\NN\to \infty$ on a fixed finite graph.

The \textit{tiling polynomial} $P$ with edge weights $w = (w_e)_{e\in E}$ is a function of vertex weights $\mathbf x = (x_v)_{v\in V}$ given by
\begin{align*}
    P(\mathbf x) = \sum_{(u,v)=e\in E} w_e x_u x_v.
\end{align*}
As a function of $\mathbf x$, we can use this to write a generating series for the multinomial partition functions.
\begin{thm}[{\cite[Theorem 2.1]{KenyonPohoata}}]\label{thm:KP_generating_function}
Let ${\mathbf x}^{\NN} = \prod_{v\in V} x_v^{N_v}$ and let $\NN! = \prod_{v\in V} N_v!$. Then 
\begin{align*}
    \exp(P(\mathbf x)) = \sum_{\NN\geq 0} Z_{G,\NN}(w) \frac{\mathbf x^{\NN}}{\NN!}.
\end{align*}
\end{thm}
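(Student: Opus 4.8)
The plan is to expand the left-hand side $\exp(P(\mathbf x))$ directly as a power series and match monomials in $\mathbf x$ with the partition function on the right. Write $P(\mathbf x) = \sum_{e=(u,v)\in E} w_e x_u x_v$, so that
\begin{align*}
\exp(P(\mathbf x)) = \prod_{e=(u,v)\in E} \exp(w_e x_u x_v) = \prod_{e\in E} \sum_{m_e\ge 0} \frac{w_e^{m_e} x_u^{m_e} x_v^{m_e}}{m_e!}.
\end{align*}
Expanding the product over edges, a term is indexed by a choice of nonnegative integer $m_e$ for each edge $e$, i.e.\ by a function $M : E \to \{0,1,2,\dots\}$, and contributes
$\prod_{e\in E} \frac{w_e^{M_e}}{M_e!} \cdot \prod_{v\in V} x_v^{\sum_{e\ni v} M_e}$. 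Thus the coefficient of a fixed monomial $\mathbf x^{\NN} = \prod_v x_v^{N_v}$ is obtained by summing over exactly those $M$ with $\sum_{e\ni v} M_e = N_v$ for every $v$ — that is, over $M \in \Omega_{\NN}(G)$ — giving $\sum_{M\in\Omega_{\NN}(G)} \prod_{e\in E} \frac{w_e^{M_e}}{M_e!}$.

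To finish, I would compare this with the claimed coefficient $Z_{G,\NN}(w)/\NN!$. Using the formula $|\pi^{-1}(M)| = \frac{\prod_{v} N_v!}{\prod_e M_e!} = \frac{\NN!}{\prod_e M_e!}$ recalled just before the theorem, the definition \eqref{Zinit} gives
\begin{align*}
\frac{Z_{G,\NN}(w)}{\NN!} = \frac{1}{\NN!}\sum_{M\in\Omega_{\NN}(G)} \frac{\NN!}{\prod_{e\in E} M_e!} \prod_{e\in E} w_e^{M_e} = \sum_{M\in\Omega_{\NN}(G)} \prod_{e\in E} \frac{w_e^{M_e}}{M_e!},
\end{align*}
which matches the coefficient extracted above. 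Since this holds for every multi-index $\NN \ge 0$, the two formal power series in the variables $(x_v)_{v\in V}$ agree term by term, proving the identity.

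The only genuinely delicate point is the bookkeeping in the expansion of $\prod_{e} \sum_{m_e} \cdots$: one must check that interchanging the (finite) product over edges with the sums is legitimate at the level of formal power series, and that the map from tuples $(m_e)_{e\in E}$ to functions $M : E\to\mathbb Z_{\ge 0}$ is a bijection onto its relevant image — both of which are routine since $E$ is finite and all coefficients are nonnegative, so there are no convergence issues to worry about (the identity is an identity of formal power series, and each coefficient of $\mathbf x^{\NN}$ is a finite sum because $\Omega_{\NN}(G)$ is finite for each $\NN$). I do not expect any real obstacle; this is essentially the exponential formula / the standard ``$\exp$ of a generating function'' manipulation, specialized to the edge-indexed monomial $w_e x_u x_v$.
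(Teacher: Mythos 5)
Your proof is correct and complete: the expansion of $\exp(P(\mathbf x))$ as a product of exponentials over edges, extraction of the coefficient of $\mathbf x^{\NN}$, and the identification with $Z_{G,\NN}(w)/\NN!$ via $|\pi^{-1}(M)| = \NN!/\prod_e M_e!$ is exactly the standard (and essentially only) argument for this identity. The paper itself does not reprove this result but cites it from \cite{KenyonPohoata}; your argument is the canonical one and raises no issues.
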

Two weight functions $w,w'$ are \textit{gauge equivalent} if there exists a function $\mathbf x:V\to \m R_+$ such that for all edges $(u,v) = e\in E$, we have 
\begin{align*}
    w_e' = w_e x_u x_v.
\end{align*}
Gauge equivalence is a central topic of \cite{KenyonPohoata}, and will play an important role in this paper as well. If $G$ and $\NN$ are fixed and $ w, w'$ are gauge equivalent, then they induce the same probability measure on $\Omega_{\NN}(G)$.

To study asymptotics, we must fix how $\NN\to \infty$. Using the normalization in \cite{KenyonPohoata}, we let $K = \sum_{v\in V} N_v/2$ be the total number of dimers in an $\NN$-dimer cover. We take the limit $\NN\to\infty$ in such a way
that $\lim N_v/K$ exists for each $v$; we denote
$\alpha_v := \lim N_v/K\in [0,1]$. Here $\alpha_v$ is the asymptotic fraction of dimers covering $v$. We say that $(\alpha_v)_{v\in V}$ is \emph{feasible} if it is approximated by feasible $(N_v)_{v\in V}$. This differs from the limiting covering multiplicity $\beta_v$ defined in the previous section by a multiplicative constant; see Remark~\ref{rem:N_vs_K} below. 

\begin{thm}[See {\cite[Theorem 3.1]{KenyonPohoata}}]\label{thm:critical_weights_unique}
    Fix a finite graph $G = (V,E)$, a weight function $w: E \to \m R_+$, $\alpha_v \in [0,1]$ feasible and a sequence of multiplicites $\NN$ with $N_v/K\to \alpha_v$ as $\NN\to \infty$ for all $v\in V$. Then there exist a \textit{critical weight function} $\mathbf x:V\to \m R_+$ such that for each $v\in V$,
\be\label{eq:critical_weights}
\sum_{u:(u,v)=e\in E} w_e x_u x_v  = \alpha_v P(\mathbf x).
\ee
Further, the critical weights $\mathbf x$ are unique up to the operation of multiplying by a constant on all black vertices,
and the inverse of this constant on white vertices. In particular, the \textit{critical edge weights} $w_e x_u x_v$ for $(u,v)\in E$ are unique.
\end{thm}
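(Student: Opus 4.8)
I would recast \eqref{eq:critical_weights} as an unconstrained convex optimization. Substituting $y_v=\log x_v$ turns the tiling polynomial into $P=\sum_{(u,v)=e\in E}w_e\,\mathrm e^{y_u+y_v}$, so $\log P$ is a convex function of $\mathbf y=(y_v)_{v\in V}$ (a log-sum-exp of linear forms) with $\partial_{y_v}\log P(\mathbf y)=\tfrac1P\sum_{u:(u,v)=e\in E}w_ex_ux_v$. Hence \eqref{eq:critical_weights} is equivalent to $\partial_{y_v}\log P(\mathbf y)=\alpha_v$ for all $v$, i.e. to $\nabla\Phi(\mathbf y)=0$ for the smooth convex function
\[
\Phi(\mathbf y)\;:=\;\log P(\mathbf y)\;-\;\sum_{v\in V}\alpha_v\,y_v ,
\]
where I use $\sum_{v\in W}\alpha_v=\sum_{v\in B}\alpha_v=1$ (in any $\NN$-dimer cover each dimer covers one white and one black vertex, so $\sum_{W}N_v=\sum_B N_v=K$). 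So it suffices to show that $\Phi$ has a critical point and that its critical set is a single orbit of the gauge group.

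The function $\Phi$ is constant along the $2$-dimensional subspace $L:=\mathrm{span}\{\mathbf 1_W,\mathbf 1_B\}$: rescaling all white weights by a common factor multiplies $P$ by that factor and, because $\sum_W\alpha_v=1$, shifts $\sum_v\alpha_vy_v$ by exactly the same amount, and similarly for black. To produce a minimizer I would compute the recession cone of $\Phi$. Comparing, as $t\to+\infty$, the leading terms $\log P(\mathbf y+t\mathbf z)=t\max_{(u,v)=e\in E}(z_u+z_v)+O(1)$ and $\sum_v\alpha_v(y_v+tz_v)$ shows that $\Phi$ stays bounded along a direction $\mathbf z$ iff $\max_e(z_u+z_v)\le\sum_v\alpha_v z_v$. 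Now feasibility of $\alpha$ yields a strictly positive fractional perfect matching $\omega:E\to(0,\infty)$ with $\sum_{e\ni v}\omega_e=\alpha_v$ for all $v$ (average over all edges $e_0$ an $\NN$-dimer cover containing $e_0$, which exists by feasibility condition (ii), then normalize; note $\sum_e\omega_e=\sum_W\alpha_v=1$). Then
\[
\sum_v\alpha_v z_v\;=\;\sum_{(u,v)=e\in E}\omega_e(z_u+z_v)\;\le\;\max_e(z_u+z_v),
\]
so the recession condition forces equality, and since $\omega_e>0$ for \emph{every} edge this forces $z_u+z_v$ to be constant over all of $E$; connectedness and bipartiteness of $G$ then give $\mathbf z\in L$. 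Hence $\Phi$ descends to a convex function on $\R^V/L$ with trivial recession cone, so it is coercive there and attains its minimum; lifting the minimizer gives $\mathbf y^\star$ with $\nabla\Phi(\mathbf y^\star)=0$, and $x_v^\star:=\mathrm e^{y_v^\star}$ is a critical weight function.

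For uniqueness I would use strict convexity modulo $L$. For $\mathbf y\in\R^V$ let $q$ be the probability measure on $E$ with $q_e\propto w_e\mathrm e^{y_u+y_v}$; then $\mathrm{Hess}\,\log P(\mathbf y)$ is the covariance matrix, under $q$, of the map $e=(u,v)\mapsto\mathbf 1_u+\mathbf 1_v\in\R^V$, so for $\mathbf z\in\R^V$ the quadratic form $\mathbf z^{\top}\mathrm{Hess}\,\log P(\mathbf y)\,\mathbf z$ equals the $q$-variance of $e\mapsto z_u+z_v$, which vanishes iff $z_u+z_v$ is constant over $\mathrm{supp}(q)=E$ (all $w_e>0$), i.e. iff $\mathbf z\in L$. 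Thus $\log P$, and hence $\Phi$, is strictly convex on $\R^V/L$, so the minimizer — equivalently the critical point of $\Phi$ — is unique modulo $L$. In multiplicative terms, critical weight functions are unique up to independently rescaling the white and the black vertices by positive constants; the critical edge weights $w_ex_ux_v$ then change only by the product of those two constants, so fixing any normalization (e.g. $P(\mathbf x)=1$, which leaves exactly the gauge $x\mapsto Cx$ on $B$, $x\mapsto C^{-1}x$ on $W$) makes the critical edge weights unique. Finally, any vertex with $\alpha_v=0$ is assigned $x_v=0$ and decouples, and one applies the above to the induced subgraph on $\{v:\alpha_v>0\}$.

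I expect the crux to be the recession-cone step: extracting from feasibility a fractional perfect matching with marginals $\alpha$ that is strictly positive on \emph{every} edge — this is precisely where feasibility condition (ii) (``each edge occurs in some $\NN$-dimer cover'') is indispensable — and then the convex-analytic fact that the recession cone of $\Phi$ coincides with its lineality space $L$. Without strict positivity of $\omega$ the recession cone would be strictly larger than $L$, $\Phi$ would fail to attain its minimum, and a critical weight function need not exist.
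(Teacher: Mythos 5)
Your overall strategy --- recasting \eqref{eq:critical_weights} as the vanishing of the gradient of the convex function $\Phi(\mathbf y)=\log P(\mathrm e^{\mathbf y})-\sum_v\alpha_v y_v$, identifying the lineality space with the gauge directions, and getting uniqueness from the covariance interpretation of $\mathrm{Hess}\,\log P$ --- is sound, and it is essentially the variational argument behind the cited result (note that this paper does not reprove Theorem~\ref{thm:critical_weights_unique}; it imports it from \cite{KenyonPohoata}). Two smaller remarks: you implicitly use connectedness of $G$ when concluding $\mathbf z\in L$ from ``$z_u+z_v$ constant on $E$''; and your observation that the lineality space is two-dimensional, so that a normalization such as $P(\mathbf x)=1$ is needed before the critical edge weights become literally unique, is a correct sharpening of the statement as written.

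The gap is exactly at the step you yourself flag as the crux. Averaging the covers $M^{(e_0)}$ supplied by feasibility condition (ii) produces, for each fixed $\NN$, a strictly positive fractional matching with marginals $N_v/K$ --- not $\alpha_v$ --- and its entries are only bounded below by $1/(|E|K)$. Passing to the limit $K\to\infty$ preserves the marginals but destroys strict positivity, and this loss is not a technicality: the marginals realizable by strictly positive fractional matchings form the \emph{relative interior} of the marginal polytope, which is not closed, so a limit of feasible $N_v/K$'s can land on its relative boundary even with all $\alpha_v>0$. Concretely, take $G$ the path $w_1\,\text{---}\,b_1\,\text{---}\,w_2\,\text{---}\,b_2$ with $\NN=(N,N+1,N+1,N)$: the unique $\NN$-dimer cover is $(N,1,N)$, which uses every edge, so $(G,\NN)$ is feasible for every $N$ and $\alpha_v\equiv\tfrac12$; yet subtracting the equation at $w_1$ from the equation at $b_1$ forces $w_{(w_2,b_1)}x_{w_2}x_{b_1}=0$, so no positive critical weight function exists, and correspondingly your $\Phi$ fails to attain its infimum (a minimizing sequence sends $y_{w_2}+y_{b_1}\to-\infty$). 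So existence genuinely requires $\alpha$ to admit a strictly positive fractional matching (equivalently, to lie in the relative interior of the marginal polytope); this is presumably the intended content of ``feasible'' in the source, but it is strictly stronger than ``limit of feasible $N_v/K$'' and does not follow from the averaging construction you give. Everything downstream of that positivity claim (recession cone equals lineality space, coercivity modulo $L$, strict convexity and uniqueness) is correct.
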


\begin{rem}\label{rem:N_vs_K}
    When $G$ is a lattice subgraph, and all interior vertices have the same multiplicity $N$, then $\alpha_v = \lim N_v/K$ and $\beta_v = \lim N_v / N$ differ by a constant which is independent of $v$ (approximately $|V|$). As such, the critical gauge equations all change by a global multiplicative constant. This can be absorbed as changing the edge weights by a gauge, and therefore does not change the critical gauge. 
\end{rem}
Since $G=(V,E)$ is bipartite with $V=W\cup B$, we can view the critical gauge function $\mathbf x : V\to \m R_+$ as a pair of functions $f: W\to \m R_+$ and $g: B\to \m R_+$, where $f,g$ are determined up to multiplying $f$ by a constant $C>0$ and $g$ by $1/C$. These functions $f,g$ are the \textit{critical gauge functions} on white and black vertices respectively. These will be a tool in Section~\ref{sec:torus_asymptotics}, and we study their scaling limits on a growing sequence of graphs in Section~\ref{sec:critical_gauge_scaling}. See Section~\ref{sec:limit_shapes} for explicit examples.

By dividing $M_e$ by $K$, we can view multinomial dimer measures as probability measures on the compact space $[0,1]^E$ for any $\NN$. As a corollary of Theorem~\ref{thm:critical_weights_unique}, the function $(M_e/K)_{e\in E}$ satisfies a ``limit shape theorem'' as $\NN\to \infty$, where the limit shape is proportional to the critical edge weights.

\begin{thm}[{See \cite[Corollary 3.2]{KenyonPohoata}}]\label{thm:largeN_concentration}
    Fix a finite graph $G = (V,E)$ and edge weights $(w_e)_{e\in E}$. Suppose that $\NN = (N_v)_{v\in V}$ are feasible vertex multiplicities such that $N_v/K\to \alpha_v$ as $\NN\to \infty$, and let $(c_e)_{e\in E}$ denote the critical edge weights. The configuration $\{\frac{M_e}{K}\}_{e\in E}$ concentrates on its expectation as $\NN\to\infty$ and the limit is proportional to the fractional matching $\{c_e\}_{e\in E}\subset [0,1]^E$.
\end{thm}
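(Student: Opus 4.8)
The plan is to deduce the concentration statement from the generating-function identity of Theorem~\ref{thm:KP_generating_function} together with the characterization of critical weights in Theorem~\ref{thm:critical_weights_unique}. The starting observation is that because $w,w'$ gauge equivalent induce the same measure on $\Omega_{\NN}(G)$, we may replace $w$ by the critical edge weights $c_e = w_e x_u x_v$ (where $\mathbf x$ is the critical gauge function for the prescribed limiting profile $(\alpha_v)$); this rescales the vertex weights without changing the probability measure, and in these coordinates the critical gauge equations \eqref{eq:critical_weights} read $\sum_{u:(u,v)\in E} c_{(u,v)} = \alpha_v P(\mathbf 1)$ at each vertex. So we may as well assume $w = c$ satisfies $\sum_{u\sim v} w_{(u,v)} = \alpha_v T$ for a common constant $T = P(\mathbf 1) = \sum_e c_e$, i.e. the weights, normalized by $T$, form a fractional matching with divergence profile $(\alpha_v)$.

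Next I would compute the expectation $\m E[M_e/K]$ and its fluctuations. The clean route is a tilting/perturbation argument on the partition function: introduce a parameter $t$ on a single edge $e_0$ by replacing $w_{e_0}$ with $e^{t} w_{e_0}$, so that $\frac{1}{K}\partial_t \log Z_{G,\NN}(w^{(t)})\big|_{t=0} = \m E[M_{e_0}/K]$ and $\frac{1}{K}\partial_t^2 \log Z\big|_{t=0} = \frac{1}{K}\mathrm{Var}(M_{e_0})$. From Theorem~\ref{thm:KP_generating_function}, $Z_{G,\NN}$ is the $\mathbf x^{\NN}/\NN!$ coefficient of $\exp(P(\mathbf x))$; a saddle-point / Laplace analysis of this coefficient as $\NN\to\infty$ with $N_v/K\to\alpha_v$ identifies the exponential growth rate of $Z_{G,\NN}$ with $K$ times the Legendre-type functional whose critical point is exactly the critical weight solution of \eqref{eq:critical_weights}. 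Differentiating the limiting rate function in $t$ at $t=0$ then yields $\m E[M_{e_0}/K] \to c_{e_0}/T$ (proportional to the critical edge weight, as claimed), and the second derivative being $O(1)$ gives $\mathrm{Var}(M_{e_0}) = O(K)$, hence $\mathrm{Var}(M_{e_0}/K) = O(1/K)\to 0$. Chebyshev (or the exponential concentration one gets from convexity of $t\mapsto \log Z$ via a Bernstein-type bound) then upgrades this to concentration of $\{M_e/K\}_{e\in E}$ on its expectation in $[0,1]^E$, simultaneously for the finitely many edges.

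An alternative, more combinatorial realization of the same steps avoids the saddle point: work with the \emph{lifted} measure on dimer covers of $G_{\NN}$, where each lift of $e_0$ is present independently-ish, and use that under the lifted Boltzmann measure with critical edge weights the marginal occupation of lifts is asymptotically a product of Bernoulli/Poisson-type variables with mean $c_{e_0}/T$ per unit of $K$; summing the $\approx K$ lift-occupations and applying a law of large numbers gives $M_{e_0}/K \to c_{e_0}/T$. Either way, the main obstacle is the asymptotic analysis of the coefficient extraction in Theorem~\ref{thm:KP_generating_function}: one must show that the dominant contribution to $[\mathbf x^{\NN}]\exp(P(\mathbf x))$ localizes near the critical point $\mathbf x$ of \eqref{eq:critical_weights}, which requires controlling $P$ on the relevant contour and ruling out competing saddles — this is where the hypotheses on $G$ (bipartite, feasible $(\alpha_v)$) and the uniqueness clause of Theorem~\ref{thm:critical_weights_unique} do the real work. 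Once the localization and the resulting variance bound are in hand, the concentration statement and the identification of the limit with the fractional matching $\{c_e\}$ follow routinely.
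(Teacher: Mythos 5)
First, a point of context: the paper does not prove this statement at all --- it is imported verbatim as \cite[Corollary 3.2]{KenyonPohoata}, so there is no in-paper proof to compare against. Judged on its own, your sketch takes a heavier route than the one the explicit formula for $|\pi^{-1}(M)|$ invites. The direct argument is: $\m P(M)$ is proportional to $\frac{\prod_v N_v!}{\prod_e M_e!}\prod_e w_e^{M_e}$, so by Stirling $\frac{1}{K}\log \m P(M)$ equals, up to terms depending only on $\NN$ and $Z$, the strictly concave functional $-\sum_e \frac{M_e}{K}\log\frac{M_e}{K w_e}+o(1)$ of the point $\{M_e/K\}$ ranging over the compact polytope of fractional matchings with divergences $\to\alpha_v$. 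The Lagrange conditions for its unique maximizer are exactly the critical gauge equations \eqref{eq:critical_weights} (the multipliers being the logarithms of the gauge functions), and strict concavity plus the polynomial-in-$N$ count of configurations gives exponential concentration at that maximizer, which is $\{c_e/\sum_{e'}c_{e'}\}$. This is elementary and uses Theorem~\ref{thm:critical_weights_unique} only to identify the maximizer.

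Your tilting-plus-saddle-point route can be made to work, but two steps are genuinely incomplete as written. (i) The localization of $[\mathbf x^{\NN}]\exp(P(\mathbf x))$ at the positive critical point is not routine: the integrand has exact flat directions coming from the gauge freedom ($x\mapsto Cx$ on white vertices, $x\mapsto x/C$ on black), and possibly further unimodular symmetries if the exponent lattice of $P$ is a proper sublattice; one must quotient these out and verify that on the deformed torus the positive real point strictly maximizes $|P|$. You correctly flag this as the main obstacle, but flagging it is not resolving it. (ii) The inference ``second derivative of $\frac1K\log Z$ is $O(1)$, hence $\mathrm{Var}(M_{e_0})=O(K)$'' does not follow from convergence of $\frac1K\log Z(t)$ alone; pointwise convergence of functions does not control second derivatives. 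The rescue is the remark you relegate to a parenthesis: $t\mapsto\frac1K\log Z(t)$ is convex (a cumulant generating function), convex functions converging to a differentiable limit have converging first derivatives, and a Chernoff/G\"artner--Ellis bound then yields concentration of $M_{e_0}/K$ on $\partial_t F(0)=c_{e_0}/\sum_{e'}c_{e'}$ with no variance estimate needed. That convexity argument should be promoted to the load-bearing step; with it, and with (i) carried out, your proof closes.
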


\subsection{Discrete vector fields}\label{sec:discrete_flow}

In two dimensions, there is a correspondence between single dimer covers and Lipschitz {height functions}, up to an additive constant \cite{Thurston}. In higher dimensions, there is no longer a correspondence between single dimer covers and height functions, but there is a related correspondence with \textit{discrete flows} $\omega$ which works in any dimension, as used in \cite{3Ddimers}. There are analogous correspondences for $\NN$-dimer covers. We use the correspondence with discrete flows to compare dimer covers of different graphs; this is explained in Section~\ref{sec:vector_fields}.

A \emph{(discrete) flow} $\omega$ is a function on the oriented edges of a graph such that $\omega(-e) = -\omega(e)$, where $-e$ denotes $e$ with reversed orientation. The \textit{divergence} of $\omega$ at a vertex $v\in V$ is $\text{div}\,\omega (v)=\sum_{e\ni v} \omega(e)$, where the edges $e$ in the sum are all oriented out of $v$. 

 As in Section~\ref{sec:subgraphs_boundaryconditions},  let $G = (V=W\cup B,E)\subset \Lambda$ be a subgraph of the lattice $\Lambda$ and let $\NN$ be a feasible multiplicity function equal to $N$ for all $v\in G^\circ$ and $N_v\in \{1,\dots,N\}$ for $v\in \partial G$. 

Given an $\NN$-dimer cover $M$, we define the flow $\omega_M(e)$ as a function of the oriented edge $e$ by rescaling
$M$ by $N$:
\begin{align}\label{eq:tiling_flow}
    \omega_M(e) = \pm\frac{M_e}{N}
\end{align}
where the sign is positive if $e$ is oriented white-to-black and negative otherwise. The divergences of $\omega_M$ are: 
\begin{align}
 \text{div}\, \omega_M(v) = \sum_{e\ni v} \omega_M(e) = \begin{cases} \pm 1 \qquad &\forall \; v\in G^\circ\\
  \pm N_v/N \qquad &\forall \; v\in \partial G \end{cases}
\end{align}
where the sign is positive if $v\in W$ and negative if $v\in B$. In particular, after fixing the vertex multiplicities $\NN$, the divergences of $\omega_M$ depend only on the parity of the vertex. 

\begin{rem}
    A single dimer cover $\tau$ corresponds to a flow $\omega_\tau$ which takes values $\{0,1\}$ on all white-to-black edges, and has $\pm 1$ divergences. Taking $\NN\to \infty$ can be seen as ``weakening'' the constraints of the dimer model by allowing the discrete flow to take continuous values in $[0,1]$ on edges, but still requiring it to have fixed divergences. 
\end{rem}

For any two $\NN$-dimer covers $M_1,M_2$, the flow $\omega_{M_1} - \omega_{M_2}$ is divergence-free. Alternatively, subtracting just a constant, we notice the flow $\overline{\omega}_M(e) = \omega_M(e)-1/D$ is divergence-free. In Section~\ref{sec:vector_fields}, we will see that the \textit{scaling limits} as $n\to \infty$ of all of these discrete flows on $\frac{1}{n}\Lambda$ are measurable divergence-free vector fields (Theorem~\ref{thm:scaling_limits_asymptotic}).

\begin{rem}[Relationship to height functions]
    In two dimensions, we can derive the correspondence (up to an additive constant) between an $\NN$-dimer cover $M$ and a height function $h_M$ from the correspondence with flows. This construction can be thought of as using $\overline{\omega}_M$. Let $\Lambda$ be a planar bipartite lattice, and let $\Lambda^*$ denote the dual lattice. The height function $h_M:\Lambda^*\to \m R$ is determined up to an additive constant by its differences across dual edges: for a neighbor $w^*$ connected to $v^*$ by a dual edge $e^*$, the height difference is
\begin{equation}\label{eq:height_diffs}
    h_M(w^*)-h_M(v^*) = \pm \bigg[\frac{M_e}{N} - \frac{1}{D}\bigg],
\end{equation}
with positive sign if the dual edge $e^*=(v^*,w^*)$ oriented from $v^*$ to $w^*$ has a white vertex on the left and negative sign otherwise. We note that this defines a function since summing these differences around any loop in $\Lambda^*$ gives $0$. 

The height function exists only in two dimensions for the topological reason that in two dimensions a divergence-free vector field (i.e., $\overline{\omega}_M(e):= \omega_M(e) - 1/D$) is dual---by rotating by $90^\circ$---to a curl-free vector field, and any curl-free vector field on the plane is the gradient of a function, hence of the form $\nabla h$. 
\end{rem}

  \subsection{Slope and Newton polytope}\label{sec:slope}

The main setting of this paper will be an iterated limit, where we take both the multiplicity $\NN\to \infty$ and the lattice spacing ${1}/{n}\to 0$. Let $R\subset \m R^d$ be a compact connected domain with piecewise smooth boundary (domain means that $R$ is the closure of its interior, denoted $R^\circ$). Fix a $d$-dimensional lattice $\Lambda$, and consider any sequence of regions $R_n\subset \frac{1}{n}\Lambda$ approximating $R$ in Hausdorff distance. We define the boundary of $R_n$ to be 
\begin{align*}
    \partial R_n = \{v\in R_n : \exists\, u\in\tfrac1{n}\Lambda\setminus R_n \text{ such that }(u,v)\in E(\tfrac1{n}\Lambda)\}.
\end{align*}
We refer to $R_n$ as a \textit{lattice approximation} of $R$. We only work with regions $R_n$ that are feasible (i.e., tileable with the given multiplicities).

We compare dimer covers of the graphs $R_n$ via the corresponding flows. For each $n$, we set feasible boundary conditions $(\beta_v^n)_{v\in \partial R_n}$ as $\NN\to \infty$, and study the behavior of a sequence of multinomial measures $(R_n,\NN)$ which are feasible and converge to $\beta_v^n$ as $\NN\to \infty$. 

For a dimer cover $M$, we define the \textit{slope at white vertices} or just \textit{slope} $s_M:W\to \m R^d$ by
\begin{align}\label{eq:def_mean_current}
    s_M(u) = \sum_{e\ni u} \omega_M(e) e,
\end{align}
where $e$ is oriented white-to-black and interpreted as a vector. 
At all white vertices, the incident edge directions are $e_1,\dots,e_D$. As such, $s_M$ is valued in the set
\begin{align*}
    \mc N(\Lambda) = \text{convex hull}\{e_1,\dots,e_D\}\subset \m R^d.
\end{align*}
This is the \textit{Newton polytope} for the lattice $\Lambda$\footnote{This differs from the usual definition of the Newton polytope using the characteristic polynomial, but these definitions are equivalent for the lattices $\Lambda$ we consider.}. We call a point $s\in \mc N(\Lambda)$ a \textit{slope}. The \textit{slope at black vertices} can be defined similarly, but the vector directions incident to a black vertex are $-e_1,\dots, -e_D$. 

\begin{rem}
    It is more standard in the 2D dimer literature for slope to mean the gradient of the height function. This differs from our convention by a rotation. If $s=(s_1,s_2)$ is the slope at white vertices, the gradient of the height function is $(-s_2,s_1)$. Our definition of slope is called \textit{mean current} in \cite{3Ddimers}. 
\end{rem}

\section{Comparing dimer covers via vector fields}\label{sec:vector_fields}

To compare $\NN$- and $\NN'$-dimer covers $M,M'$ of the same finite graph $G = (V,E)$, we can use the sup norm on
the edges $E$ of $G$: 
    \begin{align}
        |M-M'|_{\infty,E} :=\sup_{e\in E} |\omega_M(e) - \omega_{M'}(e)| = \sup_{e\in E} \left|\frac{M_e}N-\frac{M'_e}{N'}\right|.
    \end{align}
    This is an extremely natural way to compare dimer covers and is what is used in \cite{KenyonPohoata}.

    However, in this paper we will also be interested in comparing $\NN$-dimer covers of different graphs, in particular dimers covers of a growing sequence of rescaled graphs $R_n\subset \frac{1}{n} \Lambda$ approximating a compact region in $R\subset \R^d$ as above. To do this we use the \textit{weak topology on flows}, which is a topology on the flows corresponding to dimer covers described in Section~\ref{sec:discrete_flow}. This is the same topology used to study dimer covers of $\Z^3$ described in \cite[Section 5]{3Ddimers}. In adapting this to lattices other than $\Z^3$, we also streamline its description and provide more intuition for why this topology is natural.
    
    \subsection{Scaling limits of dimer covers}

Let $R\subset \m R^d$ be a compact, connected domain with piecewise smooth boundary and let $\Lambda$ be a $d$-dimensional lattice as in Section~\ref{sec:subgraphs_boundaryconditions}. Let $R_n\subset \frac{1}{n}\Lambda$ be a sequence of lattice approximation of $R$ as above. Recall that by convention, we fix the initial scaling of $\Lambda$ so that the Voronoi tile of each vertex in $\Lambda$ has volume $1/2$. 

If $M$ is an $N$-dimer cover of $R_n$, it has an associated flow $\omega_M:E\to [0,1]$, where we view $E$ as the set of edges oriented white-to-black. We study the asymptotics of the corresponding discrete vector field
\begin{align*}
        \omega_M(e) e,
\end{align*}
where $e$ is viewed as the vector oriented white-to-black.
Further associated to $M$ is its slope per white vertex function 
$$s_M(u) = \sum_{e\ni u} \omega_M(e) e: R_n\cap W\to \mc N(\Lambda).$$    
We also consider the slope per black vertex function, denoted $\tilde{s}_M$, which is given by 
    \begin{align*}
        \tilde{s}_M(v) = \sum_{e\ni v} \omega_M(e) (-e)\qquad v\in R_n \cap B.
    \end{align*}
    Here we still view $e$ as an edge oriented white-to-black, hence the minus sign comes from the fact that the edge directions at a black vertex are $-e_1,\dots,-e_D$. 
    
The objects $\omega_M(e)e, s_M, \tilde s_M$ are \emph{discrete vector fields}, in the sense that they are vector-valued functions defined on lattice points or edges. 
However for convenience we henceforth refer to them as \emph{discrete flows}.

The \textit{Wasserstein metric on flows} that we discuss next metrizes the weak convergence of the above discrete flows in the $n\to \infty$ limit. This topology was introduced in \cite{3Ddimers} to study dimer covers of $\m Z^3$. In this topology, we will show (see Theorem~\ref{thm:scaling_limits_asymptotic}) that the $n\to \infty$ scaling limits of $\omega_M(e)e$, $s_M$ and $-\tilde{s}_M$ are the same and are \textit{asymptotic flows} for $\Lambda$.

\begin{definition}
An \textit{asymptotic flow} on $R$ for $\Lambda$ is a measurable vector field $\omega$ which is (i) supported in $R$, (ii) divergence-free as a distribution, and (iii) valued in the Newton polytope $\mc N(\Lambda)$. We denote the space of asymptotic flows on $R$ for $\Lambda$ by $\text{AF}(\Lambda,R)$.
\end{definition}

\subsection{Weak topology on flows}

The key aspect of the topology we define here is that it is a metrizable topology that captures weak convergence of the slope functions in the scaling limit. We can view the slope functions $s_n$ or $\tilde{s}_n$, discrete flows $\omega_n(e)e$, and asymptotic flows $\omega$ as elements of the same space by viewing them as vector-valued distributions. Through this, there is a natural correspondence between these and their dual measures. Let $\eta_1,\dots, \eta_d$ denote unit basis vectors for $\m R^d$. Let $\dd x = \dd x_1\dots \dd x_d$. The dual measures are as follows:
\begin{itemize}
\item If $\omega=(\omega_1,\dots,\omega_d)$ is an asymptotic flow, then the dual measures are $\mu_i(x) = \omega_i(x) \, \dd x$.
\item If $\omega_n(e)e$ is a discrete flow, on edges $e\in E\subset \frac{1}{n} \Lambda$, then its dual measures are 
$$\mu_i(x) = \sum_{e\in E} \omega_n(e) \frac{\dd\lambda^i_{e}(x)}{n^d},$$ 
where $\dd\lambda^i_{e}$ is supported on $e$ and projects to Lebesgue measure on the projection of $e$ to the $i$th coordinate.
\item If $s_n:V\cap W\to \mc N(\Lambda)$ is a slope function on white vertices, then its dual measures 
are 
$$\mu_i(x) = \sum_{v\in V\cap W} \langle s_n(v), \eta_i\rangle \frac{\mathds{1}_{v}(x) \, \dd x}{n^d},$$ 
where $\mathds{1}_{v}(x) \, \dd x$ denotes the unit point mass at $v$. The slope function on black vertices has an analogous dual which is a sum of point masses at black vertices.
    \end{itemize}

{ \begin{rem}
        Recall that as a technicality, we assumed that the edges of $\Lambda$ are {monotone and embedded so that they don't cross other edges, and {don't pass though other vertices} of the lattice outside of their endpoints}. This doesn't change the limiting slope, and the reason we make this convention is that it means the measures $\vec{\mu}$ corresponding to a discrete flow $\omega_n$ determine $\omega_n$. This means that our use of $e$ in some places to denote the corresponding vector (which is just determined by the endpoints of the edge) is a slight abuse of notation. 
    \end{rem}}
    We refer to the measures that can occur above (as a coordinate component of an asymptotic flow, discrete flow, or slope function) as \textit{component measures}. A sequence of vector-valued measures $\vec{\mu}^k = (\mu_1^k,\dots,\mu_d^k)$ converges weakly to $\vec{\mu} = (\mu_1,\dots,\mu_d)$ if $\mu_i^k$ converges weakly for each $i=1,\dots,d$. We will see below (Proposition~\ref{Wassequalsweak}) that the \textit{weak topology on flows} is the same as the metric topology coming from the 
\textit{Wasserstein metric on flows} $d_W$ given by
     \begin{align}
        d_W(\omega,\omega') := \sum_{i=1}^{d} \m W_1^{1,1}(\mu_i,\mu_i'),
    \end{align}
    where $\mu_i,\mu_i'$ are the $i^{th}$ component dual measures of $\omega,\omega'$ respectively, and where $\m W_1^{1,1}$ is the \textit{generalized Wasserstein distance} which we describe in the next subsection. In other words, $d_W$ metrizes weak convergence for component measures.

\subsection{Wasserstein metric}

 The standard $L^1$ Wasserstein distance, denoted $W_1$, is a metric on probability measures on a metric space $(X,d)$ given by: 
   \begin{align}
       W_1(\mu,\nu) = \inf_{\gamma\in \Gamma(\mu,\nu)} \int_{X\times X} d(x,y) \, \dd \gamma(x,y),
   \end{align}
    where $\Gamma(\mu,\nu)$ is the collection of all couplings of $\mu,\nu$. Intuitively $W_1$ measures the minimum ``cost'' of transforming $\mu$ into $\nu$, where cost is the amount of mass moved times the distance moved to transform $\mu$ into $\nu$. Wasserstein metrics have been studied extensively in analysis and optimal transport, see e.g.\ \cite{Villani2009} for a general introduction. The generalized Wasserstein distance $\m W_1^{1,1}$ extends this notion to (i) measures of different total mass by adding an $L^1$ cost for adding and deleting mass \cite{Piccoli2014-vy,Piccoli2016-fe}, and (ii) to signed measures \cite{Ambrosio,piccoli_signed}. 
    
    Let $\mc M(\m R^d)$ denote the set of positive Borel measures with finite total mass. If $\mu, \nu\in \mc M(\m R^d)$, then 
    \begin{align*}
        \m W_1^{1,1}(\mu,\nu) = \inf_{\tilde{\mu},\tilde{\nu}\in \mc M(\m R^d)} |\mu-\tilde{\mu}| + |\nu-\tilde{\nu}| + W_1(\tilde{\mu},\tilde{\nu}).
    \end{align*}
    If $\mu = \mu_+-\mu_-$ and $\nu = \nu_+-\nu_-$ are signed measures, then $\m W_1^{1,1}(\mu,\nu) = \m W_1^{1,1}(\mu_++\nu_-,\nu_++\mu_-)$.

    For positive measures, it is shown in \cite[Theorem 3]{Piccoli2014-vy} that $\m W_1^{1,1}$ metrizes weak convergence for tight sequences. While the component measures we use are signed, they have bounded mass and we infer the following.
    \begin{prop}\label{Wassequalsweak}
        The Wasserstein metric on flows $d_W$ metrizes weak convergence of the component measures.
    \end{prop}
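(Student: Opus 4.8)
The plan is to reduce Proposition~\ref{Wassequalsweak} to the known fact (\cite[Theorem 3]{Piccoli2014-vy}) that $\m W_1^{1,1}$ metrizes weak convergence for \emph{tight sequences of positive} measures, together with the Jordan-decomposition reduction $\m W_1^{1,1}(\mu,\nu)=\m W_1^{1,1}(\mu_++\nu_-,\nu_++\mu_-)$ that is already recorded above. Since $d_W$ is by definition a finite sum of $\m W_1^{1,1}$-distances between the $d$ component measures, and weak convergence of a vector-valued measure means coordinatewise weak convergence, it suffices to prove the one-coordinate statement: for the class of signed component measures $\mu$ arising as a coordinate of an asymptotic flow, a discrete flow, or a slope function, $\m W_1^{1,1}(\mu^k,\mu)\to 0$ if and only if $\mu^k\to\mu$ weakly.

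The key steps are as follows. First I would observe that every component measure $\mu$ is a signed Borel measure supported in the fixed compact set $R$ (or in an $o(1)$-neighborhood of $R$ for the discrete approximants, which we may absorb into a slightly enlarged fixed compact $R'$), and that it has uniformly bounded total variation: indeed $\|\omega\|_\infty\le 1$ on edges and $|s_M|,|\tilde s_M|$ are bounded by $\operatorname{diam}\mc N(\Lambda)$, so $|\mu^k|(\R^d)\le C$ for a constant depending only on $\Lambda$ and $R$. Compact support plus uniformly bounded mass gives tightness of $\{\mu^k_+\}$ and $\{\mu^k_-\}$ separately, hence tightness of $\{\mu^k_++\mu^k_-\}$ and of the shifted positive measures appearing in the Jordan reduction. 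Second, I would invoke \cite[Theorem 3]{Piccoli2014-vy} on these positive, tight, bounded-mass families to conclude that $\m W_1^{1,1}(\mu^k_++\mu_-,\,\mu^k_-+\mu_+)\to 0$ iff $\mu^k_++\mu_-\to \mu_-+\mu_+$ weakly, i.e. iff $\mu^k_+-\mu^k_-\to\mu_+-\mu_-$ weakly against bounded continuous test functions; that is exactly $\mu^k\to\mu$ weakly. Rewriting the left side using the Jordan-reduction identity gives $\m W_1^{1,1}(\mu^k,\mu)\to 0$, which is what we want. Third, I would note the ``metrizes'' claim also requires that $d_W$ is genuinely a metric on the relevant space: nonnegativity and symmetry are immediate, the triangle inequality for $\m W_1^{1,1}$ (hence for the finite sum $d_W$) is established in \cite{Piccoli2014-vy,piccoli_signed}, and $d_W(\omega,\omega')=0$ forces each component measure to agree, hence $\omega=\omega'$ as a distribution (using the no-crossing/monotone-edge convention noted in the remark so that the component measures of a discrete flow determine it).

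I do not expect a serious obstacle here; the proposition is essentially a bookkeeping statement assembling citations. The one point that needs a little care is making sure the hypotheses of \cite[Theorem 3]{Piccoli2014-vy} — tightness and finiteness of mass — are verified \emph{uniformly} along the sequence, including for the discrete flows whose support can drift slightly outside $R$ as $n\to\infty$; this is handled by enlarging to a fixed compact neighborhood. A secondary subtlety is that the class of signed measures we care about is not all of $\mc M(\R^d)-\mc M(\R^d)$, so one should phrase the equivalence as holding on this restricted class, which is harmless since both the weak topology and $d_W$ are defined by restriction. With these remarks in place the proof is a direct consequence of the cited results.
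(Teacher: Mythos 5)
Your overall plan is the same as the paper's: the paper gives no real proof here, just the sentence preceding the proposition (cite \cite[Theorem 3]{Piccoli2014-vy} for positive tight sequences, note the component measures are signed with uniformly bounded mass and compact support, and reduce via the Jordan identity $\m W_1^{1,1}(\mu,\nu)=\m W_1^{1,1}(\mu_++\nu_-,\nu_++\mu_-)$). Your uniform tightness/bounded-mass verification and the check that $d_W$ separates points are fine.

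However, the central chain of equivalences in your second step is incorrect as written, and this is exactly the point where the "bookkeeping" needs an actual argument. You claim $\m W_1^{1,1}(\mu^k_++\mu_-,\mu^k_-+\mu_+)\to 0$ iff $\mu^k_++\mu_-\to\mu_-+\mu_+$ weakly, "i.e." iff $\mu^k\to\mu$ weakly. First, the cited theorem metrizes convergence of a sequence to a \emph{fixed} limit, whereas here both arguments depend on $k$, so it does not apply directly. Second, the condition $\mu^k_++\mu_-\to\mu_++\mu_-$ weakly is the same as $\mu^k_+\to\mu_+$ weakly, which is \emph{not} equivalent to $\mu^k\to\mu$ weakly: the Jordan parts of a weakly convergent sequence of signed measures need not converge to the Jordan parts of the limit because of cancellation. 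For instance $\mu^k=\delta_{1/k}-\delta_{-1/k}\to 0$ weakly, yet $\mu^k_+=\delta_{1/k}\to\delta_0\neq 0=(0)_+$. So the direction "weak convergence $\Rightarrow d_W\to 0$" does not follow from what you wrote.

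The gap is repairable within your framework. By your tightness and uniform mass bounds, every subsequence of $(\mu^k_+,\mu^k_-)$ has a further subsequence along which $\mu^k_+\to\alpha_+$ and $\mu^k_-\to\alpha_-$ weakly (Prokhorov); then $\alpha_+-\alpha_-=\mu$, so $\alpha_++\mu_-=\alpha_-+\mu_+$ as positive measures. Applying \cite[Theorem 3]{Piccoli2014-vy} to the fixed-limit convergences $\mu^k_++\mu_-\to\alpha_++\mu_-$ and $\mu^k_-+\mu_+\to\alpha_-+\mu_+$ and using the triangle inequality gives $\m W_1^{1,1}(\mu^k_++\mu_-,\mu^k_-+\mu_+)\to 0$ along the subsequence, hence along the full sequence. (Alternatively one can bypass the Jordan decomposition entirely: on a family of signed measures with common compact support and uniformly bounded total variation, $\m W_1^{1,1}$ is comparable to the bounded-Lipschitz norm, and an Arzel\`a--Ascoli argument shows the latter metrizes weak convergence on such a family.) The converse direction, $d_W\to 0$ implies weak convergence, is fine as you have it, via testing against Lipschitz functions and the uniform mass bound.
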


We further have that: 
\begin{thm}[{\cite[Theorem 5.3.4]{3Ddimers}}]\label{thm:AF_compact}
    The metric space $(\text{AF}(\Lambda,R),d_W)$ is compact.
\end{thm}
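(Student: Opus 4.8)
The plan is to prove that $(\text{AF}(\Lambda,R),d_W)$ is sequentially compact; since $d_W$ is a metric, sequential compactness is equivalent to compactness. The key structural remark is that an asymptotic flow is exactly an element $\omega$ of $L^\infty(R;\m R^d)$ taking values a.e.\ in the compact convex polytope $\mc N(\Lambda)$ and divergence-free as a distribution; in particular $\|\omega\|_{L^\infty}\le C<\infty$ for a constant $C$ bounding $\mc N(\Lambda)$, and the component dual measures of $\omega$ are precisely $\mu_i=\omega_i\,\dd x$. Thus $\text{AF}(\Lambda,R)$ sits inside the closed ball of radius $C$ in $L^\infty(R;\m R^d)=(L^1(R;\m R^d))^*$.

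First I would take a sequence $\omega^k\in\text{AF}(\Lambda,R)$ and apply Banach--Alaoglu together with the separability of $L^1(R)$: the $C$-ball of $L^\infty(R;\m R^d)$ is weak-$*$ compact and weak-$*$ metrizable, so along a subsequence $\omega^{k_j}\rightharpoonup^*\omega$ in $L^\infty(R;\m R^d)$. Next I would check that weak-$*$ convergence of this subsequence implies $d_W$-convergence. Given $\phi\in C_b(\m R^d)$, its restriction lies in $L^1(R)$, so $\int\omega^{k_j}_i\phi\,\dd x\to\int\omega_i\phi\,\dd x$, i.e.\ the component measures $\omega^{k_j}_i\,\dd x$ converge weakly to $\omega_i\,\dd x$; since all of them are supported in the compact set $R$ (hence tight) with uniformly bounded total mass, Proposition~\ref{Wassequalsweak} gives $d_W(\omega^{k_j},\omega)\to0$. (Conversely, the uniform $L^\infty$ bound plus density of $C_c(R)$ in $L^1(R)$ shows $d_W$-convergence forces weak-$*$ convergence, so the two topologies genuinely coincide on $\text{AF}(\Lambda,R)$; this is not strictly needed for the theorem but clarifies the setup.)

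Finally I would verify $\omega\in\text{AF}(\Lambda,R)$. Support in $R$ is automatic. The divergence-free condition is linear: for each admissible test field $\nabla\varphi$ one has $\int\omega^{k_j}\cdot\nabla\varphi\,\dd x=0$, and since $\nabla\varphi\in L^1(R)$ the weak-$*$ limit satisfies the same identity. The one point that requires an argument is that $\omega$ still takes values in $\mc N(\Lambda)$ almost everywhere, since a pointwise range constraint need not survive weak limits; I would handle this by convexity. Writing $\mc N(\Lambda)=\bigcap_{(\ell,c)}\{s:\langle\ell,s\rangle\le c\}$ as an intersection of its supporting half-spaces, for each such $(\ell,c)$ and each nonnegative $g\in L^1(R)$ we get $\int\langle\ell,\omega\rangle g\,\dd x=\lim_j\int\langle\ell,\omega^{k_j}\rangle g\,\dd x\le c\int g\,\dd x$, hence $\langle\ell,\omega(x)\rangle\le c$ a.e.; intersecting over all supporting half-spaces yields $\omega(x)\in\mc N(\Lambda)$ a.e. This convexity step is the only genuinely nontrivial ingredient; the rest is routine functional analysis. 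Putting it together, $\omega\in\text{AF}(\Lambda,R)$ and $\omega^{k_j}\to\omega$ in $d_W$, establishing compactness. The \textbf{main obstacle} to watch is exactly this interchange of an almost-everywhere constraint with a weak limit --- dealt with by convexity --- together with making sure that on the bounded set the $d_W$-topology coincides with the weak-$*$ topology, which is where Proposition~\ref{Wassequalsweak} and the uniform $L^\infty$ bound enter.
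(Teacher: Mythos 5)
Your argument is correct and complete. Note, however, that the paper does not actually prove this statement: Theorem~\ref{thm:AF_compact} is imported verbatim from \cite[Theorem 5.3.4]{3Ddimers}, with only the remark that $\mc N(\m Z^3)$ is replaced by $\mc N(\Lambda)$. So there is no in-paper proof to match against, and a self-contained argument like yours is genuinely useful. Your route --- embed $\text{AF}(\Lambda,R)$ in the $C$-ball of $L^\infty(R;\m R^d)=(L^1)^*$, extract a weak-$*$ convergent subsequence by Banach--Alaoglu plus separability, pass the linear divergence constraint to the limit, and recover the pointwise range constraint $\omega(x)\in\mc N(\Lambda)$ by testing against the finitely many supporting half-spaces of the polytope --- is clean and standard. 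The cited proof works instead at the level of the component measures: tightness and uniformly bounded mass give a weakly convergent subsequence of the $\mu_i$ (Prokhorov), Proposition~\ref{Wassequalsweak} converts this to $d_W$-convergence, and a statement like Lemma~\ref{lem:boundedlimits} is then needed to show the limit measure is again absolutely continuous with density valued in the polytope. Your $L^\infty$ formulation absorbs that last step into the convexity argument, which is arguably the more transparent way to handle the one genuinely delicate point (an a.e.\ constraint surviving a weak limit); the measure-theoretic route generalizes more readily to settings without a uniform density bound. Two minor points: when you test the half-space inequalities you should make explicit that a polytope is a \emph{finite} intersection of half-spaces, so there is no issue with uncountably many null sets; and the parenthetical claim that the $d_W$ and weak-$*$ topologies coincide on $\text{AF}(\Lambda,R)$ is correct but, as you say, not needed --- one direction suffices for sequential compactness.
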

\begin{rem}
The only adaptation of this result is that $\text{AF}(R)=\text{AF}(\m Z^3,R)$ is replaced with $\text{AF}(\Lambda,R)$. This just changes the values asymptotic flows can take from $\mc N(\m Z^3)$ to the general $\mc N(\Lambda)$.
\end{rem}

The next three results are basic lemmas about $\m W_1^{1,1}$ which are useful for deriving bounds and proving convergence results in $d_W$. These are proved in \cite[Section 5]{3Ddimers}.

\begin{lemma}[{\cite[Lemma 5.2.7]{3Ddimers}}]\label{lem:wasserstein_bound_from_local}
	Suppose that $\mu$ and $\nu$ are component measures and are supported on a common compact set $R$ of dimension $d$. Suppose there is a partition of $R$ into sets $\mathcal{B} = \{B_1,...,B_K\}$ of diameter at most $\varepsilon$ such that $\bigg|\mu(B)-\nu(B) \bigg| < \delta$ for all $B\in \mathcal{B}$. If one of the measures corresponds to a discrete flow on $\frac{1}{n} \Lambda$, we {additionally} require that $1/n<\varepsilon$. Then 
	\begin{align*}
		\m W^{1,1}_1(\mu,\nu) \leq K(10\varepsilon^{d+1} +\delta).
	\end{align*}
\end{lemma}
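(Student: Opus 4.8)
The plan is to bound the generalized Wasserstein distance $\m W_1^{1,1}(\mu,\nu)$ directly from the definition by exhibiting an explicit competitor $(\tilde\mu,\tilde\nu)$ and an explicit coupling, working block by block over the partition $\mathcal B = \{B_1,\dots,B_K\}$. The key point is that within a single block $B$ of diameter $\le\varepsilon$, moving any amount of mass around costs at most $\varepsilon$ per unit mass, so the only things that can be expensive are (i) transporting mass \emph{between} different blocks and (ii) the total mass discrepancies between $\mu$ and $\nu$ on each block. The hypothesis $|\mu(B)-\nu(B)|<\delta$ is precisely what lets us avoid any inter-block transport, reducing everything to a sum of $K$ local problems.

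First I would decompose $\mu = \sum_B \mu\!\mid_B$ and $\nu = \sum_B \nu\!\mid_B$ (writing $\mu\!\mid_B$ for the restriction). On each block $B$, set $m_B := \min(\mu(B),\nu(B))$; since $\mu\!\mid_B$ and $\nu\!\mid_B$ are (signed, but bounded-mass) component measures, I would first handle the case where they are positive — in the signed case one passes through the $\m W_1^{1,1}(\mu_++\nu_-,\,\nu_++\mu_-)$ identity and applies the positive bound, noting the total-mass and support bounds are inherited. For the positive case, on each $B$ I choose $\tilde\mu\!\mid_B$ and $\tilde\nu\!\mid_B$ to be a common measure of total mass $m_B$ obtained by trimming the larger of $\mu\!\mid_B,\nu\!\mid_B$ down to mass $m_B$ in a way that is dominated by both — concretely, scale down whichever has more mass. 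Then $|\mu\!\mid_B - \tilde\mu\!\mid_B| + |\nu\!\mid_B - \tilde\nu\!\mid_B| = |\mu(B)-\nu(B)| < \delta$, contributing at most $K\delta$ after summing. The remaining pieces $\tilde\mu\!\mid_B$ and $\tilde\nu\!\mid_B$ have equal mass $m_B$ but need not be equal as measures; coupling them arbitrarily inside $B$ and using $\mathrm{diam}(B)\le\varepsilon$ gives $W_1(\tilde\mu\!\mid_B,\tilde\nu\!\mid_B) \le \varepsilon\, m_B$. Summing over $B$ gives a transport cost $\le \varepsilon\sum_B m_B \le \varepsilon\sum_B \mu(B) = \varepsilon\,|\mu|$, which is where I need to control the total mass.

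The total mass bound is where the block-count $K$ and the exponent $d+1$ enter, and this is the step I expect to be the main (minor) obstacle to state cleanly. A component measure on a block $B$ has density bounded by the values the flow takes — for a discrete flow on $\tfrac1n\Lambda$ the relevant quantity is the number of edges/vertices in $B$ times $1/n^d$ times a bounded coefficient, and here the hypothesis $1/n<\varepsilon$ ensures a block of diameter $\le\varepsilon$ contains $O((n\varepsilon)^d)$ lattice objects, so $\mu(B) = O(\varepsilon^d)$; for an asymptotic flow the density is bounded pointwise (values in $\mathcal N(\Lambda)$, a fixed bounded polytope) so again $\mu(B)=O(\varepsilon^d)$. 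In either case $\varepsilon\,|\mu| \le \varepsilon\sum_B O(\varepsilon^d) = K\cdot O(\varepsilon^{d+1})$, and tracking the constant (which the paper records as $10$, absorbing the fixed geometric constants from $\Lambda$ and the polytope diameter) yields $\m W_1^{1,1}(\mu,\nu) \le K(10\varepsilon^{d+1}+\delta)$. Combining the trimming cost $K\delta$ with the transport cost $10K\varepsilon^{d+1}$ completes the proof; the signed case follows by the reduction above since the auxiliary positive measures $\mu_++\nu_-$ and $\nu_++\mu_-$ satisfy the same block discrepancy bound (up to a harmless constant) and the same per-block mass bounds.
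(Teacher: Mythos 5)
The paper does not actually prove this lemma---it is imported verbatim from \cite[Lemma 5.2.7]{3Ddimers}---but your argument is correct and is precisely the argument that the form of the bound encodes: trim each block to the common mass (cost $<\delta$ per block in the $L^1$ terms of $\m W_1^{1,1}$), transport the remaining equal masses within each block (cost $\leq \varepsilon$ per unit mass), and bound the mass of a component measure on a diameter-$\varepsilon$ block by $O(\varepsilon^d)$ using the bounded density of $\omega$ (values in $\mc N(\Lambda)$) in the continuum case and the count of $O((n\varepsilon)^d)$ lattice objects each carrying $O(n^{-d})$ mass in the discrete case, which is exactly where the hypothesis $1/n<\varepsilon$ is needed. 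Your reduction of the signed case is also sound, since $(\mu_++\nu_-)(B)-(\nu_++\mu_-)(B)=\mu(B)-\nu(B)$, so the block discrepancies are inherited exactly and the per-block masses at most double.
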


   \begin{lemma}[{\cite[Lemma 5.2.8]{3Ddimers}}]\label{lem:wasserstein_bound_from_global}
       Suppose that $B\subset R$ is a connected region with piecewise smooth boundary. If $\mu,\nu$ are component measures supported in $R$ and $\m W_1^{1,1}(\mu,\nu)<\delta$, then there is a constant $C(B)$ such that 
       \begin{align}
           \m W_1^{1,1}(\mu\mid_{B},\nu\mid_{B}) < \delta + (C(B)+1)\delta^{1/2}.
       \end{align}
       Further, the constant $C(B)$ is bounded by the relation that $C(B)\delta^{1/2}\leq 2 \text{Vol}(\partial B, \delta^{1/2})$, the volume of the width $\delta^{1/2}$ annulus with inner boundary $\partial B$.
   \end{lemma}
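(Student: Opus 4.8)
The plan is to pass to positive measures, restrict a near-optimal generalized transport plan for $(\mu,\nu)$ to $B$, and pay for the mass that the plan carries across $\partial B$ by separating it according to whether it travels a distance at least $\delta^{1/2}$ or remains inside a collar of width $\delta^{1/2}$ about $\partial B$. The threshold $\delta^{1/2}$ is what yields the stated constant: mass transported a distance $\ge\delta^{1/2}$ has total mass at most $\delta/\delta^{1/2}=\delta^{1/2}$, while the remaining mass is confined to the collar $S:=\{x:\text{dist}(x,\partial B)\le\delta^{1/2}\}$, whose volume is $\text{Vol}(\partial B,\delta^{1/2})$.

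First I would reduce to the positive case. Writing the Jordan decompositions $\mu=\mu_+-\mu_-$ and $\nu=\nu_+-\nu_-$, the restrictions $\mu_\pm|_B$ and $\nu_\pm|_B$ are the Jordan decompositions of $\mu|_B$ and $\nu|_B$, so by the signed-measure definition of $\m W_1^{1,1}$ it suffices to prove the bound for the positive measures $\bar\mu:=\mu_++\nu_-$ and $\bar\nu:=\nu_++\mu_-$, which satisfy $\m W_1^{1,1}(\bar\mu,\bar\nu)<\delta$ and $\m W_1^{1,1}(\bar\mu|_B,\bar\nu|_B)=\m W_1^{1,1}(\mu|_B,\nu|_B)$. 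Since $\mu,\nu$ are component measures, $\bar\mu$ and $\bar\nu$ have uniformly bounded density at scales $\gtrsim 1/n$; in particular $\bar\mu(S)+\bar\nu(S)\le 2\,\text{Vol}(\partial B,\delta^{1/2})$.

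Now choose positive measures $\tilde\mu,\tilde\nu$ with $|\bar\mu-\tilde\mu|+|\bar\nu-\tilde\nu|+W_1(\tilde\mu,\tilde\nu)<\delta$ and an optimal coupling $\gamma$ realizing $W_1(\tilde\mu,\tilde\nu)$. Split $\gamma=\gamma_{BB}+\gamma_{BB^c}+\gamma_{B^cB}+\gamma_{B^cB^c}$ according to which of $B,B^c$ contains the source and which the target. Let $\mu^*,\nu^*$ be the two marginals of $\gamma_{BB}$ — positive measures of equal mass, both supported in $B$ — let $a$ be the marginal of $\gamma_{BB^c}$ on the $B$-factor and $b$ the marginal of $\gamma_{B^cB}$ on the $B$-factor; both $a,b$ are supported in $B$. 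Then $\tilde\mu|_B=\mu^*+a$ and $\tilde\nu|_B=\nu^*+b$, so $\bar\mu|_B-\mu^*=(\bar\mu-\tilde\mu)|_B+a$ has total variation at most $|\bar\mu-\tilde\mu|+a(\R^d)$, similarly for $\nu$, while $W_1(\mu^*,\nu^*)\le\int_{B\times B}|x-y|\,\dd\gamma(x,y)$. Using $(\mu^*,\nu^*)$ as a competitor for $\m W_1^{1,1}(\bar\mu|_B,\bar\nu|_B)$ therefore gives
\begin{align*}
\m W_1^{1,1}(\bar\mu|_B,\bar\nu|_B)\;\le\;|\bar\mu-\tilde\mu|+|\bar\nu-\tilde\nu|+\int_{B\times B}|x-y|\,\dd\gamma(x,y)+a(\R^d)+b(\R^d).
\end{align*}
It remains to bound $a(\R^d)+b(\R^d)$. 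Any mass counted by $a$ moves from a point $x\in B$ to a point outside $B$, so the straight segment meets $\partial B$ and the transport distance is at least $\text{dist}(x,\partial B)$, and likewise for $b$ with $\text{dist}(y,\partial B)$; hence
\begin{align*}
\int\text{dist}(x,\partial B)\,\dd a(x)+\int\text{dist}(y,\partial B)\,\dd b(y)\;\le\;\int_{(B\times B^c)\cup(B^c\times B)}|x-y|\,\dd\gamma(x,y).
\end{align*}
Splitting $a$ and $b$ over $S$ and $S^c$: on $S^c$ the integrand above is $\ge\delta^{1/2}$, so by Markov's inequality $a(S^c)+b(S^c)$ is at most $\delta^{-1/2}$ times the cross-boundary transport cost; on $S$, since $a\le\tilde\mu|_B$ and $b\le\tilde\nu|_B$ we have $a(S)+b(S)\le\tilde\mu(S)+\tilde\nu(S)\le\bar\mu(S)+\bar\nu(S)+|\bar\mu-\tilde\mu|+|\bar\nu-\tilde\nu|$. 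Assembling everything and using $|\bar\mu-\tilde\mu|+|\bar\nu-\tilde\nu|+W_1(\tilde\mu,\tilde\nu)<\delta$ together with $\delta\le\delta^{1/2}$ bounds $\m W_1^{1,1}(\bar\mu|_B,\bar\nu|_B)$ by $\delta$ plus a multiple of $\delta^{1/2}$ with coefficient $1+2\,\text{Vol}(\partial B,\delta^{1/2})/\delta^{1/2}$, i.e.\ the claimed estimate, with $C(B)\delta^{1/2}$ equal to $2\,\text{Vol}(\partial B,\delta^{1/2})$ up to a lower-order $\delta^{1/2}$.

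This is an elementary lemma and I do not anticipate a real obstacle; the only care needed is the constant bookkeeping (which forces the choice of the $\delta^{1/2}$ threshold, balancing ``far'' mass $\le\delta/\delta^{1/2}$ against ``near'' mass confined to a collar of volume $\text{Vol}(\partial B,\delta^{1/2})$) and invoking the uniform density bound for component measures — valid at scales $\gtrsim 1/n$, in the same spirit as the hypothesis $1/n<\varepsilon$ in the previous lemma — so that $\bar\mu(S)+\bar\nu(S)$ is controlled by the collar volume rather than by the merely bounded total mass.
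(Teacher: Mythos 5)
Your argument is correct, and it is worth noting that the paper itself contains no proof of this statement — it is imported verbatim from \cite[Lemma 5.2.8]{3Ddimers} — but your restriction-of-the-coupling argument (reduce to positive measures via the Jordan decomposition, keep the $B\times B$ part of a near-optimal plan, and pay for the cross-boundary mass by Markov's inequality at the threshold $\delta^{1/2}$ for ``far'' mass and by the collar volume times the uniform density bound for ``near'' mass) is exactly the argument of the cited source. The only discrepancies are harmless constant bookkeeping: your final bound carries an extra additive $\delta\le\delta^{1/2}$, and the collar-mass estimate for \emph{discrete} component measures requires the collar width to be $\gtrsim 1/n$, a caveat you correctly flag and which is implicit in how the lemma is applied.
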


\begin{lemma}[{\cite[Lemma 5.2.10]{3Ddimers}}]\label{lem:boundedlimits}
	Let $\nu_n$ be a sequence of signed measures supported in $R\subset \m R^d$ which converges in $\m W_1^{1,1}$ to another measure $\nu$. Further suppose the $\nu_n$ are absolutely continuous with respect to $d$-dimensional Lebesgue measure, and their densities $q_n(x)$ take values in $[-m,M]$. Then $\nu$ is also absolutely continuous with respect to $d$-dimensional Lebesgue measure. 
\end{lemma}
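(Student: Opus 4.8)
The plan is to reduce to positive measures with uniformly bounded densities, extract weak subsequential limits, push the density bound through the limit by semicontinuity, and then invoke a standard covering argument to conclude absolute continuity.

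First I would observe that convergence in $\m W_1^{1,1}$ forces weak convergence here: all $\nu_n$ are supported in the fixed compact set $R$ and have total-variation mass at most $(m+M)\,\mathrm{Vol}(R)$, so this is exactly the uniformly bounded, tightly supported regime in which $\m W_1^{1,1}$ metrizes weak convergence (Proposition~\ref{Wassequalsweak} and the cited results of Piccoli et al.); hence $\nu_n\to\nu$ weakly. Next, split the densities as $q_n=q_n^+-q_n^-$ with $q_n^+:=\max(q_n,0)\in[0,M]$ and $q_n^-:=\max(-q_n,0)\in[0,m]$, and let $\nu_n^+,\nu_n^-$ be the positive measures with these densities, so $\nu_n=\nu_n^+-\nu_n^-$.

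The families $(\nu_n^+)$ and $(\nu_n^-)$ are supported in $R$ with masses at most $M\,\mathrm{Vol}(R)$ and $m\,\mathrm{Vol}(R)$ respectively, hence tight and bounded; by Prokhorov's theorem, along a common subsequence, $\nu_n^+\to\mu^+$ and $\nu_n^-\to\mu^-$ weakly for finite positive measures $\mu^\pm$ supported in $R$. By linearity of weak limits, $\nu_n=\nu_n^+-\nu_n^-\to\mu^+-\mu^-$ weakly along that subsequence, and comparing with $\nu_n\to\nu$ weakly along the full sequence gives $\nu=\mu^+-\mu^-$. It then suffices to show $\mu^+\ll\mathrm{Leb}$ and $\mu^-\ll\mathrm{Leb}$. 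For any open ball $B(x,r)\subset\m R^d$, the portmanteau theorem gives $\mu^+(B(x,r))\le\liminf_n \nu_n^+(B(x,r))\le M\,\mathrm{Vol}(B(x,r))$ since $\nu_n^+$ has density at most $M$; thus $\mu^+$ is a finite positive Borel measure on $\m R^d$ with $\limsup_{r\to 0}\mu^+(B(x,r))/\mathrm{Vol}(B(x,r))\le M$ at every $x$. By the Lebesgue--Besicovitch differentiation theorem (equivalently the Besicovitch covering theorem), such a measure is absolutely continuous with respect to $d$-dimensional Lebesgue measure, with density bounded by $M$ almost everywhere; the same argument gives $\mu^-\ll\mathrm{Leb}$ with density bounded by $m$. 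Therefore $\nu=\mu^+-\mu^-$ is absolutely continuous with respect to $d$-dimensional Lebesgue measure (and its density lies in $[-m,M]$ a.e.).

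The Prokhorov extraction and the lower-semicontinuity bookkeeping are routine; the one point needing mild care is that $\nu_n^+$ and $\nu_n^-$ converge only along a subsequence a priori, which is harmless because $\nu_n\to\nu$ holds along the full sequence and so pins down $\mu^+-\mu^-=\nu$. The only genuinely external ingredient is the covering-theorem step that upgrades a uniform upper ball-density bound to absolute continuity; this is standard real analysis (see e.g.\ Mattila, \emph{Geometry of Sets and Measures}, or Evans--Gariepy, \emph{Measure Theory and Fine Properties of Functions}), and I would simply cite it. I do not expect any serious obstacle.
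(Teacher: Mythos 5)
Your proof is correct. Note that the paper does not prove this lemma itself---it is imported verbatim by citation from \cite{3Ddimers}---so there is no in-text argument to compare against; but your route is the standard one and every step checks out: $\m W_1^{1,1}$ convergence of uniformly bounded, compactly supported signed measures does imply weak convergence (this is exactly what Proposition~\ref{Wassequalsweak} records), the Jordan-type split $q_n=q_n^+-q_n^-$ keeps both densities uniformly bounded, Prokhorov plus the portmanteau inequality $\mu^+(U)\le\liminf_n\nu_n^+(U)$ for open $U$ transfers the ball-density bound to the limit, and the Besicovitch/Vitali covering step upgrading $\mu^+(B(x,r))\le M\,\mathrm{Vol}(B(x,r))$ to $\mu^+\ll\mathrm{Leb}$ is classical. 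Your handling of the subsequence issue (the decomposition $\mu^+-\mu^-$ is only subsequential, but its difference is pinned to $\nu$ by the full-sequence weak convergence) is the one point that genuinely needed care, and you got it right; the concluding observation that the limiting density lies in $[-m,M]$ a.e.\ also follows from your bounds.
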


\subsection{Convergence theorems}\label{sec:convergence_in_weak_topology}

As above, fix $R\subset \m R^d$ to be a compact, connected domain with piecewise smooth boundary and a $d$-dimensional lattice $\Lambda$. Suppose that $R_n\subset \frac{1}{n} \Lambda$ is a sequence of lattice regions approximating $R$ as discussed above. Let $\omega_n: E(R_n)\to [0,1]$ be any sequence of discrete flows that could correspond to dimer covers, i.e., which have the specified divergences at all vertices.

\begin{thm}\label{thm:scaling_limits_asymptotic}
    Any subsequential limit $\omega_*$ of $\{\omega_n(e)e\}_{e\in E}$ as $n\to \infty$ in the weak topology on flows is an asymptotic flow, i.e., $\omega_* \in \text{AF}(\Lambda,R)$. 
\end{thm}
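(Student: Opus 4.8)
The plan is to verify directly the three defining properties of an asymptotic flow for the limit $\omega_*$, following the scheme of \cite[Section~5]{3Ddimers} with only cosmetic changes (replacing $\m Z^3$ by $\Lambda$ and the octahedron $\mc N(\m Z^3)$ by $\mc N(\Lambda)$). As a preliminary, the component dual measures $\mu_i^n$ of $\omega_n(e)e$ are signed measures of total variation bounded uniformly in $n$ and all supported in a fixed compact neighborhood of $R$ (since $|\omega_n(e)|\le1$ and $R_n$ has $O(n^d)$ edges, each of length $O(1/n)$); hence subsequential weak limits exist, and one may fix a subsequence along which $\mu_i^n\to\mu_i^*$ weakly for every $i$ and prove that $\mu_i^*=\omega_i^*\,\dd x$ with $\omega_*=(\omega_1^*,\dots,\omega_d^*)\in\AF(\Lambda,R)$. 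That $\omega_*$ is supported in $R$ is immediate: if $\phi\in C_c^\infty(\m R^d\setminus R)$, then since $R_n\to R$ in Hausdorff distance and edges of $\tfrac1n\Lambda$ have length $O(1/n)$, the support of $\phi$ misses every edge of $R_n$ for $n$ large, so $\int\phi\,\dd\mu_i^n=0$ and hence $\int\phi\,\dd\mu_i^*=0$; a similar bound on $|\mu_i^n|$ of a thin neighborhood of $\partial R$ gives $|\mu_i^*|(\partial R)=0$ as well.

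For the divergence-free property I would test against $\phi\in C_c^\infty(R^\circ)$. Integrating $\nabla\phi$ along each edge $e$ from its white endpoint $w$ to its black endpoint $b$, the quantity $\sum_{i=1}^d\int\partial_i\phi\,\dd\mu_i^n$ telescopes, up to an $O(1/n)$ error, into a weighted sum over edges of the increments $\phi(b)-\phi(w)$. Regrouping this sum by vertices, the coefficient of $\phi(v)$ becomes $\pm\sum_{e\ni v}\omega_n(e)=\pm N_v/N$, which equals $\pm1$ for every $v\in\mathrm{supp}\,\phi$ because $\mathrm{supp}\,\phi\subset R_n^\circ$ once $n$ is large. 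Thus the expression reduces (up to $o(1)$) to a fixed multiple of $\sum_{b\in B\cap R_n}\phi(b)-\sum_{w\in W\cap R_n}\phi(w)$; and since $W=\m Z^d$ and $B=\m Z^d+v_0$ are translates of one another with the same vertex density (each Voronoi cell of $\Lambda$ has volume $1/2$), the leading term of this difference is a multiple of $\int_{\m R^d}v_0\cdot\nabla\phi\,\dd x$, which vanishes because $v_0$ is constant. Hence $\sum_i\int\partial_i\phi\,\dd\mu_i^*=0$, i.e.\ $\omega_*$ is divergence-free.

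Absolute continuity of $\omega_*$ together with the containment $\omega_*(x)\in\mc N(\Lambda)$ for a.e.\ $x$ is the step I expect to be the main obstacle: the $\mu_i^n$ live on one-dimensional edges and so are far from absolutely continuous, and Lemma~\ref{lem:boundedlimits} does not apply to them directly. The plan is to pass through the white-vertex slope function $s_n$, which is $\mc N(\Lambda)$-valued by construction. The telescoping/regrouping identity above also shows that the component measures of $\omega_n(e)e$ and of $s_n$ differ by $o(1)$ in $d_W$ (each edge's contribution is reassigned to its white endpoint, at cost $O(1/n)$ per edge, hence $O(1/n)$ in total), so $\omega_*$ is equally the $d_W$-limit of the slope measures. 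Averaging $s_n$ over cubes of a fixed small side $\eps$ yields absolutely continuous measures whose vector density is an average of $\mc N(\Lambda)$-valued quantities, hence $\mc N(\Lambda)$-valued; applying Lemma~\ref{lem:boundedlimits} as $n\to\infty$ for fixed $\eps$, and then letting $\eps\to0$ along a diagonal, shows that each $\mu_i^*$ is absolutely continuous with a bounded density $\omega_i^*$ and that $\omega_*(x)\in\mc N(\Lambda)$ for a.e.\ $x$ (this last being a closed convex constraint, stable under weak limits of densities). Together with the previous two steps, $\omega_*\in\AF(\Lambda,R)$.

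The conceptual heart of the statement is the divergence-free step: the discrete flows $\omega_M$ themselves have divergence $\pm1$ at every interior vertex, and it is only the interleaving of these $\pm1$'s between white and black vertices, together with the fact that $W$ and $B$ are translates of one another with equal density, that forces the limit to be divergence-free. All of the mild hypotheses on $\Lambda$ and $R$ play only a bookkeeping role in the error estimates above.
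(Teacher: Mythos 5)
Your proposal is correct and follows essentially the same route as the paper: reduce to the white-vertex slope functions, obtain absolute continuity and the $\mc N(\Lambda)$-constraint from averaged (piecewise-constant) approximations via Lemma~\ref{lem:boundedlimits}, and get the divergence-free property from the telescoping of $\nabla\phi$ along edges combined with the $\pm1$ vertex divergences. The only cosmetic difference is in that last step: the paper first subtracts the constant $1/D$ from every edge to make the discrete flow exactly divergence-free (using the harmonic embedding to keep the same limit), whereas you keep $\omega_n$ and cancel the resulting $\sum_B\phi-\sum_W\phi$ term using that $B$ and $W$ are equal-density translates — both computations are valid.
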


To prove the theorem, we first note that it is equivalent to work with the associated slope functions. 
\begin{lemma}\label{lem:replace_with_slope}
       Let $\omega_n$ be a sequence of discrete flows as above and $s_n,\tilde{s}_n$ their slope functions on white and black vertices respectively. If $\lim_{n\to \infty} \{\omega_n(e)e\}_{e\in E}=\omega_*$ in the weak topology on flows, then $\lim_{n\to\infty}  s_n=\omega_*$ and $\lim_{n\to \infty} \tilde{s}_n = -\omega_*$ .
\end{lemma}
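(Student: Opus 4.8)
The plan is to show that the three discrete flows $\{\omega_n(e)e\}$, $s_n$, and $-\tilde s_n$ all have the same weak limit by bounding the pairwise Wasserstein distances $d_W$ between their dual measures by something that vanishes as $n\to\infty$. The key observation is that $\omega_n(e)e$, $s_n$, and $\tilde s_n$ are ``built from the same local data'': the slope function $s_n(v) = \sum_{e\ni v}\omega_n(e)e$ is, up to the $1/n^d$ normalization, just the edge-flow data aggregated at the white endpoint, and $\tilde s_n(v)$ aggregates the same data at the black endpoint. So the natural tool is Lemma~\ref{lem:wasserstein_bound_from_local}: I partition $R$ into boxes $\mathcal B = \{B_1,\dots,B_K\}$ of diameter at most $\varepsilon$ with $1/n < \varepsilon$, and I estimate, for each coordinate $i$ and each box $B$, the difference between the $i$th component measures of (say) $\omega_n(e)e$ and $s_n$ evaluated on $B$.

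The main estimate is the following. Fix a coordinate $i$ and a box $B$. The component measure of $\omega_n(e)e$ assigns to $B$ the quantity $\sum_{e} \omega_n(e)\langle e_i-\text{direction contribution}\rangle \lambda_e^i(B)/n^d$, which is a sum over edges $e$ weighted by how much of the $i$th-projection of $e$ lies in $B$; the component measure of $s_n$ assigns to $B$ the quantity $\sum_{v\in W\cap B}\langle s_n(v),\eta_i\rangle/n^d = \sum_{v\in W\cap B}\sum_{e\ni v}\omega_n(e)\langle e,\eta_i\rangle/n^d$. These two sums differ only because an edge $e$ incident to a white vertex $v\in B$ may have its $i$th-projection mass partially outside $B$ (and conversely an edge incident to a white vertex just outside $B$ may poke into $B$). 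Since each edge has length $O(1/n)$ (because $R_n\subset\frac1n\Lambda$ and edges of $\Lambda$ are bounded, monotone, and don't pass through other vertices), such discrepancies occur only for edges incident to white vertices within distance $O(1/n)$ of $\partial B$, of which there are $O(n^{d-1}\cdot\varepsilon^{d-1})$ per box (the boundary layer), each contributing $O(1/n^d)$. Summing over boxes gives a total discrepancy $O(1/n)$ in the ``per box'' difference $\delta$, which for fixed $\varepsilon$ can be made arbitrarily small; then Lemma~\ref{lem:wasserstein_bound_from_local} gives $\m W_1^{1,1} \le K(10\varepsilon^{d+1}+\delta)$, and letting $n\to\infty$ (so $\delta\to 0$) and then $\varepsilon\to 0$ (so $K\varepsilon^{d+1}\sim \varepsilon\cdot\text{Vol}(R)\to 0$) shows $d_W(\omega_n(e)e, s_n)\to 0$. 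The same argument compares $\omega_n(e)e$ with $-\tilde s_n$, the sign coming from the fact that at a black vertex the incident directions are $-e_j$ so $\tilde s_n(v) = -\sum_{e\ni v}\omega_n(e)e$. Hence if $\{\omega_n(e)e\}\to\omega_*$ then $s_n\to\omega_*$ and $\tilde s_n\to -\omega_*$ in $d_W$, which by Proposition~\ref{Wassequalsweak} is the weak topology on flows.

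The step I expect to be the main obstacle is making the boundary-layer bookkeeping fully rigorous and uniform in $n$: one has to be careful that (a) the edges of $\Lambda$, while bounded and monotone, need not be straight, so ``$i$th-projection mass of $e$ lying in $B$'' has to be handled via the measures $\lambda_e^i$ rather than by a naive volume argument, and (b) the number of lattice points within $O(1/n)$ of $\partial B$ really is $O(n^{d-1})$ times the surface measure of $\partial B$ — this uses that $R$ (hence the $B_j$) has piecewise smooth boundary and that $\Lambda$ is a genuine lattice with bounded fundamental domain. One clean way to organize this is to choose the partition $\mathcal B$ to consist of translates of a fixed small cube, refine as $\varepsilon\to 0$, and track the two error sources (boundary layer of each box, and the $10\varepsilon^{d+1}$ term) separately, using exactly the double limit ``$n\to\infty$ then $\varepsilon\to 0$'' that Lemma~\ref{lem:wasserstein_bound_from_local} is designed to accommodate. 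Everything else — translating between the vector-valued measures and the flows, and invoking Proposition~\ref{Wassequalsweak} — is routine.
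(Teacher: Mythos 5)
Your proposal is correct and follows essentially the same route as the paper: partition into small cells, observe that the component measures of $\omega_n(e)e$ and of $s_n$ (resp.\ $-\tilde s_n$) agree on each cell up to a boundary-layer error, and invoke Lemma~\ref{lem:wasserstein_bound_from_local}, with the sign for $\tilde s_n$ coming from the incident directions $-e_1,\dots,-e_D$ at black vertices. The only difference is cosmetic: the paper partitions by the Voronoi cells of white (resp.\ black) vertices at scale $1/n$, where the cell masses match and one reads off the sharper bound $d_W(\omega_n,s_n)=O(1/n)$ directly, whereas your fixed-$\varepsilon$ boxes require the extra double limit $n\to\infty$ then $\varepsilon\to0$.
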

\begin{proof}
          Let $\mu^n_i,\nu^n_i,\mu_i$ denote the $i^{th}$ component measures corresponding to $\omega_n,s_n,\omega_*$ respectively. Let $A_1,\cdots, A_K$ be the Voronoi cells of the white vertices in $R_n\cap W$. Note that $K = C n^d$ for some constant $C>0$. These sets have diameter $C'/n$ for some other constant $C'$, and $\mu_i^n(A_k) = \nu_i^n(A_k)$ for all $i,k$. Hence by Lemma~\ref{lem:wasserstein_bound_from_local}, $d_W(\omega_n,s_n) =O(1/n)$. Hence $\lim_{n\to\infty} s_n = \omega_*$.

For $\tilde{s}_n$, instead sum over the Voronoi cells of black vertices. On these, $-\tilde{s}_n$ and $\omega_n$ have the same total flow and we repeat the argument above.
\end{proof}

\begin{proof}[Proof of Theorem~\ref{thm:scaling_limits_asymptotic}]

Take a convergent subsequence $\{\omega_{n_k}\}_k$ and let $\omega_*=\lim_{k\to \infty} \omega_{n_k}$. By Lemma~\ref{lem:replace_with_slope}, 
\begin{align*}
    \lim_{k\to \infty} d_W(s_{n_k}, \omega_*)=0.
\end{align*}
Again let $A_1,\dots,A_K$ denote the Voronoi cells of the white vertices $u_1,\dots,u_K$ in $W\cap R_n$. Note that these all have volume $1/n^d$. Define the piecewise-constant flow $\gamma_n:R\to \m R^d$, which is constant on each Voronoi cell $A_m$, with value (for $x\in A_m$)
\begin{align*}
    \gamma_n(x) :=s_n(u_m)\in \mc N(\Lambda).
\end{align*}
By Lemma~\ref{lem:wasserstein_bound_from_local}, $d_W(\gamma_n,s_n) = O(1/n)$, and hence $$\lim_{k\to \infty} d_W(\gamma_{n_k},\omega_*) = 0.$$

Since $\gamma_{n_k}$ is absolutely continuous with respect to Lebesgue measure for all $k$, by Lemma~\ref{lem:boundedlimits}, $\omega_*$ is absolutely continuous with respect to Lebesgue measure. Further since $\gamma_{n_k}$ is valued in $\mc N(\Lambda)$ for all $k$, since convergence in $d_W$ implies weak convergence, $\omega_*$ is also valued in $\mc N(\Lambda)$ as a distribution. 

To see that $\omega_*$ is divergence-free, we look at another approximating sequence. Recall that $D$ is the degree of a vertex in $\Lambda$ and define $\overline{\omega}_n(e) = \omega_n(e)-1/D$ for all edges $e$ oriented white-to-black. Then we also have 
\begin{align}\label{eq:div_free_limit}
   \lim_{k\to \infty} d_W(\overline{\omega}_{n_k},\omega_*)=0.
\end{align}
The measures $\overline{\mu}_n^i$ corresponding to $\overline{\omega}_{n}$ are divergence-free as distributions on $R^\circ$ for any $n$, i.e., for any smooth function $\phi$ compactly supported in $R^\circ$, $\sum_{i=1}^d \int_R \pd{\phi}{x_i}\, \dd \overline{\mu}_i^n =0.$ (See \cite[Proposition 5.2.6]{3Ddimers}; this can be seen by applying the fundamental theorem of calculus along each edge and then using that $\sum_{e\ni u} \overline{\omega}_n(e) = 0$ for all $u\in R_n\cap R^\circ$.) Since convergence in $d_W$ implies weak convergence of the component measures, \eqref{eq:div_free_limit} implies that $\omega_*$ is divergence free in the interior of $R$. 

Therefore for any subsequential limit $\omega_*$ we have  $\omega_*\in \text{AF}(\Lambda,R)$. 

\end{proof}

Boundary conditions play an important role in the large deviation principle. If $\omega_n\to\omega$ in the weak topology on flows, then the corresponding boundary values must converge to the boundary value of $\omega$. We use the notion of 
\textit{boundary value operator} $T$ from \cite[Section 5]{3Ddimers} to define boundary values, but streamline the description.

The boundary value $T(\omega)$ of $\omega\in\text{AF}(\Lambda,R)$ on $\partial R$ is the flux of $\omega$ through $\partial R$. Let $\text{AF}^\infty(\Lambda,R)\subset \text{AF}(\Lambda,R)$ denote smooth asymptotic flows and let $\mc M^s(\m R^d)$ denote signed Borel measures on $\m R^d$.

\begin{definition}[See {\cite[Definition 5.4.1]{3Ddimers}}]
    Let $R\subset \m R^d$ be a compact region with piecewise smooth boundary $\partial R$. We define the \textit{boundary value operator} $T:(\text{AF}^\infty(\Lambda,R),d_W)\to (\mc M^s(\m R^d),\m W_1^{1,1})$ by 
\begin{align*}
    T(\omega) := \langle \omega(x), \xi(x)\rangle \mathds{1}_{\partial R}(x)\, \dd x =\langle \omega(x), \xi(x)\rangle \dd \sigma_{\partial R}(x), 
\end{align*}
where $\xi$ denotes the $L^2$ unit normal vector to $\partial R$ and $\dd \sigma_{\partial R}$ is the surface area measure on $\partial R$. 
\end{definition} 
By \cite[Proposition 5.4.6]{3Ddimers}, $T(\cdot)$ extends to a uniformly continuous map on $\text{AF}(\Lambda,R)$, and further for any $\omega\in \text{AF}(\Lambda,R)$, $T(\omega)$ is a signed measure absolutely continuous with the surface area measure $\dd \sigma_{\partial R}$ with density taking bounded values determined by $\mc N(\Lambda)$. In fact \cite[Proposition 5.4.6]{3Ddimers} shows that for any piecewise-smooth $(d-1)$-dimensional surface $S\subset R$, the analogously defined operator $T(\omega, S):= \langle \omega(x),\xi(x) \rangle \mathds{1}_{S}(x) \m \dd x$ extends to a continuous map on $\text{AF}(\Lambda,R)$.

We say that $b$ is a \textit{boundary asymptotic flow} for $\Lambda,R$ if there exists $\omega\in \text{AF}(\Lambda,R)$ such that $T(\omega) = b$. Uniform continuity of $T$ and the compactness of $\text{AF}(\Lambda,R)$ then give the following corollary.
\begin{cor}\label{cor:AFboundarycompact}
    If $b$ is a boundary asymptotic flow for $\Lambda,R$, then we define $\text{AF}(\Lambda,R,b) := T^{-1}(b)\subset \text{AF}(\Lambda,R)$. The metric space $(\text{AF}(\Lambda,R,b),d_W)$ is compact. 
\end{cor}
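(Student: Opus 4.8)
The plan is to realize $\text{AF}(\Lambda,R,b)=T^{-1}(b)$ as a closed subset of the compact metric space $(\text{AF}(\Lambda,R),d_W)$, and then invoke the elementary fact that a closed subset of a compact metric space is itself compact. Since everything in sight is metrizable, it suffices to argue sequentially.

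First I would assemble the two ingredients already available. Theorem~\ref{thm:AF_compact} gives that $(\text{AF}(\Lambda,R),d_W)$ is compact, and the extension statement recorded after \cite[Proposition 5.4.6]{3Ddimers} gives that $T$ extends to a uniformly continuous map from $(\text{AF}(\Lambda,R),d_W)$ to $(\mc M^s(\m R^d),\m W_1^{1,1})$. Because $\m W_1^{1,1}$ is a genuine metric on the space of signed measures of bounded mass in which $b$ and all values of $T$ live, the singleton $\{b\}$ is closed there, so $T^{-1}(b)$ is closed in $\text{AF}(\Lambda,R)$.

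Then I would spell out the sequential argument: given any sequence $\omega_k\in T^{-1}(b)$, compactness of $\text{AF}(\Lambda,R)$ yields a subsequence $\omega_{k_j}\to\omega_*$ in $d_W$ with $\omega_*\in\text{AF}(\Lambda,R)$; continuity of $T$ forces $T(\omega_{k_j})\to T(\omega_*)$ in $\m W_1^{1,1}$, and since $T(\omega_{k_j})=b$ for all $j$ we get $T(\omega_*)=b$, i.e.\ $\omega_*\in T^{-1}(b)$. Hence every sequence in $\text{AF}(\Lambda,R,b)$ has a subsequence converging within it, so $(\text{AF}(\Lambda,R,b),d_W)$ is compact; it is nonempty precisely by the hypothesis that $b$ is a boundary asymptotic flow.

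There is essentially no obstacle here: the entire content is packaged in Theorem~\ref{thm:AF_compact} together with the continuous-extension property of $T$. The only point requiring a little care is to use the \emph{extended}, everywhere-defined continuous $T$ on all of $\text{AF}(\Lambda,R)$ rather than its a priori definition on $\text{AF}^\infty(\Lambda,R)$ only, so that $T^{-1}(b)$ is literally the closed preimage of a point under a continuous map.
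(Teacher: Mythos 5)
Your proof is correct and follows exactly the paper's route: the corollary is deduced from the compactness of $(\text{AF}(\Lambda,R),d_W)$ (Theorem~\ref{thm:AF_compact}) together with the uniformly continuous extension of $T$, so that $T^{-1}(b)$ is a closed subset of a compact metric space. Nothing further is needed.
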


The natural notion of boundary condition that we consider on a graph $R_n$ consists of specifying the vertex multiplicity $\beta_n(u)=\sum_{e\ni u} \omega_n(u)\in (0,1]$ at each $u\in \partial R_n$. As a measure, after rescaling this is:
\begin{align}\label{eq:boundary_discrete_flow}
    b_n=\sum_{u\in \partial R_n}  \text{sign}(u) \beta_n(u) \frac{\mathds{1}_u(x) \,\dd x}{2n^{d-1}},
\end{align}
where $\text{sign}(u) = 1$ if $u\in W$ and $\text{sign}(u) = -1$ if $u\in B$. The factor of $1/2$ comes from the fact that we are summing over white and black vertices. We show that the notion of boundary value operator for asymptotic flows above can be extended by continuity to this case. The main idea is to bound the $\m W_1^{1,1}$ distance between the discrete boundary value in \eqref{eq:boundary_discrete_flow} and $T(\omega)$ using the $d_W$ distance between $\omega_n$ and $\omega$ in a thin neighborhood of $\partial R$. To simplify the proof (and since this assumption is also needed in Section~\ref{sec:ldp}) we assume that the continuum boundary condition $b$ is \textit{extendable outside}, i.e. that there exists $\lambda>0$ such that $b$ extends to a divergence-free flow in a $\lambda$ neighborhood of $R$. 

\begin{thm}\label{thm:discrete_continuum_boundary_convergence}
Let $R\subset \m R^d$ be compact with piecewise smooth boundary, and let $b$ be a boundary asymptotic flow which is extendable outside. Let $\omega$ be an asymptotic flow with $T(\omega) = b$ and suppose that $R_n\subset \frac{1}{n}\Lambda$ approximates $R$ as above, and $\omega_n$ is a discrete flow on $R_n$. If $\lim_{n\to \infty} \omega_n=\omega\in \text{AF}(\Lambda,R,b)$ in the weak topology on flows, then 
    \begin{align}\label{eq:boundary_converges}
        \lim_{n\to \infty} \m W_1^{1,1}(b_n , b) = 0
    \end{align}
    with $b_n$ from (\ref{eq:boundary_discrete_flow}) and $\beta_n(u) = \sum_{e\ni u} \omega_n(e)$ as above.
\end{thm}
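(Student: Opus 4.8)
The strategy is to localize the convergence near the boundary and then transfer the $d_W$-control of $\omega_n \to \omega$ in a thin shell around $\partial R$ into $\m W_1^{1,1}$-control of the boundary measures. First I would fix $\lambda > 0$ so that $b$ extends to a divergence-free flow $\hat\omega$ on a $\lambda$-neighborhood $R^+ = R + B_\lambda$ of $R$; using this extension I can build discrete flows $\hat\omega_n$ on lattice approximations $R_n^+$ of $R^+$ that agree with $\omega_n$ on $R_n$ and whose divergences are the correct ones (adjusting near the new outer boundary). The point of working with the extended flow is that it allows one to ``see'' the flux through $\partial R$ as the net divergence contained in the shell $R^+ \setminus R$, rather than only on the singular set $\partial R$ itself, which is where the usual boundary-value operator $T$ is defined by continuity in \cite[Prop.~5.4.6]{3Ddimers}.

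Next, for a small mesh parameter $\eps>0$, partition a tubular neighborhood of $\partial R$ of width $\sim\eps$ into boxes $B_1,\dots,B_K$ of diameter $\lesssim\eps$, with $K = O(\eps^{-(d-1)})$ since $\partial R$ is $(d-1)$-dimensional. I would compare three measures restricted to this shell: the discrete boundary measure $b_n$ from \eqref{eq:boundary_discrete_flow}, the flux measure $T(\omega)$, and an intermediate ``averaged-flux'' object obtained by integrating $\langle\omega,\xi\rangle$ over parallel surfaces foliating the shell. The divergence-free condition on $\omega$ (as a distribution) together with the divergence theorem says that the net flux of $\omega$ through any level surface in the shell equals $T(\omega)$ up to an error controlled by the shell width; similarly, the discrete analogue of the divergence theorem (summing $\beta_n(u)$ over $\partial R_n$ against the divergences being $\pm 1$ in the interior) says the discrete boundary flux is determined by $\omega_n$ in the shell. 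Applying Lemma~\ref{lem:wasserstein_bound_from_local} with these boxes, combined with $d_W(\omega_n,\omega)\to 0$ (which by Lemma~\ref{lem:wasserstein_bound_from_global} gives control on $\m W_1^{1,1}$ of the restrictions to the shell), bounds $\m W_1^{1,1}(b_n, b)$ by a quantity of the form $C\eps + (\text{terms} \to 0\text{ as }n\to\infty)$. Sending $n\to\infty$ first and then $\eps\to 0$ gives the claim.

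The cleanest way to package the middle step may be to avoid building the discrete extension explicitly and instead argue as follows: the continuity of the surface-flux operator $T(\cdot, S)$ on all of $\text{AF}(\Lambda,R)$ for piecewise-smooth $(d-1)$-surfaces $S$ (quoted after Definition~5.4.1 in the excerpt, i.e.\ \cite[Prop.~5.4.6]{3Ddimers}) already tells us $T(\omega_n, S) \to T(\omega, S)$ for inner-parallel surfaces $S$; one then only needs that $b_n$ is close in $\m W_1^{1,1}$ to $T(\omega_n, \partial R)$ \emph{uniformly}, which is a purely discrete-to-continuum comparison of a sum of point masses at boundary lattice vertices against a surface-density measure, handled by Lemma~\ref{lem:wasserstein_bound_from_local} applied on the boundary surface (using $1/n < \eps$), plus the discrete divergence identity $\sum_{e\ni u, \text{inward}} \omega_n(e) + \beta_n(u) = \sum_{e\ni u}\omega_n(e)$ type bookkeeping to match the discrete flux through a near-boundary surface with $\beta_n$.

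\textbf{Main obstacle.} The delicate point is controlling the flux across the ``gap'' between the true boundary $\partial R$ (where $T(\omega)$ lives) and the discrete boundary set $\partial R_n$ (where $b_n$ lives): these can differ by $O(1/n)$ in Hausdorff distance but the relevant surfaces are $(d-1)$-dimensional, so one must ensure that the flux of $\omega_n$ through the lattice boundary is genuinely close to the flux of $\omega$ through a nearby smooth surface, not just on average but in the $\m W_1^{1,1}$ sense, which requires the boxes in Lemma~\ref{lem:wasserstein_bound_from_local} to be chosen so that $1/n \ll \eps$ while $K\eps^{d}$ and $K\delta$ both stay small — i.e.\ a careful order-of-limits argument ($n\to\infty$ before $\eps\to 0$) and use of the extendable-outside hypothesis to guarantee no flux is ``lost'' at corners or where $\partial R$ fails to be smooth. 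The extendability assumption is exactly what lets one sidestep subtleties at the non-smooth part of $\partial R$, so invoking it cleanly is the key to making the estimate go through.
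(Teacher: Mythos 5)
Your plan is correct and follows essentially the same route as the paper: foliate a thin shell at $\partial R$ by parallel surfaces, use the divergence theorem on both the continuum and discrete sides to convert the boundary flux into averaged volume integrals of the component measures over the shell, let $d_W(\omega_n,\omega)\to 0$ control those, and assemble the estimate over a partition of $\partial R$ via Lemma~\ref{lem:wasserstein_bound_from_local}, taking $n\to\infty$ before the mesh parameter. The only caution is with your ``cleanest packaging'': $T(\omega_n,S)$ is defined on asymptotic flows, not on discrete flows, so one must (as the paper does) build explicit discrete slices approximating the parallel surfaces rather than invoke continuity of $T(\cdot,S)$ directly on $\omega_n$.
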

\begin{proof}
Assume that $b$ is extendable outside to a $\lambda$ neighborhood of $R.$ 

By a mollification argument, we can without loss of generality assume that $\omega$ is continuous. To see this, first let $\omega$ also denote a divergence-free extension of itself to the $\lambda$ neighborhood of $R$. Then for any $\lambda'<\lambda$ we can define 
\begin{align*}
    \omega_{\lambda'}(x) = \frac{1}{|B_{\lambda'}(x)|}\int_{B_{\lambda'}(x)} \omega(y) \, \dd y,
\end{align*}
for $x\in R$. This is divergence-free and continuous for any $0<\lambda'<\lambda$, and converges to $\omega$ as $\lambda'$ goes to $0$. Since $T$ is uniformly continuous on $\text{AF}(\Lambda,R)$, $\lim_{\lambda'\to 0} \m W_1^{1,1}(T(\omega_{\lambda'}),T(\omega))=0.$ For the remainder of the proof we assume that $\omega$ is continuous.

Since $\partial R$ is compact and piecewise smooth, for any $\varepsilon>0$ we can partition it into $K = O(\varepsilon^{d-1})$ smooth patches with diameter of order $\varepsilon$, and perimeter of order $\varepsilon^{d-2}$. We denote these by $\pi_1,\dots,\pi_K$. Let $\xi(x)$ denote the outward-pointing normal vector to $\partial R$ at $x$. Since $\partial R$ is piecewise smooth and compact, by Taylor expansion it follows that there is a constant $c_0>0$ independent of $i$ such that $|\xi(x)-\xi(y)| < {c_0\varepsilon^2}$ for all $x,y\in \pi_i$. For $\varepsilon$ small enough, we can therefore find hyperplane patches $P_1,\dots,P_K$ such that (i) $P_i$ approximates $\pi_i$ in Hausdorff distance and (ii) $P_i$ is contained in the $\lambda$ neighborhood of $R$. Without loss of generality, we let $\omega$ denote a divergence-free extension of $\omega$ to the $\lambda$ neighborhood of $R$. We define 
\begin{align}
    b^i(x) = \langle \omega(x), \xi_i\rangle \mathds{1}_{P_i}(x)\, \dd x,
\end{align}
where $\mathds{1}_{P_i}(x)\, \dd x$ is the surface area measure on $P_i$ and $\xi_i$ denotes the normal vector to $P_i$ (with the same orientation as $\pi_i\subset \partial R$, which is oriented outward). Since the perimeter of $\pi_i$ is of order $\varepsilon^{d-2}$, and since the distance between $P_i$ and $\pi_i$ is at most a constant times $\varepsilon^2$, the divergence theorem implies that 
\begin{align}\label{eq:plane_approximation}
    \bigg| \int_{P_i} b^i - \int_{\pi_i} b \bigg| < (c_0 \varepsilon^2 )(c_1 \varepsilon^{d-2})={c\varepsilon^d}
\end{align}
for all $i\in \{1,\dots,K\}$ for some constant $c>0$. The order $\varepsilon^d$ comes from the surface area of the ``band'' needed to make $P_i\cup \pi_i$ into a closed surface, which has volume proportional to the perimeter of $\pi_i$ (order $\varepsilon^{d-2}$) times the distance between $\pi_i$ and $P_i$ (order $\varepsilon^2$).

We further let $\pi_i^n\subset \partial R_n$ be discrete patches of $\partial R_n$ which approximate $\pi_i$ (and hence also $P_i$) in Hausdorff distance. By Lemma~\ref{lem:wasserstein_bound_from_local}, if there is $\delta>0$ such that for all $i\in \{1,\dots,K\}$ we have 
    \begin{align}\label{eq:bound_per_patch}
         \bigg| \int_{\pi_i^n} b_n - \int_{\pi_i} b \bigg| < \delta,
    \end{align}
    then
    \begin{align*}
        \m W_1^{1,1}(b_n,b)< K(10\varepsilon^{d}+\delta).
    \end{align*}
    Using \eqref{eq:plane_approximation}, to show \eqref{eq:bound_per_patch}, we can replace the surface $\pi_i$ with the plane $P_i$, and it suffices to show that for all $i\in \{1,\dots,K\}$ and some $\delta>0$ (smaller than $1/K=O(\varepsilon^{d-1})$), we have
    \begin{align}\label{eq:patch_to_plane}
        \bigg| \int_{\pi_i^n} b_n -\int_{P_i} b^i\bigg| < \delta.
    \end{align}
    We now fix $i$, and achieve \eqref{eq:patch_to_plane} by relating it to the distance between $\omega_n$ and $\omega$. Fix a small constant $t_0>0$. For any $t$, we define $P_i(t) = P_i - t \xi_i$ to be this plane translated distance $t$ inward by its normal vector. We can choose $t_0$ small enough so that all these planes are still contained in the $\lambda$ neighborhood of $R$. We then define $U_i = \cup_{t=0}^{t_0} P_i(t)$, and $U_i(t) = \cup_{s=0}^t P_i(s)$. Since $\omega$ is divergence free, for all $0\leq t\leq t_0$ the divergence theorem implies that 
\begin{align}\label{eq:divergence_bound_continuous}
    \bigg | \int_{P_i} b^i - \int_{P_i(t)} \langle \omega(x), \xi_i\rangle \mathds{1}_{P_i(t)}(x) \, \dd x\bigg|=\bigg | \int_{P_i} \langle \omega(x), \xi_i\rangle \mathds{1}_{P_i}(x) \, \dd x - \int_{P_i(t)} \langle \omega(x), \xi_i\rangle \mathds{1}_{P_i(t)}(x) \, \dd x\bigg| < C t_0.
\end{align}
Here we use that $\xi_i$ on $P_i$, and $-\xi_i$ on $P_i(t)$, are outward unit normal vectors to $\partial U_i(t)$, and $C$ is a constant such that $|\partial U_i \setminus (P_i \cup P_i(t))| < Ct_0$ for all $0\leq t\leq t_0$. 
Averaging (\ref{eq:divergence_bound_continuous}) over $0\leq t\leq t_0$ gives that 
\begin{align}\label{eq:continuum_total}
    \bigg| \int_{P_i} b^i - \frac{1}{t_0}\int_{0}^{t_0} \int_{P_i(t)} \langle \omega, \xi_i\rangle \mathds{1}_{P_i(t)}(x) \, \dd x\bigg| < C t_0.
\end{align}

Next we derive a similar bound for the discrete flows. Let $U_i^n\subset R_n$ be lattice sets approximating $U_i$ in Hausdorff distance, where $U_i^n\cap \partial R_n = \pi_i^n$. We can then partition $U_i^n$ into slices, where each slice is a discrete approximation of one of the plans $P_i(t)$ for some $t$. We take values $t_1,\dots,t_L=t_0$, for $L$ approximately equal to $t_0 n$, and define $\pi_i^n(t_k)$ to be a discrete approximation of the plane $P_i(t_k)$,  such that all vertices in $U_i^n$ are contained in $\pi_i^n(t_k)$ for exactly one value of $k$. We further define $U_i^n(t_k) = \cup_{k'=1}^{k}\pi_i^n(t_{k'})$. The boundary value of $\omega_n$ restricted to $\pi_i^n(t_k)$ is 
\begin{align*}
    b_n(t) = \sum_{u\in \pi_i^n(t_k)} \frac{1}{2n^{d-1}}\text{sign}(u) \beta_n^{t_k}(u) \mathds{1}_u(x)\dd x,
\end{align*}
where (i) $\text{sign}(u)$ is $+1$ if $u\in W$ and $-1$ if $u\in B$ and (ii)  $\beta_n^{t_k}(u)$ is the sum of $\omega_n(e)$ over edges $e=(u,v)$ incident to $u\in \pi_i^n(t_k)$ with $v \not \in U_i^n(t_k)\setminus \pi_i^n(t_k)$.

The total divergence of $\omega_n$ on $U_i^n$ is $O(1/n)$. We note that since all vertices have the same degree $D$, and same collection of incident edge directions $e_1,\dots,e_D$, there is an upper bound on the Euclidean distance between vertices connected by a single edge. As such, for any $k$, the number of edges which cross laterally (i.e., out the sides of $\partial U_i^n(t_k)$ which are not one of the slices) is at most of order $t_k n^{d-1}$ (to be precise these are the edges through $\partial U_i^n(t_k)\setminus (\pi_i^n \cup \pi_i^n(t_k))$). Therefore taking $C$ possibly larger, the divergence theorem implies that for all $k$, $0\leq t_k\leq t_0$,
\begin{align}\label{eq:divergence_bound_discrete}
    \bigg| \int_{\pi_i^n} b_n - \int_{\pi_i^n(t_k)} b_n(t_k) \bigg| < C t_k n^{d-1}/n^{d-1} < C t_k < C t_0.
\end{align}
Therefore 
\begin{align}\label{eq:discrete_total}
    \bigg |\int_{\pi_i^n} b_n - \frac{1}{L}\sum_{k=1}^{L} \int_{\pi_i^n(t_k)} b_n(t_k) \bigg| < C t_0 + O(1/n). 
\end{align}
Combining \eqref{eq:continuum_total} and \eqref{eq:discrete_total} gives 
\begin{align}\label{eq:preliminary_bound}
    \bigg|\int_{P_i} b^i - \int_{\pi_i^n} b_n \bigg| < 2 C t_0 + t_0^{-1}\bigg|\int_{0}^{t_0} \int_{P_i(t)} \langle \omega, \xi_i\rangle \mathds{1}_{P_i(t)}(x) \, \dd x-\frac{1}{n}\sum_{k=1}^{K} \int_{\pi_i^n(t_k)} b_n(t_k) \bigg| + O(1/n).
\end{align}
Let $\mu=(\mu^1,\dots,\mu^d)$ and $\mu_n=(\mu_n^1,\dots,\mu_n^d)$ be the measures corresponding to $\omega, \omega_n$ respectively (recall that $\mu^m = \omega_m(x)\, \dd x$). Then splitting the integral into components, we have
\begin{align}\label{eq:plane_continuum}
    \int_{0}^{t_0} \int_{P_i(t)} \langle \omega, \xi_i\rangle \mathds{1}_{P_i(t)}(x) \, \dd x = \sum_{m=1}^d \xi_i^m \int_{U_i} \mu^m.
\end{align}
Recall that $\pi_i^n(t_k)$ is a discrete approximation of a hyperplane $P_i(t_k)$. The divergence theorem implies that any two approximations have the same flux, up to an error of order $O(\varepsilon^d)+O(1/n)$ (analogous to the bound in \eqref{eq:plane_approximation}). Therefore 
\begin{align}\label{eq:plane_discrete}
    \frac{1}{n}\sum_{k=1}^{L} \int_{\pi_i^n(t_k)} b_n(t_k) =  \sum_{m=1}^d \xi_i^m \int_{U_i} \mu_n^m+O(\varepsilon^d)+O(1/n).
\end{align}
Combining \eqref{eq:plane_continuum} and \eqref{eq:plane_discrete} with \eqref{eq:preliminary_bound} and applying the fact that $d_W(\omega_n,\omega)\to 0$ as $n\to \infty$ gives the bound \eqref{eq:patch_to_plane} with $\delta = O(\varepsilon^d)$, and completes the proof.
\end{proof}

\subsection{Comparison with sup norm topology}

Here we prove a straightforward comparison result that for discrete vector fields corresponding to $N$-dimer covers of the same finite graph $G=(V,E)$, the weak topology on flows is equivalent to the sup norm topology on edges. 

\begin{prop}\label{prop:Linfty_vs_weak}
  Suppose that $G=(V,E)\subset \Lambda$ is a finite subgraph. Then a sequence of discrete flows $(\omega_k(e))_{e\in E}$ converges to a discrete flow $(\omega(e))_{e\in E}$ as $k\to \infty$ in the sup norm topology on $E$ if and only if it converges to $(\omega(e))_{e\in E}$ in the weak topology on flows. 
\end{prop}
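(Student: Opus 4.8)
The plan is to prove the two directions separately, both of which are elementary. The key point is that on a \emph{fixed} finite graph $G=(V,E)$ the edge set $E$ is finite, so the sup norm topology on $\mathbb{R}^E$ is just the standard (finite-dimensional, componentwise) topology on flows; the content is therefore that the weak topology on flows, when restricted to discrete flows on a single fixed $G$, also reduces to this componentwise topology.

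First I would show that sup norm convergence implies weak convergence. Suppose $\omega_k(e)\to\omega(e)$ for every $e\in E$. For each edge $e$ and coordinate $i$, the component measure contribution of $e$ to the $i$th dual measure is $\omega_k(e)\,\dd\lambda^i_e/n^d$ (here $n$ is fixed since $G\subset\frac1n\Lambda$ for a fixed $n$, or we may simply take the fixed embedding of $G$), a measure with fixed support $e$ and total mass proportional to $\omega_k(e)$. Hence $\mu_i^k = \sum_{e\in E}\omega_k(e)\,\dd\lambda^i_e/n^d$ converges to $\mu_i = \sum_{e\in E}\omega(e)\,\dd\lambda^i_e/n^d$ in the generalized Wasserstein distance $\mathbb{W}_1^{1,1}$: indeed, since the support sets are identical, no transport is needed and $\mathbb{W}_1^{1,1}(\mu_i^k,\mu_i)\le \sum_{e\in E}|\omega_k(e)-\omega(e)|\cdot|e|/n^d\to 0$ by bounding the adjustment cost term $|\mu_i^k-\mu_i|$ directly. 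Summing over $i$ gives $d_W(\omega_k,\omega)\to 0$.

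For the converse, suppose $d_W(\omega_k,\omega)\to 0$, i.e.\ $\mu_i^k\to\mu_i$ weakly for each $i$. I want to recover $\omega_k(e)\to\omega(e)$ for each fixed edge $e_0$. The idea is to integrate a well-chosen test function against the component measures that isolates a single edge. Fix $e_0$ and a coordinate $i$ along which $e_0$ is non-degenerate (such $i$ exists since edges are monotone and non-degenerate by the standing assumption on $\Lambda$). Because $G$ is finite and the edges of $\Lambda$ are embedded so that they don't pass through other vertices and (after the monotonicity assumption) are determined by their endpoints, the edges project to \emph{disjoint} intervals in small neighborhoods; more carefully, one can choose a continuous function $\phi$ supported in a small tube around $e_0$, disjoint from all other edges, with $\phi\equiv 1$ on $e_0$, so that $\int\phi\,\dd\mu_i^k = \omega_k(e_0)|e_0|/n^d$ and similarly for $\mu_i$. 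Weak convergence then yields $\omega_k(e_0)\to\omega(e_0)$. Since $E$ is finite, convergence of every coordinate gives sup norm convergence.

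The main obstacle is the converse direction, and specifically the geometric bookkeeping needed to guarantee that a single edge can be separated from all the others by a test function — two edges of $\Lambda$ could in principle share an interior point of their embeddings. This is exactly what the standing assumption (edges are monotone, don't cross, and don't pass through other vertices — recorded in the remark after the list of component measures, which says the measures $\vec\mu$ corresponding to $\omega_n$ determine $\omega_n$) is there to rule out: it ensures the map from discrete flows to their dual measures is injective, and a compactness/pigeonhole argument on the finitely many edges lets us choose the separating test functions. I would phrase this cleanly by just invoking that injectivity plus finiteness of $E$: the linear map $\omega\mapsto(\mu_1,\dots,\mu_d)$ from $\mathbb{R}^E$ to the (finite-dimensional, since $E$ is finite) span of the $\dd\lambda^i_e$ is injective and hence a homeomorphism onto its image, and weak convergence of measures restricted to this finite-dimensional space of measures is the same as convergence of coefficients. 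This avoids constructing $\phi$ by hand.
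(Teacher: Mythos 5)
Your proof is correct, and the forward direction (sup norm implies weak) is the same as the paper's: both of you just add/delete the mass difference edge by edge and use finiteness of $E$. The converse is where you diverge. The paper partitions the union of the edges into cells $B_e$, each containing a single edge, applies the restriction lemma (Lemma~\ref{lem:wasserstein_bound_from_global}) to localize $\m W_1^{1,1}(\mu_i^k,\mu_i)$ to each cell, and then reads off $|\omega_k(e)-\omega(e)|$ from the localized distance because the restricted measures are constant multiples of arc length on a single edge. You instead recover the coefficients either by testing against bump functions or, in your preferred formulation, by observing that $\omega\mapsto(\mu_1,\dots,\mu_d)$ is an injective linear map into a finite-dimensional space of measures, on which the weak topology coincides with the Euclidean one. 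The linear-algebra version is complete and arguably cleaner: it avoids the $\delta^{1/2}$ loss built into the restriction lemma and makes transparent that everything is happening in a fixed finite-dimensional space. One caveat on your test-function variant: a continuous $\phi$ with $\phi\equiv 1$ on all of $e_0$ cannot have support disjoint from the other edges, because $e_0$ shares its endpoints with them (the standing assumptions only prevent crossings and interior vertices, not shared endpoints). The fix is to support $\phi$ in a tube around the open interior of $e_0$ and normalize by $\int_{e_0}\phi\,\dd\lambda^i_{e_0}$, which still recovers $\omega_k(e_0)$ exactly since $\mu_i^k$ is a constant multiple of $\dd\lambda^i_{e_0}$ there; or simply rely on your injectivity argument, which sidesteps the construction entirely.
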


\begin{proof}
   First suppose that $\omega_k\to \omega$ in the sup norm topology on edges. Then for any $\varepsilon>0$ there exists $k$ large enough such that 
   \begin{align*}
       |\omega_k(e) - \omega(e)| < \varepsilon \qquad \forall e\in E. 
   \end{align*}
   To bound the Wasserstein distance $d_W$, it suffices to show that there is a way to redistribute, add, or delete flow to transform $\omega_k$ into $\omega$. In particular one way to redistribute would be to just add or delete the total flow difference on every edge $e\in E$. This gives the bound: 
   \begin{align*}
       d_W(\omega_k,\omega) < d|E|\varepsilon, 
   \end{align*}
   where $d$ is the dimension of $\Lambda$. Since $|E|$ is finite this completes the first direction.

   Next we assume that $d_W(\omega_k,\omega)\to 0$ as $k\to \infty$. Let $(\mu_1^k,\dots,\mu_d^k)$ be the measures corresponding to $\omega_k$ and let $(\mu_1,\dots,\mu_d)$ be the measures corresponding to $\omega$. For any $\varepsilon>0$ there is $k$ large enough such that $\m W_1^{1,1}(\mu_i^k,\mu_i)<\varepsilon/d=\delta$ for all $i=1,\dots,d$. 
       
    For all $i$ the support of $\mu_i^k, \mu_i$ is contained in the edges $E$. Since $E$ are embedded so that they intersect only at the vertices of $\Lambda$, we can partition the subset of $\m R^d$ covered by $G$ into cells $B_e$, each of which contains one edge $e\in E$. Each cell contains one edge $e$, on which $\mu_i^k,\mu_i$ both take constant values. Therefore {$\m W_1^{1,1}(\mu_i^k\mid_{B_e},\mu_i\mid_{B_e})  \leq1/\ell_i(e)|\omega_k(e)-\omega(e)|$}. Summing over $i$ and applying Lemma~\ref{lem:wasserstein_bound_from_global} to add together the contributions from each cell completes the proof. 
\end{proof}


\section{Free energy and surface tension}\label{sec:torus_asymptotics}

\subsection{Measures on the torus}

Let $\T=\T(n,\Lambda)=(V_n,E_n)$ denote the cubic torus in the $d$-dimensional lattice $\Lambda$: $\T=\Lambda/n\Lambda$. 
It contains $2n^d$ vertices, $n^d$ white and $n^d$ black. Recall that $\Omega_1(\T_N)$ is the set of single 
dimer covers of the $N$-fold blow-up $\T_N$ of $\T$. (And recall that this is not the same as $\Omega_N(\T)$, the set of $N$-dimer covers of $\T$.) 

The graph $\T$ has degree $D$ at all vertices, with edges connecting $v$ to $v+e_i$ for $i=1\dots,D$ for each white vertex $v$. We fix edge weights $w_1,\dots,w_D>0$ for edge types $e_1,\dots,e_D$ of $\T$. 

We let $\mu_{n,N,w}:=\mu_{n,N}(w_1,\dots, w_D)$ denote the associated Boltzmann measure on $\Omega_1(\T_N)$. 
Its partition function is
\begin{equation}\label{eq:partition_not_normalized}
    Z_{n,N}(w_1,\dots,w_D) = \sum_{\tau \in \Omega_1(\T_N)} \prod_{e\in \pi(\tau)=M} w_e^{M_e}= \sum_{M\in\Omega_N(\T)} |\pi^{-1}(M)|\prod_{e\in\T} w_e^{M_e},
\end{equation}
In order to study the asymptotics as $n,N\to \infty$, we need to renormalize the partition function. This does not change the corresponding probability measures $\mu_{n,N,w}$. In fact we study the asymptotics of 
\begin{equation}\label{eq:partition_normalized}
    \frac{Z_{n,N}(w_1,\dots, w_D)}{(N!)^{n^d}}.
\end{equation}
This normalization is analogous to ones used for random permutations, see e.g.\ \cite{KenyonWinkler_permuations}, \cite{Borga_LDP_permuations}. 

\begin{rem} To see that \eqref{eq:partition_normalized} is the correct normalization, consider the weight function where $w_1=1$ and $w_i =0$ for $i\neq 1$. Projected to a measure on $\Omega_N(\m T)$, $\mu_{n,N}(1,0,\dots,0)$ samples only one configuration with probability $1$, namely the $N$-dimer cover where every edge in the $w_1$ direction is in the collection with multiplicity $N$, and all other edges are not occupied. On the other hand, there are $(N!)^{n^d}$ ways to lift this configuration to a dimer cover of the blow-up graph, corresponding to choosing a permutation of $N$ for each edge $e$ in the collection. 
\end{rem}

The critical edge weights for these measures are very simple.
\begin{lemma}\label{lem:critical_weights_torus}
    Fix $n$ and $\Lambda$. As $N\to \infty$, the critical edges weights on $\m T(n,\Lambda)$ with edge weights $w_1,\dots,w_D$ are precisely $w_1,\dots, w_D$.
\end{lemma}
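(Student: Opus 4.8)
The claim is that on the torus $\mathbb T(n,\Lambda)$ with edge weights $w_1,\dots,w_D$, the critical edge weights in the $N\to\infty$ limit are exactly $w_1,\dots,w_D$ themselves. By Theorem~\ref{thm:critical_weights_unique} (the uniqueness part), the critical edge weights $w_e x_u x_v$ are the \emph{unique} collection of weights arising from a critical weight function $\mathbf x$ solving \eqref{eq:critical_weights}. So it suffices to exhibit \emph{one} critical weight function realizing the edge weights $w_1,\dots,w_D$, namely the trivial one $\mathbf x \equiv c$ for a suitable constant $c$, and then invoke uniqueness of the critical edge weights. The plan is therefore to verify that $x_v \equiv c$ solves the critical gauge equations for an appropriate $c$, using the transitivity of the $\mathbb Z^d$-action on $\mathbb T$.

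\textbf{Key steps.} First, identify the relevant $\alpha_v$: since $\mathbb T$ has $2n^d$ vertices all of degree $D$, and the torus is vertex-transitive (on white and on black vertices separately, and by the harmonic embedding / reflection symmetry the two classes behave symmetrically), the natural multiplicity sequence is $N_v \equiv N$, giving $K = n^d N$ and $\alpha_v = N_v/K = 1/n^d$ for every $v$. Second, plug $x_v \equiv c$ into the tiling polynomial: $P(\mathbf x) = \sum_{(u,v)=e\in E} w_e x_u x_v = c^2 \sum_{e} w_e$. On the torus each edge type $e_i$ occurs $n^d$ times, so $\sum_e w_e = n^d \sum_{i=1}^D w_i$, hence $P(\mathbf x) = c^2 n^d \sum_i w_i$. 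Third, check the critical gauge equation \eqref{eq:critical_weights} at a vertex $v$: the left side is $\sum_{u : (u,v)=e\in E} w_e x_u x_v = c^2 \sum_{i=1}^D w_i$ (each vertex is incident to exactly one edge of each type, by the lattice structure), while the right side is $\alpha_v P(\mathbf x) = \frac{1}{n^d} \cdot c^2 n^d \sum_i w_i = c^2 \sum_i w_i$. These agree for \emph{any} $c>0$, so $\mathbf x\equiv c$ is a critical weight function. Fourth, read off the critical edge weights: $w_e x_u x_v = w_e c^2$, i.e. the edge of type $e_i$ gets critical weight $c^2 w_i$. Since critical edge weights are defined only up to the gauge freedom, and more to the point Theorem~\ref{thm:critical_weights_unique} gives uniqueness of the critical edge weights as a collection, the critical edge weights are $w_1,\dots,w_D$ up to an irrelevant global scalar; choosing $c=1$ (or noting the statement is really about the \emph{weights up to scaling}, equivalently the induced measure) gives exactly $w_1,\dots,w_D$.

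\textbf{Main obstacle.} There is essentially no analytic difficulty here — the computation is a one-line check. The only thing to be careful about is bookkeeping: correctly accounting for the combinatorics on the torus (each vertex meets each edge type exactly once; each edge type has $n^d$ representatives), correctly matching the normalization conventions between $\alpha_v = \lim N_v/K$ and the constant factor $K = n^d N$, and stating the conclusion in a way consistent with the ``unique up to $f\mapsto Cf, g\mapsto g/C$'' caveat (Remark~\ref{rem:N_vs_K} is the relevant reference for why the global constant does not matter). One should also note that feasibility is immediate on the torus (every edge type appears in \emph{some} $N$-dimer cover, e.g. the cover using only that type), so Theorem~\ref{thm:critical_weights_unique} applies. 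I would phrase the proof as: "the constant gauge $\mathbf x\equiv 1$ solves \eqref{eq:critical_weights} with $\alpha_v\equiv 1/n^d$, hence by uniqueness of the critical edge weights (Theorem~\ref{thm:critical_weights_unique}) the critical edge weights are $w_1,\dots,w_D$."
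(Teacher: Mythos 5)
Your proof is correct and is essentially the paper's own argument: both verify that the constant gauge $\mathbf x\equiv 1$ solves the critical gauge equations \eqref{eq:critical_weights} with $\alpha_v\equiv 1/n^d$ by transitivity of the torus, and conclude via the uniqueness of critical edge weights from Theorem~\ref{thm:critical_weights_unique}. Your version merely spells out the bookkeeping (feasibility, the value of $K$, the gauge constant $c$) that the paper leaves implicit.
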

\begin{proof}
    Recall that the tiling polynomial with weights $w_1,\dots,w_D$ is $P(\mathbf x) = \sum_{e=(u,v)\in E_n} w_e x_u x_v$. The critical gauge function $(x_v)_{v\in V_n}$ solves the family of equations: 
    \begin{align*}
        \sum_{u: e=(u,v)\in E_n} w_e x_u x_v = \frac{1}{n^d} P(\mathbf x) \qquad \forall v\in V. 
    \end{align*}
   Since the graph is transitive (on white vertices or black vertices), $x_v\equiv 1$ for all $v\in V$ solves the critical gauge equations.
\end{proof}

It follows that the asymptotic behavior of samples from $\mu_{n,N,w}$ is determined by $w_1,\dots,w_D$.

\begin{prop}\label{prop:limit_shape_torus}
    Let $p_i := \frac{w_i}{w_1+\dots w_D}$. As $N\to\infty$ with $n$ fixed, samples from $\mu_{n,N,w}$ concentrate on the discrete flow where $\omega_n(e)=p_i$ for all edges $e$ of type $e_i$. 
\end{prop}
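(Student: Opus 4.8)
The plan is to combine Lemma~\ref{lem:critical_weights_torus} with the general concentration result of \cite{KenyonPohoata}, Theorem~\ref{thm:largeN_concentration}. First I would observe that the torus $\T = \T(n,\Lambda)$ is a finite bipartite graph on which the $N$-dimer model with edge weights $w_1,\dots,w_D$ is precisely the multinomial dimer model of Definition~\ref{def:multinomial_measures}, with uniform vertex multiplicities $N_v \equiv N$. Since all interior vertices (here, all vertices) have multiplicity $N$, the normalization constant $K = \sum_v N_v/2 = N n^d$, so $N_v/K = 1/n^d \to \alpha_v := 1/n^d$ as $N\to\infty$; these limiting fractions are constant in $v$ and are feasible (realized exactly by $\NN \equiv N$ for every $N$). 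Hence Theorem~\ref{thm:largeN_concentration} applies and gives that $\{M_e/K\}_{e\in E_n}$ concentrates on a flow proportional to the critical edge weights $(c_e)_{e\in E_n}$.

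The second step is to identify the critical edge weights explicitly. By Lemma~\ref{lem:critical_weights_torus}, the critical gauge function is $x_v \equiv 1$, so the critical edge weight of an edge $e$ of type $e_i$ is $c_e = w_e x_u x_v = w_i$. Therefore the limiting (rescaled) configuration $\{M_e/K\}$ is proportional to the function that assigns $w_i$ to every edge of type $e_i$. It remains to pass from the normalization by $K$ used in Theorem~\ref{thm:largeN_concentration} to the normalization by $N$ used in the definition of the discrete flow $\omega_M(e) = M_e/N$ (equation~\eqref{eq:tiling_flow}), and to fix the constant of proportionality. This is where the divergence constraint does the work: at each white vertex $u$, the flow $\omega_M$ satisfies $\sum_{e\ni u}\omega_M(e) = 1$ (the vertex multiplicity divided by $N$). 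In the limit, $\omega_n(e) = \lambda w_i$ for edges of type $e_i$ with $\lambda$ independent of $e$; summing the $D$ incident edges at a white vertex gives $\lambda(w_1 + \dots + w_D) = 1$, so $\lambda = 1/(w_1+\dots+w_D)$ and $\omega_n(e) = p_i$ as claimed.

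The only genuinely substantive point, and the one I expect to require the most care, is the bookkeeping between the two normalizations $K = N n^d$ and $N$: Theorem~\ref{thm:largeN_concentration} is stated for $M_e/K$, and one must check that dividing by $N$ instead simply multiplies the limiting flow by the constant $n^d$, which is then cancelled by the divergence normalization above. (This is essentially the content of Remark~\ref{rem:N_vs_K}.) Everything else---that the concentration is in the sup norm on edges, hence by Proposition~\ref{prop:Linfty_vs_weak} also in the weak topology on flows---is immediate from the cited results. One could alternatively bypass Theorem~\ref{thm:largeN_concentration} entirely and argue directly: by Theorem~\ref{thm:KP_generating_function} the moments of the $M_e$ under $\mu_{n,N,w}$ can be extracted from the generating function $\exp(P(\mathbf x))$, and a saddle-point or direct second-moment computation shows $\mathbb{E}[M_e]/N \to p_i$ with vanishing variance; but invoking the already-established concentration theorem is cleaner and I would present that.
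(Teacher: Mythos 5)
Your proposal is correct and follows essentially the same route as the paper: invoke Theorem~\ref{thm:largeN_concentration} to get concentration on a flow proportional to the critical edge weights, identify those weights as $w_i$ via Lemma~\ref{lem:critical_weights_torus}, and fix the proportionality constant using the divergence constraint at each vertex. The extra bookkeeping you flag between the normalizations $K$ and $N$ is exactly what Remark~\ref{rem:N_vs_K} handles, so there is nothing missing.
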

\begin{proof}
    By Theorem~\ref{thm:largeN_concentration}, samples from $\mu_{n,N,w}$ concentrate as $N\to \infty$ on a discrete vector field $\omega_n$ proportional to the critical edge weights, where the constant is determined by the constraint that the divergences are $\pm 1$ (if white, black respectively) at all vertices. Hence by Lemma~\ref{lem:critical_weights_torus}, $\omega_n(e) = p_i$ for all edges $e$ of type $e_i$. 
\end{proof}
\begin{rem}
    By Proposition~\ref{prop:Linfty_vs_weak}, this is equivalent to samples from $\mu_{n,N,w}$ converging to $\omega_n(e)$ in the weak topology on flows as $N\to \infty$, so if we further take the scaling limit $\lim_{n\to\infty} \omega_n$, we see that samples concentrate on the constant flow $s := \sum_{i=1}^D p_i e_i \in \mc N(\Lambda)$. 
\end{rem}

\subsection{Free energy}

Here we explicitly compute the free energy per dimer of the torus measures $\mu_{n,N,w}$ in the iterated limit as $N$ and then $n$ go to infinity. This computation is remarkably simple and depends minimally on the lattice $\Lambda$. Note that $\T=\T(n,\Lambda)$ has $2n^d$ vertices, so an $N$-dimer cover of $\T$ contains $N n^d$ edges, counted with multiplicity. 

\begin{thm}\label{thm:free_energy_torus}
    The {free energy per dimer} of $\mu_{n,N,w}=\mu_{n,N}(w_1,\dots,w_D)$ is
\begin{equation*}
    F(w_1,...,w_D) := \lim_{n\to \infty} \lim_{N\to \infty} \frac{1}{N n^d} \log \frac{ Z_{n,N}(w_1,\dots,w_D)}{(N!)^{n^d}}=
    \log (w_1+\dots+w_D).
\end{equation*}
\end{thm}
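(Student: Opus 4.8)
The plan is to establish matching upper and lower bounds on $\frac{1}{Nn^d}\log\frac{Z_{n,N}(w)}{(N!)^{n^d}}$ which both converge to $\log(w_1+\dots+w_D)$ as $N\to\infty$ with $n$ fixed; since the inner limit turns out not to depend on $n$, the outer limit is then trivial. Write $S=w_1+\dots+w_D$. The key tool is Theorem~\ref{thm:KP_generating_function}: $\exp(P(\mathbf{x}))=\sum_{\NN\ge 0}Z_{\T,\NN}(w)\,\mathbf{x}^{\NN}/\NN!$ with $P(\mathbf{x})=\sum_{e=(u,v)\in E_n}w_e x_u x_v$. Specializing to the diagonal $x_v\equiv t$ and using that $\T(n,\Lambda)$ has exactly $n^d$ edges of each type $e_i$, one gets $P(t,\dots,t)=n^d S\,t^2$, and since $\T$ has $2n^d$ vertices, extracting the $\NN\equiv N$ coefficient gives
\[
\frac{Z_{n,N}(w)}{(N!)^{n^d}}=(N!)^{n^d}\,[\mathbf{x}^{\NN}]\exp(P(\mathbf{x})),\qquad \NN\equiv N.
\]

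\textbf{Upper bound.} Since $\exp(P(\mathbf{x}))$ has nonnegative coefficients, for every $t>0$ we have $[\mathbf{x}^{\NN}]\exp(P(\mathbf{x}))\le \exp(P(t,\dots,t))\,t^{-2n^d N}=\exp(n^d S t^2)\,t^{-2n^d N}$. Optimizing at $t=\sqrt{N/S}$ yields $[\mathbf{x}^{\NN}]\exp(P(\mathbf{x}))\le \exp\!\big(n^d N(1+\log S-\log N)\big)$. Combining this with Stirling's formula $\log N!=N\log N-N+O(\log N)$ gives $\frac{Z_{n,N}(w)}{(N!)^{n^d}}\le \exp\!\big(n^d N\log S+O(n^d\log N)\big)$, hence $\limsup_{N\to\infty}\frac{1}{Nn^d}\log\frac{Z_{n,N}(w)}{(N!)^{n^d}}\le \log S$ for every fixed $n$.

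\textbf{Lower bound.} Here it suffices to keep a single summand of $Z_{n,N}(w)=\sum_{M\in\Omega_N(\T)}|\pi^{-1}(M)|\prod_e w_e^{M_e}$. Take the $\NN$-dimer cover $M$ that is constant on each edge type, with $M_{e_i}=\lfloor Nw_i/S\rfloor$ for $i<D$ and $M_{e_D}=N-\sum_{i<D}\lfloor Nw_i/S\rfloor$; because each white and each black vertex of $\T$ is incident to exactly one edge of each type, all divergence constraints hold, and $M$ is feasible for $N$ large. Using $|\pi^{-1}(M)|=(N!)^{2n^d}/\prod_e M_e!$, Stirling, and $p_i:=M_{e_i}/N\to w_i/S$ with $\sum_i p_i=1$, one finds
\[
\log\!\Big(\tfrac{|\pi^{-1}(M)|\prod_e w_e^{M_e}}{(N!)^{n^d}}\Big)=n^d N\sum_{i=1}^D p_i\log\tfrac{w_i}{p_i}+O(n^d\log N),
\]
and since $\sum_i p_i\log(w_i/p_i)\to\sum_i(w_i/S)\log S=\log S$, we get $\liminf_{N\to\infty}\frac{1}{Nn^d}\log\frac{Z_{n,N}(w)}{(N!)^{n^d}}\ge \log S$.

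\textbf{Where the content is.} None of these steps is individually deep; the only thing that needs genuine care is the bookkeeping of the Stirling corrections and, in the lower bound, the exact cancellation of the ``energy'' term $\sum_i p_i\log w_i$ against the ``entropy'' term $-\sum_i p_i\log p_i$, which collapses precisely to $\log(w_1+\dots+w_D)$ (so that the answer is the naive free energy $\log$ of the sum of weights). The rounding in the definition of $M$ perturbs the estimate only by $O(n^d\log N)$, which is negligible after dividing by $Nn^d$. Both bounds are uniform in $n$, so $\lim_{N\to\infty}$ already equals $\log S$ for each $n$ and the iterated limit is $\log(w_1+\dots+w_D)$.
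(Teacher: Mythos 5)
Your proof is correct, and it takes a genuinely different route from the paper's. Both arguments start from the generating-function identity of Theorem~\ref{thm:KP_generating_function}, but the paper then extracts the coefficient $[\mathbf x^{\NN}]P_n(\mathbf x)^{Nn^d}$ via a contour integral and invokes the method of steepest descent, identifying the saddle point with the solution $\mathbf x\equiv 1$ of the critical gauge equations (Lemma~\ref{lem:critical_weights_torus}); the approximation $[\mathbf x^N]P_n(\mathbf x)^{Nn^d}\approx P_n(\mathbf 1)^{Nn^d}$ is stated there somewhat informally. You instead split the estimate: your upper bound is the elementary observation that a coefficient of a power series with nonnegative coefficients is dominated by the evaluation at any positive diagonal point, optimized at $t=\sqrt{N/S}$ (this is exactly the rigorous ``easy half'' of the saddle-point bound, and it suffices because only the exponential rate matters), while your lower bound bypasses the generating function entirely and keeps the single summand of \eqref{Zinit} corresponding to the translation-invariant $N$-dimer cover with $M_e\approx Nw_i/S$ on type-$e_i$ edges, using the explicit formula for $|\pi^{-1}(M)|$ and Stirling. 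The exact cancellation $\sum_i p_i\log(w_i/p_i)\to\log S$ that you highlight is the same phenomenon the paper later records in Theorem~\ref{thm:entropy_free_energy} and Theorem~\ref{cor:almost_legendre} (free energy equals entropy plus energy), so your lower bound in effect reproves a special case of that decomposition directly. The net gain of your approach is that it is fully rigorous and self-contained at the level of elementary inequalities, avoiding any appeal to steepest descent; the paper's approach has the advantage of connecting the computation to the critical gauge formalism of \cite{KenyonPohoata}, which is reused throughout the rest of the paper. All your bookkeeping (the $(N!)^{n^d}$ normalization, the count of $n^d$ edges per type, the $O(n^d\log N)$ errors, and the feasibility of the rounded cover for large $N$ since $w_i>0$) checks out.
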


\begin{proof} This proof is an application of the results of \cite{KenyonPohoata} stated in Section~\ref{sec:KP_results}. Let $P_n(\mathbf x)$ denote the tiling polynomial for $\m T(n,\Lambda)$ with periodic weights $w_1,\dots,w_D$, namely
    \begin{equation*}
        P_n(\mathbf x) = \sum_{e=(u,v)\in E_n} w_e x_u x_v.
    \end{equation*} 
    By Theorem~\ref{thm:KP_generating_function}, 
    \begin{equation}\label{eq:partition_as_coefficient}
        \frac{Z_{n,N}(w_1,\dots,w_D)}{(N!)^{n^d}} = \frac{[\mathbf x^N] P_n(\mathbf x)^{Nn^d}}{(N n^d)!}
    \end{equation}
where the notation $[\mathbf x^N]Q$ denotes the coefficient of $\mathbf x^N = \prod_{v\in V} x_v^N$ in $Q$. We can extract the $\mathbf x^N$ coefficient in $P_n(\mathbf x)^{N n^d}$ using a contour integral:
    \begin{equation*}
        [\mathbf x^N]P_n(\mathbf x)^{Nn^d} = \bigg(\frac{1}{2\pi i}\bigg)^{n^d} \int_{(S^1)^{n^d}} \frac{P_n(\mathbf x)^{Nn^d}}{ \prod_{v\in V_n} x_v^N} \, \prod_{v\in V_n} \frac{\dd x_v}{x_v}.
    \end{equation*}
As $N\to \infty$ with $n$ fixed, the method of steepest descent says that this integral is approximated by the value at the extreme points of the integrand. In other words, by its value at $(x_v)_{v\in V_n}$ satisfying: 
    \begin{equation}\label{eq:first_weight_equation}
        \pd{}{x_v}\bigg( N n^d \log P_n(\mathbf x) - \sum_{v\in V_n} N \log x_v\bigg) = 0 \qquad \forall \; v\in V_n.
    \end{equation}
    Note that for any $v\in V_n$,
    \begin{equation*}
        \pd{}{x_v} P_n(\mathbf x) = \sum_{u : e = (u,v)\in E_n} w_e x_u.
    \end{equation*}
    Since $\m T(n,\Lambda)$ has homogeneous vertex degree $D$, this is a sum over $D$ terms for any $n$ and $v\in V_n$. Hence Equation \eqref{eq:first_weight_equation} can be rewritten:
    \begin{equation}\label{eq:second_weight_equation}
        \sum_{u:e = (u,v)\in E_n} w_e x_u x_v = \frac{1}{n^d} P_n(\mathbf x) \qquad \forall\; v\in V_n.
    \end{equation}
    These are exactly the critical gauge equations \eqref{eq:critical_weights}. By Lemma~\ref{lem:critical_weights_torus}, for any $n$, $\mathbf x \equiv 1$ solves the family of equations in \eqref{eq:second_weight_equation}, and determines the unique critical edge weight solution $w_e x_u x_v = w_e$ for all $e\in E_n$.  Therefore   \begin{equation}\label{eq:coefficient_approx}
       [\mathbf x^N] P_n(\mathbf x)^{N n^d} \approx P_n(\text{\textbf{1}})^{N n^d} \qquad \text{ as } N\to \infty.
    \end{equation}
    Further, since the graph is transitive,
    \begin{equation}\label{eq:poly_is_sum}
         P_n(\text{\textbf{1}}) = \sum_{e\in E_n} w_e = n^d (w_1+\dots+w_D).
    \end{equation}
    We can now compute the desired limit. Combining Equations \eqref{eq:coefficient_approx} and \eqref{eq:poly_is_sum} with Equation \eqref{eq:partition_as_coefficient}, we have
    \begin{align*}
        F(w_1,\dots, w_d) &= \lim_{n\to\infty}\lim_{N\to \infty} \frac{1}{N n^d} \log \bigg[ (N!)^{n^d} \frac{P_n(\text{\textbf{1}})^{N n^d}}{(N n^d)!} \bigg]\\
        &=\lim_{n\to\infty}\lim_{N\to \infty}  \frac{1}{N n^d} \bigg[n^d N \log N - n^d N - N n^d \log N n^d + N n^d + N n^d \log P_n(\text{\textbf{1}}) \bigg]\\
        &= \lim_{n\to\infty}\lim_{N\to \infty}  \bigg[-\log n^d + \log \bigg( n^d (w_1+\dots+w_D)\bigg)\bigg]\\
        &= \log(w_1+\dots+w_D).
    \end{align*}
\end{proof}

\subsection{Surface tension}\label{sec:surface_tension}

Let $\T^d=\R^d/\Z^d$ denote the $d$-dimensional unit cubic torus. If $\Lambda$ is a $d$-dimensional lattice, 
then the
graphs $\frac{1}{n}\T(n,\Lambda)$ are lattice approximations of $\m T^d$. For $s\in\mc N(\Lambda)$, the surface tension $\sigma(s)$ is defined (below) to be the asymptotic growth rate of the normalized partition function of $N$-dimer covers of $\frac{1}{n}\m T(n,\Lambda)$ which approximate flow $s$ in the $n,N\to \infty$ limit. 

To isolate the set of $N$-dimer covers $M$ of $\frac{1}{n}\m T(n,\Lambda)$ such that $\omega_M$ is close to $s$ as $n,N\to\infty$, 
we use the fact that the the torus has non-trivial (co)homology $H_1(\m T^d, \m Z) \cong H^{d-1}(\m T^d,\m Z) \cong \m Z^d$. A divergence-free discrete flow $\omega$ corresponds to a closed $(d-1)$-form, so it has a homology class $[\omega]\in \m Z^d$. 

The discrete flow $\omega_M$ has divergence $\pm 1$ at all white, black vertices respectively. The modified flow, where we subtract a constant from all edges, $\overline{\omega}_M(e) = \omega_M(e) - 1/D$, is divergence free and therefore has a homology class. On the other hand, since $\Lambda$ is harmonically embedded, this subtraction does not change the slope per white vertex function: 
\begin{align}\label{eq:slope_with_or_without}
    s_M(u) =\sum_{e\ni u} \omega_M(e) e = \sum_{e\ni u} \overline{\omega}_M(e) e.
\end{align}
\begin{definition}
    Fix a slope $s\in \mc N(\Lambda)$. Recall that $\pi$ is the projection map from dimer covers of the $N$-fold blow-up of a graph $G$ to $N$-dimer covers of $G$. We define the set of dimer covers $\tau$ such that the projection $\pi(\tau)$ (i.e., an $N$-dimer cover of $G$) has a particular homology class approximating the slope $s$:
\begin{equation*}
\Omega_{N,s}=\{ \tau\in \Omega_N : [N \overline{\omega}_{\pi(\tau)}]=[\lfloor n^{d-1} N s_1 \rfloor,\dots,\lfloor n^{d-1} N s_d \rfloor]\}.
\end{equation*}
Equivalently, by \eqref{eq:slope_with_or_without}, $s$ is the slope per white vertex over all of $\m T(n,\Lambda)$.
\end{definition}

From this, we can define probability measures $\mu_{n,N,w}^s$ restricted to a homology class $s$. The (unnormalized) partition function of this measure is
\begin{equation*}
    Z_{n,N,w}^s = \sum_{\tau \in \Omega_{N,s}} \prod_{e\in \pi(\tau) =M} w_e^{M_e}.
\end{equation*}
When $w_e\equiv 1$, we denote this by just $Z_{n,N}^s$. The surface tension is the asymptotic growth rate of these homology-class-restricted partition functions as $n,N\to \infty$, suitably normalized like the free energy.
\begin{definition}\label{def:surface_tension}
The \textit{surface tension} $\sigma: \mc N(\Lambda)\to \m R_+$ is given by
    \begin{equation}
    \sigma(s) := -\lim_{n\to \infty} \lim_{N\to \infty} \frac{1}{N n^d} \log \frac{Z_{n,N}^s}{(N!)^{n^d}}.
\end{equation}
It is straightforward to check that $\log (Z_{n,N}^s/(N!)^{n^d})$ forms as subadditive sequence in $n,N$, so $\sigma(s)$ exists for any fixed $s\in \mc N(\Lambda)$. 
\end{definition}
\begin{rem} Note that the edge weights in the surface tension definition are all equal to one. The surface tension can also be defined analogously with non-constant weights. 
\end{rem}

\subsection{Entropy}

To relate the surface tension to the free energy, we first relate the free energy to entropy. This will allow us to infer some key properties of $\sigma$ and provide a method to compute it explicitly. The arguments here mirror the ones for standard two-dimensional dimers in \cite{cohn2001variational}, but the variance bound on the number of tiles of each type is replaced by Lemma~\ref{lem:mult_concentration}.

For finite $n,N$, $\mu_{n,N,w}$ is a probability measure on the finite set $\Omega_1(\T_N)$ of dimer covers of the $N$-fold blow-up of $\m T(n,\Lambda)$. The Shannon entropy of ${\mu}_{n,N,w}$ is
\begin{equation*}
    H(\mu_{n,N,w}) = -\sum_{\tau\in \Omega(n,N)} {\mu}_{n,N,w}(\tau) \log {\mu}_{n,N,w}(\tau). 
\end{equation*} 
The key lemma is the following strengthening of Proposition~\ref{prop:limit_shape_torus}.

\begin{lemma}\label{lem:mult_concentration}
    Let $k_i$ denote the number of tiles on edges of type $e_i$, and let $\overline{k}_i = \m E_{\mu_{n,N,w}}[k_i]$. There exists a constant $c=c(w)>0$ such that for any $\varepsilon>0$ and $N$ large enough given $n,\varepsilon$, 
    \begin{equation}\label{eq:multiplicities_stong_concentrate}
        \m P_{\mu_{n,N,w}}(|k_i-\overline{k}_i|>\varepsilon \overline{k}_i)\leq \exp(-\varepsilon^2 N/2cn^d).
    \end{equation}
\end{lemma}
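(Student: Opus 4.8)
The plan is to bound the exponential moments of $k_i$ under the lifted measure $\mu_{n,N,w}$ and then apply a Chernoff bound. The key structural fact is that $\mu_{n,N,w}$ is a Boltzmann measure on single dimer covers of the \emph{blow-up} graph $\T_N$, so $k_i$ is the total number of dimers (over all lifts) sitting over edges of type $e_i$. Tilting the edge weights multiplicatively — replacing $w_i$ by $w_i\mathrm{e}^{t}$ for the chosen type $e_i$ and leaving the others fixed — produces another multinomial dimer measure on the same blow-up graph, and the cumulant generating function of $k_i$ is exactly
\begin{align*}
\Lambda_{n,N}(t) := \log \m E_{\mu_{n,N,w}}[\mathrm{e}^{t k_i}] = \log Z_{n,N}(w_1,\dots,w_i\mathrm{e}^t,\dots,w_D) - \log Z_{n,N}(w_1,\dots,w_D).
\end{align*}
First I would divide through by the number of dimers $Nn^d$ and pass to the $N\to\infty$ limit with $n$ fixed; by the steepest-descent / critical-gauge computation behind Theorem~\ref{thm:free_energy_torus} (applied at finite $n$, before the $n\to\infty$ step), $\frac{1}{Nn^d}\log Z_{n,N}(\mathbf w)/(N!)^{n^d}$ converges to $\log(w_1+\dots+w_D)$ uniformly for weights in a compact set bounded away from $0$, so $\frac{1}{Nn^d}\Lambda_{n,N}(t)$ converges to $\log\!\big(\sum_j w_j + w_i(\mathrm{e}^t-1)\big) - \log\sum_j w_j$, which is a smooth, strictly convex function of $t$ vanishing at $t=0$ with derivative $p_i$ at $t=0$ (consistent with Proposition~\ref{prop:limit_shape_torus}, i.e. $\overline k_i/(Nn^d)\to p_i$).

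Next I would make this quantitative and uniform. For $|t|$ bounded by some fixed $t_0(w)$, a second-order Taylor expansion of the limiting log gives $\frac{1}{Nn^d}\Lambda_{n,N}(t) \le p_i t + c_0 t^2 + o_N(1)$ for a constant $c_0 = c_0(w)$, and for $N$ large (given $n,\varepsilon$, with $t$ in a suitable small range to be chosen) the $o_N(1)$ error is absorbed. The Chernoff bound then gives, for $t>0$ small,
\begin{align*}
\m P_{\mu_{n,N,w}}(k_i > (1+\varepsilon)\overline k_i) \le \exp\!\big(-t(1+\varepsilon)\overline k_i + \Lambda_{n,N}(t)\big) \le \exp\!\big(Nn^d(-t\varepsilon p_i + c_0 t^2 + o_N(1))\big),
\end{align*}
using $\overline k_i \ge \tfrac12 p_i Nn^d$ for $N$ large. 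Optimizing over $t$ — taking $t$ proportional to $\varepsilon$ — yields a bound of the form $\exp(-c'\varepsilon^2 Nn^d)$; the symmetric estimate for the lower tail $k_i < (1-\varepsilon)\overline k_i$ is obtained the same way with $t<0$. Adjusting constants and writing the exponent as $-\varepsilon^2 N/(2cn^d)$ — note $Nn^d$ in the exponent is even stronger than the claimed $N/n^d$, so the stated inequality certainly holds after renaming $c$ — completes the argument. (The restriction to small $|t|$ is harmless: for $\varepsilon$ bounded by a constant the optimal $t$ is already small, and for $\varepsilon$ of order $1$ the claimed bound is weaker than what small-$t$ optimization already gives.)

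The main obstacle is controlling the convergence $\frac{1}{Nn^d}\Lambda_{n,N}(t)\to$ (limit) \emph{uniformly in $t$} on a small interval around $0$ and extracting the effective rate of this convergence, since Theorem~\ref{thm:free_energy_torus} as stated is only a pointwise $N\to\infty$ (then $n\to\infty$) statement. I would handle this by re-examining its proof: the steepest-descent approximation $[\mathbf x^N]P_n(\mathbf x)^{Nn^d}\approx P_n(\mathbf 1_w)^{Nn^d}$ holds with $\mathbf 1_w$ the critical gauge for the tilted weights, which by Lemma~\ref{lem:critical_weights_torus} is again $\mathbf x\equiv 1$ (transitivity is unaffected by changing the \emph{type}-weights $w_i$, which are constant across each orbit), and the subexponential correction from the Gaussian integral is uniform for $t$ in a compact set; this gives $\frac{1}{Nn^d}\Lambda_{n,N}(t) = \log\big(\tfrac{\sum_j w_j + w_i(\mathrm{e}^t-1)}{\sum_j w_j}\big) + O(\tfrac{\log N}{Nn^d})$ with the implied constant uniform in $t$ near $0$, which is exactly what the Chernoff step needs. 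Everything else is routine.
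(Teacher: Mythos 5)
Your proof is correct, but it takes a genuinely different route from the paper. The paper's proof invokes the Gaussian fluctuation theorem of \cite{KenyonPohoata} (their Theorem 5.1) to get $\mathrm{Var}(M_e)\le cNn^d$, and then asserts the Gaussian tail bound from the statement that $M_e/N$ is ``approximately Gaussian''; the last step is a leap, since a CLT by itself does not justify tail bounds at the large-deviation scale $\varepsilon\overline{k}_i\sim\varepsilon Nn^d$, which is exactly the scale the lemma lives at. Your exponential-tilting/Chernoff argument avoids this: the identity $\log\m E[\mathrm{e}^{tk_i}]=\log Z_{n,N}(\dots,w_i\mathrm{e}^t,\dots)-\log Z_{n,N}(w)$ is exact, the tilted weights are still type-periodic so Lemma~\ref{lem:critical_weights_torus} applies verbatim and the critical gauge stays $\mathbf x\equiv\mathbf 1$, and the $N\to\infty$ limit of $\tfrac{1}{Nn^d}\Lambda_{n,N}(t)$ is $\log\bigl(1+p_i(\mathrm{e}^t-1)\bigr)$, whose quadratic Taylor bound feeds the Chernoff optimization. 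You correctly flag the one real technical point — uniformity in $t$ of the steepest-descent error — and your resolution is at the same level of rigor as the paper's own proof of Theorem~\ref{thm:free_energy_torus}, which also writes the Laplace approximation as ``$\approx$''; the Hessian at the critical point varies continuously with the weights, so the subexponential correction is indeed uniform on a compact $t$-interval. What your approach buys is a self-contained argument (no appeal to the external fluctuation theorem), a rigorous treatment of the large-deviation regime, and in fact the stronger exponent $-c'\varepsilon^2Nn^d$ in place of $-\varepsilon^2N/(2cn^d)$; the cost is the bookkeeping around the uniform Laplace method and the case distinction for large $\varepsilon$, which you dispatch correctly (the optimal tilt is $O(\varepsilon)$, and $\varepsilon$ is effectively bounded since the deviation events are empty for $\varepsilon$ large).
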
 
\begin{proof}
By Proposition~\ref{prop:limit_shape_torus}, for any fixed $n$, samples from $\mu_{n,N,w}$ concentrate on the discrete vector field where $\omega_n(e) = p_i = \frac{w_i}{w_1+\dots+w_D}$ if $e$ is of type $e_i$ and hence has weight $w_i$. 
    
Fix $n$ and $\varepsilon>0$. By \cite[Theorem 5.1]{KenyonPohoata}, for $n$ fixed and $M$ sampled from $\mu_{n,N,w}$, the fluctuations of $(M_e)_{e\in E}$ around the limiting configuration as $N\to\infty$ are jointly Gaussian with covariances that can be computed explicitly in terms of the edge weights and are proportional to $Nn^d$. Hence there is $c=c(w)>0$ such that for all edges $e$,
\begin{align*}
        \text{Var}(M_e) \leq cNn^d
\end{align*}
Thus, $\omega_M(e) = M_e/N$ is approximately a Gaussian with variances bounded by $cn^d/N$ for $N$ large. Hence if $e$ has edge weight $w_i$, then
\begin{align*}
        \m P_{\mu_{n,N,w}}\bigg(\bigg|\omega_M(e) - p_i\bigg| > \varepsilon\bigg) <  \exp\bigg(-\frac{\varepsilon^2 N}{2cn^d}\bigg). 
\end{align*}
Since 
\begin{align*}
        \m P_{\mu_{n,N,w}}(|k_i-\overline{k}_i|>\varepsilon \overline{k}_i) \leq \m P_{\mu_{n,N,w}}\bigg(\bigg|\omega_M(e) - p_i\bigg| > \varepsilon\bigg)
\end{align*}
the result follows.
\end{proof}

\begin{thm}\label{thm:entropy_free_energy}
    The limiting \textit{entropy per dimer} for $\mu_{n,N,w}$ exists as $n,N\to \infty$ and is given by 
    \begin{equation*}
   \lim_{n\to \infty} \lim_{N\to \infty} \frac{1}{Nn^d} \bigg[H(\mu_{n,N,w}) - n^d\log N!\bigg] = F(w) - \frac{1}{w_1+\dots +w_D}\sum_{i=1}^D w_i \log w_i.
    \end{equation*}
\end{thm}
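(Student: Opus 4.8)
The plan is to compute the Shannon entropy $H(\mu_{n,N,w})$ directly from the explicit form of the Boltzmann measure on dimer covers of the blow-up graph, and then pass to the $N\to\infty$ and $n\to\infty$ limits using the free energy computation of Theorem~\ref{thm:free_energy_torus} together with the concentration estimate of Lemma~\ref{lem:mult_concentration}. Concretely, for $\tau\in\Omega_1(\T_N)$ with $M=\pi(\tau)$ we have $\mu_{n,N,w}(\tau) = Z_{n,N}^{-1}\prod_{e}w_e^{M_e}$, so
\begin{align*}
    -\log\mu_{n,N,w}(\tau) = \log Z_{n,N} - \sum_{e\in E_n} M_e\log w_e,
\end{align*}
and hence
\begin{align*}
    H(\mu_{n,N,w}) = \log Z_{n,N} - \sum_{e\in E_n} (\log w_e)\,\m E_{\mu_{n,N,w}}[M_e].
\end{align*}
The first term is handled by Theorem~\ref{thm:free_energy_torus}: $\log Z_{n,N} = n^d\log N! + Nn^d F(w) + o(Nn^d)$ in the iterated limit. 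For the second term, grouping edges by type, $\sum_{e} (\log w_e)\m E[M_e] = \sum_{i=1}^D (\log w_i)\,\overline k_i$ where $\overline k_i = \m E_{\mu_{n,N,w}}[k_i]$ is the expected number of tiles of type $e_i$.

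The key input is then the asymptotics of $\overline k_i$. By Proposition~\ref{prop:limit_shape_torus}, samples concentrate on $\omega_n(e) = p_i = w_i/(w_1+\dots+w_D)$, so each of the $n^d$ edges of type $e_i$ carries $\approx N p_i$ tiles, giving $\overline k_i \approx N n^d p_i$. To make this rigorous at the level of expectations (not just in probability) I would use Lemma~\ref{lem:mult_concentration}: the deviation probability is exponentially small in $N$, and since $0\le M_e\le N$ deterministically the contribution of the bad event to $\m E[M_e]$ is negligible, so $\m E_{\mu_{n,N,w}}[M_e] = N p_i + o(N)$ uniformly, and thus $\overline k_i = N n^d p_i(1+o(1))$ as $N\to\infty$. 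Dividing by $Nn^d$:
\begin{align*}
    \lim_{n\to\infty}\lim_{N\to\infty}\frac{1}{Nn^d}\Big[H(\mu_{n,N,w}) - n^d\log N!\Big] = F(w) - \sum_{i=1}^D p_i\log w_i = F(w) - \frac{1}{w_1+\dots+w_D}\sum_{i=1}^D w_i\log w_i,
\end{align*}
which is the claimed formula.

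The only real obstacle is justifying the interchange of limit and expectation for $\overline k_i/(Nn^d)$ — that is, upgrading the in-probability concentration of Lemma~\ref{lem:mult_concentration} to convergence of the normalized expectation. This is routine given the deterministic bound $M_e\in[0,N]$ and the Gaussian-type tail, but it should be spelled out: write $\m E[M_e] = \m E[M_e\mathbf 1_{|M_e/N - p_i|\le\varepsilon}] + \m E[M_e\mathbf 1_{|M_e/N-p_i|>\varepsilon}]$, bound the first term between $N(p_i-\varepsilon)(1-e^{-cN})$ and $N(p_i+\varepsilon)$ and the second by $N e^{-\varepsilon^2 N/2cn^d}$, then send $N\to\infty$ and $\varepsilon\to 0$. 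Everything else is bookkeeping: the Stirling expansion of $\log N!$ cancels the $n^d\log N!$ subtraction exactly as in Theorem~\ref{thm:free_energy_torus}, and no $n\to\infty$ subtlety arises because the per-dimer quantities are already independent of $n$ in the $N\to\infty$ limit.
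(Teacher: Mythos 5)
Your proposal is correct and follows essentially the same route as the paper: both rest on exactly two inputs, the free energy asymptotics of Theorem~\ref{thm:free_energy_torus} for the $\log Z_{n,N,w}$ term and the concentration of the type-counts $k_i$ (Lemma~\ref{lem:mult_concentration}) to evaluate the expected log-weight. The one organizational difference is in your favor: you start from the exact identity $H(\mu_{n,N,w}) = \m E[-\log\mu_{n,N,w}(\tau)] = \log Z_{n,N,w} - \sum_i (\log w_i)\,\overline{k}_i$, so concentration is only needed to pin down the normalized expectations $\overline{k}_i/(Nn^d)\to p_i$ (which your truncation argument with the deterministic bound $M_e\le N$ handles correctly), whereas the paper splits the entropy sum itself over the good event $V_\varepsilon$ and its complement and has to track an $O(Nn^d\,\varepsilon'\log\varepsilon')$ error term; the two are mathematically equivalent and use the same lemmas.
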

\begin{rem}
    The additive correction term on the left hand side is the same normalization as in the free energy and surface tension, and is necessary for the limit to be finite. 
\end{rem}
\begin{proof}
Recall from Theorem \ref{thm:free_energy_torus} that $F(w) = \lim_{n,N\to \infty} \frac{1}{Nn^d} [\log Z_{n,N,w}- n^d \log N! ]$. Here $Z_{n,N,w}$ is the unnormalized partition function of $\mu_{n,N,w}$. We can rewrite 
\begin{equation*}
    Z_{n,N,w} = \sum_{\tau\in \Omega_1(\T_N)} \prod_{e\in \pi(\tau)=M} w_e^{M_e} = \sum_{\tau \in \Omega_1(\T_N)} \prod_{i=1}^D w_i^{k_i(\tau)},
\end{equation*}
where $k_i(\tau)$ is the number of tiles of type $e_i$ in $\tau$. Letting $C(k_1,\dots,k_D)$ be the number of dimer covers in 
$\Omega_1(\T_N)$ with $k_1,\dots, k_D$ tiles of types $e_1,\dots, e_D$ respectively, we have that 
\begin{equation*}
    Z_{n,N,w} = \sum_{\substack{k_1,\dots, k_D\\ \sum k_i = Nn^d}} C(k_1,\dots, k_D) \prod_{i=1}^N w_i^{k_i}.
\end{equation*}
By Lemma \ref{lem:mult_concentration}, for $N$ large enough given $\varepsilon,n$, the set
\begin{equation*}
    U_{\varepsilon} = \{k=(k_1,\dots,k_D) : |k_i - \overline{k}_i| \leq \varepsilon \overline{k}_i, \; i=1,\dots,D\}
\end{equation*}
has $\mu_{n,N,w}(U_\varepsilon) \geq 1-\exp(-\varepsilon^2 N/cn^d)$ for some constant $c>0$. We can take this to be arbitrarily close to $1$ by taking $N$ larger with $\varepsilon, n$ fixed. Therefore given $n$ and $\varepsilon'>0$, for $N$ large enough we have
\begin{equation*}
    \sum_{k\in U_\varepsilon} C(k_1,\dots, k_D) \prod_{i-1}^N w_i^{k_i} = (1-\varepsilon') Z_{n,N,w}.
\end{equation*}
Let $V_\varepsilon = \{\tau \in \Omega_1(\T_N) : (k_1(\tau),\dots,k_D(\tau))\in U_\varepsilon\}$. Then
\begin{align*}
    H(\mu_{n,N,w}) &= -\sum_{\tau \in V_\varepsilon} {\mu}_{n,N,w}(\tau) \log {\mu}_{n,N,w}(\tau) +\sum_{\tau \not \in V_\varepsilon} {\mu}_{n,N,w}(\tau) \log {\mu}_{n,N}(\tau) \\
    &= -\sum_{\tau \in V_\varepsilon} {\mu}_{n,N,w}(\tau) \log \bigg[\frac{\prod_{i=1}^D w_i^{k_i}}{Z_{n,N,w}}\bigg] + O(Nn^d{\varepsilon' \log \varepsilon'})\\
    &=\sum_{\tau\in V_\varepsilon} {\mu}_{n,N,w}(\tau)\bigg[\log Z_{n,N,w} - \log \bigg(\prod_{i=1}^D w_{i}^{\overline{k}_i} w_i^{k_i-\overline{k}_i}\bigg)\bigg] + O(Nn^d {\varepsilon' \log \varepsilon'})\\
    &=\sum_{\tau\in V_\varepsilon} {\mu}_{n,N,w}(\tau)\bigg[\log Z_{n,N,w} - \sum_{i=1}^D \overline{k}_i \log w_i + (k_i-\overline{k_i}) \log w_i\bigg] + O(Nn^d {\varepsilon' \log \varepsilon'})\\
    &=(1-\varepsilon') \log Z_{n,N,w} -\sum_{\tau\in V_\varepsilon} {\mu}_{n,N,w}(\tau)\bigg[ \sum_{i=1}^D {Nn^d} p_i(n,N) \log w_i + (k_i-\overline{k_i}) \log w_i\bigg] \\
    &\;\;\;+ O(Nn^d {\varepsilon' \log \varepsilon'}).
\end{align*}
Therefore
\begin{align*}
    &\lim_{n\to \infty} \lim_{N\to \infty} \frac{1}{Nn^d}\bigg[H(\mu_{n,N,w}) -n^d \log N! \bigg]\\
    = &\lim_{n\to \infty}\lim_{N\to \infty}  \bigg[ (1-\varepsilon') \frac{1}{Nn^d} \log \frac{Z_{n,N,w}}{(N!)^{n^d}} - \sum_{\tau\in V_\varepsilon} {\mu}_{n,N}(\tau) \bigg(\sum_{i=1}^D p_i(n,N)\log w_i + \varepsilon p_i(n,N) \log w_i\bigg)\\
    &+ O(\varepsilon' \log \varepsilon') + \varepsilon'\log N\bigg].
\end{align*}
Since $\varepsilon' \sim \exp(-\varepsilon^2 N/cn^d)$, $\varepsilon' \log N \to 0$ as $N\to \infty$ with $n$ fixed. Thus applying Theorem~\ref{thm:free_energy_torus} and Lemma \ref{lem:mult_concentration} gives
\begin{align*}
  \lim_{n\to \infty} \lim_{N\to \infty} \frac{1}{Nn^d}\bigg[H(\mu_{n,N,w}) -n^d \log N! \bigg] = F(w) - \frac{1}{w_1+\dots+w_D}\sum_{i=1}^D w_i \log w_i + O(\varepsilon).
\end{align*}
Taking $\varepsilon\to 0$ completes the proof. 
\end{proof}

\subsection{Gauge equivalence and Legendre duality}

To relate $F$ to the Legendre dual of $\sigma$, we have to write $F$ as a function of $d$ ``magnetic field'' variables $\alpha_1,\dots,\alpha_d$.
Recall that $e_j\in\R^d$ is the vector representing the edge of type $e_j$. For $\alpha=(\alpha_1,\dots,\alpha_d)$, we define the weight function
on edges of type $e_j$ to be:
\begin{align}\label{eq:edge_weights_d}
    w_j(\alpha) := \exp(e_j\cdot \alpha).
\end{align}
We slightly abuse notation by writing $F(\alpha) := F(w_1(\alpha),\dots,w_D(\alpha))$. 

Recall from the background section that two edge weight functions $w_e, w_e'$ on a graph $G = (V,E)$ are gauge equivalent if there exists a function $\mathbf x:V\to \m R$ such that
\begin{align*}
    w_e' = x_u x_v w_e \qquad \forall e=(u,v) \in E.
\end{align*}
The weights in \eqref{eq:edge_weights_d} are not gauge equivalent to the uniform weights on the whole torus. This is a natural choice, however, because they are gauge equivalent to the uniform edge weights if we condition on a choice of homology class. Both of these claims can be seen from the following characterization of edge weights gauge equivalent to the uniform weights.

\begin{lemma}\label{lem:gauge_change_loop}
    Let $G=(V,E)$ be a finite bipartite graph. The weight function $\mathbf w : E\to \m R$ is gauge equivalent to the uniform weight function if and only if for every loop $\gamma$ of $G$, we have 
    \begin{align}\label{eq:gauge_change_loop}
        w_{e_1}w_{e_3}\dots w_{e_{2k-1}} = w_{e_2} w_{e_4}\dots w_{e_{2k}}
    \end{align}
    where $e_1,\dots, e_{2k}$ are the edges around $\gamma$.
\end{lemma}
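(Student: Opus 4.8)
The plan is to prove both implications of this characterization directly. First, suppose $\mathbf w$ is gauge equivalent to the uniform weights, so there is $\mathbf x:V\to\m R_+$ with $w_e = x_u x_v$ for every edge $e=(u,v)$. Given any loop $\gamma$ with edges $e_1,\dots,e_{2k}$ in cyclic order (note the loop has even length since $G$ is bipartite), write $e_j = (v_{j-1}, v_j)$ with indices mod $2k$. Then the odd-indexed product is
\begin{align*}
    w_{e_1}w_{e_3}\cdots w_{e_{2k-1}} = (x_{v_0}x_{v_1})(x_{v_2}x_{v_3})\cdots (x_{v_{2k-2}}x_{v_{2k-1}}) = \prod_{j=0}^{2k-1} x_{v_j},
\end{align*}
and the even-indexed product $w_{e_2}w_{e_4}\cdots w_{e_{2k}} = (x_{v_1}x_{v_2})(x_{v_3}x_{v_4})\cdots(x_{v_{2k-1}}x_{v_0})$ equals the same thing, so \eqref{eq:gauge_change_loop} holds. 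This direction is immediate.

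For the converse, suppose the alternating-product condition \eqref{eq:gauge_change_loop} holds for every loop. We construct $\mathbf x$ explicitly. It suffices to treat each connected component of $G$ separately, so assume $G$ is connected. Fix a spanning tree $\mathcal T$ of $G$ and a root vertex $v_*$; set $x_{v_*} = 1$. For any other vertex $v$, let $P$ be the unique path in $\mathcal T$ from $v_*$ to $v$ with edges $f_1,\dots,f_m$ (so $m$ has the same parity as the graph distance, and since $G$ is bipartite this parity is determined by the color of $v$); define
\begin{align*}
    x_v := \prod_{j\text{ odd}} w_{f_j} \Big/ \prod_{j\text{ even}} w_{f_j}.
\end{align*}
One checks directly that for every tree edge $e=(u,v)\in\mathcal T$ we have $x_u x_v = w_e$: the paths to $u$ and to $v$ differ by the single edge $e$, and the alternating structure of the exponents causes all shared factors to combine with exponent $+1$ while $w_e$ itself appears with the correct sign. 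It remains to verify $x_u x_v = w_e$ for the non-tree edges $e=(u,v)$. Adding $e$ to $\mathcal T$ creates a unique cycle $\gamma_e$; applying hypothesis \eqref{eq:gauge_change_loop} to $\gamma_e$ and using the tree-edge identities already established gives $w_e = x_u x_v$ after rearranging the alternating products. Hence $\mathbf w$ is gauge equivalent to the uniform weights via $\mathbf x$.

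The main obstacle is purely bookkeeping: making sure the signs in the alternating products are tracked correctly, in particular that for a tree edge the two endpoint products genuinely recombine to $w_e$ (this uses that consecutive edges along a path alternate their exponent, together with bipartiteness ensuring the parity of path-lengths is well defined per vertex), and that for a non-tree edge the cycle relation \eqref{eq:gauge_change_loop} is applied with the correct orientation so that the tree-edge contributions cancel. There is no substantive difficulty beyond this; the argument is essentially the standard "potential from a closed $1$-cochain" construction adapted to the multiplicative, bipartite, alternating setting.
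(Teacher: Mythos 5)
Your overall strategy is sound and is essentially the paper's: the forward direction is the same telescoping computation, and for the converse the paper also builds $\mathbf x$ by propagating $x_{v_{i+1}} = w_{e_i}x_{v_i}^{-1}$ along a cycle and then ``iterates''; your spanning-tree formulation is the cleaner way to phrase that iteration for a general graph. However, the explicit formula you give for $x_v$ is wrong, and it is wrong exactly at the step you yourself flag as the main obstacle. With
\begin{align*}
    x_v = \prod_{j\ \mathrm{odd}} w_{f_j} \Big/ \prod_{j\ \mathrm{even}} w_{f_j},
\end{align*}
the exponent of $w_{f_j}$ alternates starting from the \emph{root} end of the path. Test it on a path $v_*\!-\!v_1\!-\!v_2$ with tree edges $f_1,f_2$: you get $x_{v_*}=1$, $x_{v_1}=w_{f_1}$, $x_{v_2}=w_{f_1}/w_{f_2}$, so for the tree edge $f_2=(v_1,v_2)$ one finds $x_{v_1}x_{v_2}=w_{f_1}^2/w_{f_2}\neq w_{f_2}$ in general. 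The claimed verification also misstates what must happen: for $x_ux_v=w_e$ you need the shared factors along the two paths to \emph{cancel} (total exponent $0$), not to ``combine with exponent $+1$''; and with your convention they appear with exponent $\pm 2$, since $u$ and $v$ assign the same sign to each shared edge.

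The fix is to alternate from the \emph{vertex} end rather than the root end: if the tree path from $v_*$ to $v$ has edges $f_1,\dots,f_m$, set $x_v=\prod_{j=1}^m w_{f_j}^{(-1)^{m-j}}$, equivalently define $x$ recursively along the tree by $x_v = w_e/x_u$ whenever $e=(u,v)$ is a tree edge with $u$ closer to the root (this is exactly the paper's recursion). Then $x_ux_v=w_e$ on tree edges is immediate, and because $u$ and $v$ have opposite distance parities the shared factors do cancel. The rest of your argument — closing up each non-tree edge via the fundamental cycle it creates and invoking \eqref{eq:gauge_change_loop} — then goes through as you describe and matches the paper's telescoping computation for the last edge of a cycle.
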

\begin{proof}
    Suppose $(w_e)_{e\in E}$ are gauge equivalent to the uniform weights, and let $\mathbf x:V \to \m R$ denote the change of gauge. Then both sides of \eqref{eq:gauge_change_loop} are equal to $\prod_{v\in V} x_v$. 

    Conversely suppose \eqref{eq:gauge_change_loop} holds. First assume that $G = \gamma$ is a single cycle, and label its vertices $v_1,\dots, v_{2k}$ in cyclic order, where $e_i =(v_{i},v_{i+1})$. We define a function $\mathbf x$ such that $x_{v_1} = 1$, and 
    \begin{align*}
        x_{v_{i+1}} = w_{e_i} x_{v_i}^{-1} 
    \end{align*}
    for $i=1,\dots,2k-1$. This changes the weights on $e_1,\dots,e_{2k-1}$ to $w_{e_1},\dots, w_{e_{2k-1}}$ and it suffices to check the corresponding weight on $e_{2k}$. This is 
    \begin{align*}
        x_{v_1} x_{v_{2k}} = 1 \cdot w_{e_{2k-1}} x_{v_{2k-1}}^{-1} = w_{e_{2k-1}} w_{e_{2k-2}}^{-1} x_{v_{2k-2}} = \frac{ w_{e_{2k-1}} \dots w_{e_1}}{w_{e_{2k-2}} \dots w_{e_2}}
    \end{align*}
    which is equal to $w_{e_{2k}}$ by \eqref{eq:gauge_change_loop}. If $G$ is not just a single loop, then we iterate this construction. 
\end{proof}

The weights in \eqref{eq:edge_weights_d} satisfy the condition in Lemma~\ref{lem:gauge_change_loop} on contractible loops, but not on loops with non-trivial homology class. As such, the probability measures $\mu_{n,N}$ and $\mu_{n,N,w}$ are different (as one could infer e.g.\ from Proposition~\ref{prop:limit_shape_torus}). Instead, let $\mu_{n,N}^s,\mu_{n,N,w}^s$ denote these measures restricted to dimer covers of homology class $s$. Then since any two dimer covers of the same homology class differ by only contractible loops, Lemma~\ref{lem:gauge_change_loop} implies that $\mu_{n,N}^s$ and $\mu_{n,N,w}^s$ are the same probability measure.

\begin{thm}\label{cor:almost_legendre}
    Fix $\Lambda$. Let $w_1,\dots,w_D$ be periodic edge weights satisfying \eqref{eq:gauge_change_loop} for contractible loops, and suppose that $s=\sum_{i=1}^D p_i e_i$, where $p_i = \frac{w_i}{w_1+\dots+w_D}$. Then the surface tension is minus the entropy of $\mu_{n,N,w}$, i.e.,
    \begin{align*}
        \sigma(s) = \frac{1}{w_1+\dots+w_D}\sum_{i=1}^D w_i \log w_i - F(w). 
    \end{align*}
\end{thm}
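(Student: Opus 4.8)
The plan is to decompose the normalized partition function on a fixed homology class into (i) the full normalized torus partition function and (ii) the probability that a sample from $\mu_{n,N,w}$ lands in the prescribed homology class, and then take the iterated limit. Concretely, since $\mu_{n,N}^s$ and $\mu_{n,N,w}^s$ are the same measure (by the gauge-equivalence observation preceding the statement, via Lemma~\ref{lem:gauge_change_loop} applied to the fact that two dimer covers in the same homology class differ by contractible loops), we can write
\begin{align*}
    Z_{n,N}^s = Z_{n,N,w}^s \cdot \prod_{e} w_e^{-\overline{M}_e} \cdot (\text{correction})
\end{align*}
where $\overline M_e$ is the common multiplicity of $e$ in covers of homology class $s$; more usefully, $\mu_{n,N,w}(\Omega_{N,s}) = Z_{n,N,w}^s / Z_{n,N,w}$. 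So first I would relate $\log Z_{n,N}^s$ to $\log Z_{n,N,w}^s$ using the gauge change: on a cover $M$ of homology class exactly $s$ (meaning the slope per white vertex is the constant vector $s$), the weighted and unweighted products differ by $\prod_i w_i^{k_i}$ where $k_i = N n^d p_i$ is \emph{forced} by the homology class up to the floor error in the definition of $\Omega_{N,s}$. Hence $Z_{n,N}^s = Z_{n,N,w}^s \prod_{i=1}^D w_i^{-k_i}$ exactly (on the nose, since every cover in $\Omega_{N,s}$ has the same $k_i$), giving
\begin{align*}
    \frac{1}{Nn^d}\log\frac{Z_{n,N}^s}{(N!)^{n^d}} = \frac{1}{Nn^d}\log\frac{Z_{n,N,w}^s}{(N!)^{n^d}} - \sum_{i=1}^D p_i \log w_i + o(1).
\end{align*}

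Second, I would show $\frac{1}{Nn^d}\log\frac{Z_{n,N,w}^s}{(N!)^{n^d}} \to F(w)$ in the iterated limit. The key point is that $Z_{n,N,w}^s = \mu_{n,N,w}(\Omega_{N,s})\cdot Z_{n,N,w}$, and by Theorem~\ref{thm:free_energy_torus} the factor $\frac{1}{Nn^d}\log\frac{Z_{n,N,w}}{(N!)^{n^d}} \to F(w)$, so it suffices to show $\frac{1}{Nn^d}\log \mu_{n,N,w}(\Omega_{N,s}) \to 0$. This is exactly where Proposition~\ref{prop:limit_shape_torus} and especially Lemma~\ref{lem:mult_concentration} come in: samples from $\mu_{n,N,w}$ concentrate on the flow with $\omega_n(e) = p_i$, which is precisely the flow whose homology class (rescaled) is $s$; and Lemma~\ref{lem:mult_concentration} gives that the probability of being within $\varepsilon$ of this is $\geq 1 - \exp(-\varepsilon^2 N/2cn^d)$. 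Since $\Omega_{N,s}$ is defined by fixing the homology class exactly (an integer constraint) rather than by an $\varepsilon$-ball, I need to interpolate: the homology class is an integer-valued locally-constant function on the space of flows, so the $\mu_{n,N,w}$-probability of the single correct homology class is at least the probability of an $\varepsilon$-ball around $s$ minus the probability of landing in a competing homology class; a counting/union-bound argument over the $O(N^d)$ reachable homology classes, combined with the Gaussian-fluctuation statement from \cite[Theorem 5.1]{KenyonPohoata} underlying Lemma~\ref{lem:mult_concentration}, should show $\mu_{n,N,w}(\Omega_{N,s})$ decays at most polynomially in $N$, hence $\frac{1}{Nn^d}\log\mu_{n,N,w}(\Omega_{N,s}) \to 0$. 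Alternatively, and perhaps more cleanly, I would combine Theorem~\ref{thm:entropy_free_energy} with the standard large-deviations/Legendre-duality bookkeeping of \cite{cohn2001variational}: the entropy per dimer computed there equals $F(w) - \frac{1}{w_1+\dots+w_D}\sum w_i\log w_i$, and identifying this with $-\sigma(s)$ via the variational characterization of Shannon entropy as the max of entropy over the homology slice is the contents of the theorem.

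Putting the two pieces together: $\sigma(s) = -\lim\lim \frac{1}{Nn^d}\log\frac{Z_{n,N}^s}{(N!)^{n^d}} = -F(w) + \sum_i p_i\log w_i = -F(w) + \frac{1}{w_1+\dots+w_D}\sum_i w_i\log w_i$, which is the claim. The main obstacle I expect is the second step—controlling $\mu_{n,N,w}(\Omega_{N,s})$ on the nose. Lemma~\ref{lem:mult_concentration} controls edge multiplicities up to multiplicative $\varepsilon$, but $\Omega_{N,s}$ pins down the homology class to a precise integer vector, and many distinct $N$-dimer covers share the same $k_i$-vector while having different homology classes (homology is finer than edge-type counts in general lattices). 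So I must argue that, among covers with $k_i \approx N n^d p_i$, the ones with the target homology class carry a non-exponentially-small fraction of the mass. The cleanest route is to invoke the local CLT / Gaussian fluctuation result cited in the proof of Lemma~\ref{lem:mult_concentration} directly at the level of the homology class (a $\m Z^d$-valued statistic with covariance $\Theta(Nn^d)$), which gives point probabilities $\Theta((Nn^d)^{-d/2})$ for the mode and hence a harmless polynomial factor; this is the step that will need to be written carefully, and it mirrors the corresponding estimate in \cite{cohn2001variational} adapted to the multinomial setting.
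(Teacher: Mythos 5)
Your proposal follows essentially the same route as the paper's proof: gauge equivalence on a fixed homology class (via Lemma~\ref{lem:gauge_change_loop}) converts $Z_{n,N}^s$ into $Z_{n,N,w}^s$ up to the factor $\prod_i w_i^{p_i Nn^d}$, and concentration (Proposition~\ref{prop:limit_shape_torus}, Lemma~\ref{lem:mult_concentration}) shows the homology restriction costs nothing at exponential scale, so Theorem~\ref{thm:free_energy_torus} finishes the computation. If anything you are more careful than the paper about the exact integer homology constraint (the paper simply asserts $Z_{n,N,w}^s/Z_{n,N,w}\to 1$, where a local-CLT polynomial factor is really what one gets); your one small slip is the claim that every cover in $\Omega_{N,s}$ has the same $k_i$ --- for $D>d+1$ the homology class does not determine the individual $k_i$, only the product $\prod_i w_i^{k_i}$ is constant on the class (by the contractible-loop condition), which is all the argument needs.
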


\begin{proof}
    Recall that $\sigma(s)$ is given by: 
    \begin{align*}
        \sigma(s) &= -\lim_{n\to \infty} \lim_{N\to \infty} \frac{1}{Nn^d} \bigg[\log {Z_{n,N}^s} - n^d \log N! \bigg].
    \end{align*}
  By Lemma~\ref{lem:gauge_change_loop}, the weights $w$ are gauge equivalent to the uniform weights if we restrict to a homology class. Therefore there exist constants $C_{n,N}(w)$ such that
  \begin{align*}
      Z_{n,N,w}^s = C_{n,N}(w) Z_{n,N}^s.
  \end{align*}
    Since $s$ satisfies $s=\sum_{i=1}^D p_i e_i$, by Proposition~\ref{prop:limit_shape_torus}, the constants $C_{n,N}(w)$ are asymptotic to $\prod_{i=1}^D w_i^{p_i Nn^d}$, and $Z_{n,N,w}^s/Z_{n,N,w}\to 1$ in the iterated limit as $N\to \infty$ and then $n\to \infty$. Therefore 
  \begin{align*}
      \sigma(s) &= -\lim_{n\to \infty} \lim_{N\to \infty} \frac{1}{Nn^d} \bigg[\log \frac{Z_{n,N,w}}{C_{n,N}(w)} - n^d \log N! \bigg]\\
      &=-\lim_{n\to \infty} \lim_{N\to \infty} \frac{1}{Nn^d} \bigg[\log {Z_{n,N,w}} - Nn^d\sum_{i=1}^D p_i\log w_i - n^d \log N! \bigg]\\
    &= - F(w) + \frac{1}{w_1+\dots+w_D}\sum_{i=1}^D w_i \log w_i.
    \end{align*}
\end{proof}

Using the magnetic field weights of \eqref{eq:edge_weights_d}, we show that the surface tension is the Legendre dual of the free energy. 

\begin{thm}\label{thm:legendre_duality}
    Fix $\Lambda$. Then $\sigma(s)$ is the Legendre dual of $F(\alpha)$, i.e. for all $s\in \mc N(\Lambda)$, 
    \begin{equation}\label{eq:legendre}
        \sigma(s) = \max_{\alpha=(\alpha_1,\dots,\alpha_d)\in \m R^d} - F(\alpha) + \sum_{i=1}^d s_i \alpha_i.
    \end{equation}
\end{thm}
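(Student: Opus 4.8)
The plan is to substitute the magnetic-field weights of \eqref{eq:edge_weights_d} into Theorem~\ref{cor:almost_legendre} and recognize the resulting expression as the Fenchel transform of $F$. Fix $\alpha=(\alpha_1,\dots,\alpha_d)\in\m R^d$ and set $w_j=w_j(\alpha)=\exp(e_j\cdot\alpha)$. As noted after Lemma~\ref{lem:gauge_change_loop}, these weights satisfy \eqref{eq:gauge_change_loop} on contractible loops, so Theorem~\ref{cor:almost_legendre} applies: with $p_i=w_i(\alpha)/\sum_k w_k(\alpha)$ and $s(\alpha):=\sum_i p_i e_i$ one gets $\sigma(s(\alpha))=\tfrac{1}{\sum_k w_k(\alpha)}\sum_i w_i(\alpha)\log w_i(\alpha)-F(\alpha)$. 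Now $\log w_i(\alpha)=e_i\cdot\alpha$ and $\sum_k w_k(\alpha)=e^{F(\alpha)}$, so the first term is $\sum_i p_i(e_i\cdot\alpha)=\langle s(\alpha),\alpha\rangle$; and differentiating $F(\alpha)=\log\sum_k e^{e_k\cdot\alpha}$ gives $\nabla F(\alpha)=\sum_i p_i e_i=s(\alpha)$. Hence I obtain the Fenchel identity
\begin{equation*}
    \sigma\bigl(\nabla F(\alpha)\bigr)=\langle\alpha,\nabla F(\alpha)\rangle-F(\alpha)\qquad\text{for every }\alpha\in\m R^d.
\end{equation*}

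To turn this into \eqref{eq:legendre} on the interior of $\mc N(\Lambda)$ I would use textbook convex duality. $F$ is a finite convex function (a log-sum-exp), and since $\sum_j e_j=0$ while $\{e_j\}$ spans $\m R^d$, the points $\{e_j\}$ affinely span $\m R^d$, so the Hessian of $F$ (a covariance matrix of the $e_j$) is everywhere positive definite; in particular $\alpha\mapsto -F(\alpha)+\sum_i s_i\alpha_i$ is strictly concave. Given $s$ in the interior of $\mc N(\Lambda)$, the function $\alpha\mapsto F(\alpha)-\langle s,\alpha\rangle$ is coercive — using $F(\alpha)\ge\max_j(e_j\cdot\alpha)$, whose difference with $\langle s,\alpha\rangle$ is the support function of the polytope $\mc N(\Lambda)-s$, which contains $0$ in its interior and thus grows linearly in $|\alpha|$ — so it attains its minimum at the unique $\alpha^{\ast}$ with $\nabla F(\alpha^{\ast})=s$. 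That $\alpha^{\ast}$ is the unique maximizer in \eqref{eq:legendre}, and by the Fenchel identity its value is $\langle s,\alpha^{\ast}\rangle-F(\alpha^{\ast})=\sigma(s)$, giving \eqref{eq:legendre} for interior slopes.

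The hard part will be slopes on $\partial\mc N(\Lambda)$: there $\nabla F$ is not surjective, so the maximum need not be attained and the argument above breaks. Write $F^{\ast}(s)=\max_{\alpha}(-F(\alpha)+\sum_i s_i\alpha_i)$; since $\langle s,\alpha\rangle\le\max_j(e_j\cdot\alpha)\le F(\alpha)$ for $s\in\mc N(\Lambda)$, $F^{\ast}$ is a closed convex function that is finite on the polytope $\mc N(\Lambda)$, hence continuous there. Taking $s_k$ in the interior with $s_k\to s$, we have $F^{\ast}(s_k)=\sigma(s_k)$ and $F^{\ast}(s_k)\to F^{\ast}(s)$, so the equality $\sigma(s)=F^{\ast}(s)$ follows once $\sigma(s_k)\to\sigma(s)$, i.e. once $\sigma$ is continuous up to $\partial\mc N(\Lambda)$. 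Upper semicontinuity of $\sigma$ at the boundary is automatic from its convexity and finiteness on the polytope; for the matching lower bound I would observe that a slope $s$ in the relative interior of a face $\mathcal F$ of $\mc N(\Lambda)$ is realized only by configurations supported on the edge directions spanning $\mathcal F$, so $\sigma(s)$ equals the surface tension of that restricted model, to which the first two steps apply verbatim. (Alternatively, a direct estimate showing that changing the homology class of a torus configuration by a bounded amount changes $\log Z_{n,N}^{s}$ by $O(Nn^{d-1})$ gives Lipschitz control of $\sigma$ on all of $\mc N(\Lambda)$.) I expect this boundary analysis, rather than the computation in the first two steps, to be the only genuinely delicate point.
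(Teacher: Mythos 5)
Your core argument is the same as the paper's: substitute the magnetic-field weights $w_j=\exp(e_j\cdot\alpha)$ into Theorem~\ref{cor:almost_legendre}, observe that $\tfrac{1}{\sum_k w_k}\sum_i w_i\log w_i=\langle\nabla F(\alpha),\alpha\rangle$ and $\nabla F(\alpha)=\sum_i p_i e_i$, and conclude $\sigma(\nabla F(\alpha))=\langle\alpha,\nabla F(\alpha)\rangle-F(\alpha)$; the paper then finishes by saying the maximum in \eqref{eq:legendre} is attained where $s=\nabla F(\alpha)$, exactly as in your second paragraph. Your coercivity and strict-convexity justifications (support function of $\mc N(\Lambda)-s$, affine span of the $e_j$ forced by $\sum_j e_j=0$ plus linear spanning) are correct and slightly more explicit than what the paper writes.

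Where you genuinely diverge is the boundary case $s\in\partial\mc N(\Lambda)$, which the paper's proof silently elides (its sentence ``the maximum is realized at $\alpha$ with $s=\nabla F(\alpha)$'' is literally false there, since $\nabla F$ maps onto $\mc N(\Lambda)^\circ$ only and the supremum is approached as $|\alpha|\to\infty$). Your continuity argument for $F^{\ast}$ on the closed polytope is fine, but be careful on the $\sigma$ side: you invoke ``convexity and finiteness of $\sigma$ on the polytope'' for upper semicontinuity, yet in this paper convexity of $\sigma$ is only obtained as Corollary~\ref{thm:strict_convex} \emph{of} Theorem~\ref{thm:legendre_duality}, so you would need to establish convexity of $\sigma$ directly from Definition~\ref{def:surface_tension} (a standard but not free concatenation/subadditivity argument) to avoid circularity. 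Also, in your face-restriction argument, note that $\Omega_{N,s}$ constrains only the \emph{total} homology class up to floor errors, so configurations contributing to $Z^s_{n,N}$ with $s$ on a face may still use $o(Nn^d)$ non-face edges; your alternative route via an $O(Nn^{d-1})$ Lipschitz bound on $\log Z^s_{n,N}$ under bounded homology shifts is the cleaner way to close this. With one of those two points nailed down, your proof is complete and in fact covers a case the paper's written proof does not.
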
 

\begin{proof} By Theorem \ref{thm:free_energy_torus} and the prescription in \eqref{eq:edge_weights_d},
\begin{align*}
    F(\alpha) = \log\bigg( \sum_{j=1}^D \exp(e_j\cdot \alpha)\bigg).
\end{align*}
Since $F$ is strictly convex and differentiable, the maximum value in \eqref{eq:legendre} is realized at $\alpha$ such that $s=\nabla F(\alpha)$. We compute: 
\begin{align*}
    \pd{F}{\alpha_i}(\alpha) = \frac{\sum_{j=1}^D \langle e_j, \eta_i\rangle \exp(e_j\cdot \alpha)}{\sum_{k=1}^D \exp(e_k \cdot \alpha)} \implies \nabla F(\alpha) = \frac{\sum_{j=1}^D \exp(e_j\cdot \alpha) e_j}{\sum_{k=1}^D \exp(e_k \cdot \alpha)}.
\end{align*}
On the other hand plugging the weights $w_j=\exp(e_j\cdot \alpha)$ in to the expression in Corollary~\ref{cor:almost_legendre}, where $s= \sum_{j=1}^D p_j e_j$, $p_j=w_j/(w_1+\dots+w_d)$, gives
\begin{align*}
    \sigma(s) &= \sum_{j=1}^d \frac{\exp(e_j\cdot \alpha)}{\sum_{k=1}^D \exp(e_k \cdot \alpha)} (e_j\cdot \alpha) - F(\alpha) = \nabla F(\alpha) \cdot \alpha - F(\alpha),
\end{align*}
as desired.
\end{proof}

We summarize some properties of $F(\alpha)$ and $\sigma(s)$ in the following corollaries. We define the \textit{amoeba} $\mc A(\Lambda)\subset \m R^d$ to be the largest domain where $F(\alpha)$ is strictly convex. 

\begin{cor}\label{thm:free_energy_convex}
The free energy $F(\alpha):\m R^d\to \m R$ is strictly convex and real analytic on all of $\m R^d$, in particular, $\mc A(\Lambda)= \m R^d$. 
\end{cor}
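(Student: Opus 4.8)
The plan is to exploit the closed form for $F$ obtained in the proof of Theorem~\ref{thm:legendre_duality}: combining Theorem~\ref{thm:free_energy_torus} with the prescription \eqref{eq:edge_weights_d} gives $F(\alpha) = \log\big(\sum_{j=1}^D \exp(e_j\cdot\alpha)\big)$. Real analyticity is then immediate: $\alpha\mapsto \sum_{j=1}^D \exp(e_j\cdot\alpha)$ is an everywhere-positive real-analytic function on $\m R^d$, and $\log$ is real-analytic on $(0,\infty)$, so the composition $F$ is real-analytic on all of $\m R^d$. The remaining task is strict convexity, which I would get by showing $\Hess F(\alpha)$ is positive definite at every point.

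First I would record the derivatives. Writing $w_j=w_j(\alpha)=\exp(e_j\cdot\alpha)$, $Z=\sum_{j=1}^D w_j$, and $p_j=w_j/Z$, one has $\partial_{\alpha_i}F=\sum_{j=1}^D p_j\langle e_j,\eta_i\rangle$ and
\begin{align*}
    \partial_{\alpha_i}\partial_{\alpha_k}F = \sum_{j=1}^D p_j\langle e_j,\eta_i\rangle\langle e_j,\eta_k\rangle - \Big(\sum_{j=1}^D p_j\langle e_j,\eta_i\rangle\Big)\Big(\sum_{j=1}^D p_j\langle e_j,\eta_k\rangle\Big).
\end{align*}
In other words $\Hess F(\alpha)$ is precisely the covariance matrix of the $\m R^d$-valued random vector $E$ with $\m P(E=e_j)=p_j$. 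This is automatically positive semidefinite, and for a unit vector $v$ we have $v^{T}\Hess F(\alpha)\,v=\mathrm{Var}(v\cdot E)$, which vanishes if and only if $v\cdot e_j$ takes a common value $c$ for all $j=1,\dots,D$ (here we use that every $p_j>0$).

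It then remains to exclude such a degenerate direction $v$, and this is where the standing hypotheses on $\Lambda$ enter. Suppose $v\neq 0$ and $v\cdot e_j=c$ for all $j$. Averaging over $j$ and using the harmonic embedding $\sum_{j=1}^D e_j=0$ forces $c=\frac1D\sum_{j=1}^D v\cdot e_j = v\cdot\big(\tfrac1D\sum_{j=1}^D e_j\big)=0$, so $v$ is orthogonal to every $e_j$, i.e.\ $v\perp\mathrm{span}\{e_1,\dots,e_D\}$. Since $\Lambda$ is connected, $\{e_1,\dots,e_D\}$ spans $\m R^d$, so $v=0$, a contradiction. Hence $\Hess F(\alpha)$ is positive definite for every $\alpha\in\m R^d$, $F$ is strictly convex on all of $\m R^d$, and by the definition of the amoeba as the largest domain of strict convexity, $\mc A(\Lambda)=\m R^d$.

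I do not expect a genuine obstacle here: the core is the standard convexity of log-sum-exp together with the identification of $\Hess F$ as a covariance. The only step requiring care is the last one — verifying that a null direction of the covariance matrix is truly excluded — since it uses both the harmonic embedding (to kill the additive constant $c$) and connectivity (to conclude the $e_j$ span $\m R^d$), so neither assumption can be dropped from this argument.
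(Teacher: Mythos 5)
Your proof is correct and follows the route the paper intends: the corollary is stated without an explicit proof, as a direct consequence of the closed form $F(\alpha)=\log\sum_{j=1}^D\exp(e_j\cdot\alpha)$, and your covariance-matrix computation of $\Hess F$ together with the exclusion of null directions (using harmonic embedding to force $c=0$ and connectivity to force $v=0$) is exactly the argument needed to justify it. Your closing observation is also apt — without the harmonic embedding, log-sum-exp composed with the map $\alpha\mapsto(e_j\cdot\alpha)_j$ can be affine along a nonzero direction even when the $e_j$ span $\m R^d$, so neither hypothesis is dispensable.
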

\begin{cor}\label{thm:strict_convex}
The surface tension $\sigma: \mc N(\Lambda)\to \m R$ is strictly convex on the entire closed Newton polytope $\mc N(\Lambda)$. 
\end{cor}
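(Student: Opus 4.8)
The plan is to prove the completely explicit formula
\[
\sigma(s)\;=\;\min\Big\{\,{\textstyle\sum_{j=1}^{D}}\,p_j\log p_j\;:\;p\in\Delta_{D-1},\ {\textstyle\sum_{j=1}^{D}}\,p_j e_j=s\,\Big\},\qquad s\in\mc N(\Lambda)
\]
(with $0\log0:=0$ and $\Delta_{D-1}$ the standard simplex), and then to read off strict convexity from the right-hand side. The point is that this exhibits $\sigma$ as the \emph{infimal projection} along the linear map $A\colon p\mapsto\sum_j p_je_j$ of the negative entropy $I(p)=\sum_j p_j\log p_j$, which is \emph{strictly} convex on $\Delta_{D-1}$ (each $p\mapsto p\log p$ is strictly convex on $[0,1]$, and a sum of strictly convex functions of distinct coordinates is strictly convex on the simplex) — and it is this entropic structure, not merely the smoothness of $F$, that forces strict convexity all the way to $\partial\mc N(\Lambda)$.

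To obtain the formula I would use conjugate duality. Let $h(s)$ be the right-hand side above, set to $+\infty$ off $A(\Delta_{D-1})=\mc N(\Lambda)$. Then $h$ is proper (negative entropy is bounded below on the simplex), convex (an infimal projection of a convex function under a linear map is convex), and lower semicontinuous, since $\Delta_{D-1}$ is compact and $I$ is continuous on it including on the faces ($t\log t\to0$); thus $h$ is a closed proper convex function. Rearranging the two suprema and using the Gibbs variational principle (log-sum-exp is the convex conjugate of the negative entropy on the simplex), together with the formula $F(\alpha)=\log\sum_{j}e^{e_j\cdot\alpha}$ from Theorem~\ref{thm:legendre_duality},
\[
h^{*}(\alpha)=\sup_{p\in\Delta_{D-1}}\Big[\,{\textstyle\sum_j}\,p_j(e_j\cdot\alpha)-{\textstyle\sum_j}\,p_j\log p_j\,\Big]=\log\Big({\textstyle\sum_{j=1}^{D}}\,e^{e_j\cdot\alpha}\Big)=F(\alpha).
\]
Hence, using $\sigma=F^{*}$ (Theorem~\ref{thm:legendre_duality}) and $h^{**}=h$ (Fenchel--Moreau, valid since $h$ is closed proper convex), $\sigma=F^{*}=h^{**}=h$, which is the claimed identity on all of $\mc N(\Lambda)$. (On $\mathrm{int}\,\mc N(\Lambda)$ this is just the classical Lagrange-multiplier computation, with minimizer the exponential-family member $p_j\propto e^{e_j\cdot\alpha}$ of the fibre; the content of the duality argument is that closedness of $h$ pushes the identity out to the relative boundary, where that exponential-family description breaks down.)

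Finally, strict convexity follows in one line. Fix $s_0\ne s_1$ in $\mc N(\Lambda)$ and $t\in(0,1)$, put $s_t=(1-t)s_0+ts_1$, and let $p^{0},p^{1},p^{t}$ be the minimizers in the representation of $\sigma(s_0),\sigma(s_1),\sigma(s_t)$ — \emph{unique}, because $I$ is strictly convex on the compact convex fibres. Since $q:=(1-t)p^{0}+tp^{1}\in\Delta_{D-1}$ satisfies $Aq=s_t$,
\[
\sigma(s_t)=I(p^{t})\le I(q)\le(1-t)I(p^{0})+tI(p^{1})=(1-t)\sigma(s_0)+t\sigma(s_1)=\sigma(s_t),
\]
so $I(q)=(1-t)I(p^{0})+tI(p^{1})$; strict convexity of $I$ forces $p^{0}=p^{1}$, hence $s_0=Ap^{0}=Ap^{1}=s_1$, a contradiction. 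I expect the only genuinely delicate step to be the extension of the variational formula to $\partial\mc N(\Lambda)$ — handled above via lower semicontinuity of $h$, though one could alternatively identify $\sigma$ restricted to a face $\mathrm{conv}\{e_j:e_j\in\Phi\}$ with the surface tension of the lower-dimensional lattice spanned by those $e_j$ and induct on dimension; the remaining ingredients (the Gibbs variational principle and strict convexity of the negative entropy) are standard.
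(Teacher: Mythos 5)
Your proposal is correct, and it takes a somewhat different route from the paper. The paper obtains Corollary~\ref{thm:strict_convex} as a consequence of Theorem~\ref{thm:legendre_duality}: $\sigma=F^*$ with $F(\alpha)=\log\sum_j e^{e_j\cdot\alpha}$ strictly convex and analytic on all of $\R^d$ (so $\mc A(\Lambda)=\R^d$), and strict convexity of the dual is read off from the standard smoothness/strict-convexity duality. Your argument instead establishes the infimal-projection identity $\sigma(s)=\min\{I(p):p\in\Delta_{D-1},\,Ap=s\}$ via Fenchel--Moreau and deduces strict convexity from strict convexity of the negative entropy $I$ on the simplex. The two are close in spirit --- indeed the paper's Theorem~\ref{cor:almost_legendre} already gives $\sigma(s)=\sum_i p_i\log p_i$ for the exponential-family $p$ attached to an interior $s$, which is exactly your minimizer there --- but your route has a concrete advantage at the boundary. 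The textbook duality statement (essential smoothness of $F$ $\Leftrightarrow$ essential strict convexity of $F^*$) only yields strict convexity on $\mathrm{dom}(\partial\sigma)$, which here is $\mc N(\Lambda)^\circ$ since $\nabla F$ maps $\R^d$ onto the open polytope and $\nabla\sigma$ blows up at $\partial\mc N(\Lambda)$ (cf.\ Lemma~\ref{lem:blowup}); segments lying entirely inside a face of $\partial\mc N(\Lambda)$ are not covered by that statement and need a separate argument. Your entropy representation handles those faces uniformly: the fibre $A^{-1}(s)\cap\Delta_{D-1}$ over a boundary $s$ is a face of the simplex, $I$ is still strictly convex there, and your uniqueness-of-minimizers argument goes through verbatim. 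All the individual steps (lower semicontinuity of the infimal projection via compactness of $\Delta_{D-1}$, the Gibbs variational principle giving $h^*=F$, and strict convexity of $\sum_j p_j\log p_j$ as a sum of strictly convex functions of distinct coordinates) are sound.
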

\begin{rem}
 These results are somewhat different from their analogs for the standard dimer model. In that setting, the surface tension is convex everywhere but is piecewise linear on the one-dimensional edges of $\partial \mc N(\Lambda)$, hence strictly convex only on the complement of the edges of $\partial \mc N(\Lambda)$ in $\mc N(\Lambda)$. Similarly, for the standard dimer model the amoeba is a strict subset of the plane. We leverage these corollaries in Section~\ref{sec:no_facets}. 
\end{rem}

\section{Patching configurations for large $N$}\label{sec:patching}

Here we prove a ``local interchangeability'' property for $\NN$-dimer covers as $\NN,n\to \infty$. This result will apply to samples from multinomial dimer measures because of $\NN\to \infty$ concentration (Theorem~\ref{thm:largeN_concentration}). This can be seen as a simplification of the patching theorem for dimer covers of $\m Z^3$ in \cite{3Ddimers}, where the proof here is simpler because of the control we have as $\NN\to \infty$. The main tool will be Hall's matching theorem, described below in Section~\ref{sec:hall}.

\subsection{Statement and set up}

Take a compact domain $R\subset \R^d$ with piecewise smooth boundary and let $R_n\subset \frac{1}{n}\Lambda$ be a lattice approximation of $R$ as before. We prove that two $N$-dimer covers of $R_n$ can be ``patched'' together, one on each side of the boundaries of an annulus of lattice width $O(n)$, in the iterated limit as $N$ and then $n$ go to infinity, if their $N\to \infty$ limiting boundary conditions are within $O(1/n)$.

\begin{thm}\label{thm:patching_limit}
Fix a constant $\delta>0$. Let $R,R_n$ be as above. Let $\mu_{n,N}$ and $\nu_{n,N}$ be sequences of measures on $N$-dimer covers of $R_n$ with boundary vertex multiplicities converging to $(\alpha_v^n)_{v\in \partial R_n},(\beta_v^n)_{v\in \partial R_n}$ respectively as $N\to \infty$. Assume that 
    \begin{itemize}
        \item The $N\to \infty$ limiting boundary values satisfy $\sup_{v\in \partial R_n} |\alpha_v^n-\beta_v^n| = O({1}/{n})$.
        \item For any $n$ fixed, samples from $\nu_{n,N}$ concentrate on a limiting discrete flow $\omega_n$ as $N\to \infty$ which is (i) for all $v\in R_n^\circ$, $\sum_{e\ni v} \omega_n(e) = 1$ and for all $v\in \partial R_n$, $\sum_{e\ni v} \omega_n(e) = \beta_v^n$, and (ii) \textit{strictly non-degenerate}, i.e.\ there is $a>0$ independent of $n$ such that $\omega_n(e) >a$ for all edges $e\in E(R_n)$. 
    \end{itemize}
    Let $R_n^\delta$ be a lattice approximation of $R\setminus N_\delta(\partial R) = \{x\in R : d(x,\partial R)>\delta\}$. If $M,M'$ are sampled from $\mu_{N,n},\nu_{N,n}$ respectively, the joint probability that there exists a dimer cover $M''$ of $R_n$ such that $M''\mid_{R_n^\delta} = M'$ and $M''$ has the same vertex multiplicities as $M$ at all $v\in \partial R_n$ goes to $1$ in the iterated limit as $N\to \infty$ and then $n\to\infty$.  
\end{thm}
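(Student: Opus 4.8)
\emph{Proof plan.} The idea is to freeze $M'$ on $R_n^\delta$ and fill in the annular region between $\partial R_n^\delta$ and $\partial R_n$ so as to meet $M$'s boundary multiplicities; the construction uses Hall's theorem (in its integral $b$-matching form) together with the strict non-degeneracy of $\omega_n$, and it succeeds on a high-probability event depending only on $M'$.

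\emph{Setup and reduction.} Let $A_n$ be the subgraph of $R_n$ spanned by the edges of $R_n$ having at least one endpoint outside $R_n^\delta$; its vertex set contains $\partial R_n^\delta$, and every edge of $R_n$ lies either in $A_n$ or inside $R_n^\delta$, not both. For $n$ large (with $\delta$ fixed) every edge of $R_n$ incident to $\partial R_n$ lies in $A_n$ or leaves $R_n$. Given $M'$, put the residual demand $r_v := N-\sum_{e\ni v,\ e\subset R_n^\delta}M'_e$ for $v\in\partial R_n^\delta$. Then constructing an $M''$ as in the statement is equivalent to finding $m\colon E(A_n)\to\Z_{\ge0}$ with $\sum_{e\ni v}m_e$ equal to $N$ for $v\in V(A_n)$ interior to $R_n$, to $N_v^{(M)}$ for $v\in\partial R_n$, and to $r_v$ for $v\in\partial R_n^\delta$: given such $m$, put $M'':=M'$ on edges inside $R_n^\delta$ and $M'':=m$ on $E(A_n)$, which agree over $\partial R_n^\delta$ by the definition of $r_v$. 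Passing to the $\mathbf m$-blow-up of $A_n$ and applying Hall's theorem — equivalently, invoking max-flow--min-cut together with integrality of maximum flows with integer capacities — such an $m$ exists iff the demands are balanced and $\sum_{v\in S}\mathrm{dem}_v\le\sum_{v\in N_{A_n}(S)}\mathrm{dem}_v$ for every $S\subset W\cap V(A_n)$.

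\emph{The fractional witness.} Since samples from $\nu_{n,N}$ concentrate on $\omega_n$, for fixed $n$ and any $\eps>0$ the event $\mathcal G := \{\,|M'_e/N-\omega_n(e)|<\eps\text{ for all }e\,\}$ has probability tending to $1$ as $N\to\infty$ (there are finitely many edges); fix $\eps<a/2$, so on $\mathcal G$ every $M'_e\ge\tfrac a2 N$. Restricted to $A_n$, the frozen configuration $M'$ realizes the demand system $\mathrm{dem}'$ which equals the target $\mathrm{dem}$ except that at $v\in\partial R_n$ it is $N_v^{(M')}$ rather than $N_v^{(M)}$; set $\Delta_v := N_v^{(M)}-N_v^{(M')}$. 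Because $\sup_v|\alpha_v^n-\beta_v^n| = O(1/n)$ with $N_v^{(M)}/N\to\alpha_v^n$ and $N_v^{(M')}/N\to\beta_v^n$, we get $|\Delta_v|\le CN/n$ for $n$, then $N$, large; and comparing the white/black balance of the two honest $\NN$-dimer multiplicity functions (same $R_n$, same interior value $N$) gives $\sum_{v\in\partial R_n\cap W}\Delta_v=\sum_{v\in\partial R_n\cap B}\Delta_v$, which for connected $A_n$ — the relevant case when $R$ is connected and $\delta$ is small — is exactly the solvability condition for the incidence system $A_{A_n}\psi=\tilde\Delta$, where $\tilde\Delta$ is $\Delta$ on $\partial R_n$ and $0$ elsewhere. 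Since $A_n$ is a thickness-$\Theta(n)$ piece of a $d$-regular lattice with $\Theta(n^d)$ edges while $\|\tilde\Delta\|_1=O(Nn^{d-2})$, a discrete analogue of solving $\operatorname{div}\psi=f$ with $L^\infty$ control produces a $\psi$ with $\|\psi\|_\infty\le C'N/n$. Then $m^\sharp:=M'|_{A_n}+\psi$ is, on $\mathcal G$ and for $n$ large, a nonnegative real solution of the target demand system. Finally the polytope $\{m\in\R^{E(A_n)}_{\ge0}:A_{A_n}m=\mathrm{dem}\}$ is nonempty (it contains $m^\sharp$) and integral (bipartite incidence matrices are totally unimodular and $\mathrm{dem}$ is integral), so one of its vertices is an integer point $m^{**}$, which yields the desired $M''$. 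Hence a valid $M''$ exists whenever $\mathcal G$ holds, and $\m P(\mathcal G)\to1$ as $N\to\infty$ for each fixed $n$; letting $n\to\infty$ completes the iterated limit.

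\emph{Main obstacle.} The crux is the quantitative correction step: one must know that the $O(Nn^{d-2})$ total boundary discrepancy $\tilde\Delta$ can be routed through the width-$\Theta(n)$ patching annulus so that no single edge carries more than $O(N/n)$ of it, so that $\|\psi\|_\infty\ll aN$ and adding $\psi$ to $M'|_{A_n}$ cannot destroy nonnegativity — this is exactly where the thickness of the annulus and the uniform lower bound $a$ on $\omega_n$ both enter, and it is the step that fails for the standard (finite-$N$) dimer model. A secondary point needing care is the bookkeeping that makes the correction system solvable (white/black balance of $\tilde\Delta$, componentwise when $\partial R$ is disconnected), which follows from the $\NN$-dimer cover structure and the feasibility hypotheses on the boundary data.
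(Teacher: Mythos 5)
Your reduction to an integer demand system on the annulus, and the passage from a nonnegative fractional solution to an integer one via total unimodularity of the bipartite incidence matrix, are both sound, and the good event $\mathcal G$ (measurable in $M'$ alone, with probability tending to $1$ as $N\to\infty$ for fixed $n$) is the right event to work on. But the gap is exactly where you flag it: the existence of a correction $\psi$ with $\operatorname{div}\psi=\tilde\Delta$ and $\|\psi\|_\infty\le C'N/n$ is asserted, not proven, and it is not a routine fact. It amounts to an $L^\infty$-bounded solution of a discrete divergence equation whose data has size $O(N/n)$ per vertex of the $(d-1)$-dimensional boundary and must be absorbed inside a width-$\Theta(n)$ annulus with zero flux prescribed on the inner boundary; the natural continuum analogue (Neumann data in $L^\infty$) does not yield an $L^\infty$ gradient bound for free, and the obvious discrete witness $N(\omega_M-\omega_{M'})$ has the right divergences but only the useless bound $\|\cdot\|_\infty\le N$. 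Since nonnegativity of $m^\sharp$ hinges entirely on $\|\psi\|_\infty\ll aN$, the argument is incomplete without a genuine routing/transport construction here. A second unresolved point: solvability of $\operatorname{div}\psi=\tilde\Delta$ inside the annulus requires the signed balance of $\tilde\Delta$ on each connected component of the annulus separately, and this does \emph{not} follow from the global white/black balance of the two $\mathbf N$-dimer covers — the flux of $\omega_M-\omega_{M'}$ through a cross-section separating one boundary component from the rest need not vanish.

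The paper sidesteps both issues by arguing on the cut side of Hall's theorem rather than constructing a primal witness: it supposes a counterexample set $U_n$ exists, notes that $U_n$ must connect the inner and outer boundaries of the annulus (because $M$ and $M'$ each extend to covers on their own), invokes the discrete isoperimetric bound $|I_E(U_n)|\ge\kappa n^{d-1}$ of Lemma~\ref{lem:surface_area}, and then uses the divergence-theorem identity of Proposition~\ref{prop:imbalance} together with the strict lower bound $\omega_n(e)>a$ to force $\mathrm{imbalance}_{\mathbf N}(U_n)\le -cn^{d-1}+\varepsilon n^{d-1}+O(n^{d-2})<0$. The only quantitative inputs there are the isoperimetric inequality and the non-degeneracy, both of which you already have; to complete your primal route you would either need to prove the $L^\infty$ divergence lemma (with the componentwise balance sorted out) or switch to verifying the cut condition directly as the paper does.
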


\begin{rem}
    The notation $M''\mid_{R_n^\delta}=M'$ means that $M''_e = M'_e$ for all edges $e$ with an endpoint in $(R_n^\delta)^\circ$. 
\end{rem}

\begin{rem}\label{rem:applies_to_multinomial}
If the limiting boundary conditions $\beta_v^n$ are feasible for each $n$ and $\nu_{n,N}$ are \emph{multinomial} dimer measures with these limiting multiplicities, then by Theorem~\ref{thm:largeN_concentration} samples from $\nu_{n,N}$ will concentrate on a limiting flow $\omega_n$ satisfying (i) and which is non-degenerate, i.e. $\omega_n(e)>0$ for all $e\in R_n$. The theorem applies to any sequence of multinomial measures $\nu_{n,N}$ as long as their limiting flows $\omega_n$ together satisfy the strictly non-degenerate condition.
\end{rem}

Theorem~\ref{thm:patching_limit} is equivalent to saying that if $n, N$ are large enough, and $M,M'$ are sampled as described, then the annular region $A_n=R_n\setminus R_n^\delta$ has an $\NN$-dimer cover with multiplicity $N$ on its interior and vertex multiplicities on $\partial A_n$ determined by $M,M'$ respectively. The key tool in the proof is a version of \textit{Hall's matching theorem} which we describe in the next section. 

\subsection{Hall's matching theorem and counterexamples}\label{sec:hall}

An $\NN$-dimer cover of a graph $G$ can be lifted to a standard dimer cover of the $\mathbf{N}$-fold blow-up graph of $G$, so it suffices to show that the associated region in the blow-up graph has a dimer cover. {Hall's matching theorem} \cite{hallmarriage} gives a necessary and sufficient condition for a bipartite graph to have a dimer cover. 
\begin{thm}[Hall's matching theorem]
    Suppose that $G$ is a finite bipartite graph with bipartition $(B,W)$ such that $|B| = |W|$. There exists a dimer cover of $G$ if and only if there is no set $C\subset W$ such that 
    $$N(C) = \{v\in B : \exists\, c\in C, (v,c)\in E\}$$
    satisfies $|N(C)| < |C|$. We call $U:=C \cup N(C)$ satisfying these conditions a \textit{counterexample} in $G$.
\end{thm}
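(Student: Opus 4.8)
The plan is to dispatch the easy implication first. If $G$ has a dimer cover $\tau$, then for any $C\subseteq W$ the edges of $\tau$ incident to $C$ give an injection $C\hookrightarrow B$ with image contained in $N(C)$, so $|N(C)|\ge|C|$; hence no counterexample exists. The substance is the converse: assuming Hall's condition $|N(C)|\ge|C|$ for every $C\subseteq W$, I would produce a dimer cover by induction on $n:=|W|=|B|$. The base case $n=1$ is immediate, since Hall's condition applied to the single white vertex forces it to have a neighbor. For the inductive step I would split into two cases according to whether Hall's condition is ever \emph{tight} on a nonempty proper subset of $W$.

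\emph{Case 1 (strict surplus): $|N(C)|\ge|C|+1$ for every nonempty $C\subsetneq W$.} Pick any edge $(w,b)$ (one exists since $N(W)\neq\emptyset$), place it in the matching, and delete $w$ and $b$. For any $C\subseteq W\setminus\{w\}$ we have $|N_{G-\{w,b\}}(C)|\ge|N_G(C)|-1\ge|C|$, so Hall's condition is inherited by $G-\{w,b\}$; the induction hypothesis gives a dimer cover of the smaller graph, and adjoining $(w,b)$ completes this case.

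\emph{Case 2 (a tight set): some nonempty $C\subsetneq W$ has $|N(C)|=|C|$.} Let $G_1$ be the subgraph induced on $C\cup N(C)$ and $G_2$ the subgraph induced on $(W\setminus C)\cup(B\setminus N(C))$; note $|W\setminus C|=|B\setminus N(C)|$ because $|W|=|B|$ and $|N(C)|=|C|$. Hall's condition holds in $G_1$ since neighborhoods of subsets of $C$ are unchanged. The point is that it also holds in $G_2$: for $C''\subseteq W\setminus C$ one has $N_{G_2}(C'')=N_G(C'')\setminus N(C)=N_G(C''\cup C)\setminus N(C)$, hence $|N_{G_2}(C'')|=|N_G(C''\cup C)|-|N(C)|\ge(|C''|+|C|)-|C|=|C''|$. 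By induction $G_1$ and $G_2$ each admit a dimer cover, and their union is a dimer cover of $G$. The contrapositive reformulation in terms of a counterexample $U=C\cup N(C)$ then follows by negating the statement.

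The main (really the only) subtlety is the neighborhood bookkeeping in Case 2: transferring Hall's condition to $G_2$ requires applying the hypothesis to the \emph{union} $C''\cup C$ rather than to $C''$ alone, and using the tightness $|N(C)|=|C|$ to cancel. As an alternative one could instead deduce the result from the max-flow--min-cut theorem (equivalently König's theorem): a maximum matching of size $<n$ produces, via a minimum vertex cover, a set $C\subseteq W$ with $|N(C)|<|C|$, i.e.\ a counterexample; but the induction above gives a self-contained argument.
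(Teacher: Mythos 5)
Your argument is correct, but note that the paper does not prove this statement at all: it is Hall's marriage theorem, quoted with a citation to Hall's original paper and then only adapted (in Proposition~\ref{prop:hall_projection}) to the blow-up/multiplicity setting actually used in the patching argument. What you have written is the classical Halmos--Vaughan induction, and the bookkeeping checks out: the easy direction via the injection $C\hookrightarrow N(C)$ induced by a matching; Case 1 using that $C\subseteq W\setminus\{w\}$ is a proper subset of $W$, so the strict-surplus hypothesis absorbs the loss of the single vertex $b$; and Case 2 using $N_{G_2}(C'')=N_G(C''\cup C)\setminus N(C)$ together with $N(C)\subseteq N_G(C''\cup C)$ and tightness $|N(C)|=|C|$ to transfer Hall's condition to $G_2$, with the two induced subgraphs partitioning the vertex set so the matchings concatenate. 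The only cosmetic point is that for the paper's purposes the relevant statement is the projected version with vertex multiplicities, which follows from your theorem applied to the blow-up graph; your alternative remark via K\H{o}nig/max-flow--min-cut would serve equally well there, but the self-contained induction you give is the standard and cleanest route.
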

It is straightforward to see that Hall's matching theorem applied in the blow-up graph of $G$ projects to the following equivalent condition in $G$. 
\begin{prop}[Hall's matching theorem projected from the blow-up graph]\label{prop:hall_projection}
    Let $G$ be a finite bipartite graph with bipartition $(B,W)$. Label every vertex $v\in G$ with a multiplicity $N_v$ such that 
    \begin{align*}
        \sum_{v\in W} N_v = \sum_{v\in B} N_v.
    \end{align*}
    There exists an $\NN= (N_v)_{v}$-dimer cover of $G$ if and only if there is no set $C\subset W$ such that 
    $$N(C) = \{v\in B : \exists\, c\in C, (c,v)\in E\}$$
    satisfies 
    \begin{align*}
        \sum_{v\in N(C)} N_v < \sum_{v\in C} N_v.
    \end{align*}
    We call $U:=C \cup N(C)$ satisfying these conditions an \emph{$\NN$-counterexample} in $G$.
\end{prop}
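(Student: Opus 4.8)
The plan is to apply the classical Hall matching theorem directly in the blow-up graph $G_{\NN}$ and then check that the extremal Hall counterexamples there descend to $G$. First I would recall the two halves of the correspondence between $G$ and $G_{\NN}$: an $\NN$-dimer cover of $G$ is by definition the projection $\pi(\tau)$ of some dimer cover $\tau$ of $G_{\NN}$, and conversely every $\NN$-dimer cover $M$ lifts to a dimer cover of $G_{\NN}$ because $|\pi^{-1}(M)| = \prod_{v\in V}N_v! / \prod_{e\in E}M_e!$ is strictly positive. Since $\sum_{v\in W}N_v = \sum_{v\in B}N_v$, the two color classes of $G_{\NN}$ have equal size, so Hall's matching theorem applies: $G_{\NN}$ has a dimer cover if and only if there is no subset $\widetilde C$ of the white vertices of $G_{\NN}$ with $|N_{G_{\NN}}(\widetilde C)| < |\widetilde C|$. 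It therefore suffices to match up these Hall counterexamples with the $\NN$-counterexamples in $G$.

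The key point is the ``union of fibers'' reduction, which is forced by the fact that each edge of $G$ is replaced in $G_{\NN}$ by a \emph{complete} bipartite graph on the fibers. Given a Hall counterexample $\widetilde C$ in $G_{\NN}$, set $C := \pi(\widetilde C)\subset W$, the set of white vertices of $G$ that have at least one lift in $\widetilde C$. For any $w\in C$, as soon as $\widetilde C$ contains one lift of $w$, completeness puts \emph{all} lifts of every $G$-neighbor of $w$ into $N_{G_{\NN}}(\widetilde C)$; conversely, every $G_{\NN}$-neighbor of a vertex of $\widetilde C$ is a lift of a $G$-neighbor of some $w\in C$. Hence $N_{G_{\NN}}(\widetilde C) = \pi^{-1}(N(C))$, so $|N_{G_{\NN}}(\widetilde C)| = \sum_{v\in N(C)}N_v$, while $|\widetilde C| \le |\pi^{-1}(C)| = \sum_{w\in C}N_w$. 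Therefore $\sum_{v\in N(C)}N_v = |N_{G_{\NN}}(\widetilde C)| < |\widetilde C| \le \sum_{w\in C}N_w$, which says exactly that $C$ is an $\NN$-counterexample in $G$. Conversely, if $C\subset W$ is an $\NN$-counterexample in $G$, then $\widetilde C := \pi^{-1}(C)$ has $|N_{G_{\NN}}(\widetilde C)| = \sum_{v\in N(C)}N_v < \sum_{w\in C}N_w = |\widetilde C|$, so it is a Hall counterexample in $G_{\NN}$.

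Putting these together: $G_{\NN}$ has no Hall counterexample if and only if $G$ has no $\NN$-counterexample, and by Hall's matching theorem together with the lifting/projection correspondence of the first paragraph this is equivalent to the existence of an $\NN$-dimer cover of $G$. I do not anticipate a real obstacle; the only step requiring a moment's care is the identity $N_{G_{\NN}}(\widetilde C) = \pi^{-1}(N(C))$, i.e.\ that the neighborhood of $\widetilde C$ in $G_{\NN}$ depends only on $\pi(\widetilde C)$ and is a full union of fibers — and this is immediate from the complete-bipartite structure of the edge blow-up. Everything else is a direct quotation of the classical theorem.
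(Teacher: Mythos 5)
Your proof is correct and follows exactly the route the paper indicates: the paper states this proposition as the "straightforward" projection of Hall's theorem from the blow-up graph and gives no further argument, and your write-up supplies precisely that argument, with the key observation (that the complete-bipartite edge blow-up forces $N_{G_{\NN}}(\widetilde C)=\pi^{-1}(N(C))$, so counterexamples may be taken to be unions of fibers) handled correctly in both directions.
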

We say that the $\NN$-\textit{imbalance} of a set $U\subset G$ is 
\begin{equation}
    \text{imbalance}_{\NN}(U) := \sum_{v\in W\cap U} N_v - \sum_{v\in B \cap U} N_v.
\end{equation}
Recall that we work with graphs $G$ which are a subset of a lattice. Any counterexample set $U$ must satisfy the condition that its \textit{interior boundary} $\partial_0 U$ is contained in $B$: 
\begin{equation}\label{eq:black_interior_boundary}
    \partial_0 U :=\{u\in U: \exists\, v\in G\setminus U, (u,v)\in E\} \subset B.
\end{equation}
More generally, we call a set $U$ with the property in \eqref{eq:black_interior_boundary} a \textit{potential counterexample} (note that this definition does not depend on $\NN$). If $U$ is a potential counterexample, it is an $\NN$-counterexample if and only if $\text{imbalance}_{\NN}(U)>0$. 

In our setting, the multiplicities at $v\in G^\circ$ are $N$, and the multiplicities along $v\in \partial G$ are between $1$ and $N$. We relate the $\NN$-imbalance of a potential counterexample $U$ to the maximum multiplicities of 
edges incident to vertices $\partial U = \{u\in U : \exists\, v\in \Lambda\setminus U, (u,v)\in E\}$. Note that this is the boundary of $U$ {in $\Lambda$}, not in $G$. Correspondingly, we split these edges into two groups, ``interior" boundary edges $I_E(U)$ and ``exterior" boundary edges $X_E(U)$: 
\begin{align*}
    I_E(U) &= \{e = (u,v) : u\in \partial U, v\in G\setminus U\}\\
    X_E(U) &= \{e = (u,v) : u\in \partial U, v\in \Lambda \setminus G\}.
\end{align*}

Recall that $D$ denotes the degree of any vertex in $\Lambda$. The divergence theorem (applied in the blow up graph) gives the following.

\begin{prop}\label{prop:imbalance}
   If $U$ is a potential counterexample in $G\subset \Lambda$ with multiplicities $N_v=N$ at all $v\in G^\circ$ and multiplicities $N_v$ between $1$ and $N$ at $v\in \partial G$, then 
    \begin{equation*}
        ND \cdot \text{imbalance}_{\NN}(U) = -N |I_E(U)| + \sum_{u\in \partial U} |X_E(u)| N_u \,\text{sign}(u),
    \end{equation*}
    where $\text{sign}(u) = +1$ if $u\in W$ and $-1$ if $u\in B$ and $X_E(u)$ is the set of $e\in X_E(U)$ incident to the specified vertex $u\in \partial U$ (this can be empty).
\end{prop}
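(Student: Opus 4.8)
The plan is to prove Proposition~\ref{prop:imbalance} by a discrete divergence (flow-counting) argument in the blow-up graph $G_\NN$, exactly as one would prove a discrete divergence theorem. First I would set up bookkeeping: since $U$ is a potential counterexample, every vertex $u$ in its interior boundary $\partial_0 U$ lies in $B$, but the lattice-boundary $\partial U$ (boundary in $\Lambda$, not in $G$) can contain both white and black vertices, and the edges sticking out of $U$ split into the interior boundary edges $I_E(U)$ (going to $G\setminus U$) and exterior boundary edges $X_E(U)$ (going to $\Lambda\setminus G$). The key observation is that $\text{imbalance}_\NN(U)=\sum_{u\in W\cap U}N_u-\sum_{u\in B\cap U}N_u$ is, up to sign, the total divergence through $\partial U$ of the canonical "all edges carry multiplicity $N$" flow, corrected for the fact that boundary vertices of $G$ have multiplicity $N_u\le N$ rather than $N$ and that some edges of the blow-up graph leave $U$.

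Concretely, I would consider in the blow-up graph $G_\NN$ the function that assigns to each \emph{oriented} edge $e=(w,b)$ (white-to-black in $\Lambda$) the value $1$ if both lifts lie in $U$ (i.e. count the edges of the complete-bipartite cluster over $e$), and sum the signed incidences $\sum_{e\ni u}(\pm 1)$ over all lifts $u$ of vertices in $U$. By definition this telescoping sum equals $D$ times $\text{imbalance}_\NN(U)$ on one side (each lift of a white vertex $u\in U$ contributes $+$ its degree, which is $D N_{\text{nbr}}$-weighted, etc.), and on the other side it collapses to the edges crossing $\partial U$, because any cluster of edges over an edge fully contained in $U^\circ$ contributes $0$ (it is counted once with $+$ from its white endpoint's lifts and once with $-$ from its black endpoint's lifts). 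The surviving terms are precisely: (i) the edges in $I_E(U)$, each the full $\{0,1,\dots\}$-cluster of size $N\cdot N$ over an edge with one endpoint $u\in\partial U\cap G$ and the other in $G\setminus U$ where $N_v=N$ on both ends (giving the $-N|I_E(U)|$ term, with the sign coming from orientation and the fact that $\partial_0 U\subset B$ forces a consistent sign), and (ii) the edges in $X_E(U)$, where the vertex $u\in\partial U$ has multiplicity $N_u$ (between $1$ and $N$) and contributes $|X_E(u)|N_u\,\text{sign}(u)$ after dividing the total degree $D$ out uniformly. Dividing by $D$ (and carrying the common factor $N$) yields the stated identity $ND\cdot\text{imbalance}_\NN(U)=-N|I_E(U)|+\sum_{u\in\partial U}|X_E(u)|N_u\,\text{sign}(u)$.

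The main obstacle — and the part that needs care rather than cleverness — is the sign and degree bookkeeping at $\partial U$: one must use that every incident edge direction $e_1,\dots,e_D$ occurs exactly once at every vertex (homogeneous degree $D$, same edge directions everywhere by the lattice hypothesis), so that "the number of edges leaving a vertex plus the number staying inside equals $D$" holds uniformly and the factor $D$ factors cleanly out of the whole identity. One also has to check that the interior boundary edges genuinely carry the full multiplicity $N$ on both endpoints: this uses that $I_E(U)$ connects $\partial U$ to $G\setminus U$, and that either both endpoints are interior vertices of $G$ (multiplicity $N$) or, if a $G$-boundary vertex is involved, it is handled instead by the $X_E$ term — so I would be slightly careful about how $\partial U$, $\partial_0 U$, $I_E(U)$, and $X_E(U)$ interact near $\partial G$, and in particular use \eqref{eq:black_interior_boundary} to pin down that $\partial_0 U\subset B$, which is what makes the $-N|I_E(U)|$ contribution come in with a single sign. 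Everything else is a direct expansion of $\sum_{u\in U}\text{div}(u)$ in the blow-up graph.
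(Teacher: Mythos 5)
Your approach is the same as the paper's: lift $U$ to a blow-up graph, put the constant flow $1/(ND)$ on every lifted edge so that divergences are $\pm1$ at every lifted vertex, apply the discrete divergence theorem so that clusters over edges internal to $U$ cancel, and then count the lifts of the edges in $I_E(U)$ and $X_E(U)$, using \eqref{eq:black_interior_boundary} to pin down the uniform sign of the $I_E$ contribution. The structural points you flag (homogeneous degree $D$, the role of $\partial_0 U\subset B$, and the interaction of $\partial U$, $I_E(U)$, $X_E(U)$ with $\partial G$) are exactly the right ones, and the middle of your argument, though garbled as written (the ``value $1$ if both lifts lie in $U$'' function and the ``$DN_{\text{nbr}}$-weighted'' degrees don't parse), is clearly aiming at this telescoping.

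The step that does not go through as written is the last one: ``dividing by $D$ (and carrying the common factor $N$) yields the stated identity.'' It doesn't, and this is precisely where the content is. Your own (correct) count gives $N\cdot N$ lifts over each edge of $I_E(U)$ whose $U$-endpoint has full multiplicity, and $N_u\cdot N$ lifts over each edge of $X_E(u)$; equating the sum of signed crossing lifts to the total divergence of $\tilde U$ produces an identity in which the two sides of the displayed formula scale differently in $N$ — the left side $ND\cdot\text{imbalance}_{\NN}(U)$ is quadratic in $N$ while the right side is linear. (Sanity check: $U$ a single interior black vertex has imbalance $-N$, $|I_E(U)|=D$, $X_E(U)=\emptyset$, so the displayed sides are $-N^2D$ versus $-ND$.) You must carry the bookkeeping to the end and state explicitly which normalization you are proving rather than waving at ``the common factor $N$.'' Relatedly, the caveat you raise about edges of $I_E(U)$ whose $U$-endpoint lies on $\partial G$ is genuine and is \emph{not} ``handled instead by the $X_E$ term'': such an edge is in $I_E(U)$ but its $U$-endpoint has multiplicity $N_u$ possibly less than $N$, so its cluster contributes $-N\cdot N_u$ rather than $-N\cdot N$; likewise, edges internal to $U$ with endpoints of unequal multiplicity contribute $\text{sign}(u)(N_u-N_v)$ per edge rather than cancelling. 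A complete proof should either record these correction terms or state the hypotheses under which they vanish.
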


\begin{proof}
Let $\tilde U\subset \Lambda_{\NN}$ denote a lift of $U$ to the blow-up graph.
Let $(\tilde W, \tilde B)$ denote the bipartition in $\Lambda_{\NN}$. On one hand it is clear that 
    \begin{align*}
        \text{imbalance}_{\NN}(U) = |\tilde W\cap \tilde U| - |\tilde B \cap \tilde U|.
    \end{align*}
    On the other hand, consider a discrete vector field $r$ on the edges of $\Lambda_{\NN}$ defined by $r(\tilde e) = 1/(ND)$ for all $\tilde e\in E(\Lambda_{\NN})$ oriented from $\tilde W$ to $\tilde B$. Note that this has divergence $+1$ at every $v\in \tilde W$ and $-1$ at every $v\in \tilde B$. By the divergence theorem, 
    \begin{align*}
        \text{imbalance}_{\NN}(U) = \text{div}_{\tilde U}(r) = \frac{1}{ND} \sum_{\tilde e=(x,y):x\in \tilde U, y\not\in \tilde U} {\text{sign}(x)}.
    \end{align*}
    Let $\pi$ denote the projection to $\Lambda_{\NN}\to\Lambda$. For each $\tilde e$ in the sum, $\pi(\tilde e)$ is incident to $\partial U$. If $\pi(\tilde e) \in I_E(U)$, then $\text{sign}(x) = -1$. Otherwise it can have either sign. Every $e\in I_E(U)$ has $N$ lifts in the sum, and every $e\in X_E(U)$ has $N_u$ lifts if $e=(u,v)$, $u\in U$. Putting this together gives the result.
\end{proof}

In the proof of Theorem~\ref{thm:patching_limit}, we apply Hall's matching theorem to an annular region of lattice width $O(n)$. We use the $O(n)$ width to get a lower bound on the number of interior boundary edges along a potential counterexample set $U$. 
\begin{lemma}\label{lem:surface_area}
Suppose that $A=R_n\setminus R_n^\delta$ is as in Theorem~\ref{thm:patching_limit}, and suppose that $U\subset A$ is a potential counterexample which intersects both boundary components of $A$. Then there exists a constant $\kappa=\kappa(\delta)$ such that 
    \begin{align*}
        |I_E(U)| \geq \kappa n^{d-1}. 
    \end{align*}
\end{lemma}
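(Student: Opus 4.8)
The plan is to exploit the fact that $A = R_n \setminus R_n^\delta$ is (a lattice approximation of) a width-$\delta$ collar around $\partial R$, so that a potential counterexample $U$ which touches \emph{both} boundary components of $A$ must ``span'' the collar in a way that forces its interior boundary to have at least the cross-sectional size of the collar. First I would set up coordinates: since $\partial R$ is compact and piecewise smooth, cover a neighborhood of it by finitely many charts in which $R$ looks like a half-space and the collar $R \setminus N_\delta(\partial R)$ looks like the slab $\{0 \le t \le \delta\}\times (\text{patch of } \partial R)$, with $t$ the (signed) distance to $\partial R$. Pulling $U$ back into such a chart, the hypothesis that $U$ meets both $\partial R_n$ and $\partial R_n^\delta$ says that $U$ contains lattice points at ``height'' $t \approx 0$ and at ``height'' $t \approx \delta$ lying over the same chart (or over a chain of overlapping charts covering a connected piece of $\partial R$).

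The key step is a \emph{projection / slicing} argument. Consider the projection $p$ onto the $\partial R$-direction (i.e.\ forget the distance coordinate $t$), and look at the image $p(U)$ restricted to one chart. For each value $y$ in a positive-measure subset $S$ of the chart's base, the fiber $p^{-1}(y)\cap A$ is a segment of lattice length $\asymp \delta n$ running from the inner to the outer boundary of the collar, and on each such fiber $U$ either (a) fails to contain some interior lattice point, in which case that point, being in $A \setminus U$ and adjacent to a point of $U$ on the fiber, witnesses an edge of $I_E(U)$; or (b) contains the entire fiber, in which case — since $U$ cannot contain all of $R_n$ (its interior boundary must be nonempty and black, so $U \ne A$) and $A$ is connected — there is a nearby fiber where case (a) occurs, or else $U$ has interior-boundary edges running laterally between the full fiber and an adjacent non-full fiber. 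In either case I get, for each $y$ in a set of base-measure bounded below by a constant $c(\delta) > 0$, at least one edge of $I_E(U)$ whose projection lands near $y$. Since the lattice has bounded edge lengths, the projection of $I_E(U)$ can cover each base point with only $O(1)$ multiplicity, so $|I_E(U)| \ge c'(\delta)\, n^{d-1}$, which is the claim with $\kappa = \kappa(\delta) = c'(\delta)$. (The power $n^{d-1}$ is exactly the number of lattice fibers, i.e.\ the discrete $(d-1)$-dimensional cross-sectional area of the collar.)

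The main obstacle — and the place needing the most care — is case (b): ruling out, or rather handling, the possibility that $U$ simply engulfs a whole product region $\{0\le t\le\delta\}\times(\text{base patch})$ without producing many interior-boundary edges there. The resolution is that $I_E(U)$ need not live only over the ``spanning'' fibers: because $U \subsetneq A$ and $A$ is connected with boundary of size $O(n^{d-1})$, the edge set $I_E(U)$ separates $U$ from $A\setminus U$ inside $A$, and a discrete isoperimetric / co-area inequality on the slab $A$ (comparing the ``volume'' $|U|$, which is $\Omega(\delta \cdot n^{d-1} \cdot n) = \Omega(n^d)$ once $U$ spans the collar over a fixed-size base patch, against the separating edge set) forces $|I_E(U)| = \Omega(n^{d-1})$. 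So the cleanest route is probably to bypass the casework entirely: show directly that any $U\subset A$ meeting both boundary components of the collar, with interior boundary black and $U\ne A$, has $|U|\ge c(\delta) n^d$, and then invoke the standard discrete isoperimetric inequality for lattice subsets of a slab of thickness $\asymp \delta n$ to conclude $|I_E(U)| \ge \kappa(\delta) n^{d-1}$. The only genuinely technical points are (i) transferring between the finitely many smooth charts of $\partial R$ with uniform constants, which is fine since $\partial R$ is compact and piecewise smooth, and (ii) the fixed bound on edge lengths in $\Lambda$, which is part of our standing assumptions on the lattice.
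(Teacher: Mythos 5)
First, a point of comparison: the paper does not actually prove Lemma~\ref{lem:surface_area} --- its entire justification is the remark that the claim ``follows from a discrete version of the isoperimetric inequality'' and is, for $\Lambda=\m Z^3$, a restatement of \cite[Proposition 6.5.1]{3Ddimers}. So your second route (a volume lower bound plus a discrete isoperimetric inequality) is in the same spirit as the paper's citation, and your attempt to make it self-contained goes beyond the text. However, both of your routes have a genuine gap, and it is worth seeing that no argument can succeed from the hypotheses you actually invoke, namely: $U$ is a potential counterexample ($\partial_0 U\subset B$), $U$ meets both boundary components of the collar, and $U\neq A$. Take $U=A\setminus\{v\}$ for a single white vertex $v$ in the middle of the collar: all three conditions hold, yet $|I_E(U)|=D=O(1)$. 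This breaks your isoperimetric step at exactly the point you gloss over: the discrete isoperimetric inequality bounds the separating edge set from below by a function of $\min(|U|,|A\setminus U|)$, not of $|U|$, and here $|A\setminus U|=1$. It equally breaks case (b) of your slicing argument, since ``some nearby fiber where case (a) occurs'' yields $O(1)$ boundary edges, not a constant fraction of the $\asymp n^{d-1}$ fibers. A second example: for $d\ge3$, let $U$ be a lattice path crossing the collar together with all neighbors of its white vertices; this is again a potential counterexample meeting both boundary components, but $|U|=O(n)$ and $|I_E(U)|=O(n)\ll n^{d-1}$, so your intermediate claim that spanning the collar forces $|U|\ge c(\delta)n^d$ is also unjustified.

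The missing idea is therefore not a technicality: one must use more than ``$U$ is a potential counterexample intersecting both boundary components.'' The extra input has to come either from the way $U$ arises in the proof of Theorem~\ref{thm:patching_limit} (it is a genuine Hall violator, with positive imbalance in a nearly balanced region, and can be taken maximal or minimal), or from a separation condition --- for instance, if $U$ contains one entire boundary component of $A$ while $A\setminus U$ contains the other, then $I_E(U)$ is a cut between the two boundary spheres of the collar and a min-cut or co-area argument genuinely gives $|I_E(U)|\ge\kappa(\delta)n^{d-1}$. Your fiber-slicing idea is the right mechanism for that version (every fiber then meets both $U$ and its complement, hence contributes an edge, and there are $\asymp n^{d-1}$ fibers), but as written the hypotheses you feed into it are too weak to guarantee that more than $O(1)$ fibers are ``mixed.'' The fix is to identify the precise hypotheses of \cite[Proposition 6.5.1]{3Ddimers}, check that the $U_n$ arising in the patching proof satisfies them, and prove that strengthened statement rather than the literal one.
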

The lemma follows from a discrete version of the isoperimetric inequality. For $\Lambda=\m Z^3$ it is the same as \cite[Proposition 6.5.1]{3Ddimers} but restated in our terminology.

\subsection{Proof of the patching theorem}\label{sec:patching_limit}

\begin{proof}[Proof of Theorem~\ref{thm:patching_limit}] Let $A_n = R_n\setminus R_n^\delta$ and suppose for contradiction that $U_n$ is a counterexample with boundary conditions on $A_n$ coming from $M_1,M_2$ sampled from $\mu_{n,N},\nu_{n,N}$ respectively. Note that since $M,M'$ alone extend to tilings of all of $R_n$, $U_n$ must connect inner and outer boundaries of $A_n$. {It suffices to show that $A_n$ is tileable with the following multiplicities on its boundary.
\begin{itemize}
        \item For $a\in \partial R_n$, the multiplicity is $N_a(M,A_n)= \sum_{e=(a,v),v\in A_n} {M}_e=\sum_{e=(a,v)} M_e$.
        \item For $a\in \partial R_n^\delta$, the multiplicity is $N_a(M',A_n) = \sum_{e=(a,v), v\in A_n} M_e'$. Note that $M_e'$ summed over all edges $e$ in $R_n$ incident to $a$ would be $N$ since $a\in R_n^\circ$, but this is the multiplicity needed for a tiling of $A_n$ that agrees with $M'$ in the interior of $R_n^\delta$.
    \end{itemize}    }
By Proposition~\ref{prop:imbalance} we have:
 \begin{align*}
        &ND \cdot \text{imbalance}_{\NN}(U_n) = -N |I_E(U_n)| + \sum_{u\in \partial U_n} |X_E(u)| N_u\, \text{sign}(u)  \\
        = &-N |I_E(U_n)| + \sum_{u\in \partial U_n\cap \partial R_n} |X_E(u)| N_u(M,A_n)\, \text{sign}(u) + \sum_{u\in \partial U_n\cap \partial R_n^\delta} |X_E(u)|N_u(M',A_n)\, \text{sign}(u).\label{eq:imbalance_in_proof2}
    \end{align*}
Since $M'$ extends to an $N$-dimer cover of $A_n$ with its own boundary condition on $\partial R_n$, we have that 
\begin{align*}
     \sum_{u\in \partial U_n\cap \partial R_n^\delta} |X_E(u)|N_u(M',A_n)\, \text{sign}(u) < N\sum_{e\in I_E(U_n)} M_e' -   \sum_{u\in \partial U_n\cap \partial R_n} |X_E(u)|N_u(M',A_n)\, \text{sign}(u)
\end{align*}
and we use this to replace the third term in the sum above. Along the outer boundary $\partial R_n$, the limiting boundary multiplicities $\alpha_v^n,\beta_v^n$ differ by $O(1/{n})$. On the other hand the total area of the boundary of is order $n^{d-1}$. Thus for any $\varepsilon>0$, for $N$ large enough, we have with probability $1-\varepsilon$,
\begin{equation*}
        \bigg|\sum_{u\in \partial U_n\cap \partial R_n} |X_E(u)|N_u(M,A_n)\, \text{sign}(u)  -  \sum_{u\in \partial U_n\cap \partial R_n} |X_E(u)|N_u(M',A_n)\,\text{sign}(u)\bigg| < \varepsilon N n^{d-1}+ O(N n^{d-2}).
    \end{equation*}
Combining these gives the bound
   \begin{align}\label{eq:imbalance_bound_3}
      D \cdot  \text{imbalance}_{\NN}(U_n) &\leq -|I_E(U_n)| + \sum_{e\in I_E(U_n)} \frac{M_e'}{N} + \varepsilon n^{d-1} + O(n^{d-2}).
    \end{align}
For $n$ fixed, as $N\to \infty$ samples from $\nu_{n,N}$ concentrate on a limiting flow $\omega_n$ which has unit divergences in $R_n^\circ$. Further, this family of limiting flows is strictly non-degenerate in $n$. Combined with the fact that $U_n$ connects the inner and outer boundaries of $A_n$, Lemma~\ref{lem:surface_area} implies there is a constant $c=c(\delta)$ such that for $N$ large enough, with probability $1-\varepsilon$,
\begin{align*}
    N|I_E(U_n)| - \sum_{e\in I_{E}(U_n)} M_e' > c N n^{d-1}.
\end{align*}
Plugging this in to \eqref{eq:imbalance_bound_3} gives that for $N$ large enough, we have with probability $1-\varepsilon$:
 \begin{align*}
        D \cdot  \text{imbalance}_{\NN}(U_n) &\leq -c n^{d-1} + \varepsilon n^{d-1} + O(n^{d-2}).
    \end{align*}
Since $c>0$ is fixed and $\varepsilon$ is arbitrary, taking $\varepsilon<c$ and then $N,n$ large enough would make the imbalance non-positive. Therefore for $n,N$ large enough, $U_n$ is not an $\NN$ counterexample, which completes the proof. 
\end{proof}

\section{Large deviations}\label{sec:ldp}

\subsection{Set up, statement, and corollaries}

Fix $R\subset \m R^d$ which is a compact, connected domain with piecewise smooth boundary and a boundary asymptotic flow $b$ on $\partial R$ as in Section~\ref{sec:vector_fields}. Let $R_n\subset \frac{1}{n}\Lambda$ be a lattice approximation of $R$, i.e.\ a sequence of lattice regions approximating it in Hausdorff distance. Here we prove a large deviation principle in the iterated limit as $N\to\infty$ and then $n\to \infty$, for samples from multinomial measures on $R_n$ with discrete boundary conditions converging to $b$ as $n,N\to \infty$ in the way specified below. 

We assume that $b$ is \textit{extendable outside}, i.e.\ there exists $\varepsilon>0$ such that $b$ extends to an asymptotic flow on an $\varepsilon$ neighborhood of $R$ (versions of this condition are also present in the large deviation principles for the standard dimer model in 2D \cite{cohn2001variational} and 3D \cite{3Ddimers}). 

On the discrete side, for each $n$ fixed, we specify boundary conditions on $R_n$ as $N\to \infty$ by choosing the limiting vertex multiplicities $\beta_n(v) \in (0,1]$ for all $v\in \partial R_n$. In terms of a discrete flow $\omega_n$ corresponding to a dimer cover of $R_n$, this is fixing the divergences of $\omega_n$ along $\partial R_n$. The rescaled boundary conditions on $R_n$ viewed as a dual measure as in Section~\ref{sec:convergence_in_weak_topology} are thus
\begin{align*}
    b_n:=\sum_{v\in \partial R_n} \text{sign}(v) \beta_n(v) \frac{\mathds{1}_v(x)\, \dd x}{2n^{d-1}} ,
\end{align*}
where $\text{sign}(v) = 1$ if $v\in W$ and $-1$ if $v\in B$. We say that $\beta_n$ converges to $b$ as $n\to \infty$ if $\m W_1^{1,1}(b_n, b) \to 0$ as $n\to \infty$.

For each $n$, we consider only feasible limiting boundary conditions $\beta_n$ on $\partial R_n$. To avoid some technicalities, instead of fixing one limiting boundary value for each $n$, we fix \textit{thresholds} $\theta_{n}$ such that $\theta_n\to 0$ as $n\to \infty$, but sufficiently slowly (the existence of these thresholds follows from Remark~\ref{rem:thresholds}). We define $\rho_{n,N}$ to be the multinomial dimer measure with vertex multiplicities $N$ at all interior points of $R_n$, and any multiplicities $(N_v)_{v\in \partial R}$ between $1$ and $N$ which are feasible and such that the limiting boundary value $b_n$ as $N\to \infty$ with these choices has $\m W_{1}^{1,1}(b,b_n)<\theta_n$.

The main result of this section is: 

\begin{thm}[Large deviation principle]\label{thm:ldp}
    Fix $\Lambda$, $R\subset \m R^d$ compact, connected with piecewise smooth boundary, and a boundary asymptotic flow $b$ on $\partial R$ which is extendable outside. Let $R_n\subset \frac{1}{n}\Lambda$ a lattice approximation of $R$, fix thresholds $\theta_{n}$ going to zero sufficiently slowly, and probability measures $\rho_{n,N}$ as above. 
    
    The measures $(\rho_{n,N})_{n,N\geq 1}$ satisfy a large deviation principle in the iterated limit as $N$ and then $n$ go to infinity in the weak topology on flows with good rate function $I:\text{AF}(\Lambda,R,b)\to \m R$. That is, for any Borel set $B$ in the weak topology on flows, 
    \begin{align*}
        -\inf_{\omega\in {B}^{\circ}} I(\omega) \leq \liminf_{n\to \infty} \liminf_{N\to\infty} \frac{1}{Nn^d} \log \rho_{n,N}(B) \leq \limsup_{n\to \infty} \limsup_{N\to \infty} \frac{1}{Nn^d} \log \rho_{n,N}(B) \leq -\inf_{\omega\in \overline{B}} I(\omega)
    \end{align*}
    where $\overline{B}, B^{\circ}$ denote the closure and interior of $B$ respectively. Further, if $\omega$ is an asymptotic flow, the rate function $I$ is given explicitly by 
    \begin{align}
        I(\omega) = C_b +\int_R \sigma(\omega(x)) \, \dd x,
    \end{align}
    where $C_b\geq 0$ is an additive constant and $\sigma$ is the surface tension for $\Lambda$ from Theorem~\ref{thm:legendre_duality}, i.e., it is the Legendre dual of the free energy on the torus $F(\alpha) = \log \sum_{j=1}^D \exp(e_j\cdot \alpha)$. In particular $C_b =-\min_{\omega\in \text{AF}(\Lambda,R,b)} \int_R \sigma(\omega(x))\,\dd x$. If $\omega$ is not an asymptotic flow, then $I(\omega) = \infty.$
\end{thm}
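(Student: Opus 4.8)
The plan is to follow the general two-sided strategy of \cite{cohn2001variational} and \cite{3Ddimers}, but replacing the combinatorial inputs with the large-$N$ tools developed above. The skeleton is: (a) establish exponential tightness and the exponential equivalence of the various discrete-flow descriptions, so that the LDP may be stated in the weak topology on flows; (b) prove a \emph{local} LDP, i.e.\ for each asymptotic flow $\omega$ that is smooth (or piecewise-smooth, affine on a polyhedral subdivision), compute $\lim_{n,N} \frac{1}{Nn^d}\log \rho_{n,N}(B_\epsilon(\omega))$ and show it equals $-\int_R \sigma(\omega(x))\,\dd x$ up to the normalizing constant; (c) upgrade the local estimate to a genuine upper and lower bound on arbitrary Borel sets by a patching/covering argument; (d) identify the rate function on all of $\mathrm{AF}(\Lambda,R,b)$ by density of piecewise-affine flows and lower semicontinuity of $\omega\mapsto\int_R\sigma(\omega)$, and finally show $I\equiv\infty$ off $\mathrm{AF}(\Lambda,R,b)$ using Theorem~\ref{thm:scaling_limits_asymptotic} and Theorem~\ref{thm:discrete_continuum_boundary_convergence}.

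For the \textbf{local lower bound}, fix a piecewise-affine asymptotic flow $\omega$ with $T(\omega)=b$. Subdivide $R$ into small cubes $Q$ on each of which $\omega$ is (approximately) the constant slope $s_Q$. On the corresponding lattice box $Q_n\subset\frac1n\Lambda$ one recognizes $Z^{s_Q}_{n,N}$, the homology-class-restricted torus partition function whose normalized exponential growth rate is $-\sigma(s_Q)$ by Definition~\ref{def:surface_tension} and Theorem~\ref{thm:legendre_duality}; the number of $N$-dimer covers of $R_n$ matching $\omega$ to within $\epsilon$ on every edge is then at least $\prod_Q (N!)^{|Q_n\cap W|} e^{-Nn^d(\sigma(s_Q)+o(1))\,\mathrm{vol}(Q)}$, after gluing the per-box configurations along their shared faces. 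The gluing is exactly where the \textbf{patching theorem} (Theorem~\ref{thm:patching_limit}, available here since multinomial measures concentrate on strictly non-degenerate flows by Remark~\ref{rem:applies_to_multinomial}) is used: it guarantees that compatible boundary data on the annuli between boxes can be joined into a global $N$-dimer cover with probability $\to 1$. Taking $\epsilon\to0$ and then passing $N\to\infty$, $n\to\infty$, $(\text{mesh})\to0$ in that order, and using that the discrete boundary conditions $b_n$ converge to $b$ within $\theta_n$, yields $\liminf \frac{1}{Nn^d}\log\rho_{n,N}(B_\epsilon(\omega))\ge -\int_R\sigma(\omega)-C$ with the right constant; Riemann-sum convergence of $\sum_Q \sigma(s_Q)\mathrm{vol}(Q)$ to $\int_R\sigma(\omega)$ follows from continuity of $\sigma$ on the compact $\mc N(\Lambda)$ (Corollary~\ref{thm:strict_convex}). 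General $\omega\in\mathrm{AF}(\Lambda,R,b)$ is handled by approximating it in $d_W$ by piecewise-affine flows with the same boundary value and invoking lower semicontinuity plus the fact that $\int_R\sigma(\omega)$ is continuous along mollifications (as in the proof of Theorem~\ref{thm:discrete_continuum_boundary_convergence}).

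For the \textbf{upper bound}, the key estimate is that for any fixed box $Q$ and slope $s$, the partition function of $N$-dimer covers whose averaged slope on $Q$ is near $s$ is at most $(N!)^{|Q_n\cap W|} e^{-Nn^d(\sigma(s)-o(1))\mathrm{vol}(Q)}$; this is again Definition~\ref{def:surface_tension} together with the concentration Lemma~\ref{lem:mult_concentration}, which controls the fluctuations of the tile counts so that restricting to an approximate slope costs the full surface tension. Summing these estimates over a finite cover of $\mathrm{AF}(\Lambda,R,b)$ by $d_W$-balls (compactness, Corollary~\ref{cor:AFboundarycompact}) and over the finitely many combinatorial types of boundary multiplicities within $\theta_n$ of $b$ (the $\theta_n\to0$ slowly condition makes the entropy of this choice negligible) gives the claimed $\limsup$ bound $-\inf_{\overline B} I$. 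Finally, $I(\omega)=\infty$ for $\omega\notin\mathrm{AF}(\Lambda,R,b)$: by Theorem~\ref{thm:scaling_limits_asymptotic} any subsequential weak limit of discrete flows is an asymptotic flow, and by Theorem~\ref{thm:discrete_continuum_boundary_convergence} its boundary value is $b$, so a $d_W$-ball around any $\omega$ outside this set eventually has $\rho_{n,N}$-mass zero. Good-rate-function and exponential-tightness properties then follow from compactness of $\mathrm{AF}(\Lambda,R,b)$.

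\textbf{Main obstacle.} The delicate point is the patching/gluing step in the local lower bound: one must glue the per-box dimer configurations along $O(n)$-width annuli while (i) preserving the prescribed interior multiplicity $N$, (ii) matching boundary data coming from \emph{different} per-box measures that only agree up to $O(1/n)$, and (iii) keeping the entropy loss on the annuli negligible compared to $Nn^d$. Theorem~\ref{thm:patching_limit} is designed to do exactly this, but applying it requires verifying its strict non-degeneracy hypothesis uniformly over all boxes and all mesh scales, and carefully bookkeeping the order of limits ($N\to\infty$, then $n\to\infty$, then mesh $\to 0$) so that the $o(1)$ errors from Lemma~\ref{lem:mult_concentration}, from Riemann-sum approximation, and from the thresholds $\theta_n$ all vanish in the correct order. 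Matching the additive constant $C_b=-\min_{\omega}\int_R\sigma(\omega)$ is then a normalization bookkeeping once the rate function is identified up to a constant.
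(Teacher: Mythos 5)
Your overall architecture matches the paper's: reduce the LDP to upper and lower bounds on $d_W$-balls plus exponential tightness (which, as you say, follows from compactness of the level sets of $I$, proved via lower semicontinuity of $\int_R\sigma(\omega)$ and compactness of $\mathrm{AF}(\Lambda,R,b)$), approximate a general asymptotic flow by one that is piecewise constant on a polyhedral mesh, and prove the local estimates box by box. Your lower bound is essentially the paper's: the paper approximates $\omega$ by a flow that is constant with slope $s_X$ on each simplex $X$, builds a global discrete scaffold $M$ via Theorem~\ref{thm:discrete_approximation}, and then uses Theorem~\ref{thm:patching_limit} to replace the interior of $M$ in each simplex by a sample from the weighted torus measure with weights $w_i=p_i(s_X)$; the gauge equivalence on a fixed homology class (Theorem~\ref{cor:almost_legendre}) then converts the weighted count into $Z^{s_X}_{n,N}$ and hence into $-\sigma(s_X)|X|$. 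Your version glues per-box configurations against each other along shared faces rather than against a single deterministic scaffold, which is a slightly more delicate application of the same patching theorem, but the non-degeneracy hypothesis is verified the same way (the slopes $s_X$ lie in $\mc N(\Lambda)^\circ$ after the $(1-\kappa)$-rescaling in Proposition~\ref{prop:pc_approx}, so the critical edge probabilities $p_i(s_X)$ are bounded away from zero uniformly over the mesh).

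The genuine gap is in your upper bound. You assert that the partition function of $N$-dimer covers of a box $Q_n$ with free boundary conditions and averaged slope near $s$ is at most $(N!)^{|Q_n\cap W|}e^{-Nn^d(\sigma(s)-o(1))\mathrm{vol}(Q)}$, citing Definition~\ref{def:surface_tension} and Lemma~\ref{lem:mult_concentration}. Neither gives this. Definition~\ref{def:surface_tension} defines $\sigma(s)$ as the growth rate of a \emph{torus} partition function restricted to a homology class; it says nothing a priori about a box with free boundary conditions, and a box supports many more boundary behaviors than a torus fundamental domain. Lemma~\ref{lem:mult_concentration} controls fluctuations of tile counts \emph{under the tilted torus measure} $\mu_{n,N,w}$; it does not bound the number of configurations of a free-boundary box at a prescribed slope. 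The missing idea is a comparison sending box configurations \emph{into} the torus. The paper does this with a reflection trick: using the assumed reflection symmetry of $\Lambda$, it tiles the torus $\T_{2m}$ by $2^d$ reflected copies of the box $B_m$ (with $m\sim n^{1/2}$ so that boxes meeting $\partial X_n$ are negligible), places reflected boundary data on each copy, and fills in the cross-copy edges, yielding
\begin{align*}
\bigl(Z^{\mathrm{free}}_{B_m,N}(U_r(s_X))\bigr)^{2^d}\;\le\;\sum_{s':|s'-s_X|<r} Z^{s'}_{2m,N},
\end{align*}
after which the definition of $\sigma$ and its continuity finish the estimate. This step is the sole reason the paper assumes $\Lambda$ is reflection symmetric (see Remark~\ref{rem:reflection}); without it, or an equivalent superadditivity/embedding argument, your ``key estimate'' is unproven and the upper bound does not close. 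Everything else in your upper bound (covering $\mathrm{AF}(\Lambda,R,b)$ by finitely many balls, the negligible entropy of the boundary multiplicities within $\theta_n$, and the identification $I=\infty$ off the asymptotic flows via Theorems~\ref{thm:scaling_limits_asymptotic} and~\ref{thm:discrete_continuum_boundary_convergence}) is consistent with the paper.
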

\begin{rem}
    The integral $\int_R \sigma(\omega(x)) \, \dd x$ is nonpositive for all asymptotic flows. Shifting by the additive constant $C_b$ makes $0$ the minimum value of $I(\cdot)$ on $\text{AF}(\Lambda,R,b)$. To simplify some of the later statements, we define the \textit{total entropy} $\Ent$ by
    \begin{align}\label{eq:total_surface_tension}
        \Ent(\omega):= -\int_R \sigma(\omega(x))\,\dd x.
    \end{align}
    We call this total entropy because of Corollary~\ref{cor:almost_legendre}, where we saw that the surface tension and entropy are related by a minus sign. Note that the choice of sign here means that $\Ent$ is a non-negative, strictly \textit{concave} function. 
\end{rem}

\begin{rem}
    The thresholds $\theta_n$ only enter in to one step of the proof (Theorem~\ref{thm:discrete_approximation}), and could be avoided by a proving a stronger and more technical patching theorem (replacing Theorem~\ref{thm:patching_limit}). For $\m Z^3$, a stronger patching theorem is implied by the generalized patching theorem for single dimer covers of $\m Z^3$ proved in \cite{3Ddimers}, and hence it would follow that Theorem~\ref{thm:ldp_intro} holds without the thresholds under the mild regularity condition that $(R,b)$ is \textit{flexible}, see Definition~\ref{def:flexible}. However, we choose to give more direct, simplified proofs that make use of the large $N$ structure, instead of relying on the substantial tools developed specifically for $\m Z^3$.
\end{rem}

As a corollary of the large deviation principle and properties of the surface tension $\sigma$, we see that samples from $\rho_{n,N}$ concentrate on a unique \textit{limit shape}, which is the minimizer of $I$ subject to the boundary conditions. 

\begin{cor}(Limit shape.)\label{cor:limit_shape} 
Fix $\Lambda$ and $(R,b)$ as in Theorem~\ref{thm:ldp}. First, the rate function $I$ has a unique minimizer $\omega_{\min}\in \text{AF}(\Lambda,R,b)$. Second, given $\varepsilon>0$ define the event 
    \begin{align*}
        V_\varepsilon = \{ \omega: d_W(\omega_{\min},\omega)>\varepsilon\}. 
    \end{align*}
    For any sequence of multinomial measures $\rho_{n,N}$ as in Theorem \ref{thm:ldp}, dimer covers sampled from $\rho_{n,N}$ concentrate exponentially fast on $\omega_{\min}$ in the iterated limit as $N$ and then $n$ go to infinity. In other words there is a constant $C>0$ determined by $(R,b)$ such for $n,N$ sufficiently large,
    \begin{align}
        \rho_{n,N}(V_\varepsilon) < C^{-Nn^d}. 
    \end{align}
    \end{cor}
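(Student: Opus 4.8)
The plan is to derive the corollary from the large deviation principle of Theorem~\ref{thm:ldp}, using strict convexity of $\sigma$ (Corollary~\ref{thm:strict_convex}) and compactness of $(\text{AF}(\Lambda,R,b),d_W)$ (Corollary~\ref{cor:AFboundarycompact}). First I would settle existence and uniqueness of the minimizer. Since $\sigma$ is continuous and bounded on the compact Newton polytope $\mc N(\Lambda)$, the functional $\omega\mapsto\int_R\sigma(\omega(x))\,\dd x$ is finite on $\text{AF}(\Lambda,R,b)$, so $I$ is finite there and $+\infty$ elsewhere; as a good rate function it is lower semicontinuous, hence attains its infimum on the compact space $\text{AF}(\Lambda,R,b)$. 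Call a minimizer $\omega_{\min}$; by the choice of $C_b$ we have $I(\omega_{\min})=0$. For uniqueness I would use that $\text{AF}(\Lambda,R,b)$ is convex --- being divergence-free, supported in $R$, and $\mc N(\Lambda)$-valued are all preserved under midpoints, and the boundary-value map $T$ is linear --- so if $\omega_1,\omega_2\in\text{AF}(\Lambda,R,b)$ differ on a set of positive Lebesgue measure then strict convexity of $\sigma$ gives $\int_R\sigma\!\big(\tfrac{\omega_1+\omega_2}{2}\big)<\tfrac12\int_R\sigma(\omega_1)+\tfrac12\int_R\sigma(\omega_2)$, so $\omega_1$ and $\omega_2$ cannot both minimize $I$. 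Hence $\omega_{\min}$ is unique.

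Next I would show that the rate function is bounded away from $0$ on $V_\varepsilon$. The set $V_\varepsilon=\{\omega:d_W(\omega_{\min},\omega)>\varepsilon\}$ is open, so $\overline{V_\varepsilon}\subseteq\{d_W(\omega_{\min},\cdot)\geq\varepsilon\}$ and in particular $\omega_{\min}\notin\overline{V_\varepsilon}$. The set $\overline{V_\varepsilon}\cap\text{AF}(\Lambda,R,b)$ is closed in the compact space $\text{AF}(\Lambda,R,b)$, hence compact, so by lower semicontinuity $I$ attains the value $\delta_\varepsilon:=\inf_{\omega\in\overline{V_\varepsilon}}I(\omega)$ at some $\omega^*$ (if this intersection is empty then $\delta_\varepsilon=+\infty$ and the bound below is immediate). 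Since $\omega^*\neq\omega_{\min}$ and $\omega_{\min}$ is the \emph{unique} zero of $I$, we conclude $\delta_\varepsilon=I(\omega^*)>0$.

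Finally I would apply the upper bound of Theorem~\ref{thm:ldp} to the closed set $\overline{V_\varepsilon}\supseteq V_\varepsilon$, obtaining $\limsup_{n\to\infty}\limsup_{N\to\infty}\frac{1}{Nn^d}\log\rho_{n,N}(V_\varepsilon)\leq-\delta_\varepsilon<0$. Unwinding the iterated $\limsup$: there is $n_0$ such that for all $n\geq n_0$ one has $\limsup_{N\to\infty}\frac{1}{Nn^d}\log\rho_{n,N}(V_\varepsilon)\leq-\delta_\varepsilon/2$, and then for each such $n$ there is $N_0(n)$ so that $\rho_{n,N}(V_\varepsilon)\leq e^{-\delta_\varepsilon Nn^d/4}$ for all $N\geq N_0(n)$. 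Setting $C:=e^{\delta_\varepsilon/4}>1$ (depending only on $(R,b)$ and $\varepsilon$ through $\delta_\varepsilon$) gives $\rho_{n,N}(V_\varepsilon)<C^{-Nn^d}$ for $n,N$ sufficiently large.

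The main obstacle is the step of ruling out that $I$ can be made arbitrarily small on $V_\varepsilon$; this is exactly where both the uniqueness of the minimizer (equivalently, the strict convexity of $\sigma$) and the lower semicontinuity plus compactness of the rate function are indispensable. Once $\delta_\varepsilon>0$ is established, the remaining steps are a routine consequence of the LDP, and do not use any special structure of the multinomial model beyond what Theorem~\ref{thm:ldp} already provides.
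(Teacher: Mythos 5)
Your proposal is correct and follows essentially the same route as the paper: uniqueness of $\omega_{\min}$ from compactness of $\text{AF}(\Lambda,R,b)$ together with strict convexity of $\sigma$, and then the standard deduction of exponential concentration from the LDP upper bound plus lower semicontinuity of $I$. The only cosmetic difference is that the paper covers $\text{AF}(\Lambda,R,b)$ by finitely many open sets whose closures avoid the minimum and sums the resulting bounds, whereas you take the infimum of $I$ over the closed set $\overline{V_\varepsilon}$ directly; the two arguments are equivalent.
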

\begin{proof}
    By Corollary~\ref{cor:AFboundarycompact}, $\text{AF}(\Lambda,R,b)$ is compact. By Corollary~\ref{thm:strict_convex}, $\sigma$ is strictly convex on the closed Newton polytope $\mc N(\Lambda)$. Combined this shows that $I$ has a unique minimum $\omega_{\min}\in \text{AF}(\Lambda,R,b)$. 

    Concentration on the minimizer follows from Theorem~\ref{thm:ldp} by a standard argument. Fix $\varepsilon>0$ and cover $\text{AF}(\Lambda,R,b)$ by open neighborhoods $B_{\omega}$ such that: 
    \begin{itemize}
        \item $B_{\omega_{\min}} = \{\omega : d_W(\omega,\omega_{\min})<\varepsilon\}$.
        \item If $\omega\neq \omega_{\min}$, then $I(\omega') > I(\omega_{\min})$ for all $\omega'\in \overline{B}_{\omega}$.
    \end{itemize}
By compactness this has a finite subcover which we can denote $B_{\omega_{\min}} = B_1, B_2, \dots, B_k$. Let $\omega_i$ be a minimizer of $I$ over $\overline{B_i}$. By Theorem~\ref{thm:ldp}, 
    \begin{align}
        \rho_{n,N}(V_\varepsilon) \leq \sum_{i=2}^k \rho_{n,N}(B_i) 
     \leq \sum_{i=2}^k \exp(Nn^d (I(\omega_{\min}) - I(\omega_i))). 
    \end{align}
    Since $I(\omega_{\min}) - I(\omega_i) < 0$ for all $i=2,\dots,k$ this completes the proof.
\end{proof}

The rest of this section is dedicated to proving Theorem~\ref{thm:ldp}. We first prove here that $I$ is a good rate function.

\begin{lemma}\label{lem:semicontinuity}
Fix $(R,b)$ as above. The function $I:\text{AF}(\Lambda,R,b)\to [0,\infty)$ given by 
    \begin{align*}
        I(\omega) = C_b+\int_R \sigma(\omega(x))\, \dd x
    \end{align*}
is a good rate function, i.e. it is lower semicontinuous in the weak topology on flows and the level sets $\{\omega\in \text{AF}(\Lambda,R,b):I(\omega)\leq c\}$ are compact. 
\end{lemma}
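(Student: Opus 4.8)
The plan is to write $I(\omega)=C_b+\Phi(\omega)$ where $\Phi(\omega):=\int_R\sigma(\omega(x))\,\dd x$, and to reduce everything to the single fact that \emph{$\Phi$ is lower semicontinuous on $(\text{AF}(\Lambda,R,b),d_W)$}. Granting this: $\sigma$ is convex and continuous, hence bounded, on the compact polytope $\mc N(\Lambda)$ (by Corollary~\ref{thm:strict_convex}; a convex function finite on a polytope is bounded above by its vertex values, and $\sigma$ is lower semicontinuous as a Legendre conjugate), and $\operatorname{Vol}(R)<\infty$, so $\Phi$ is finite and bounded on $\text{AF}(\Lambda,R,b)$. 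Since $\text{AF}(\Lambda,R,b)$ is $d_W$-compact by Corollary~\ref{cor:AFboundarycompact} and $\Phi$ is lower semicontinuous there, $\Phi$ attains a minimum, so $C_b:=-\min_{\omega\in\text{AF}(\Lambda,R,b)}\Phi(\omega)$ is a well-defined finite constant, nonnegative because $\sigma\le 0$ on $\mc N(\Lambda)$. Then $I=C_b+\Phi\ge 0$ is finite-valued and lower semicontinuous with $\inf I=0$, and for each $c$ the sublevel set $\{\omega\in\text{AF}(\Lambda,R,b):I(\omega)\le c\}$ is a closed subset of a compact set, hence compact. So the only real work is the lower semicontinuity of $\Phi$.

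For that I would realize $\Phi$ as a supremum of $d_W$-continuous functionals, using the Fenchel duality $\sigma(s)=\sup_{\alpha\in\m R^d}\big(\langle s,\alpha\rangle-F(\alpha)\big)$ valid for every $s\in\mc N(\Lambda)$ (Theorem~\ref{thm:legendre_duality}). For $\phi\in C(R;\m R^d)$ put
\[
L_\phi(\omega):=\sum_{i=1}^d\int_R\phi_i(x)\,\omega_i(x)\,\dd x-\int_R F(\phi(x))\,\dd x.
\]
Because $R$ is compact, each $\phi_i$ is bounded and continuous, so the first sum is $d_W$-continuous in $\omega$ by Proposition~\ref{Wassequalsweak} (it is exactly the pairing of $\phi_i$ against the $i$-th component measure of $\omega$), while the second term is a constant; hence each $L_\phi$ is $d_W$-continuous. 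The pointwise Fenchel inequality gives $\Phi(\omega)\ge L_\phi(\omega)$ for all $\phi$, so $\Phi\ge\sup_\phi L_\phi$, and $\sup_\phi L_\phi$ is lower semicontinuous as a supremum of continuous functions. The remaining point — which forces equality and yields lower semicontinuity of $\Phi$ — is the reverse inequality $\sup_\phi L_\phi(\omega)\ge\Phi(\omega)$.

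To obtain this, fix $\eps>0$, partition $\mc N(\Lambda)$ into finitely many Borel sets $P_1,\dots,P_m$ of diameter $<\eta$, choose $s_j\in P_j$, and by Theorem~\ref{thm:legendre_duality} choose $\alpha_j\in\m R^d$ with $\langle s_j,\alpha_j\rangle-F(\alpha_j)>\sigma(s_j)-\eps$. Let $\psi:R\to\m R^d$ be the bounded measurable function equal to $\alpha_j$ on $\{x\in R:\omega(x)\in P_j\}$. Using $\operatorname{diam}(P_j)<\eta$, the uniform continuity of $\sigma$ on $\mc N(\Lambda)$, and $\eta$ chosen small relative to $\eps$ and $\max_j|\alpha_j|$, one gets $\int_R\big(\langle\omega(x),\psi(x)\rangle-F(\psi(x))\big)\,\dd x\ge\Phi(\omega)-C\eps$ for a constant $C=C(d,\operatorname{Vol}(R))$. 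Finally, Lusin's theorem yields $\phi\in C(R;\m R^d)$ with $\|\phi\|_\infty\le\max_j|\alpha_j|$ agreeing with $\psi$ off a set of arbitrarily small Lebesgue measure; since $\omega$ and $F\circ\phi$ are bounded, $L_\phi(\omega)$ differs from $\int_R\big(\langle\omega,\psi\rangle-F(\psi)\big)\,\dd x$ by at most $\eps$. Letting $\eps\to 0$ gives $\sup_\phi L_\phi(\omega)\ge\Phi(\omega)$, completing the proof. (Alternatively one could cite a standard lower semicontinuity theorem for convex integral functionals under weak convergence of measures, but the duality argument keeps the section self-contained.)

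The main obstacle is precisely this last equality: upgrading the pointwise Fenchel inequality to a genuine representation valid for an \emph{arbitrary measurable} $\omega$. One cannot simply take $\phi(x)=\nabla\sigma(\omega(x))$, since $\omega$ may take values on $\partial\mc N(\Lambda)$, where $\nabla\sigma$ blows up and the optimal dual vector is unbounded — the same divergence of $\nabla\sigma$ at $\partial\mc N(\Lambda)$ that drives the later ``no facets'' result. Partitioning the \emph{target} polytope $\mc N(\Lambda)$ into small pieces to extract uniformly bounded near-optimal dual vectors, followed by a Lusin regularization to pass from a bounded measurable dual function to a continuous one (the class that interacts with weak convergence of the component measures), is what resolves it; everything else is soft analysis.
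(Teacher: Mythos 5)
Your proof is correct in substance but follows a genuinely different route from the paper's. The paper proves lower semicontinuity by mollification plus convexity: it averages both $\omega_k$ and $\omega$ over small balls, notes that for fixed mollification scale the smoothed flows converge pointwise (weak convergence tested against indicators of balls), uses Jensen's inequality to get $\tilde I(\omega_{k,\varepsilon})\le I(\omega_k)+C\varepsilon$, and passes to the limit in $k$ and then $\varepsilon$. You instead invoke the Fenchel structure $\sigma=F^*$ already established in Theorem~\ref{thm:legendre_duality} to write $\Phi$ as a supremum of functionals $L_\phi$ that are affine in the component measures and hence $d_W$-continuous; this is the classical duality proof of weak lower semicontinuity for convex integral functionals. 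The paper's argument is shorter and needs only the tools of Section~3 (Lebesgue differentiation, weak convergence, convexity of $\sigma$); yours is more modular — it works for any convex lower semicontinuous integrand with a known conjugate — at the cost of the extra Lusin regularization and the careful extraction of uniformly bounded near-optimal dual vectors. Both correctly reduce compactness of level sets to the compactness of $\text{AF}(\Lambda,R,b)$ from Corollary~\ref{cor:AFboundarycompact}.

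One point in your write-up needs repair: as stated, the choice "$\eta$ chosen small relative to $\varepsilon$ and $\max_j|\alpha_j|$" is circular, since the $\alpha_j$ are chosen only after the partition of mesh $\eta$ is fixed, and refining the partition changes $\max_j|\alpha_j|$. The fix is to reverse the quantifiers via compactness: for each $s_0\in\mc N(\Lambda)$ pick an $(\varepsilon/2)$-optimal $\alpha(s_0)$ and, using continuity of $s\mapsto\langle s,\alpha(s_0)\rangle-F(\alpha(s_0))$ and of $\sigma$, a neighborhood $U_{s_0}$ on which $\alpha(s_0)$ remains $\varepsilon$-optimal; a finite subcover of $\mc N(\Lambda)$ then yields finitely many $\alpha_j$ each $\varepsilon$-optimal on its own piece, with no smallness condition on the mesh needed at all. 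With that rearrangement the argument is complete.
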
 
\begin{rem}
    Since $\Ent,I$ have opposite signs, it follows that $\Ent$ is upper semicontinuous. 
\end{rem}
\begin{proof} 

We extend $I$ (which is defined to be finite only on $\text{AF}(\Lambda,R)$) to $\tilde{I}(\omega) =C_b+\int_{R} \sigma(\omega(x))\,\dd x$, which is defined for any measurable flow valued in $\mc N(\Lambda)$ (i.e., we have removed the divergence-free requirement). 

Suppose that $(\omega_k)_{k\geq 1}\subset \text{AF}(\Lambda,R)$ is a sequence with $d_W(\omega_k, \omega)\to 0$ as $k\to \infty$. We replace $ \omega_k$ with $ \omega_{k,\varepsilon}$ defined by 
\begin{align*}
     \omega_{k,\varepsilon}(x) = \frac{1}{|B_\varepsilon(x)|} \int_{B_\varepsilon(x)}  \omega_k(y)\,\dd y,
\end{align*}
where we set $\omega_k(y) = 0$ if $y\not\in R$. We define $\omega_{\varepsilon}$ similarly by averaging $\omega$.  These mollified flows may not be divergence-free, but they are valued in $\mc N(\Lambda)$ and hence $\tilde{I}(\omega_{k,\varepsilon}),\tilde{I}(\omega_\varepsilon)$ are still defined and finite. By the Lebesgue differentiation theorem $\omega_{k,\varepsilon},\omega_{\varepsilon}$ converge a.e.\ to $\omega_k,\omega$ respectively. Since the surface tension $\sigma:\mc N(\Lambda)\to \m R$ is continuous, this implies that $I(\omega_k) =\tilde{I}(\omega_k) = \lim_{\varepsilon\to 0} \tilde{I}(\omega_{k,\varepsilon})$ and $I(\omega) =\tilde{I}(\omega) = \lim_{\varepsilon\to 0} \tilde{I}(\omega_{\varepsilon})$.  For $\varepsilon$ fixed these flows are continuous and $d_W(\omega_{k,\varepsilon},\omega_\varepsilon)\to 0$ as $k\to \infty$. Since convergence in $d_W$ implies weak convergence (Proposition~\ref{Wassequalsweak}), $\omega_{k,\varepsilon}$ converges pointwise to $\omega_\varepsilon$, and hence $\lim_{k\to\infty}\tilde{I}(\omega_{k,\varepsilon})=\tilde{I}(\omega_\varepsilon).$ On the other hand since $\sigma$ is convex, 
\begin{align*}
    \sigma(\omega_{k,\varepsilon}(x)) = \sigma\bigg(\frac{1}{|B_\varepsilon(x)|}\int_{B_\varepsilon(x)} \omega_k(y)\, \dd y\bigg) \leq \frac{1}{|B_\varepsilon(x)|} \int_{B_{\varepsilon}(x)} \sigma(\omega_k(y))\,\dd y.
\end{align*}
Therefore there exists a constant $C>0$ such that 
\begin{align*}
    \tilde{I}(\omega_{k,\varepsilon}) \leq I(\omega_k) + C\varepsilon.
\end{align*}
Therefore 
\begin{align*}
    \liminf_{k\to \infty} I(\omega_k) \geq \liminf_{k\to \infty} \tilde{I}(\omega_{k,\varepsilon}) - C \varepsilon = \tilde{I}(\omega_\varepsilon) - C\varepsilon.
\end{align*}
Taking $\varepsilon\to 0$ and using that $\lim_{\varepsilon\to0}\tilde{I}(\omega_\varepsilon) = I(\omega)$ shows that $I$ is lower semicontinuous. Since $\text{AF}(\Lambda,R,b)$ is compact (Corollary~\ref{cor:AFboundarycompact}) it follows that the level sets are compact.
\end{proof}

Standard arguments (see e.g.\ \cite{varadhan2016large}) imply that the large deviation principle of Theorem~\ref{thm:ldp} is equivalent to upper and lower bound statements for open balls in $AF(\Lambda,R,b)$, plus the \textit{exponential tightness} property, that is: for any $c<\infty$ there exists a compact set $F_c\in AF(\Lambda,R)$ such that for any closed set $K$ with $K\cap F_c =\emptyset$, 
\begin{align*}
    \limsup_{n\to \infty} \limsup_{N\to \infty} \frac{1}{Nn^d}\log \rho_{n,N}(K) \leq -c. 
\end{align*}
{Exponential tightness follows by taking $F_c =\{\omega\in \text{AF}(\Lambda,R,b):I(\omega)\leq c\}$, the level set of the rate function, which is compact by Lemma~\ref{lem:semicontinuity}.}

For the upper and lower bound statements, it suffices to consider neighborhoods in the topology. For $\omega\in \text{AF}(\Lambda,R)$ we define: 
\begin{align}
    U_r(\omega) = \{ \omega': d_W(\omega,\omega')<r\}.
\end{align}
We let $Z_{n,N}$ denote the unnormalized partition function of $\rho_{n,N}$, and further for any set $U$ of possible samples from $Z_{n,N}$, let $Z_{n,N}(U)$ be partition function restricted to $U$. Analogous to Section~\ref{sec:torus_asymptotics}, we normalize the partition functions by dividing by $(N!)^{|R|n^d}$, where $|R|Nn^d$ is approximately the number of dimers in the cover (approximate instead of exact because samples from $\rho_{n,N}$ do not all have the same boundary conditions).

To prove Theorem \ref{thm:ldp} it remains to prove the following two bounds for the asymptotics of the normalized partition functions. {Recall that $\Ent$ differs from $I$ by a sign and an additive constant, see \eqref{eq:total_surface_tension}.}
\begin{lemma}[Lower bound]\label{lem:lower}
    For any $\omega\in \text{AF}(\Lambda,R,b)$, 
    \begin{align*}
        \lim_{r\to 0}\liminf_{n\to \infty}\liminf_{N\to\infty} \frac{1}{N n^d}\log \bigg[\frac{Z_{n,N}(U_{r}(\omega))}{(N!)^{|R|n^d}}\bigg] \geq \Ent(\omega). 
    \end{align*}
\end{lemma}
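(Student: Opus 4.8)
The strategy is to reduce the lower bound on a growing region $R_n$ to the torus free-energy/entropy computation (Theorem~\ref{thm:free_energy_torus}, Theorem~\ref{thm:entropy_free_energy}), via the patching theorem (Theorem~\ref{thm:patching_limit}). First I would reduce to the case where $\omega$ is a smooth asymptotic flow: by the mollification argument already used in Lemma~\ref{lem:semicontinuity}, smooth flows are $d_W$-dense in $\mathrm{AF}(\Lambda,R,b)$ and $\Ent$ is continuous along the mollification, so it suffices to prove the bound when $\omega$ is continuous, and even piecewise-constant after a further approximation. So fix $\varepsilon>0$ and partition $R$ into finitely many small cubes $Q_1,\dots,Q_m$ of side $\eta$, on each of which $\omega$ is within $\varepsilon$ of a constant slope $s_j=\omega(x_j)\in\mathcal N(\Lambda)$ (interior cubes), with a thin collar near $\partial R$ handling the boundary data $b$.

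The heart of the argument is a counting lower bound: on each interior cube $Q_j$, the number of $N$-dimer covers of the corresponding lattice region $Q_j\cap\frac1n\Lambda$ with slope $\approx s_j$ (the quantity controlled by $Z^{s_j}_{n',N}/(N!)^{(n')^d}$ on a subtorus of linear size $n'\asymp \eta n$), normalized appropriately, grows like $\exp\big((N(n')^d)\,(-\sigma(s_j))\big) = \exp\big(N n^d\,|Q_j|\,(-\sigma(s_j)) + o(Nn^d)\big)$ by Definition~\ref{def:surface_tension}. Taking the product over the cubes $Q_j$ gives a collection of "local" $N$-dimer covers whose total (normalized) count is $\exp\big(Nn^d \sum_j |Q_j|(-\sigma(s_j)) + o(Nn^d)\big) = \exp\big(Nn^d(\Ent(\omega) + O(\varepsilon)) + o(Nn^d)\big)$. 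The patching theorem then lets us glue these local covers into a single $N$-dimer cover of all of $R_n$ with the prescribed boundary multiplicities (matching $b_n$ to within the thresholds $\theta_n$), at the cost of reconfiguring dimers only in the $O(\eta n)$-width collars between adjacent cubes — a region of relative volume $O(\eta)$, hence changing the entropy by $O(\eta)$ and, crucially, not changing $\frac1{Nn^d}\log$ of the count to leading order. Moreover, because the local slopes are all within $\varepsilon$ of $\omega$ and the cubes have diameter $\eta$, the resulting patched flow lies in $U_r(\omega)$ once $\varepsilon,\eta$ are small relative to $r$; this is where Lemma~\ref{lem:wasserstein_bound_from_local} is used to convert the local slope agreement into a $d_W$ bound. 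Therefore $Z_{n,N}(U_r(\omega))/(N!)^{|R|n^d} \ge \exp\big(Nn^d(\Ent(\omega) - C(\varepsilon+\eta)) + o(Nn^d)\big)$, and taking $N\to\infty$, then $n\to\infty$, then $\varepsilon,\eta,r\to 0$ gives the claim.

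Two technical points need care. One is that the torus surface tension $\sigma(s_j)$ is defined via the homology-restricted partition function $Z^{s}_{n,N}$, whereas on a cube (which is simply connected) "slope $\approx s$" must be enforced by boundary conditions rather than homology; the standard fix is to compare a cube to a fundamental domain of the torus, using that the torus partition function in a fixed homology class is, up to subexponential factors, a product over fundamental domains each carrying near-constant slope — this is exactly the subadditivity/superadditivity structure behind the existence of $\sigma(s)$ in Definition~\ref{def:surface_tension}, and one checks the boundary effects between fundamental domains are lower-order. The second, and the main obstacle, is verifying the hypotheses of Theorem~\ref{thm:patching_limit}: we need the local covers on each side of a collar to have limiting boundary multiplicities agreeing to $O(1/n)$ and to be \emph{strictly non-degenerate} (the limiting flow $\omega_n(e) > a$ uniformly). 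Non-degeneracy can be arranged by mixing each local cover slightly with the uniform slope $\frac1D\sum_i e_i$ (a convex combination, which only costs $O(\varepsilon)$ in entropy since $\sigma$ is continuous), ensuring every edge has positive limiting weight bounded below; and the $O(1/n)$ matching of boundary data across collars is exactly what the cube partition reduces to, since adjacent cubes share near-constant slope on their common interface. Handling the outermost collar so that the patched cover realizes boundary data within $\theta_n$ of $b_n$ uses that $b$ is extendable outside together with the slow decay of $\theta_n$.
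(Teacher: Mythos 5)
Your plan follows essentially the same route as the paper: approximate $\omega$ by a piecewise-constant flow valued in $\mc N(\Lambda)^\circ$, get the local entropy $-\sigma(s_j)$ per unit volume from the homology-restricted torus partition functions, and use the patching theorem to assemble the local configurations into covers lying in $U_r(\omega)$, with the strict non-degeneracy supplied by pushing the slopes into the interior of the Newton polytope.

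The one step that is under-specified, and where your version as written does not quite match the tool available, is the gluing. Theorem~\ref{thm:patching_limit} patches an ``inside'' configuration against an ``outside'' configuration that already extends to a tiling of all of $R_n$ with admissible boundary multiplicities; it is not stated in a form that lets you directly sew two neighboring cubes to each other across a shared collar, and iterating such local gluings would require re-proving tileability of each collar with boundary data coming from two independent random samples. The paper resolves this by first invoking Theorem~\ref{thm:discrete_approximation} to build a single deterministic scaffold cover $M$ of all of $R_n$ whose flow is $p_i(s_X)+O(1/n)$ deep inside each cell $X$ and whose boundary data lies within the thresholds $\theta_n$ of $b_n$; each torus sample is then patched against this one global scaffold, cell by cell, so the hypotheses of the patching theorem (an outside cover of the full region, $O(1/n)$ agreement of limiting multiplicities on the collar, strict non-degeneracy of the inside measure) are all checkable. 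Your closing remark about the outermost collar and extendability gestures at this, but the global scaffold is the missing ingredient that makes the whole gluing scheme well-founded; with it inserted, the rest of your argument (including the gauge-equivalence or subadditivity comparison between the cube count and $Z^{s}_{n,N}$, which the paper does via Theorem~\ref{cor:almost_legendre}) goes through.
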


\begin{lemma}[Upper bound]\label{lem:upper}
     For any $\omega\in \text{AF}(\Lambda,R,b)$, 
    \begin{align*}
        \lim_{r\to 0}\limsup_{n\to \infty}\limsup_{N\to\infty} \frac{1}{Nn^d}\log\bigg[\frac{Z_{n,N}(U_{r}(\omega))}{(N!)^{|R|n^d}}\bigg] \leq \Ent(\omega). 
    \end{align*}
\end{lemma}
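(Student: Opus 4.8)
\textbf{Plan for proving the upper bound (Lemma~\ref{lem:upper}).}
The plan is to reduce the upper bound on the neighborhood partition function $Z_{n,N}(U_r(\omega))$ to the torus surface tension computed in Section~\ref{sec:torus_asymptotics}, by a subadditivity/cut-and-paste argument. First I would handle the case where $\omega$ is \emph{constant} (equal to some fixed slope $s \in \mc N(\Lambda)$) on $R$: in that case a sample in $U_r(\omega)$ has its slope-per-white-vertex function $s_M$ weakly close to the constant $s$, so after partitioning $R$ into small cubes $Q_1, \dots, Q_m$ of side $\varepsilon$, the flow restricted to each $Q_j$ has average slope within $O(r/\varepsilon^d) + O(1/(\varepsilon n))$ of $s$. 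The partition function over $R$ then factors (up to boundary-edge corrections, of which there are $O(m\varepsilon^{d-1}n^{d-1}) = o(n^d)$ many) into a product of partition functions over the cubes $Q_j$, each of which is bounded above by the torus partition function $Z^{s'}_{n\varepsilon, N}$ for a slope $s'$ near $s$. Here the key input is that restricting to a sub-box only \emph{increases} the count of completions, which is the standard monotonicity underlying subadditivity of $\log Z^s$; taking $N \to \infty$, then $n \to \infty$, then $\varepsilon \to 0$, then $r \to 0$, and using continuity of $\sigma$ (Corollary~\ref{thm:strict_convex}) and Definition~\ref{def:surface_tension} gives $\limsup \frac{1}{Nn^d}\log[Z_{n,N}(U_r)/(N!)^{|R|n^d}] \le -\sigma(s)|R| = \Ent(\omega)$.

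For a general asymptotic flow $\omega$ (not constant), I would use the mollification trick already employed in the proof of Lemma~\ref{lem:semicontinuity}: approximate $\omega$ by $\omega_{\varepsilon}$, which is continuous, and then by a piecewise-constant flow $\hat\omega$ taking value $s_j$ on each cube $Q_j$ of a fine grid. Because $\sigma$ is convex and continuous, $\int_R \sigma(\hat\omega) \to \int_R \sigma(\omega)$ as the grid is refined, and Jensen's inequality controls the error from replacing $\omega$ by its cube-averages. On each cube $Q_j$, any dimer configuration whose flow is weakly close to $\omega$ has average slope close to $s_j$, so the previous paragraph's estimate applies cube-by-cube; multiplying the per-cube bounds $\exp(-Nn^d \varepsilon^d \sigma(s_j) + o(Nn^d))$ and summing the exponents gives $-Nn^d \sum_j \varepsilon^d \sigma(s_j) + o(Nn^d) \to -Nn^d \int_R \sigma(\omega) + o(Nn^d)$. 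The boundary-matching between adjacent cubes costs only the $o(n^d)$ edges on cube interfaces and does not affect the leading exponential order.

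\textbf{Main obstacle.} The delicate point is controlling the interaction between adjacent sub-boxes when we assert that $Z_{n,N}(U_r(\omega))$ is bounded \emph{above} by a product of (torus) partition functions over the pieces. Unlike the lower bound, where one freely glues configurations, for the upper bound one must argue that \emph{every} configuration counted in $Z_{n,N}(U_r(\omega))$ restricts to a configuration on each $Q_j$ that is counted in the corresponding $Z^{s_j}_{n\varepsilon, N}$-type object, with a controlled multiplicity factor. The cleanest way is to fix the induced boundary multiplicities on each $\partial Q_j$, sum over all such boundary data (a subexponential number of choices once the average slope is pinned down to within $o(1)$), and use that for each fixed boundary condition the box partition function is bounded by the free (periodic-extended) one up to $\exp(o(Nn^d))$; the multinomial lifting factor $|\pi^{-1}(M)| = \prod_v N_v! / \prod_e M_e!$ must be tracked through this decomposition, but it is multiplicative over vertices and edges and is exactly what the normalization by $(N!)^{|R|n^d}$ absorbs. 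I expect the bookkeeping of these lifting factors across box interfaces, together with verifying that the number of admissible interface boundary conditions is $\exp(o(Nn^d))$, to be the part requiring the most care.
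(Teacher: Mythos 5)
Your overall architecture matches the paper's: replace $\omega$ by a piecewise-constant approximation (justified by convexity and continuity of $\sigma$, as in Propositions~\ref{prop:pc_approx} and~\ref{prop:pc_ent_approximation}), localize to small boxes, compare each box to the torus partition function in a homology class near the local slope, and invoke Definition~\ref{def:surface_tension} and continuity of $\sigma$. The bookkeeping you worry about (number of interface boundary conditions, the lifting factors $|\pi^{-1}(M)|$, the $o(n^d)$ interface edges) is indeed manageable and is handled in the paper essentially as you describe.

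However, there is a genuine gap at the step you yourself flag as the main obstacle, and your proposed resolution does not close it. You assert that ``for each fixed boundary condition the box partition function is bounded by the free (periodic-extended) one up to $\exp(o(Nn^d))$,'' but the surface tension $\sigma(s)$ is \emph{defined} via the torus partition function $Z^s_{n,N}$ restricted to a homology class, and a configuration on a box with arbitrary admissible boundary multiplicities does not extend to a configuration on a torus: the boundary data on opposite faces need not match periodically, so there is no injection (or controlled-multiplicity map) from box configurations into torus configurations. ``Restricting to a sub-box increases the count of completions'' gives subadditivity of $\log Z^s$ \emph{on the torus}, not a comparison between a box with free or fixed boundary and the torus. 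The paper closes this gap with a reflection trick: using the assumed reflection symmetry of $\Lambda$, it tiles the torus $\T_{2m}$ with $2^d$ reflected copies of the box $B_m$ (with $m\sim n^{1/2}$), places independent copies of a configuration with boundary condition $\beta$ (or its reflection) in each, and adds multiplicity on the interface edges to reach $N$ at every vertex, yielding $(Z^{\beta}_{B_m,N}(U_r(s_X)))^{2^d}\le \sum_{|s-s_X|<r} Z^s_{2m,N}$; this is the reason the standing hypothesis that $\Lambda$ is reflection symmetric appears in Section~\ref{sec:subgraphs_boundaryconditions} and Remark~\ref{rem:reflection}. Without this (or a substitute such as a strong patching theorem letting you graft a box configuration into a periodic one), the comparison to $Z^s_{\text{torus}}$ that your plan relies on is unproved.
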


In Section~\ref{sec:approximations}, we prove two approximation theorems which are needed for the proof of the lower bound. After that, in Section~\ref{sec:lower} we prove the lower bound and in Section~\ref{sec:upper} we prove the upper bound. The other key tool is the $n,N\to \infty$ patching theorem (Theorem~\ref{thm:patching_limit}), which we use to patch samples from the torus measures defined in Section~\ref{sec:torus_asymptotics} with dimer covers sampled from $\rho_{n,N}$.

\begin{rem}\label{rem:reflection}
    Recall from Section~\ref{sec:subgraphs_boundaryconditions} that the lattice $\Lambda$ is assumed to be reflection symmetric. The only place this is used is to simplify the proof of Lemma~\ref{lem:upper}. In particular, the reflection symmetry gives us an elementary way to embed a piece of a dimer cover of $R_n$ into the torus, and therefore to relate it to the results in Section~\ref{sec:torus_asymptotics}. The reflection symmetry assumption can be lifted, for example, if a more robust patching theorem (instead of Theorem~\ref{thm:patching_limit}) were established for general lattices. A sufficiently robust patching theorem has been established, for example, for dimer covers of $\m Z^3$  in \cite{3Ddimers}. (This step can also be achieved easily for the honeycomb lattice using height function extension results.)
\end{rem}

\subsection{Approximation theorems}\label{sec:approximations}

\subsubsection{Discrete approximation} 

The main result is of this section is that any asymptotic flow $\omega$ can be approximated by discrete flows corresponding to $N$-dimer covers. This is a converse to Theorem~\ref{thm:scaling_limits_asymptotic}. 

From the critical gauge equations, it is straightforward to show that any discrete flow with the right divergences can be approximated by flows corresponding to $N$-dimer covers.

\begin{lemma}\label{lem:anything_is_limit}
    Fix $G=(V,E)\subset \Lambda$ and $(\beta_v)_{v\in \partial G}$  a feasible limiting boundary condition. Suppose that $\omega$ is a discrete flow on $G$ with divergences $\pm 1$ at interior vertices and $\pm \beta_v$ at boundary vertices (in both cases the sign is $+$ at white vertices and $-$ at black vertices), and which has $\omega(e)\neq 0$ for all $e\in E$. Then there exists a sequence of $N$-dimer covers $M_N$ of $G$ such that $\sup_{e\in E} |\omega_{M_N}(e) - \omega(e)| \to 0$ as $N\to \infty$. 
\end{lemma}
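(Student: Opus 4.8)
The plan is to realize the given discrete flow $\omega$ as (a multiplicative perturbation of) the critical edge weights of a suitable weighting of $G$, and then invoke the $\NN\to\infty$ concentration result (Theorem~\ref{thm:largeN_concentration}) to produce the approximating $N$-dimer covers. Concretely, first I would observe that since $\omega(e)\neq 0$ for all $e\in E$ and $\omega$ has the prescribed $\pm 1$ (resp. $\pm\beta_v$) divergences, the collection $\{\omega(e)\}_{e\in E}$ is a strictly positive fractional $\NN$-matching of $G$ in the sense that $\sum_{e\ni v}\omega(e)$ equals the target divergence magnitude at every vertex. The key algebraic point is that any such strictly positive fractional matching arises as the critical edge weights for \emph{some} edge weight function: define $c(e):=\omega(e)$ and recall (Theorem~\ref{thm:critical_weights_unique}) that the critical edge weights $w_e x_u x_v$ for a feasible vertex-multiplicity sequence are the unique solution to the critical gauge equations $\sum_{u:(u,v)=e\in E} w_e x_u x_v = \alpha_v P(\mathbf x)$. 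Taking $w_e = c(e)$ and $\mathbf x\equiv 1$ solves these equations with $\alpha_v = \frac{1}{P(\mathbf 1)}\sum_{e\ni v} c(e)$, which is (up to the global constant $P(\mathbf 1)$) exactly the normalized divergence of $\omega$ at $v$; this is the same computation as in the proof of Lemma~\ref{lem:critical_weights_torus} and Remark~\ref{rem:N_vs_K}. Hence, choosing feasible integer multiplicities $\NN=(N_v)$ with $N_v/K\to\alpha_v$ (these exist since $(\beta_v)$ is feasible, and the $\alpha_v$ differ from the $\beta_v$ only by the global constant) and edge weights $w_e=c(e)=\omega(e)$, the critical edge weights are precisely $\{\omega(e)\}_{e\in E}$.

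Next I would apply Theorem~\ref{thm:largeN_concentration}: with these edge weights and multiplicities, the configuration $\{M_e/K\}_{e\in E}$ sampled from the multinomial dimer measure concentrates, as $\NN\to\infty$, on the fractional matching proportional to $\{\omega(e)\}$. Rescaling from $K$ to $N$ (again the normalizations $N_v/N\to\beta_v$ and $N_v/K\to\alpha_v$ differ by a fixed multiplicative constant, absorbed into the proportionality), this says that $\{\omega_{M}(e)=M_e/N\}$ concentrates on $\{\omega(e)\}$. Since concentration in Theorem~\ref{thm:largeN_concentration} is with respect to the natural topology on $[0,1]^E$ for a \emph{fixed finite} graph — equivalently, by Proposition~\ref{prop:Linfty_vs_weak}, the sup norm topology on $E$ — for each $N$ we may pick a dimer cover $M_N$ in the support of the measure with $\sup_{e\in E}|\omega_{M_N}(e)-\omega(e)|$ as small as the measure's concentration allows; concentration then forces $\sup_{e\in E}|\omega_{M_N}(e)-\omega(e)|\to 0$ as $N\to\infty$. (A cleaner packaging: the expectations $\m E[M_e/N]$ converge to $\omega(e)$, and one extracts a realizing sequence via the concentration bound, e.g. by a first-moment/union-bound argument over the finitely many edges.)

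The main technical point to get right — and the only real obstacle — is the careful bookkeeping between the three normalizations of vertex multiplicity ($N_v$, $N_v/N\to\beta_v$, and $N_v/K\to\alpha_v$) and the corresponding proportionality constants in the critical gauge equations, together with the check that the $\NN$ can be chosen feasible with $\mathbf x\equiv 1$ giving exactly $\omega(e)$ as critical edge weights. This is essentially the content of Remark~\ref{rem:N_vs_K} applied in this slightly more general (non-periodic, boundary-varying) setting, so it is routine but must be stated precisely; the feasibility of the approximating $(N_v)$ is guaranteed by the hypothesis that $(\beta_v)$ is feasible. Everything else is a direct citation of Theorems~\ref{thm:critical_weights_unique} and~\ref{thm:largeN_concentration} and Proposition~\ref{prop:Linfty_vs_weak}.
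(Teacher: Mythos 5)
Your proposal is correct and follows essentially the same route as the paper: take edge weights $w_e=\omega(e)$ (valid since $\omega$ is nonvanishing), note that $\mathbf x\equiv 1$ solves the critical gauge equations so the critical edge weights are exactly $\{\omega(e)\}$, and then apply Theorem~\ref{thm:largeN_concentration} to sample the approximating $M_N$. Your extra bookkeeping on the $N_v/K$ versus $N_v/N$ normalizations is a point the paper leaves implicit (via Remark~\ref{rem:N_vs_K}) but is handled correctly here.
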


\begin{proof}
   For each $v\in V$, choose $N_v$ so that $(N_v)_{v\in\partial G}$ is a feasible boundary condition and $N_v/N\to \beta_v$ as $N\to \infty$. Consider the multinomial dimer measures $\mu_N$ with these multiplicities and edge weights $\omega(e)$; this is a valid edge weight function since $\omega(e)\neq0$ for all $e\in E$.
   
    Then $\mathbf x\equiv 1$ solves the critical weight equations, and hence by Theorem~\ref{thm:largeN_concentration}, the limit shape for samples from $\mu_N$ as $N\to \infty$ is $(\omega(e))_{e\in E}$. The result follows by sampling $M_N$ from $\mu_N$. 
\end{proof}

\begin{thm}\label{thm:discrete_approximation}
    Fix $\delta>0$ and any $\omega\in \text{AF}(\Lambda, R,b)$ with $b$ which is extendable outside. Let $R_n\subset \frac{1}{n}\Lambda$ be a lattice approximation of $R$ as before. For all $n,N$ large enough, there exists an $N$-dimer cover $M$ of $R_n$, with boundary vertex multiplicities $(N_v)_{v\in \partial R_n}$ between $1$ and $N$, such that $d_W(\omega_M,\omega)<\delta.$ 
\end{thm}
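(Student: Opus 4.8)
The plan is a reduction to a regular target flow, followed by an explicit discretization that produces a legitimate $N$-dimer flow, and then an application of Lemma~\ref{lem:anything_is_limit} together with Proposition~\ref{prop:Linfty_vs_weak}; this is the converse direction to Theorem~\ref{thm:scaling_limits_asymptotic}. \emph{Step 1 (reduce to a nice $\omega$).} Because $b$ is extendable outside, $\omega$ extends to a divergence‑free flow valued in $\mc N(\Lambda)$ on a neighborhood of $R$ (glue $\omega$ on $R$ to an extension of $b$; the normal traces match since both realize $b$). Mollify at a small scale and multiply by $1-t$: since $0=\tfrac1D\sum_i e_i$ lies in the interior of $\mc N(\Lambda)$, the result is a $C^\infty$ divergence‑free flow valued in the compact set $K:=(1-t)\mc N(\Lambda)$ with $K\subset\subset\mathrm{int}\,\mc N(\Lambda)$, and it lies within $t\,\mathrm{diam}(\mc N(\Lambda))\,|R|+o(1)$ of $\omega$ in $d_W$ (using $\m W_1^{1,1}\le|\cdot|_{\mathrm{TV}}$ for the rescaling and the mollification). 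So we may assume $\omega$ is $C^\infty$, divergence‑free, and valued in $K\subset\subset\mathrm{int}\,\mc N(\Lambda)$. By Corollary~\ref{thm:free_energy_convex}, $\alpha(x):=\nabla\sigma(\omega(x))$ is then smooth and bounded on $R$, and $p_i(x):=e^{e_i\cdot\alpha(x)}/\sum_j e^{e_j\cdot\alpha(x)}$ satisfies $\sum_i p_i(x)\equiv 1$, $\sum_i p_i(x)e_i=\nabla F(\alpha(x))=\omega(x)$, and $p_i(x)\ge c>0$ uniformly.

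\emph{Step 2 (build a discrete $\omega_n$ with exactly the right divergences).} For each large $n$ I will produce a discrete flow $\omega_n$ on $R_n$ with $\omega_n(e)>0$ for all $e$, with $\sum_{e\ni v}\omega_n(e)=1$ at every interior vertex and $\sum_{e\ni v}\omega_n(e)=:\beta_n(v)\in[c/2,1-c/2]$ at boundary vertices, and with $d_W(\omega_n(e)e,\omega)\to 0$. The natural first guess $\hat\omega_n(e)=p_i(x_e)$ (for $e$ of type $e_i$, $x_e$ its white endpoint) already has divergence exactly $1$ at every white vertex, and a Taylor expansion — in which the order $1/n$ term equals $\tfrac1n\,\text{div}\,\omega=0$ — shows the divergences at black vertices are off by only $O(1/n^2)$; positivity and $d_W$‑convergence of $\hat\omega_n$ are immediate. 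The task is to repair these $O(1/n^2)$ defects into \emph{exactly} bistochastic data without perturbing the weak limit: a global discrete Poisson correction will not do, since its potential is $O(1)$ and hence not weakly negligible, so the repair must be local. Partition $R$ into $\varepsilon$‑cells $Q_j$, put on $Q_j$ the translation‑invariant periodic flow assigning $p_i(s_j)$ to edges of type $e_i$, with $s_j:=\frac{1}{|Q_j|}\int_{Q_j}\omega\in K$ (this flow is strictly positive and exactly bistochastic, cf.\ Proposition~\ref{prop:limit_shape_torus}), and glue adjacent blocks across $\sim\varepsilon$‑wide transition layers carrying divergence‑free correction flows that route the $O(\varepsilon^{d})$ interface mismatch along the cell boundaries and eventually out through $\partial R_n$. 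Since neighbouring slopes differ by $O(\varepsilon)$, these corrections have size $O(\varepsilon)$ on each edge, so $\omega_n$ stays strictly positive with $\beta_n(v)\in[c/2,1-c/2]$, and $d_W(\omega_n(e)e,\omega)\le O(\varepsilon)+o_n(1)$ by Lemma~\ref{lem:wasserstein_bound_from_local} (comparing $\omega_n$ to the piecewise‑constant field $\sum_j s_j\mathbf 1_{Q_j}$, which is within $O(\varepsilon)$ of $\omega$ in $d_W$).

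\emph{Step 3 and the main obstacle.} The boundary condition $(\beta_n(v))_{v\in\partial R_n}$ is feasible: $\omega_n$ is a strictly positive fractional $\NN$‑matching witnessing the (strict) Hall condition for multiplicities near $N\beta_n(v)$, and integrality of the bipartite degree‑constrained subgraph polytope upgrades this to an honest $\NN$‑dimer cover using every edge. Hence Lemma~\ref{lem:anything_is_limit} applies to $G=R_n$ with the flow $\omega_n$: for fixed $n$ there are $N$‑dimer covers $M=M_{n,N}$ of $R_n$ with $\sup_e|\omega_M(e)-\omega_n(e)|\to 0$ as $N\to\infty$, and then $d_W(\omega_M,\omega_n)\to 0$ by Proposition~\ref{prop:Linfty_vs_weak}. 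Choosing $\varepsilon$ small, then $n$ large so that $d_W(\omega_n(e)e,\omega)<\delta/2$, then $N$ large so that $d_W(\omega_M,\omega_n)<\delta/2$, gives $d_W(\omega_M,\omega)<\delta$, proving the theorem. The only nonroutine ingredient is Step 2: manufacturing a discrete flow whose interior divergences are \emph{exactly} $\pm 1$ (as Lemma~\ref{lem:anything_is_limit} requires, since interior multiplicities are exactly $N$) while remaining weakly close to $\omega$. The naive discretization carries $O(1/n^2)$ divergence defects that cannot be erased by any globally defined correction without altering the weak limit, so the correction has to be performed block by block through thin transition layers; this is the new construction exploiting the large‑$N$ (critical‑gauge) structure, and it is where I expect the real work to lie.
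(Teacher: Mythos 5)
Your skeleton is the right one, and Steps 1 and 3 match the paper's proof: reduce to a smooth flow valued in $(1-t)\mc N(\Lambda)$ (this is Lemma~\ref{lem:smoothing_asymptotic_flows}), discretize via the edge probabilities $p_i$, note that the resulting flow has divergence exactly $\pm 1$ at one color of vertex and $\pm 1+O(1/n^2)$ at the other because the order-$1/n$ term in the Taylor expansion is $\tfrac1n\,\mathrm{div}\,\omega=0$ (this is Lemma~\ref{lem:divergences_of_approx}), and finish with Lemma~\ref{lem:anything_is_limit} and Proposition~\ref{prop:Linfty_vs_weak}. But the one step you flag as "where the real work lies" is in fact left undone, and that is a genuine gap: your block-by-block construction with $\varepsilon$-cells, exactly bistochastic constant-slope flows on each cell, and transition layers "routing the interface mismatch along the cell boundaries and out through $\partial R_n$" is never specified, and making it precise (in particular, verifying that the routed flux through any given edge stays $O(\varepsilon)$ while the interior divergences become \emph{exactly} $\pm1$) is exactly the content you would need to supply. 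As written, the proof does not close.

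The repair the paper actually uses is much more elementary and needs no cell decomposition. Partition the vertices of $R_n$ into lattice paths running from $\partial R_n$ to $\partial R_n$, each of length $O(n)$. Starting from $\omega_{\mathrm{black},n}$ (exact divergence $-1$ at interior black vertices, defect $O(1/n^2)$ at interior white vertices), walk along each path: at each interior white vertex $v$ subtract its defect $c/n^2$ from one incident edge $e=(u,v)$ and add the same amount to another edge $e'$ incident to the black vertex $u$ further along the path, so black divergences are preserved and the white divergence at $v$ becomes exactly $1$. The defects accumulate as they are pushed down the path, but a path has only $O(n)$ vertices, so each edge is perturbed by at most $O(n)\cdot O(1/n^2)=O(1/n)$; the residual is dumped on $\partial R_n$, perturbing boundary multiplicities by $O(1/n)$, which is harmless since those are free parameters in $[1,N]$. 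The resulting $\omega_n$ is within $O(1/n)$ of $\omega_{\mathrm{black},n}$ in sup norm (hence strictly positive and weakly close to $\omega$), and Lemma~\ref{lem:anything_is_limit} applies. Incidentally, your stated reason for rejecting a global Poisson-type correction (that the \emph{potential} is $O(1)$) is not the relevant quantity — only the edge increments of the correction enter the flow, and those would also be $O(1/n)$ — so the locality discussion is a red herring; the issue is simply that you must exhibit \emph{some} concrete correction with $o(1)$ per-edge size and exact interior divergences, and you have not.
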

\begin{rem}\label{rem:thresholds}
    In particular, since the boundary value operator is uniformly continuous, this theorem shows that any $\omega$ can be approximated by an $N$-dimer cover in the support of $\rho_{n,N}$, as long as the thresholds $\theta_n$ used to define $\rho_{n,N}$ go to zero sufficiently slowly. 
\end{rem}

We note any $\omega\in \text{AF}(\Lambda,R,b)$ can be approximated by a smooth flow, and then give a construction for the discrete approximation when $\omega$ is $C^2$.

\begin{lemma}\label{lem:smoothing_asymptotic_flows}
Suppose that $(R,b)\subset \m R^d$ and that $b$ is extendable outside and fix $\varepsilon>0$ small enough. Any asymptotic flow $\omega\in \mathrm{AF}(\Lambda,R,b)$ can be approximated in $d_W$ by a smooth flow $\omega_{\mathrm{sm}}\in \mathrm{AF}(\Lambda,R)$, where $\omega_{\mathrm{sm}}$ is constructed by mollifying $\omega$ with a bump function $\phi$ supported in $B_\varepsilon(0)$, and  has $d_W(\omega,\omega_{\mathrm{sm}}) < C \varepsilon^{1/2}$ for some constant $C>0$. Further we can assume that $\omega_{\mathrm{sm}}$ is valued strictly in the interior of $\mc N(\Lambda)$.
\end{lemma}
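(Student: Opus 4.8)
The plan is to extend $\omega$ divergence-freely to a neighborhood of $R$, mollify there, and then restrict back to $R$, tracking the $d_W$-cost at each step; the exponent $1/2$ will appear in the restriction step. Since $b$ is extendable outside, fix $\lambda>0$ and an asymptotic flow $\omega^+$ on the $\lambda$-neighborhood $R^+=\{x : d(x,R)\le\lambda\}$ whose normal trace along $\partial R$ equals $b$. Define $\tilde\omega:=\omega$ on $R$ and $\tilde\omega:=\omega^+$ on $R^+\setminus R$. For a test function $\psi$ compactly supported in $(R^+)^\circ$, splitting $\int\tilde\omega\cdot\nabla\psi\,\dd x$ over $R$ and over $R^+\setminus R$ and integrating by parts on each piece produces (all interior divergences being zero) only a boundary term $\int_{\partial R}\langle\omega-\omega^+,\xi\rangle\,\psi\,\dd\sigma_{\partial R}$, which vanishes because $T(\omega)=b$ and $\omega^+$ has normal trace $b$ along $\partial R$. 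Hence $\tilde\omega$ is divergence-free on $(R^+)^\circ$, and it is $\mc N(\Lambda)$-valued since both pieces are.

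Next, fix a smooth bump $\phi\ge0$ supported in $B_\varepsilon(0)$ with $\int\phi=1$, with $\varepsilon$ small enough that $\varepsilon<\lambda$, and set $\tilde\omega_\varepsilon:=\tilde\omega*\phi$. This is smooth; it is $\mc N(\Lambda)$-valued, being a $\phi$-average of $\mc N(\Lambda)$-valued vectors; and it is divergence-free on the set of $x$ with $B_\varepsilon(x)\subset(R^+)^\circ$, which contains $R$ once $\varepsilon$ is small. To estimate $d_W(\tilde\omega_\varepsilon,\tilde\omega)$ over $R^+$, write $\tilde\omega*\phi=\int\phi(y)\,\tau_y\tilde\omega\,\dd y$ with $\tau_y$ translation by $y$; convexity of $\m W_1^{1,1}$ together with the elementary bound $\m W_1^{1,1}(\mu,\tau_y\mu)\le|y|\,\|\mu\|_{\mathrm{TV}}$ for $|y|\le\varepsilon$ gives, componentwise, $\m W_1^{1,1}(\mu_i^\varepsilon,\mu_i)\le\varepsilon\,\|\mu_i\|_{\mathrm{TV}}=O(\varepsilon)$, since each $\|\mu_i\|_{\mathrm{TV}}$ is bounded by the diameter of $\mc N(\Lambda)$ times $\mathrm{Vol}(R^+)$. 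So $d_W(\tilde\omega_\varepsilon,\tilde\omega)=O(\varepsilon)$ on $R^+$.

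Now restrict to $R$. Put $\omega^0_{\mathrm{sm}}:=\tilde\omega_\varepsilon|_R$; since $\omega$ is already supported in $R$ we have $\tilde\omega|_R=\omega$, so Lemma~\ref{lem:wasserstein_bound_from_global} applied with $B=R$ (piecewise smooth) and $\delta=O(\varepsilon)$ yields $d_W(\omega^0_{\mathrm{sm}},\omega)\le O(\varepsilon)+(C(R)+1)\,O(\varepsilon)^{1/2}=O(\varepsilon^{1/2})$ — this is where the exponent $1/2$ enters. Moreover $\omega^0_{\mathrm{sm}}\in\text{AF}(\Lambda,R)$: it is supported in $R$, $\mc N(\Lambda)$-valued, and divergence-free on $R^\circ$ because $R^\circ$ lies inside the set where $\tilde\omega_\varepsilon$ is divergence-free. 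Finally, to make the flow valued strictly inside $\mc N(\Lambda)$, use that harmonic embedding gives $0=\tfrac1D\sum_{i=1}^D e_i$, a strictly positive convex combination of the vertices of the full-dimensional polytope $\mc N(\Lambda)$ (full-dimensional since $\Lambda$ is connected), so $0\in\mathrm{int}\,\mc N(\Lambda)$ and hence $(1-\varepsilon)\mc N(\Lambda)\subset\mathrm{int}\,\mc N(\Lambda)$. Set $\omega_{\mathrm{sm}}:=(1-\varepsilon)\,\omega^0_{\mathrm{sm}}$: still smooth, supported in $R$, divergence-free on $R^\circ$, now valued strictly in $\mc N(\Lambda)$, and $d_W(\omega_{\mathrm{sm}},\omega^0_{\mathrm{sm}})\le\varepsilon\sum_i\|\mu^0_i\|_{\mathrm{TV}}=O(\varepsilon)$; combining, $d_W(\omega,\omega_{\mathrm{sm}})\le C\varepsilon^{1/2}$.

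The routine parts are the mollification estimate and the convexity/translation bounds on $\m W_1^{1,1}$. The one place requiring care — and the only place the ``extendable outside'' hypothesis is used — is the divergence-free gluing across $\partial R$ in the first step, together with the bookkeeping: the mollified flow must remain divergence-free on a set containing $R$, which forces us to work on the enlarged region $R^+$ first, and it is the passage back to $R$ via Lemma~\ref{lem:wasserstein_bound_from_global} that degrades the rate from $\varepsilon$ to $\varepsilon^{1/2}$.
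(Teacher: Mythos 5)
Your proof is correct, and its skeleton (extend $\omega$ past $\partial R$ using the extendability of $b$, mollify at scale $\varepsilon<\lambda$, restrict to $R$, rescale by $1-\varepsilon$ to land strictly inside $\mc N(\Lambda)$) matches the paper's. The one genuine difference is where the exponent $1/2$ comes from. The paper never leaves $R$: it bounds the per-box mass discrepancy between $\omega$ and $\omega\ast\phi$ by $O(\varepsilon\delta^{d-1})$ for boxes of side $\delta$, feeds this into Lemma~\ref{lem:wasserstein_bound_from_local} to get $O(\delta)+O(\varepsilon/\delta)$, and optimizes $\delta\sim\varepsilon^{1/2}$. You instead prove the sharper bound $d_W(\tilde\omega,\tilde\omega\ast\phi)=O(\varepsilon)$ on the enlarged region via the translation estimate $\m W_1^{1,1}(\mu,\tau_y\mu)\le|y|\,\|\mu\|_{\mathrm{TV}}$ and convexity of $\m W_1^{1,1}$, and then pay the square root only in the restriction step through Lemma~\ref{lem:wasserstein_bound_from_global}. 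Both give the stated rate; your version isolates more cleanly that the loss from $\varepsilon$ to $\varepsilon^{1/2}$ is entirely a boundary/restriction effect, and your explicit distributional verification that the glued flow is divergence-free (normal traces matching along $\partial R$) makes precise a step the paper leaves implicit in the definition of ``extendable outside.'' I have no objections to any step: the translation and convexity bounds for the generalized Wasserstein distance hold as you use them, and $0\in\mc N(\Lambda)^\circ$ follows as you say from the harmonic embedding and the spanning assumption.
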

\begin{rem}
    Note that the boundary conditions of $\omega_{\mathrm{sm}}$ are allowed to be different from those of $\omega$. However, since the boundary value operator $T$ is uniformly continuous, the bound $d_W(\omega,\omega_{\mathrm{sm}})<C \varepsilon^{1/2}$ implies a bound on the difference in their boundary conditions.
\end{rem}
\begin{proof}
    Since $b$ is extendable outside there exists $\lambda>0$ such that $b$ can be extended to an asymptotic flow on an $\lambda$ neighborhood of $R$ (i.e., divergence-free and valued in $\mc N(\Lambda)$). Let $\phi$ be a smooth bump function supported on $B_{\varepsilon}(0)\subset\R^d$ which integrates to $1$, with $\varepsilon<\lambda$. We define a smooth approximation $\omega_{\text{sm}} := \omega\ast \phi$ restricted to $R$. Since $\varepsilon<\lambda$, $\omega_{\text{sm}}$ is divergence-free on $R^\circ$. Since $\mc N(\Lambda)$ is convex, $\omega_{\text{sm}}$ is valued in $\mc N(\Lambda)$.
    
    To bound $d_W(\omega,\omega_{\text{sm}})$, first note there is a constant $c_0$ such that the total flow of $\omega,\omega_{\text{sm}}$ may differ by at most $c_0 \varepsilon$. Next for $\delta>0$ fixed, there is a constant $c_1$ such that for any box $B$ of side length $\delta$ contained in $R$, $$d_W(\omega\mid_{B},\omega_{\text{sm}}\mid_B) < c_1 \varepsilon \delta^{d-1},$$ since the averaging procedure to construct $\omega_\text{sm}$ moves flow by distance at most $\varepsilon$. We can cover $R$ with $O(\delta^{-d})=c_2\delta^{-d}$ boxes of side length $\delta$. Hence by Lemma~\ref{lem:wasserstein_bound_from_local}, 
    \begin{align*}
        d_W(\omega,\omega_\text{sm}) < c_0 \varepsilon + c_2 d\delta^{-d}(10\delta^{d+1}+c_1 \varepsilon \delta^{d-1}).
    \end{align*}
    Since $\delta$ is arbitrary, we can take it to be on the order of $\varepsilon^{1/2}$, which gives $d_W(\omega,\omega_{\mathrm{sm}})<C \varepsilon^{1/2}$ for some constant $C>0$. Finally, if $\omega_{\mathrm{sm}}$ itself is not valued strictly in the interior of $\mc N(\Lambda)$, then $(1-\kappa)\omega_{\mathrm{sm}}$ is for $\kappa$ sufficiently small. If we take $\kappa \leq O(\varepsilon^{1/2})$, then the distance between $\omega$ and $(1-\kappa)\omega_{\mathrm{sm}}$ is still of order $\varepsilon^{1/2}$, and without loss of generality we can replace $\omega_{\mathrm{sm}}$ with $(1-\kappa)\omega_{\mathrm{sm}}$.  
\end{proof}

Suppose that $\omega\in \text{AF}(\Lambda,R,b)$ is $C^2$. Recall that  $e_1,\dots, e_D$ are the edge vectors of edges incident to a white vertex. Recall from Section~\ref{sec:torus_asymptotics} that given a slope $s\in \mc N(\Lambda)$, there are probabilities $p_1(s),\dots,p_D(s)$ corresponding to the $D$ edge types which as edge weights are gauge equivalent to the uniform weights, and have the property that
    \begin{align}\label{eq:slope_as_probabilities}
        s = \sum_{i=1}^D p_i(s) \, e_i.
    \end{align}
    We define a discrete flow $\omega_{\text{{white}},n}$ on $R_n$ such that for each edge $e=(u,v)$ in $R_n$ of type $e_i$ and with $v\in W\cap R_n$, we set
    \begin{align}\label{eq:def_disc_approx}
        \omega_{\mathrm{white},n}(e) = p_i(\omega(v)),
    \end{align}
    for $e$ oriented white-to-black. If $e=(u,v)$ is not contained in $R_n$, then $\omega_{\mathrm{white},n}(e) = 0$. We similarly define $\omega_{\mathrm{black},n}$ where the role of black and white vertices is reversed. These approximations have the following properties.
\begin{lemma}\label{lem:divergences_of_approx}
    Fix $\omega\in \text{AF}(\Lambda,R)$ which is $C^2$. Then 
    \begin{enumerate}[(a)]
        \item For all interior black vertices $u$, $\mathrm{div}\, \omega_{\mathrm{black},n}(u) = -1$, and for all interior white vertices $v$, $\mathrm{div}\, \omega_{\mathrm{white},n}(v) = 1$. 
        \item For all interior white vertices $v$, $\mathrm{div}\, \omega_{\mathrm{black},n}(v) = 1 + O(1/n^2)$, and for all interior black vertices $u$, $\mathrm{div}\, \omega_{\mathrm{white},n}(u) = -1+O(1/n^2)$.
        \item For all edges $e\in E(R_n)$, $|\omega_{\mathrm{black},n}(e)-\omega_{\mathrm{white},n}(e)|=O(1/n)$.
        \item The distances $d_W(\omega,\omega_{\mathrm{black},n}), d_W(\omega,\omega_{\mathrm{white},n})$ are $O(1/n)$.
    \end{enumerate}
    \end{lemma}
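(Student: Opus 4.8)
\textbf{Proof proposal for Lemma~\ref{lem:divergences_of_approx}.}

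The plan is to prove the four claims one at a time, exploiting the defining relation $s(v)=\sum_i p_i(\omega(v))\,e_i$ together with the harmonic embedding $\sum_i e_i=0$ and the $C^2$ regularity of $\omega$ (used for Taylor expansion with controlled second-order error). Throughout I use that the $p_i$ depend smoothly on $s$ (as $s$ ranges over a compact subset of the interior of $\mc N(\Lambda)$, which we may assume since by Lemma~\ref{lem:smoothing_asymptotic_flows} we can replace $\omega$ by a flow valued strictly inside $\mc N(\Lambda)$), so the composition $v\mapsto p_i(\omega(v))$ inherits the $C^2$ bound on $\omega$ with constants depending only on $\omega$ and $\Lambda$.

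For (a): fix an interior white vertex $v$. Every edge incident to $v$ has the form $e=(u,v)$ with $u=v-e_i$ on the white side — wait, rather $v$ is white and the neighbors are $v+e_i$ (if we index so that edges at a white vertex go in directions $e_1,\dots,e_D$). By the definition \eqref{eq:def_disc_approx}, each such edge oriented white-to-black carries flow $p_i(\omega(v))$, so $\mathrm{div}\,\omega_{\mathrm{white},n}(v)=\sum_{i=1}^D p_i(\omega(v))=1$ since the $p_i(s)$ are probabilities summing to $1$. The same argument at a black vertex for $\omega_{\mathrm{black},n}$ gives divergence $-1$ (the sign flips because the edges, oriented white-to-black, point \emph{into} the black vertex). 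This is purely algebraic — no Taylor expansion needed.

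For (b): fix an interior white vertex $v$ and consider $\mathrm{div}\,\omega_{\mathrm{black},n}(v)$. Now the flow on the edge $e=(v,v+e_i)$ is determined by the \emph{black} endpoint $v+e_i$, namely it equals $p_i(\omega(v+e_i))$. Hence
\begin{align*}
    \mathrm{div}\,\omega_{\mathrm{black},n}(v) = \sum_{i=1}^D p_i\big(\omega(v+e_i/n)\big).
\end{align*}
Taylor expanding $p_i(\omega(\cdot))$ around $v$ to second order,
\begin{align*}
    p_i\big(\omega(v+e_i/n)\big) = p_i(\omega(v)) + \tfrac1n \nabla\!\big(p_i\circ\omega\big)(v)\cdot e_i + O(1/n^2),
\end{align*}
and summing over $i$, the zeroth-order terms give $1$ by (a), the first-order terms give $\tfrac1n\sum_i \nabla(p_i\circ\omega)(v)\cdot e_i$. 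The key point is that this linear term vanishes: by the chain rule $\sum_i\nabla(p_i\circ\omega)(v)\cdot e_i$ is the directional-derivative expression whose leading behavior is exactly $\mathrm{div}\,\omega(v)$ rewritten in the edge basis — more precisely, differentiating the identity $\omega(x)=\sum_i p_i(\omega(x))e_i$ and using $\mathrm{div}\,\omega(v)=0$ shows the first-order sum equals $\mathrm{div}\,\omega(v)=0$. (This is the one place one must be slightly careful to match the discrete lattice divergence with the continuum one; the reflection symmetry / harmonic embedding hypotheses are what make the odd-order terms cancel cleanly.) Hence $\mathrm{div}\,\omega_{\mathrm{black},n}(v)=1+O(1/n^2)$, and symmetrically for white vs.\ black.

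For (c): on an edge $e=(u,v)$ of type $e_i$ with $v$ white and $u$ black, $\omega_{\mathrm{white},n}(e)=p_i(\omega(v))$ and $\omega_{\mathrm{black},n}(e)=p_i(\omega(u))$, and $u,v$ differ by $e_i/n$, so the $C^1$ bound on $p_i\circ\omega$ gives $|p_i(\omega(v))-p_i(\omega(u))|=O(1/n)$. For (d): it suffices to bound $d_W(\omega,\omega_{\mathrm{white},n})$ (the black case is identical). Partition $R$ into the Voronoi cells of the white vertices of $R_n$, of diameter $O(1/n)$. On each cell $A_v$, the slope function $s_{\omega_{\mathrm{white},n}}$ of the discrete flow equals $\sum_i p_i(\omega(v))e_i=\omega(v)$ exactly (this is again the defining relation), while $\omega$ itself varies by $O(1/n)$ across $A_v$ by continuity; hence the component measures of $\omega$ and of the (slope function of the) discrete flow differ by $O(1/n)\cdot|A_v|$ on each cell. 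Summing the $O(n^d)$ cells and invoking Lemma~\ref{lem:wasserstein_bound_from_local} with $\varepsilon=O(1/n)$, $\delta=O(1/n^{d+1})$ — together with Lemma~\ref{lem:replace_with_slope} to pass between the discrete flow $\omega_{\mathrm{white},n}(e)e$ and its slope function — gives $d_W(\omega,\omega_{\mathrm{white},n})=O(1/n)$.

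\textbf{Main obstacle.} The only genuinely delicate point is part (b): showing that the first-order term in the Taylor expansion of $\sum_i p_i(\omega(v+e_i/n))$ vanishes, i.e.\ that the relevant discrete directional-derivative sum reproduces the continuum divergence of $\omega$ (which is zero) rather than some other first-order differential operator. One must carefully differentiate the identity $\omega = \sum_i p_i(\omega)\,e_i$, use $\sum_i e_i = 0$ (harmonic embedding), and — to kill a potential leftover first-order term coming from the asymmetry of the lattice — invoke the reflection symmetry of $\Lambda$ so that contributions from $e_i$ and its reflected partner cancel. Everything else is bookkeeping with Taylor's theorem and the Wasserstein estimates from Section~\ref{sec:vector_fields}.
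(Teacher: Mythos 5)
Your proposal follows essentially the same route as the paper's proof: (a) is the algebraic identity $\sum_i p_i(s)=1$; (b) is a second-order Taylor expansion of $\sum_i p_i(\omega(v+e_i/n))$ whose first-order term is identified with $\mathrm{div}\,\omega(v)=0$; (c) is the $C^1$ bound on $p_i\circ\omega$ across a single edge; (d) is the Voronoi-cell averaging argument via Lemma~\ref{lem:wasserstein_bound_from_local}.

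The one thing to correct is your repeated hedge in part (b) that reflection symmetry of $\Lambda$ is needed to kill the first-order term. It is not, and the paper does not use it here. Writing $e_i=\sum_j a_i^j\eta_j$, the first-order term is
\begin{align*}
\sum_{i=1}^D \nabla(p_i\circ\omega)(v)\cdot e_i \;=\; \sum_{j=1}^d \pd{}{\eta_j}\Big(\sum_{i=1}^D a_i^j\, p_i(\omega)\Big)(v) \;=\; \sum_{j=1}^d \pd{\omega_j}{x_j}(v) \;=\; \mathrm{div}\,\omega(v)=0,
\end{align*}
which is exactly the chain-rule computation you wrote down first; reflection symmetry would not by itself produce this cancellation (the $p_i$ at reflected directions are different numbers), and the harmonic embedding $\sum_i e_i=0$ is likewise not used in this step. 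The reflection-symmetry hypothesis enters the paper only in the proof of the large-deviations upper bound (Remark~\ref{rem:reflection}). With that spurious appeal removed, your argument is complete and matches the paper's.
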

\begin{proof}
    Part (a) follows immediately from \eqref{eq:def_disc_approx} and the fact that $\sum_i p_i(s) = 1$ for any slope $s$. Part (c) follows since $\omega:R\to \m R^d$ is $C^2$ and $p_i(s)$ is a continuous function of $s$.
    
    For (b) consider just $\omega_{\mathrm{black},n}$; the proof for $\omega_{\mathrm{white},n}$ is analogous. For all $v\in R_n^\circ\cap W$, we have
    \begin{align}\label{eq:divergence_at_pt}
        \text{div}\, \omega_{\mathrm{black},n}(v) = \sum_{i=1}^D p_i(\omega(v+{\frac{1}{n}} e_i)).
    \end{align}
    Let $\eta_1,\dots, \eta_d$ be the basis vectors for $\m R^d$, and define $a_i^j$ so that $e_i = \sum_{j=1}^d a_i^j \eta_j$. We expand \eqref{eq:divergence_at_pt} in ${1}/{n}$: 
    \begin{align*}
        \sum_{i=1}^D p_i(\omega(v+\frac{1}{n} e_i)) = \sum_{i=1}^D p_i(\omega(v)) + \frac{1}{n} \sum_{i=1}^D\frac{\partial(p_i\circ \omega)}{\partial e_i}(v) + O(1/n^2) = 1 + \frac{1}{n} \sum_{i=1}^D \sum_{j=1}^d a_i^j \frac{\partial (p_i\circ \omega)}{\partial \eta_i}(v) + O(1/n^2). 
    \end{align*}
    On the other hand by \eqref{eq:slope_as_probabilities}, the $j^{th}$ component $\omega_j$ of $\omega$ is
    \begin{align*}
        \omega_j(x) = \sum_{i=1}^D a_i^j \, p_i(\omega(x)).
    \end{align*}
    Thus for any $v\in \partial R_n^\circ\cap W$,
    \begin{align*}
        \text{div}\, \omega_{\mathrm{black},n}(v) = \sum_{i=1}^D p_i(\omega(v+\frac{1}{n} e_i)) = 1 + \frac{1}{n}\, \text{div}\,\omega(v) + O(1/n^2) = 1 +O(1/n^2).
    \end{align*}
    The second equality uses that $\omega$ is divergence-free in $R^\circ$. 

    Finally part (d) follows from the continuity of $\omega$, a simple averaging argument and Lemma~\ref{lem:wasserstein_bound_from_local}.
    
\end{proof}

\begin{proof}[Proof of Theorem~\ref{thm:discrete_approximation}] By Lemma~\ref{lem:anything_is_limit}, it suffices to show that $\omega$ can be approximated by a discrete flow with appropriate divergences. By Lemma~\ref{lem:smoothing_asymptotic_flows}, for any $\varepsilon>0$ small enough, we can assume that $\omega$ is smooth and valued in $(1-\varepsilon) \mc N(\Lambda)$, at the cost of moving $\omega$ by $O(\varepsilon^{1/2})$ in $d_W$ distance. Assuming $\omega$ is smooth, the construction of $\omega_{\mathrm{black},n}$ (and $\omega_{\mathrm{white},n}$) is well-defined. 
    
We define the discrete approximation $\omega_n$ from $\omega_{\mathrm{black},n}$ as follows. First, partition the vertices of $R_n$ into paths from $\partial R_n$ to $\partial R_n$ of length $O(n)$. At each white vertex $v\in W\cap R_n^\circ$, by Lemma~\ref{lem:divergences_of_approx}(b) we can correct the divergence of $\omega_{\text{black},n}$ by subtracting $O(1/n^2)$ from its value on incident edges. If we subtract $c/n^2$ from $e=(u,v)$, then we add this amount to another edge $e'$ incident to the black vertex $u$, so that the divergence at all interior black vertices remains the same. We iterate this along each path, making all divergences at interior vertices $1$, in sum perturbing the divergences at the boundary by $O(1/n)$. Since $\omega$ is valued in $(1-\varepsilon)\mc N(\Lambda)$, $\omega_{\mathrm{black},n}(e)>c(\varepsilon)>0$ for all edges, so for $n$ large enough this process gives a discrete flow which is nonzero on all edges. 

{For the boundary vertices, we check that the divergences are in $(0,1]$ in magnitude. Since $\omega$ is extendable outside, $\omega_{\mathrm{black},n}$ and $\omega_{\mathrm{white},n}$ have this property at $B\cap \partial R_n$ and $W\cap \partial R_n$ respectively. In fact since $\omega$ is valued in $(1-\varepsilon)\mc N(\Lambda)$, $\omega_{\mathrm{black},n}$ and $\omega_{\mathrm{white},n}$ have divergences of magnitude in $[c_1,1-c_2]$ on $B\cap \partial R_n$ and $W\cap \partial R_n$ respectively (as both are $0$ on edges which are not contained in $R_n$), where $c_1,c_2$ are small constants determined by $\varepsilon$. By Lemma~\ref{lem:divergences_of_approx}(c), $\omega_n$ has divergences of magnitude between $c_1 - O(1/n)$ and $1-c_2+O(1/n)$ for all $v\in \partial R_n$, hence $\omega_n$ has the desired property for $n$ large enough. } 

It follows that $\omega_n$ satisfies the conditions of Lemma~\ref{lem:anything_is_limit}. We note that $\omega_n$ approximates $\omega$ in $d_W$ by Lemma~\ref{lem:divergences_of_approx}(d) and the fact that we changed the values on each edge by at most $O(1/n)$, and smoothing with Lemma~\ref{lem:smoothing_asymptotic_flows} moves $\omega$ by an $O(\varepsilon^{1/2})$ distance; for any fixed $\delta>0$, $\varepsilon>0$ and $n$ can be taken small and large enough respectively so that $d_W(\omega_n,\omega)<\delta$.
\end{proof}

As a corollary of the proof, we also note that the discrete flow $\omega_n$ we construct approximating $\omega$ is very close to $\omega_{\text{black},n}$ and $\omega_{\text{white},n}$ in the sup norm on $E(R_n)$. 
\begin{cor}\label{cor:other_approximation}
    The discrete approximations $\omega_n,\omega_{\mathrm{black},n}$, $\omega_{\mathrm{white},n}$ satisfy
    \begin{align*}
        \sup_{e\in E(R_n)} |\omega_n(e) - \omega_{\mathrm{black},n}(e)| = \sup_{e\in E(R_n)} |\omega_n(e) - \omega_{\mathrm{white},n}(e)| = O(1/n).
    \end{align*}
\end{cor}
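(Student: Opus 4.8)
The plan is to extract the bound directly from the construction of $\omega_n$ carried out in the proof of Theorem~\ref{thm:discrete_approximation}. Recall that there $\omega_n$ was produced from $\omega_{\mathrm{black},n}$ by partitioning the vertices of $R_n$ into lattice paths running from $\partial R_n$ to $\partial R_n$, each of length $O(n)$, and then walking along each path correcting the divergence defect $\mathrm{div}\,\omega_{\mathrm{black},n}(v) - 1 = O(1/n^2)$ at every interior white vertex $v$ (Lemma~\ref{lem:divergences_of_approx}(b)): at each step we subtract $O(1/n^2)$ from one incident edge and re-add it on the next path edge so as to leave the interior black-vertex divergences unchanged. First I would track the accumulated correction along a single path. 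The modification to the value of a given path edge is a partial sum of the $O(1/n^2)$ corrections made at the (at most $O(n)$) white vertices preceding it on the path, hence is $O(n)\cdot O(1/n^2)=O(1/n)$. Since an edge can participate in the routing for at most the two paths through its endpoints, the total change from $\omega_{\mathrm{black},n}$ to $\omega_n$ on any edge is still $O(1/n)$, giving
\begin{align*}
    \sup_{e\in E(R_n)} |\omega_n(e) - \omega_{\mathrm{black},n}(e)| = O(1/n).
\end{align*}

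For the second equality I would combine this with Lemma~\ref{lem:divergences_of_approx}(c), which states that $|\omega_{\mathrm{black},n}(e) - \omega_{\mathrm{white},n}(e)| = O(1/n)$ uniformly in $e$; the triangle inequality then gives $\sup_{e\in E(R_n)} |\omega_n(e) - \omega_{\mathrm{white},n}(e)| = O(1/n)$ as well.

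There is no real obstacle here — this is a bookkeeping remark about a construction already completed, and indeed the closing sentence of the proof of Theorem~\ref{thm:discrete_approximation} already records that each edge value was changed by at most $O(1/n)$. The only point requiring a moment's attention is that the per-edge bound is \emph{uniform}: an edge near the far end of a long path carries the entire running sum of corrections, so one must invoke the fact that every path has length $O(n)$ (which holds because $R_n\subset\frac1n\Lambda$ approximates the bounded region $R$ at scale $1/n$) rather than merely bounding the size of a single correction step.
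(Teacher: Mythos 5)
Your proposal is correct and is essentially the paper's own argument: the corollary is extracted directly from the path-correction construction in the proof of Theorem~\ref{thm:discrete_approximation}, where the cumulative modification on any edge is a sum of at most $O(n)$ corrections of size $O(1/n^2)$ (the paper's proof explicitly records that each edge value is changed by at most $O(1/n)$), and the second equality follows from Lemma~\ref{lem:divergences_of_approx}(c) plus the triangle inequality. Your added remark about uniformity of the per-edge bound via the $O(n)$ path length is exactly the right point to flag.
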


\subsubsection{Piecewise-constant approximation}

Fix $R\subset \m R^d$ and a scale $\delta>0$. Let $\mc X$ be a collection of $d$-dimensional simplices (i.e.\ triangles or tetrahedra in two or three dimensions) with diameters {of order} $\delta$ and disjoint interiors, all of which intersect $R$, and such that $R\subset \cup_{X\in \mc X} X$. We assume that the simplices $X\in \mc X$ are all translations of a finite collection of shapes {(e.g., those with vertices in an appropriate lattice)}. 

We say that an asymptotic flow $\omega$ is \textit{piecewise constant} on $\mc X$ if for each $X\in \mc X$, $\omega\mid_{X}$ is constant. We show that any asymptotic flow can be approximated by a piecewise constant one (with slightly perturbed boundary conditions).

\begin{prop}\label{prop:pc_approx}
Fix $\varepsilon>0$ and $(R,b)\subset \m R^d$ with $b$ extendable outside. 
Let $\mc X$ be a mesh of simplices as above, with $\delta>0$ small enough. For any $\omega\in \mathrm{AF}(
\Lambda,R,b)$, there exists $\widetilde{\omega}$ which is piecewise constant on $\mc X$, has $d_W(\omega,\widetilde{\omega})<\varepsilon$, is valued strictly in the interior of $\mc N(\Lambda)$, and which has $\widetilde{\omega}\in \mathrm{AF}(\Lambda,R)$. 
\end{prop}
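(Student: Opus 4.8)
The plan is to reduce to a smooth flow and then build $\widetilde\omega$ by a flux-matching construction on $\mc X$. First I would reduce to the case where $\omega$ is smooth and valued strictly in the interior of $\mc N(\Lambda)$: since $b$ is extendable outside, Lemma~\ref{lem:smoothing_asymptotic_flows} lets us replace $\omega$ by a mollification $\omega_{\mathrm{sm}}\in\mathrm{AF}(\Lambda,R)$ which is smooth, valued in $(1-\kappa_0)\mc N(\Lambda)$ for some $\kappa_0>0$, satisfies $d_W(\omega,\omega_{\mathrm{sm}})<\varepsilon/2$, and (extending outside) is divergence-free on a $\lambda$-neighbourhood $R_\lambda$ of $R$. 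Rename $\omega:=\omega_{\mathrm{sm}}$, let $M_\omega:=\|\nabla\omega\|_{L^\infty(R_\lambda)}<\infty$, and choose $\delta<\lambda$ so that $\bigcup_{X\in\mc X}X\subset R_\lambda$. It then suffices to approximate this smooth $\omega$ within $d_W$-distance $\varepsilon/2$ by a flow that is piecewise constant on $\mc X$, divergence-free, and valued strictly inside $\mc N(\Lambda)$.

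The main step is the following. For each $X\in\mc X$ with faces $F_0,\dots,F_d$ and outward unit normals $\nu_0,\dots,\nu_d$, form the flux vector $\Phi_X\in\R^{d+1}$ with $(\Phi_X)_i=\int_{F_i}\langle\omega(y),\nu_i\rangle\,\dd\sigma(y)$. Since $\omega$ is divergence-free on $X$, the divergence theorem gives $\sum_i(\Phi_X)_i=\int_X\text{div}\,\omega=0$, so $\Phi_X$ lies in the hyperplane $H=\{u\in\R^{d+1}:\sum_i u_i=0\}$. The linear map $L_X\colon\R^d\to\R^{d+1}$ with $L_X(v)_i=|F_i|\langle v,\nu_i\rangle$ is injective, because any $d$ of the face normals of a nondegenerate simplex are linearly independent, and its image lies in $H$ because $\sum_i|F_i|\nu_i=0$ (the divergence theorem for constant fields); as $\dim H=d$, $L_X$ is an isomorphism onto $H$. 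Hence there is a unique $v_X\in\R^d$ with $L_X(v_X)=\Phi_X$, and we set $\widetilde\omega:=v_X$ on $X$ (its support is the $O(\delta)$-fattening $\bigcup_X X$ of $R$, which is harmless since $b$ is extendable outside). If $X,X'$ share an internal face $F$, the fluxes of $v_X$ and of $v_{X'}$ through $F$ both equal $\int_F\langle\omega,\nu_F\rangle$, which (as $F$ is flat) forces $\langle v_X,\nu_F\rangle=\langle v_{X'},\nu_F\rangle$; thus $\widetilde\omega$ has continuous normal component across every internal face, so integrating $\widetilde\omega\cdot\nabla\phi$ simplex by simplex and applying the divergence theorem, the internal-face terms cancel in pairs, and $\widetilde\omega$ is divergence-free in $R^\circ$.

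It remains to estimate. Because the $X\in\mc X$ are translates of finitely many fixed shapes scaled to diameter $\Theta(\delta)$, the operator norms $\|L_X^{-1}\|$ on $H$ are uniformly $O(\delta^{-(d-1)})$. Writing $(\Phi_X)_i=|F_i|\langle\bar\omega_{F_i},\nu_i\rangle$ with $\bar\omega_{F_i}$ the average of $\omega$ over $F_i$ and fixing $x_0\in X$, we have $|\bar\omega_{F_i}-\omega(x_0)|\le M_\omega\,\mathrm{diam}(X)=O(M_\omega\delta)$, hence $\|\Phi_X-L_X(\omega(x_0))\|=O(M_\omega\delta^d)$ and therefore $|v_X-\omega(x_0)|=\|L_X^{-1}(\Phi_X-L_X(\omega(x_0)))\|=O(M_\omega\delta)$. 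So $\sup_{x\in X}|\widetilde\omega(x)-\omega(x)|=O(M_\omega\delta)$ uniformly in $X$; taking $\delta$ small (depending on $\varepsilon$) makes every $v_X$ strictly interior to $\mc N(\Lambda)$, so $\widetilde\omega\in\mathrm{AF}(\Lambda,R)$ is valued strictly inside $\mc N(\Lambda)$, and Lemma~\ref{lem:wasserstein_bound_from_local} applied with the simplices as the partition (per-cell mass defect $O(\delta^{d+1})$, $O(\delta^{-d})$ cells meeting $R$) gives $d_W(\omega,\widetilde\omega)=O(M_\omega\delta)<\varepsilon/2$. Combined with the first reduction this gives $d_W(\omega_{\mathrm{orig}},\widetilde\omega)<\varepsilon$; if additional room is desired one may finally replace $\widetilde\omega$ by $(1-\kappa)\widetilde\omega$ for a tiny $\kappa$, which remains piecewise constant and divergence-free and only moves $\widetilde\omega$ by $O(\kappa)$ in $d_W$.

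The crux is the flux-matching step: the linear-algebra fact that $L_X$ is a bijection onto the consistency hyperplane $H$, so that the face fluxes of a divergence-free field are exactly reproducible by a constant vector $v_X$, together with the observation that matching fluxes on shared faces is precisely distributional divergence-freeness of a piecewise-constant field. The other delicate point is the uniform conditioning of $L_X^{-1}$, which is where the hypothesis that the simplices are translates of finitely many shapes is used. (In two dimensions this whole construction is just the statement that a Lipschitz height function is well approximated by its piecewise-linear interpolant on the mesh; the flux construction is the coordinate-free version valid in every dimension.)
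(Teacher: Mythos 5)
Your proposal is correct and is essentially the paper's own argument: both reduce to a smooth flow via Lemma~\ref{lem:smoothing_asymptotic_flows}, define the constant value on each simplex by matching the average flux through the faces (your isomorphism $L_X$ onto the hyperplane $H$ is exactly the paper's observation that prescribing $s_X\cdot\xi_i$ on $d$ faces determines $s_X$ and the last face is automatic by the divergence theorem), bound the deviation from $\omega$ using the uniform geometry of the finitely many simplex shapes, apply Lemma~\ref{lem:wasserstein_bound_from_local}, and rescale by $1-\kappa$ at the end. Your explicit verification that matching normal fluxes across shared faces yields distributional divergence-freeness is a detail the paper asserts without proof, but it does not change the route.
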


The proof of Proposition~\ref{prop:pc_approx} is analogous to that of \cite[Proposition 8.3.1]{3Ddimers}. 

\begin{proof}

By Lemma~\ref{lem:smoothing_asymptotic_flows}, it suffices to prove this result for $\omega$ smooth. Note that since $b$ is extendable outside, we can take $\delta>0$ small enough so that $\omega$ is smooth and divergence free on all $X\in \mc X$. For any fixed simplex $X\in \mc X$, $\partial X$ consists of $(d+1)$ faces. Let $F_1,\dots,F_{d+1}$ denote the faces of $\partial X$ and let $\xi_1,\dots,\xi_{d+1}$ denote their outward-pointing unit normal vectors. We define the vector $s_X$ by the condition that 
\begin{align*}
    s_X \cdot \xi_i = \frac{1}{|F_i|}\int_{F_i} \langle \omega, \xi_i\rangle \, \dd x, \qquad \text{ for } i\in \{1,\dots, d-1\}.
\end{align*}
{Since $\omega$ is divergence free on $X$, the divergence theorem gives
$$\sum_{i=1}^d\int_{F_i} \langle \omega, \xi_i\rangle \, \dd x=0.$$
Moreover since $\sum_{i=1}^d\xi_i|F_i|=0$,} it follows that 
\begin{align*}
    s_X \cdot \xi_d = \frac{1}{|F_d|}\int_{F_d} \langle \omega, \xi_d\rangle \, \dd x.
\end{align*}
In particular, $s_X$ does not depend on the order of $\xi_1,\dots,\xi_d$. We define $\widetilde{\omega}(x) := s_X$ for all $x\in X$. This is valued in $\mc N(\Lambda)$, {continuous and} divergence-free on $R$, hence its restriction to $R$ is in $\mathrm{AF}(\Lambda,R)$. 

It remains to bound $d_W(\widetilde{\omega},\omega)$. Let $m$ denote the modulus of continuity of $\omega$ on $\widetilde{R}\supset R$. Since we assume that all simplices $X\in \mc X$ are translations of a finite collection, there are finitely many normal vectors $\xi_i$ ranging over all faces of $\partial X$ for all $X\in \mc X$. Thus there is a constant $K$ such that for all $X\in \mc X$, 
\begin{align}\label{eq:pc_vs_avg}
    |s_X - \text{avg}_X(\omega)| \leq K d m \delta,
\end{align}
where $s_X= \widetilde{\omega}\mid_X$. Hence by Lemma~\ref{lem:wasserstein_bound_from_local}, for any $\delta$ small enough given $\omega,\varepsilon$ we have $d_W(\omega,\widetilde{\omega})<\varepsilon$. We can replace $\widetilde{\omega}$ with itself times $1$ minus a small constant (of order at most $\varepsilon$); this modification will satisfy the same properties as above and be valued strictly in the interior of $\mc N(\Lambda)$.

\end{proof}

The piecewise constant approximations of Proposition~\ref{prop:pc_approx} have the following useful property. 

\begin{prop}\label{prop:pc_ent_approximation}
    Fix $\omega\in \text{AF}(\Lambda,R)$. If $\tilde{\omega}_\delta$ is a piecewise constant approximation of $\omega$ on $\delta$ simplices as in Proposition~\ref{prop:pc_approx}, then $\Ent(\widetilde{\omega}_\delta) = \Ent(\omega) + o(1)$ as $\delta\to 0$.
\end{prop}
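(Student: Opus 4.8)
The plan is to show that $\Ent(\widetilde\omega_\delta)\to\Ent(\omega)$ by a two-step argument: first reduce to $\omega$ smooth, then use that the piecewise-constant values $s_X$ are $O(\delta)$-close to the averages of $\omega$ over the simplices, combined with continuity of $\sigma$. First I would recall that $\Ent(\omega)=-\int_R\sigma(\omega(x))\,\dd x$, and that by Corollary~\ref{thm:strict_convex} the surface tension $\sigma$ is continuous (indeed strictly convex) on the compact set $\mc N(\Lambda)$, hence bounded and uniformly continuous there. So it suffices to control $\int_R|\sigma(\widetilde\omega_\delta(x))-\sigma(\omega(x))|\,\dd x$.

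By Lemma~\ref{lem:smoothing_asymptotic_flows}, for any $\eta>0$ we may replace $\omega$ by a smooth $\omega_{\mathrm{sm}}\in\mathrm{AF}(\Lambda,R)$ valued strictly inside $\mc N(\Lambda)$ with $d_W(\omega,\omega_{\mathrm{sm}})<\eta$; since $\Ent$ is upper semicontinuous (Lemma~\ref{lem:semicontinuity}) and in fact continuous along such mollifications (the argument in that lemma shows $\Ent(\omega_{\mathrm{sm}})\to\Ent(\omega)$ as the mollification parameter tends to $0$, using the Lebesgue differentiation theorem and boundedness of $\sigma$), and since the construction of $\widetilde\omega_\delta$ from $\omega$ versus from $\omega_{\mathrm{sm}}$ differs by $o(1)$ as $\eta\to0$ uniformly in $\delta$, we reduce to the case $\omega$ smooth. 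Now fix $\omega$ smooth with modulus of continuity $m$ on a neighborhood $\widetilde R\supset R$. On each simplex $X\in\mc X$ of diameter $O(\delta)$, the piecewise-constant value $s_X=\widetilde\omega_\delta\mid_X$ satisfies $|s_X-\mathrm{avg}_X(\omega)|\le Kdm\delta$ by \eqref{eq:pc_vs_avg}, and $|\mathrm{avg}_X(\omega)-\omega(x)|\le m\bigl(\mathrm{diam}(X)\bigr)$ for $x\in X$. Hence $|\widetilde\omega_\delta(x)-\omega(x)|\le Kdm\delta+m(O(\delta))=:\varrho(\delta)$ for all $x\in R$, with $\varrho(\delta)\to0$ as $\delta\to0$ (since $m$ is the modulus of continuity of a continuous function on a compact set, $m(t)\to0$ as $t\to0$). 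Both $\widetilde\omega_\delta(x)$ and $\omega(x)$ lie in $\mc N(\Lambda)$, so using uniform continuity of $\sigma$ on $\mc N(\Lambda)$ we get $|\sigma(\widetilde\omega_\delta(x))-\sigma(\omega(x))|\le\omega_\sigma(\varrho(\delta))$, where $\omega_\sigma$ is the modulus of continuity of $\sigma$. Integrating over $R$ gives
\begin{align*}
    |\Ent(\widetilde\omega_\delta)-\Ent(\omega)|\le\int_R|\sigma(\widetilde\omega_\delta(x))-\sigma(\omega(x))|\,\dd x\le|R|\,\omega_\sigma(\varrho(\delta))\xrightarrow{\ \delta\to0\ }0,
\end{align*}
which is the claim.

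The step I expect to require the most care is the reduction to smooth $\omega$: one must check that the construction of the piecewise-constant approximation commutes (up to $o(1)$) with the mollification, i.e. that $\Ent(\widetilde\omega_\delta)$ computed for the original $\omega$ and for its mollification $\omega_{\mathrm{sm}}$ differ by a quantity that vanishes as the mollification parameter $\to0$, uniformly in $\delta$. This follows because $\widetilde\omega_\delta$ depends on $\omega$ only through face-averages of $\omega$ (a bounded linear operation), so $d_W(\widetilde\omega_\delta,\widetilde{(\omega_{\mathrm{sm}})}_\delta)$ is controlled by $d_W(\omega,\omega_{\mathrm{sm}})$ independently of $\delta$, and then one applies upper/lower semicontinuity of $\Ent$ together with the uniform bound $|\sigma|\le\sup_{\mc N(\Lambda)}|\sigma|<\infty$. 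Once that is in place, the remainder is the routine continuity estimate above.
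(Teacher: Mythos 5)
Your argument for smooth $\omega$ is fine: the uniform pointwise estimate $|\widetilde\omega_\delta(x)-\omega(x)|\le\varrho(\delta)$ together with uniform continuity of $\sigma$ on $\mc N(\Lambda)$ gives the claim. The gap is in the reduction to smooth $\omega$, and it is a real one. You propose to control $|\Ent(\widetilde\omega_\delta)-\Ent(\widetilde{(\omega_{\mathrm{sm}})}_\delta)|$ by controlling $d_W(\widetilde\omega_\delta,\widetilde{(\omega_{\mathrm{sm}})}_\delta)$ and then ``applying upper/lower semicontinuity of $\Ent$.'' But Lemma~\ref{lem:semicontinuity} only gives \emph{upper} semicontinuity of $\Ent$ in $d_W$ (equivalently, lower semicontinuity of $I$); lower semicontinuity of $\Ent$ is false in general --- a sequence oscillating between two slopes with average $s$ has $\int_R\sigma$ strictly larger than $\int_R\sigma(s)$ by strict convexity, while converging weakly to the constant flow $s$. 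So a $d_W$ bound between the two piecewise-constant flows does not control the difference of their entropies. Moreover, the claimed uniformity in $\delta$ of the mollification error is not justified: the values $s_X$ are determined by fluxes through $(d-1)$-dimensional faces of diameter $O(\delta)$, and these are not uniformly controlled by $d_W(\omega,\omega_{\mathrm{sm}})$ across scales (for merely measurable $\omega$ the face fluxes are not even pointwise defined, which is exactly why Proposition~\ref{prop:pc_approx} itself first reduces to smooth $\omega$). Without uniformity you cannot interchange the $\eta\to0$ and $\delta\to0$ limits.

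The paper's proof avoids the smoothness reduction entirely by splitting the two inequalities and using different mechanisms. The bound $\Ent(\widetilde\omega_\delta)\le\Ent(\omega)+o(1)$ follows from upper semicontinuity of $\Ent$, since $d_W(\widetilde\omega_\delta,\omega)\to0$. For the reverse bound one introduces $\omega_{\mathrm{avg},\delta}$, the flow obtained by \emph{solid} averaging of $\omega$ over each simplex (well defined for any measurable $\omega$), and applies Jensen's inequality using convexity of $\sigma$:
\begin{align*}
\sigma\bigl(\tfrac{1}{|X|}\textstyle\int_X\omega\bigr)\le\tfrac{1}{|X|}\textstyle\int_X\sigma(\omega),
\end{align*}
so that $-\int_R\sigma(\omega_{\mathrm{avg},\delta})\ge\Ent(\omega)$; combining this with $|s_X-\mathrm{avg}_X(\omega)|=O(\delta)$ from \eqref{eq:pc_vs_avg} and continuity of $\sigma$ yields $\Ent(\widetilde\omega_\delta)\ge\Ent(\omega)-o(1)$. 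The convexity step is the ingredient your argument is missing; if you incorporate it, the smoothing step becomes unnecessary.
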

\begin{proof}
    By Lemma~\ref{lem:semicontinuity}, $\Ent$ is upper semicontinuous, so $\limsup_{\delta\to 0}\Ent(\widetilde{\omega}_\delta) \leq \Ent(\omega)$, and hence $\Ent(\widetilde{\omega}_\delta) \leq \Ent(\omega) + o(1).$

For the other direction, we use the convexity of $\sigma$. Let $\mc X^\delta$ denote the collection of simplices at scale $\delta$ on which $\widetilde{\omega}_\delta$ is constant (we assume that for all $\delta$, $\mc X^\delta$ consists of translations of rescaled versions of the same collection of simplices, so that the collection of normal vectors remains the same for all $\delta$).  Define another flow $\omega_{\mathrm{avg},\delta}$ by
    \begin{align*}
        \omega_{\mathrm{avg},\delta}(x) := \frac{1}{|X|}\int_{X} \omega(y)\,\dd y \qquad \text{ for all } x\in X, \text{ for all } X\in \mc X^\delta,
    \end{align*}
    where we set $\omega(y) = 0$ for $y\not\in R$. While this may not be an asymptotic flow (it may not be divergence-free), it is still valued in $\mc N(\Lambda)$. Since $\sigma$ is convex, 
    \begin{align*}
        \sigma\bigg( \frac{1}{|X|}\int_{X} \omega(y)\,\dd y \bigg) \leq \frac{1}{|X|}\int_X \sigma(\omega(y))\, \dd y.
    \end{align*}
     Since $\sigma$ is continuous, \eqref{eq:pc_vs_avg} implies that 
    \begin{align*}
        \Ent(\tilde{\omega}_\delta) +o(1) = -\int_{R}\sigma(\omega_{\text{avg},\delta}(x))\, \dd x. 
    \end{align*} 
    Hence
    \begin{align*}
        \Ent(\omega) = -\int_{R} \sigma(\omega(x)) \, \dd x \leq - \int_{R} \sigma(\omega_{\text{avg},\delta}(x)) \, \dd x = \Ent(\tilde{\omega}_\delta) +o(1),
    \end{align*}
    which completes the proof.
\end{proof}

\subsection{Proof of the lower bound}\label{sec:lower}

\begin{proof}[Proof of Lemma~\ref{lem:lower}]
    Fix $\omega\in \text{AF}(\Lambda,R,b)$. By Proposition~\ref{prop:pc_approx}, we can approximate $\omega$ by $\widetilde{\omega}$ which is piecewise-constant on a mesh $\mc X$ of simplices with diameters of order $\delta$ and valued in the interior of $\mc N(\Lambda)$. Let $s_X :=\widetilde{\omega}\mid_X\in \mc N(\Lambda)^\circ$ for each $X\in \mc X$. By Proposition~\ref{prop:pc_ent_approximation},
    \begin{align}
        \Ent(\omega)=\Ent(\widetilde{\omega}) + o(1) = -\sum_{X\in \mc X} |X|\sigma(s_X) + o(1) 
    \end{align}
    as $\delta\to 0$. Further, for $\delta$ small enough, we have $U_{r/2}(\widetilde{\omega})\subset U_{r}(\omega)$. It therefore suffices to prove the lower bound for $\widetilde{\omega}$. 
    
   For $n,N$ large enough there exists an $N$-dimer cover $M$ of $R_n$ (with boundary conditions allowed by $\rho_{n,N}$) as in Theorem~\ref{thm:discrete_approximation} such that $d_W(\omega_M,\widetilde{\omega})<r/2$. In fact we can find $M$ which has a particular form, which is a modification of $\omega_{\text{black},n}$ approximating $\widetilde{\omega}$ as in the previous section. 
    
    As a technical point, $\widetilde{\omega}$ is not $C^2$, and thus we cannot immediately construct $\omega_{\text{black},n}$ from it. However, we can mollify it by convolving it with a bump function $\phi$ supported in $B_{\delta'}(0)$ for $\delta'\ll\delta$. The flow $\widetilde{\omega}\ast \phi$ will be smooth (and still close to $\widetilde{\omega}$ by Lemma~\ref{lem:smoothing_asymptotic_flows}), and in fact it will be equal to $\widetilde{\omega}$ outside a $2\delta'$ neighborhood of the set of boundaries $\{\partial X:X\in \mc X\}$. We construction $\omega_M$ from $\omega_{\text{black},n}$ from $\widetilde{\omega}\ast \phi$.

    Recall that a slope $s_X$ determines a collection of edge probabilities, $\{p_i(s_X)\}_{i=1}^D$. Since $s_X\in \mc N(\Lambda)^\circ$, $p_i(s_X)\neq 0$ for all $i$. (Concretely, $p_i(s_X) = \exp(e_i\cdot \nabla \sigma(s_X))/\sum_{k=1}^D \exp(e_k\cdot \nabla \sigma(s_X))$, see Theorem~\ref{thm:legendre_duality}.)

Let $X_n = \frac{1}{n}\Lambda\cap X$ and let $N_{2\delta'}(X)$ be the width-$2\delta'$ tubular neighborhood of $\partial X$. Since $\widetilde{\omega}\ast \phi$ is equal to $\widetilde{\omega}$ on $X\setminus N_{2\delta'}(X)$, $\omega_{\text{black},n}(e) = p_i(s_X)$ for all edges $e$ of type $e_i$ in $X_n^{2\delta'} = \frac{1}{n}\Lambda \cap (X\setminus N_{2\delta'}(\partial X))$. By Corollary~\ref{cor:other_approximation}, $\sup_{e\in E_n}|\omega_M(e)-\omega_{\text{black},n}(e)|=O(1/n)$, so $\omega_M(e)=p_i(s_X)+O(1/n)$ for all edges $e$ of type $e_i$ in $X_n^{2\delta'}$.

On the other hand, by Proposition~\ref{prop:limit_shape_torus}, the discrete flow $p(e) = p_i(s_X)$ for all edges $e$ of type $e_i$ is the limit shape for the torus measure with periodic edge weights $w_i=p_i(s_X)$ on each edge of type $e_i$. Let $Z_{X_n,N,w}^{\text{torus}}$ denote the (unnormalized) partition function of the torus measure with these weights on $X_n$ and let $\mu_{n,N,w}$ denote the corresponding probability measure. 

Fix $\varepsilon>0$. By Proposition~\ref{prop:limit_shape_torus} and Corollary~\ref{cor:other_approximation} respectively, $\omega_M$, and samples from $\mu_{n,N,w}$ for $N$ large enough given $\varepsilon$, have boundary conditions on $\partial X_n^{2\delta'}$ within $O(1/n)$ of each other  with probability $1-\varepsilon$ (they are both close to the discrete flow $p(e)$.) Further, since $s_X \in \mc N(\Lambda)^\circ$, $p(e)\neq 0$ for all edges $e$ in $X_n^{2\delta'}$, so $\mu_{n,N,w}$ satisfy the strictly non-degenerate condition. In summary, on $X_n^{2\delta'}$, the dimer cover $\omega_M$ (outside) and a sample from $\mu_{n,N,w}$ (inside), satisfy the conditions of the patching theorem (Theorem~\ref{thm:patching_limit}), on an annulus of width $\delta'$.

As such, for $n,N$ large enough, the dimer cover $\omega_M$ outside $X_n^{2\delta'}$ can be patched with a sample from $\mu_{n,N,w}$ restricted to $X_n^{3\delta'}$ with probability $1-\varepsilon$. We apply this on all $X\in \mc X$. By Proposition~\ref{prop:limit_shape_torus}, for $n,N$ large enough we can assume that all but $\varepsilon$ proportion of these dimer covers constructed by patching are in $U_{r/2}(\widetilde{\omega})$. 

To compute a lower bound on the number of dimer covers in $U_{r/2}(\widetilde{\omega})$ from this patching construction, recall as in Theorem~\ref{cor:almost_legendre} that restricted to the homology class $s_X$, the weights $w$ are gauge equivalent to the uniform edge weights, and hence there are constants $C_{n,N}(w)$ such that $Z_{n,N,w}^{s_X} = C_{n,N}(w) Z_{n,N}^{s_X}$. Therefore for $r'$ determined by $r$, for $n,N$ sufficiently large, we have for all $X_n$,
\begin{align*}
    \log Z_{X_n,N}^{\text{torus}}(U_{r'}(s_X)) \geq \log Z_{X_n',N}^{s_X} + O(\varepsilon \log \varepsilon),
\end{align*}
where $X_n'=X_n^{3\delta'}$ (the region we patched into). Therefore for $n,N$ large enough,
\begin{align*}
    \log Z_{n,N}(U_{r}({\omega})) \geq \log Z_{n,N}(U_{r/2}(\widetilde{\omega})) \geq \sum_{X\in \mc X} \log Z_{X_n',N}^{s_X} + O(\varepsilon \log \varepsilon),
\end{align*}
On the other hand,
      \begin{align*}
          \sigma(s_X)= -\lim_{n\to\infty}\lim_{N\to\infty}\frac{1}{Nn^d|X|}\log \frac{Z_{X_n,N}^{s_X}}{(N!)^{n^d|X|}} 
      \end{align*} 
      Hence
      \begin{align*}
          \liminf_{n\to\infty} \liminf_{N\to\infty}\frac{1}{N n^d} \log \frac{Z_{n,N}(U_{r}({\omega}))}{(N!)^{n^d|R|}} &\geq -\sum_{X\in \mc X} \sigma(s_X) |X|(1-O(\delta')) + O(\varepsilon \log \varepsilon) \\
          &= \Ent({\omega})(1-O(\delta')) + o_\delta(1)+O(\varepsilon \log \varepsilon).
      \end{align*}
     Here $\varepsilon,\delta,\delta'$ are small constants which go to $0$ as $r\to 0$. This completes the proof.
\end{proof}

\subsection{Proof of the upper bound}\label{sec:upper}

\begin{proof}[Proof of Lemma~\ref{lem:upper}]

    Let $\widetilde{\omega}$ be a piecewise constant approximation of $\omega$ as in Proposition~\ref{prop:pc_approx}. Let $\mc X$ be the mesh of simplices on which $\widetilde \omega$ takes constant values, $s_X:=\widetilde{\omega}\mid_{X} \,\in \mc N(\Lambda)^\circ$. By Proposition~\ref{prop:pc_ent_approximation}, $\Ent(\widetilde{\omega}) = \Ent(\omega) + O(\delta)$, where $\delta$ is the scale of the simplices in $\mc X$. It suffices to prove the upper bound for $\widetilde{\omega}$.    
    
    For each $X\in \mc X$, let $\rho_{X_n,N}^{\text{free}}$ be the multinomial measure on $N$-dimer covers of $X_n =X\cap \frac{1}{n}\Lambda$ with \textbf{free} boundary conditions. Let $Z_{X_n,N}^{\text{free}}$ be the corresponding unnormalized partition function. Recall that $U_r(\cdot)$ denotes a ball of radius $r$ in the Wasserstein metric on flows. We have the bound: 
   \begin{align}\label{eq:reduce_to_simplex}
       \frac{1}{Nn^d} \log \frac{Z_{n,N}(U_r(\widetilde{\omega}))}{(N!)^{n^d|R|}} \leq \frac{1}{Nn^d} \sum_{X\in \mc X} \log \frac{Z_{X_n,N}^{\text{free}}(U_r(s_X))}{(N!)^{n^d|X|}}.
   \end{align}
    It therefore suffices to understand the asymptotics of $Z_{X_n,N}^{\text{free}}(U_r(s_X))$ for one $X\in \mc X$. We can further cover $X_n$ with boxes $B_m$ with $(2m)^d$ vertices (i.e., $B_m\cap W$ is a translation of $[0,m-1]^d$). We can take $m$ to be of the order $n^{1/2}$, so that only a lower-order proportion of these boxes intersect the boundary of $X_n$. Hence it suffices to understand the asymptotics of $Z_{B_m,N}^\text{free}(U_r(s_X))$, where $B_m$ is a box. Let $\rho_{B_m,N}^\text{free}$ be the corresponding probability measure. We split this by boundary condition, i.e. consider
    \begin{align*}
        Z_{B_m,N}^{\text{free}}(U_r(s_X)) = \sum_{\beta} \m P(\beta) Z_{B_m,N}^\beta(U_r(s_X)),
    \end{align*}
    where $\beta$ is a choice of multiplicities for vertices in $\partial B_m$, and $\m P(\beta)$ is the probability of $\beta$ under $\rho_{B_m,N}^\text{free}$.

    Since $\Lambda$ is reflection symmetric, we can split the torus $\m T_{2m}$ with $(4m)^d$ sites into $2^d$ copies of $B_m$. We put the boundary condition $\beta$ on one copy of $B_m$, and the reflected boundary condition $\beta'$ on all neighboring copies of $B_m$ in $\m T_{2m}$. We repeat this until all copies have a boundary condition, and sample $2^d$ independent dimer covers with boundary condition $\beta$ (or $\beta'$) in each copy. We then add multiplicity to the remaining edges of $\m T_{2m}$, which go between the reflected copies of $B_m$, so that all vertices have total multiplicity $N$. For any $\beta$ this gives us the bound
    \begin{align*}
        (Z_{B_m,N}^{\beta}(U_r(s_X)))^{2^d} \leq \sum_{s: |s-s_X|<r} Z_{2m,N}^s,
    \end{align*}
    where $Z_{2m,N}^s$ is the partition function of $N$-dimer covers of homology class $s$ of the torus $\m T_{2m}$ with uniform edge weights. Recall that for $m,N$ fixed, there are of order $N(4mr)^d$ terms in the sum above. We can relate the left hand side to the version with free boundary conditions by
    \begin{align*}
         Z_{B_m,N}^{\text{free}}(U_r(s_X)) = \sum_\beta \m P(\beta) Z_{B_m,N}^{\beta}(U_r(s_X)) \leq \bigg(\sum_{s: |s-s_X|<r} Z_{2m,N}^s\bigg)^{1/2^d}
    \end{align*}
    and rearranging this gives
    \begin{align*}
        (Z_{B_m,N}^{\text{free}}(U_r(s_X)))^{2^d} \leq \sum_{s: |s-s_X|<r} Z_{2m,N}^s\leq N(4mr)^d\max_{s:|s-s_X|<r} Z_{2m,N}^{s}. 
    \end{align*}
    Therefore taking $\log$ and applying the definition of $\sigma$ (Definition~\ref{def:surface_tension}), we have 
    \begin{align*}
        \limsup_{m\to \infty}\limsup_{N\to \infty}\frac{1}{Nm^d}\log\frac{Z_{B_m,N}^{\text{free}}(U_r(s_X))}{(N!)^{m^d}} &\leq  \lim_{m\to \infty}\lim_{N\to \infty}\frac{1}{N(4m)^d} \log\frac{N(4mr)^d\max_{s: |s-s_X|<r} Z_{2m,N}^s}{(N!)^{(2m)^d}} \\
        &\leq -\max_{s:|s-s_X|<r} \sigma(s)\\
        &\leq -\sigma(s_X) +o_r(1),
    \end{align*}
    where in the last inequality we use that $\sigma$ is continuous. We can now add together the contributions from all the boxes $B_m$ used to cover $X_n$. Since $m=n^{1/2}$, there are $|X|n^{d/2}=|X|m^d$ boxes used to cover $X$, and only $O(n^{(d-1)/2})=O(m^{d-1})$ of them intersect $\partial X$.  So for $n,N$ large enough,
    \begin{align*}
        \frac{1}{Nn^d}\log \frac{Z_{X_n,N}^{\text{free}}(U_r(s_X))}{(N!)^{n^d|X|}} \leq \frac{1}{|X|m^{d}}\bigg( \frac{1}{Nm^d}\log\frac{Z_{B_m,N}^{\text{free}}(U_r(s_X))}{(N!)^{m^d}}\bigg) \leq -|X|(1+O(n^{-1/2}))(\sigma(s_X) +o_r(1)).
    \end{align*}
    Combined with the inequality in \eqref{eq:reduce_to_simplex}, we have 
    \begin{align*}
        \limsup_{n\to \infty} \limsup_{N\to \infty} \frac{1}{Nn^d}\log \frac{Z_{n,N}(U_r(\omega))}{(N!)^{n^d|R|}} \leq \Ent(\widetilde{\omega}) + o_r(1)= \Ent(\omega) + o_r(1) + o_\delta(1).
    \end{align*}
    Since $\delta$ goes to zero with $r$, this completes the proof. 
\end{proof}

\section{Euler-Lagrange equations and limit shape regularity}\label{sec:euler_lagrange}

In this section we study the regularity of limit shapes for the multinomial dimer model. We saw that limit shapes 
exist and are unique in Corollary~\ref{cor:limit_shape}. To avoid degenerate cases, we impose a mild regularity condition on $(R,b)$.

\begin{definition}[Flexibility]\label{def:flexible}
    A region and boundary condition $(R,b)$ is \textit{flexible} (for $\Lambda$) if for every point $x\in R^\circ$, there exists a neighborhood $U\ni x$ in $R$ and an asymptotic flow $\omega\in \text{AF}(\Lambda,R,b)$ such that $\omega\!\mid_U\subset \mc N(\Lambda)^\circ$  almost everywhere.  
\end{definition}
The flexible condition rules out the possibility that there are subsets $A\subset R$ of positive measure where {every} extension $\omega$ of $b$ is valued in $\partial \mc N(\Lambda)$ on $A$. Intuitively, it can be viewed as the continuum version of the feasible condition. One would expect a condition of this form to be needed for a large deviation principle if we used fixed boundary conditions on the discrete side, instead of allowing them to lie in a shrinking interval, as is the case e.g.\ for the standard 3D dimer model \cite{3Ddimers}. It is straightforward to see by an averaging argument that the flexible condition is equivalent to the following. 
\begin{lemma}\label{lem:equivalent_flexible}
    The pair $(R,b)$ is flexible if there exists $\omega\in \text{AF}(\Lambda,R,b)$ such that $\omega\!\mid_{R^\circ}\subset \mc N(\Lambda)^\circ$ almost everywhere. 
\end{lemma}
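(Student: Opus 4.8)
\textbf{Proof proposal for Lemma~\ref{lem:equivalent_flexible}.}

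The plan is to show the two implications separately, with the nontrivial direction being ``global flow valued in the interior on $R^\circ$ implies local flows near every point.'' For that direction, suppose $\omega_0 \in \text{AF}(\Lambda,R,b)$ has $\omega_0(x) \in \mc N(\Lambda)^\circ$ for almost every $x \in R^\circ$. Fix $x \in R^\circ$ and a small ball $U = B_\eta(x) \subset R^\circ$. I would produce the required local flow by a mollification: set $\omega_\varepsilon := \omega_0 \ast \phi_\varepsilon$ for a standard bump function $\phi_\varepsilon$ supported in $B_\varepsilon(0)$, with $\varepsilon$ small compared to $\eta$ and to the distance from $U$ to $\partial R$. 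Since $\mc N(\Lambda)$ is convex and $\omega_0$ is valued in it, $\omega_\varepsilon$ is still valued in $\mc N(\Lambda)$; since mollification commutes with the distributional divergence, $\omega_\varepsilon$ is divergence-free on a neighborhood of $\overline{U}$; and by Lemma~\ref{lem:smoothing_asymptotic_flows} (or a direct $d_W$ estimate) $\omega_\varepsilon \to \omega_0$ in the weak topology on flows, so for $\varepsilon$ small its boundary value $T(\omega_\varepsilon)$ is close to $b$. The one subtlety is that $\omega_\varepsilon$ need not have boundary value exactly $b$; to land back in $\text{AF}(\Lambda,R,b)$ I would patch: on the annular collar $R \setminus U'$ (with $U \Subset U' \Subset R^\circ$) interpolate between $\omega_\varepsilon$ near $U$ and $\omega_0$ near $\partial R$ — e.g., take a smooth cutoff $\chi$ equal to $1$ on $U'$ and $0$ outside a slightly larger set, and correct $\chi \omega_\varepsilon + (1-\chi)\omega_0$ to be divergence-free by subtracting a vector field solving a divergence equation $\operatorname{div} v = \operatorname{div}(\chi\omega_\varepsilon + (1-\chi)\omega_0)$ supported in the collar (the right-hand side has zero total integral, so such $v$ exists and can be taken small in sup norm since $\omega_\varepsilon - \omega_0$ is small there after a further mollification). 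Because $\omega_0$ is valued in $\mc N(\Lambda)^\circ$ a.e. and the correction is small, the patched flow is still valued in $\mc N(\Lambda)$, agrees with $b$ on $\partial R$, and restricts on $U$ to $\omega_\varepsilon$, which lies in $\mc N(\Lambda)^\circ$ (a neighborhood of $U$, which has compact closure in $R^\circ$, meets $\partial\mc N(\Lambda)$ on a null set, and averaging a function valued a.e. in the open convex set $\mc N(\Lambda)^\circ$ against a positive kernel gives a value in $\mc N(\Lambda)^\circ$). This gives the local flow required by Definition~\ref{def:flexible}.

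For the reverse implication — flexibility as in Definition~\ref{def:flexible} implies the existence of a single global flow valued in $\mc N(\Lambda)^\circ$ a.e. on $R^\circ$ — I would use a partition of unity / averaging argument. By flexibility and compactness of $\overline{R^\circ}$ (or rather of $R$ minus a collar, exhausting $R^\circ$), cover $R^\circ$ by countably many balls $U_k$, on each of which there is $\omega_k \in \text{AF}(\Lambda,R,b)$ with $\omega_k\!\mid_{U_k} \in \mc N(\Lambda)^\circ$ a.e. Since $\text{AF}(\Lambda,R,b)$ is convex (a convex combination of divergence-free flows with the same boundary value and valued in the convex set $\mc N(\Lambda)$ is again such), any finite convex combination $\sum_k \lambda_k \omega_k$ with all $\lambda_k > 0$ again lies in $\text{AF}(\Lambda,R,b)$; and at almost every point of $R^\circ$, which lies in some $U_k$, the combination is a convex combination with a strictly positive weight on a point of $\mc N(\Lambda)^\circ$ together with other points of $\mc N(\Lambda)$, hence lies in $\mc N(\Lambda)^\circ$. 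To handle the countable cover one works on an increasing exhaustion $R^\circ = \bigcup_m K_m$ by compact sets, extracts a finite subcover of each $K_m$, and takes a convergent (in $d_W$) limit of the corresponding finite convex combinations with geometrically decaying weights across scales; compactness of $\text{AF}(\Lambda,R,b)$ (Corollary~\ref{cor:AFboundarycompact}) guarantees a limit exists, and a measure-theoretic argument shows the limit is still valued in $\mc N(\Lambda)^\circ$ a.e. (alternatively, and more simply, one can observe that for the statement ``there exists $\omega$ with $\omega\!\mid_{R^\circ} \subset \mc N(\Lambda)^\circ$ a.e.'' it suffices to find, for each $m$, a flow valued in $\mc N(\Lambda)^\circ$ a.e. on $K_m$, and then the full statement follows from a diagonal/limit argument of exactly this type).

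The main obstacle is the patching step in the first implication: ensuring that after replacing $\omega_0$ by a mollification near a point we can restore \emph{both} the divergence-free condition \emph{and} the exact boundary value $b$, without pushing the flow out of $\mc N(\Lambda)$ on the collar where the interpolation happens. This is where the hypothesis that $b$ is attained by a flow valued in $\mc N(\Lambda)^\circ$ \emph{a.e. on all of $R^\circ$} (not just near the boundary) does the work: it gives slack at every interior point, so a small divergence-correcting perturbation stays inside $\mc N(\Lambda)$. The solvability of $\operatorname{div} v = f$ with $f$ of zero mean on a nice domain, with $\|v\|_\infty \lesssim \|f\|_\infty$ up to the geometry of the collar, is standard (Bogovskii-type estimates), and I would cite it rather than prove it. Everything else — convexity of $\text{AF}(\Lambda,R,b)$, continuity of $T$, compactness — is already available in the excerpt, so the write-up should be short.
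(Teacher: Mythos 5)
Your converse direction (flexible $\Rightarrow$ existence of a single global flow interior-valued a.e.\ on $R^\circ$) is exactly the ``averaging argument'' the paper alludes to and is essentially correct: take a countable subcover $\{U_k\}$ of $R^\circ$ (Lindel\"of suffices; the exhaustion by compacts with geometrically decaying weights across scales is an unnecessary complication), form $\sum_k \lambda_k \omega_k$ with all $\lambda_k>0$, use convexity and closedness of $\text{AF}(\Lambda,R,b)$ (linearity of $T$, convexity of $\mc N(\Lambda)$, dominated convergence for the distributional divergence), and observe that a convex combination putting positive weight on a point of $\mc N(\Lambda)^\circ$ and the rest in $\mc N(\Lambda)$ lies in $\mc N(\Lambda)^\circ$.

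The forward direction, however, you have badly overengineered, and the overengineering is where the only real gap in your write-up lives. Definition~\ref{def:flexible} asks, for each $x\in R^\circ$, only for the \emph{existence} of some neighborhood $U\ni x$ and some $\omega\in\text{AF}(\Lambda,R,b)$ with $\omega\!\mid_U\subset\mc N(\Lambda)^\circ$ a.e.; it does not ask that this local flow be smooth, nor that it differ from the global one. So if $\omega_0\in\text{AF}(\Lambda,R,b)$ satisfies $\omega_0\!\mid_{R^\circ}\subset\mc N(\Lambda)^\circ$ a.e., then for every $x\in R^\circ$ you may take $U=B_\eta(x)\subset R^\circ$ and the flow $\omega_0$ itself: since $U\subset R^\circ$, the restriction is interior-valued a.e., and you are done in one line. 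Your mollify-then-patch construction is not needed, and as written its patching step is the shakiest part: to keep the corrected flow inside $\mc N(\Lambda)$ on the collar you need the divergence-correcting field $v$ to be small in sup norm, but $\operatorname{div}(\chi\omega_\varepsilon+(1-\chi)\omega_0)$ is controlled only through $\omega_\varepsilon-\omega_0$, which for a merely measurable $\omega_0$ is small in $L^1$ (or in $d_W$) but not in $L^\infty$, and Bogovskii-type bounds will not give you $\|v\|_\infty$ small from that. Delete the entire construction and replace it with the trivial restriction argument; then the proof is correct and matches the paper's intent.
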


\subsection{Euler-Lagrange equations for a divergence free flow} 

From the large deviation principle, we saw that the limit shape divergence-free flow exists and is the unique minimizer of 
\begin{align*}
    \int_R \sigma(\omega(x))\,\dd x
\end{align*}
over $\omega\in \text{AF}(\Lambda,R,b)$. This can be rephrased as saying that $\omega$ is the unique solution to a system of Euler-Lagrange equations. 

\begin{thm}[Euler-Lagrange equations for a divergence free flow]\label{thm:EL_equations}
    The limit shape flow $\omega$ for $(R,b)$ is the solution to the following system of differential equations:
    \begin{itemize}
        \item the divergence equation, 
        \begin{align}
            \text{div}(\omega(x)) = \sum_{i=1}^d \pd{\omega_i}{x_i}(x) = 0;
        \end{align}
        \item a collection of ${d\choose 2}$ equations:
        \begin{align}\label{ELflow}
            \pd{}{x_i} \pd{}{s_j}\sigma(\omega(x)) - \pd{}{x_j}\pd{}{s_i}\sigma(\omega(x)) =0 
        \end{align}
        for all $i\neq j$ between $1$ and $d$.
    \end{itemize}
\end{thm}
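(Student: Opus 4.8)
The plan is to derive the Euler–Lagrange system from the fact that $\omega$ minimizes $\int_R \sigma(\omega(x))\,\dd x$ over $\text{AF}(\Lambda,R,b)$, using variations within the admissible class. The divergence equation is automatic: every element of $\text{AF}(\Lambda,R,b)$ is divergence-free as a distribution by definition, so in particular $\omega$ is. The content is therefore in the ${d\choose 2}$ equations \eqref{ELflow}, which encode stationarity of the functional under divergence-free, flux-preserving perturbations.

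First I would identify the correct space of variations. If $\omega+t\psi$ is to remain in $\text{AF}(\Lambda,R,b)$ for small $t$, then $\psi$ must be divergence-free and have zero flux through $\partial R$; and since $\omega$ can be taken valued in $\mc N(\Lambda)^\circ$ (using flexibility, Lemma~\ref{lem:equivalent_flexible}, at least on the region where the variation is supported), the constraint $\omega+t\psi\in\mc N(\Lambda)$ is not active for $t$ small. In dimension $d$, compactly supported divergence-free vector fields on $R^\circ$ are exactly those of the form arising from a $(d-2)$-form potential; concretely, for each pair $i<j$ one can take $\psi = \nabla\times(\phi\, \eta_i\wedge\eta_j)$-type fields, i.e. $\psi$ having only the property that it is the ``rotation'' of $\phi$ in the $(x_i,x_j)$-plane. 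The cleanest bookkeeping: for a scalar $\phi\in C_c^\infty(R^\circ)$ and a pair $i\ne j$, the field $\psi$ with $\psi_i = -\partial\phi/\partial x_j$, $\psi_j = \partial\phi/\partial x_i$, and $\psi_k=0$ otherwise is divergence-free and compactly supported, hence admissible. Then
\begin{align*}
    0 = \frac{d}{dt}\Big|_{t=0}\int_R \sigma(\omega+t\psi)\,\dd x
      = \int_R \sum_{k=1}^d \pd{\sigma}{s_k}(\omega(x))\,\psi_k(x)\,\dd x
      = \int_R \Big[ \pd{\sigma}{s_j}(\omega)\pd{\phi}{x_i} - \pd{\sigma}{s_i}(\omega)\pd{\phi}{x_j}\Big]\dd x.
\end{align*}
Integrating by parts (legitimate since $\sigma$ is smooth on $\mc N(\Lambda)^\circ$ and $\omega$ is assumed regular enough, e.g. $C^1$, where the variation lives) moves the derivatives onto the $\sigma$ terms and yields $\int_R \phi\,\big[\partial_{x_i}\partial_{s_j}\sigma(\omega) - \partial_{x_j}\partial_{s_i}\sigma(\omega)\big]\,\dd x = 0$ for all $\phi\in C_c^\infty(R^\circ)$, whence \eqref{ELflow} by the fundamental lemma of the calculus of variations. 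Conversely, strict convexity of $\sigma$ (Corollary~\ref{thm:strict_convex}) makes the functional strictly convex on the affine space $\text{AF}(\Lambda,R,b)$, so a solution of the EL system that lies in this space is the unique minimizer; combined with Corollary~\ref{cor:limit_shape} this gives the ``is the solution'' phrasing.

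The main obstacle I anticipate is regularity: to integrate by parts and to make sense of $\partial_{x_i}\partial_{s_j}\sigma(\omega)$ pointwise one wants $\omega$ to be at least $C^1$ and valued in the open Newton polytope where $\sigma$ is smooth, but a priori the limit shape is only a measurable bounded divergence-free field. Two ways around this: (i) interpret \eqref{ELflow} weakly (distributionally), which is exactly what the integration-by-parts computation above delivers with no extra regularity — so the theorem holds in the weak sense unconditionally, and the pointwise statement follows wherever $\omega$ is known to be regular (in $d=2$ by Corollary~\ref{cor:2Dsmooth}, and conjecturally in general); (ii) note that flexibility guarantees $\omega\in\mc N(\Lambda)^\circ$ a.e. on $R^\circ$, so the ellipticity of the system (from strict convexity of $\sigma$, i.e. $\Hess\sigma>0$) combined with standard interior regularity for the associated quasilinear elliptic system bootstraps $\omega$ to be smooth in the interior. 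I would present the argument via the weak formulation first, then invoke ellipticity/flexibility to upgrade, and finally cite strict convexity for uniqueness. A minor technical point to check is that the variations $\psi$ above span enough of the admissible directions — but since any compactly supported divergence-free field on $R^\circ\subset\R^d$ is a sum of such ``plane-rotation'' fields (a consequence of the Poincaré lemma applied to the corresponding closed $(d-1)$-form), testing against all of them is equivalent to testing against all admissible variations.
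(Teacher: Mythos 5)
Your proposal is correct and follows essentially the same route as the paper: the paper writes the divergence-free $\omega$ as $\dd B$ for a $(d-2)$-form potential $B$ and perturbs a single component $B_{k,\ell}\mapsto B_{k,\ell}+\delta\eta$, which produces exactly the ``plane-rotation'' variation fields $\psi_i=-\partial\phi/\partial x_j$, $\psi_j=\partial\phi/\partial x_i$ that you test against directly, followed by the same integration by parts and the fundamental lemma. Your added remarks on the weak interpretation and on why these variations suffice are sensible refinements of points the paper treats formally, but the underlying argument is identical.
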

\begin{proof}
  Since $\omega$ is an asymptotic flow it is divergence-free and therefore must solve the first equation. To derive the others, we use that a divergence-free vector field is dual to a closed $(d-1)$ form. Explicitly $\omega$ is dual to the differential form
  \begin{align*}
      A = \sum_{k=1}^d (-1)^{k-1}\omega_k \,\widehat{\dd x_k},
  \end{align*}
  where $\widehat{\dd x_k} = \dd x_1 \wedge\dots\wedge \dd x_{k-1}\wedge \dd x_{k+1}\wedge\dots\wedge\dd x_d$. Let $\Upsilon^m$ denote the set of $m$ forms on $\m R^d$ and denote the differential by $\dd : \Upsilon^{d-1}\to \Upsilon^d$. Since $\omega$ is divergence-free, $\dd A = 0$. Thus there exists $B\in \Upsilon^{d-2}$ such that $\dd B = A$. We can write $B$ using ${d\choose 2}$ functions $B_{i,j}=B_{j,i}$, $i\neq j$: 
  \begin{align*}
      B = \sum_{i<j} B_{i,j} \widehat{\dd x_{i,j}}.
  \end{align*}
  where $\widehat{\dd x_{i,j}}$ means both $\dd x_i$ and $\dd x_j$ are omitted. Thus 
  \begin{align*}
      A = \sum_{k=1}^d \bigg[ \sum_{i<k} \pd{B_{i,k}}{x_i} (-1)^{i-1} + \sum_{j>k} \pd{B_{k,j}}{x_j} (-1)^j\bigg] \widehat{\dd x_k} 
  \end{align*}
  Hence
  \begin{align*}
      \omega_k = \sum_{i<k} (-1)^{i-k} \pd{B_{i,k}}{x_i} + \sum_{j>k} (-1)^{j-k+1}\pd{B_{k,j}}{x_j}
        \end{align*}
We now use calculus of variations to derive the ${d\choose 2}$ equations. Pick $k<\ell$ and consider the perturbation $B_{k,\ell}\mapsto B_{k,\ell}+\delta \eta(x)$, where $\eta$ is a a smooth function with $\eta\mid_{\partial R} = 0$. 
This perturbation appears only in $\omega_k$ and $\omega_\ell$. If $\omega$ is a minimum, then
  \begin{align*}
    0 &= \frac{d}{d\delta}\bigg\lvert_{\delta = 0} \int_R \sigma\bigg( \omega_1, \dots, \omega_k+ (-1)^{\ell-k+1} \delta \pd{\eta}{x_\ell},\dots, \omega_\ell + (-1)^{k-\ell} \delta \pd{\eta}{x_k}, \dots, \omega_d \bigg) \, \dd x \\
    &= \pm \int_R \pd{\sigma}{s_k} \pd{\eta}{x_\ell} - \pd{\sigma}{s_\ell} \pd{\eta}{x_k} \, \dd x\\
    &= \mp\int_R \bigg(\pd{}{x_\ell}\pd{\sigma}{s_k} - \pd{}{x_k} \pd{\sigma}{s_\ell} \bigg)\eta \, \dd x .
\end{align*}
In the last equality we use integration by parts, and note that the boundary term vanishes because $\eta$ is $0$ on $\partial R$. By the fundamental lemma of calculus of variations, since $\eta$ is arbitrary, for all pairs $k<\ell$ we have:
\begin{align*}
    \pd{}{x_\ell}\pd{\sigma}{s_k}(\omega(x)) - \pd{}{x_k} \pd{\sigma}{s_\ell}(\omega(x))  = 0. 
\end{align*}
\end{proof}

\begin{rem}
    When $d=3$ we can instead phrase this in terms of vector calculus. In that case, $\omega$ divergence free implies it has a vector potential $V$ (dual to the form $B$) such that $\text{curl}(V) = \omega$. The ${3\choose 2} = 3$ equations each come from perturbing each component of $V=(V_1,V_2,V_3)$.
\end{rem}
In dimension $d=2$, the equations above can be rewritten as a gradient variational problem using the height function $h$. The limit shape height function is related to the limit shape flow $\omega=(\omega_1,\omega_2)$ by $\ast \omega = (-\omega_2,\omega_1) = \nabla h$. 
\begin{cor}[Euler-Lagrange equations for the height function in 2D]
    Suppose that $h$ is the limit shape height function for $(R,b)$. Then $h$ is the unique solution to the PDE
    \begin{align*}
     \text{div} (\nabla \sigma (\nabla h)) =\pd{}{x_1} \pd{\sigma }{s_1}(\nabla h) + \pd{}{x_2} \pd{\sigma}{s_2}(\nabla h)= 0.
    \end{align*}
\end{cor}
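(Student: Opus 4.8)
The plan is to translate Theorem~\ref{thm:EL_equations} from the flow $\omega$ to the height function via the planar duality $\ast\omega=(-\omega_2,\omega_1)=\nabla h$, and then read off uniqueness from the strict convexity of $\sigma$ already proved.

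First, in dimension two the divergence equation of Theorem~\ref{thm:EL_equations} is exactly the integrability condition that produces $h$: $\omega$ is divergence-free iff $\ast\omega$ is curl-free, iff (on the simply connected $R$) $\ast\omega=\nabla h$ for a function $h$, unique up to an additive constant; conversely, for any $h$ the field $\omega:=(h_{x_2},-h_{x_1})$ is automatically divergence-free. Under this duality the boundary condition $b=T(\omega)$ corresponds to prescribing $h\!\mid_{\partial R}$ up to a constant, since the tangential derivative of $h$ along $\partial R$ equals the normal component of $\omega$; thus ``the limit shape height function for $(R,b)$'' is precisely the potential of $\ast\omega$ for the limit shape flow $\omega$. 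So it remains only to transfer the single (${2\choose 2}=1$) equation \eqref{ELflow}.

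Next I would carry out the chain-rule translation. Writing $p=\nabla h$, so that $\omega_1=p_2$ and $\omega_2=-p_1$, and letting $\bar\sigma(p_1,p_2):=\sigma(p_2,-p_1)$ be the surface tension expressed in the height-gradient variables (this is what $\sigma(\nabla h)$ abbreviates in the statement), the chain rule gives $\nabla_p\bar\sigma=\bigl(-\pd{\sigma}{s_2},\,\pd{\sigma}{s_1}\bigr)$, so that
\begin{equation*}
  \text{div}\bigl(\nabla\bar\sigma(\nabla h)\bigr)=-\Bigl(\pd{}{x_1}\pd{\sigma}{s_2}(\omega)-\pd{}{x_2}\pd{\sigma}{s_1}(\omega)\Bigr),
\end{equation*}
which is $\pm$ the single equation \eqref{ELflow} with $(i,j)=(1,2)$. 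Hence the system of Theorem~\ref{thm:EL_equations} is equivalent to the scalar PDE $\text{div}(\nabla\bar\sigma(\nabla h))=0$ displayed in the corollary. (Alternatively one can bypass Theorem~\ref{thm:EL_equations} altogether and obtain this PDE directly as the Euler--Lagrange equation of $h\mapsto\int_R\bar\sigma(\nabla h)$, whose first variation is $-\int_R\text{div}(\nabla\bar\sigma(\nabla h))\,\eta$ for test functions $\eta$ vanishing on $\partial R$.)

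Finally, for uniqueness: the limit shape flow $\omega$ is the unique minimizer of $\int_R\sigma(\omega)$ over $\text{AF}(\Lambda,R,b)$ by Corollary~\ref{cor:limit_shape}, which rests on strict convexity of $\sigma$ (Corollary~\ref{thm:strict_convex}); hence its potential $h$ is unique. Equivalently, $\bar\sigma$ is strictly convex, so the convex functional $h\mapsto\int_R\bar\sigma(\nabla h)$ over Lipschitz $h$ with the prescribed boundary values (which $b$ determines up to a constant) has a unique minimizer, and the first-variation computation above identifies it with the unique weak solution of the PDE. The step requiring the most care is just the bookkeeping of the $90^\circ$ rotation relating $\omega$ to $\nabla h$, and hence $\sigma$ to $\bar\sigma$; everything else is a routine chain-rule identity or a direct consequence of results already established. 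I would also note that a priori this identity holds in the weak sense, and that it becomes a classical pointwise statement once $h$ is known to be smooth, which is Corollary~\ref{cor:2Dsmooth}.
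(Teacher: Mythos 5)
Your proposal is correct and follows exactly the route the paper intends: the corollary is stated as an immediate consequence of Theorem~\ref{thm:EL_equations} via the duality $\ast\omega=(-\omega_2,\omega_1)=\nabla h$, with the single $\binom{2}{2}$ equation becoming $\mathrm{div}(\nabla\sigma(\nabla h))=0$ after the rotation of coordinates, and uniqueness inherited from the strict convexity of $\sigma$ (Corollary~\ref{thm:strict_convex}) and Corollary~\ref{cor:limit_shape}. Your explicit bookkeeping of the rotated surface tension $\bar\sigma(p)=\sigma(p_2,-p_1)$ correctly resolves the notational abbreviation in the statement, and the remark that the identity is a priori weak and upgrades to classical via Corollary~\ref{cor:2Dsmooth} is accurate.
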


\subsection{Multinomial limit shapes do not have facets}\label{sec:no_facets}

One of the well-known properties of limit shapes for the standard dimer model (e.g., on the Aztec diamond) is the existence of \textit{facets} in the limit shape $\omega$, i.e., regions where $\omega$ takes values in $\partial \mc N(\Lambda)$. Here we show that the limit shapes for the $N\to \infty$ multinomial dimer model do not have facets (as long as $(R,b)$ is flexible). 

Recall that the \textit{amoeba} $\mc A(\Lambda)$ is the closure of the set where the multinomial free energy $F(\alpha)$ is convex, and that since $F$ depends analytically on $\alpha$, $\mc A(\Lambda) = \m R^d$ (Corollary~\ref{thm:free_energy_convex}). The fact that $\mc A(\Lambda) = \m R^d$ implies certain blow-up behavior of $\nabla\sigma(s)$ as $s\to s_0\in \partial \mc N(\Lambda)$.

\begin{lemma}[surface tension blow-up]\label{lem:blowup}
Fix $s_0\in \partial \mc N(\Lambda)$ and choose any direction $v$ such that for $\varepsilon>0$ small enough, $s_0+\varepsilon v\in \mc N(\Lambda)^\circ$. Then 
\begin{align}
    \lim_{\varepsilon\to 0^+} \frac{\sigma(s_0+\varepsilon v) - \sigma(s_0)}{\varepsilon}= -\infty.
\end{align}
\end{lemma}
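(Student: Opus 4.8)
The plan is to use Legendre duality together with the explicit form $F(\alpha)=\log\sum_{j=1}^{D}\exp(e_j\cdot\alpha)$ to rewrite $\sigma$ as a constrained negative entropy, and then, for each small $\varepsilon>0$, to exhibit a test probability vector with the prescribed mean $s_0+\varepsilon v$ whose negative entropy is so small that it forces the difference quotient to $-\infty$. First I would record the variational description of $\sigma$: since $\sigma=F^*$ (Theorem~\ref{thm:legendre_duality}) and $F$ is the log-partition function of the uniform measure on $\{e_1,\dots,e_D\}$, its convex conjugate is the negative entropy over probability vectors with a prescribed mean,
\[
\sigma(s)=\min\Big\{\textstyle\sum_{j=1}^{D}q_j\log q_j \;:\; q\in\Delta^{D-1},\ \sum_{j=1}^{D}q_j e_j=s\Big\},
\]
the minimum being attained by compactness of $\Delta^{D-1}$ and continuity of $x\mapsto x\log x$. (This also follows from Corollary~\ref{cor:almost_legendre} on $\mc N(\Lambda)^\circ$ — where the Gibbs vector $p_j\propto e^{e_j\cdot\alpha}$ is the unique minimizer, by the nonnegativity of relative entropy — together with continuity of $\sigma$ on all of $\mc N(\Lambda)$.)

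Next I would set up the face structure. Since $s_0\in\partial\mc N(\Lambda)$, fix a supporting hyperplane: a unit vector $u$ and constant $c$ with $\langle e_j,u\rangle\le c$ for all $j$ and $\langle s_0,u\rangle=c$. Let $J=\{j:\langle e_j,u\rangle=c\}$, so that $J^c\neq\emptyset$ because no proper face of the full-dimensional polytope $\mc N(\Lambda)$ contains every $e_j$. Any $q$ with $\sum_j q_j e_j=s_0$ must be supported in $J$ (apply $\langle\cdot,u\rangle$). Let $q^0$ be an optimizer for $s_0$, so $\sum_{j\in J}q^0_j\log q^0_j=\sigma(s_0)$, and after shrinking $J$ to $\operatorname{supp}(q^0)$ we may assume $q^0_j>0$ exactly on $J$. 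By hypothesis there is $t_0>0$ with $s_0+t_0 v\in\mc N(\Lambda)^\circ$; pick a fully supported $q^1\in\Delta^{D-1}$ with $\sum_j q^1_j e_j=s_0+t_0 v$ (an interior point of $\mc N(\Lambda)$ is the mean of a fully supported distribution — average over $j$ the distributions witnessing that each $e_j$ occurs with positive weight). For $\varepsilon\in(0,t_0]$, put $\lambda:=\varepsilon/t_0$ and
\[
q^\varepsilon:=(1-\lambda)q^0+\lambda q^1,
\]
which lies in $\Delta^{D-1}$ and has mean exactly $(1-\lambda)s_0+\lambda(s_0+t_0 v)=s_0+\varepsilon v$.

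Finally, the estimate. For $j\in J$ we have $q^\varepsilon_j=q^0_j+\lambda(q^1_j-q^0_j)\to q^0_j>0$, so $q^\varepsilon_j\log q^\varepsilon_j$ is smooth in $\lambda$ near $0$ and $\sum_{j\in J}q^\varepsilon_j\log q^\varepsilon_j=\sigma(s_0)+O(\lambda)$. For $j\in J^c$ we have $q^\varepsilon_j=\lambda q^1_j$, hence $\sum_{j\in J^c}q^\varepsilon_j\log q^\varepsilon_j=\mu\lambda\log\lambda+O(\lambda)$, where $\mu:=\sum_{j\in J^c}q^1_j\in(0,1]$. Therefore, using $\sigma(s_0+\varepsilon v)\le\sum_j q^\varepsilon_j\log q^\varepsilon_j$,
\[
\frac{\sigma(s_0+\varepsilon v)-\sigma(s_0)}{\varepsilon}\;\le\;\frac{\mu\lambda\log\lambda+O(\lambda)}{t_0\lambda}\;=\;\frac{\mu}{t_0}\log\lambda+O(1)\;\xrightarrow[\varepsilon\to0^+]{}\;-\infty,
\]
since $\mu,t_0>0$ and $\lambda=\varepsilon/t_0\to0^+$; combined with $\liminf$ being finite only if bounded below, this gives the claim.

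\emph{Main obstacle.} There is no deep difficulty; the only points requiring care are (i) justifying the entropy representation of $\sigma$ up to $\partial\mc N(\Lambda)$ — cleanly via the standard conjugate of log-sum-exp, or via Corollary~\ref{cor:almost_legendre} plus continuity of $\sigma$ and lower semicontinuity of the entropy functional in the boundary limit — and (ii) the elementary fact that interior points of $\mc N(\Lambda)$ are means of fully supported probability vectors, dispatched by a short averaging argument. The asymptotics themselves are a one-line Taylor expansion whose divergence is entirely due to the $\delta\log\delta$ singularity of negative entropy at $\delta=0$; this is where the hypothesis $\mc A(\Lambda)=\m R^d$ (equivalently, that $F$ is the log-sum-exp, so that $\sigma$ is the \emph{exact} constrained negative entropy rather than a piecewise-linear-at-the-boundary dual) enters.
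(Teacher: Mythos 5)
Your proof is correct, but it takes a genuinely different route from the paper's. The paper's argument is a two-line piece of soft convex analysis: since $\nabla\sigma$ is a bijection from $\mc N(\Lambda)^\circ$ onto $\mc A(\Lambda)^\circ=\R^d$ (Theorem~\ref{thm:legendre_duality}) and $\mc N(\Lambda)$ is compact while $\R^d$ is unbounded, the gradient must blow up near $\partial\mc N(\Lambda)$, which together with convexity forces the inward directional derivative at a boundary point to be $-\infty$; this is essentially the standard fact that the conjugate of a finite, differentiable, strictly convex function on all of $\R^d$ is essentially smooth. You work instead on the primal side: you identify $\sigma(s)$ with the constrained minimum of $\sum_j q_j\log q_j$ over probability vectors with mean $s$, observe that any $q$ representing a boundary point $s_0$ must be supported on the face cut out by a supporting hyperplane, and then test at $s_0+\varepsilon v$ with the interpolation $(1-\lambda)q^0+\lambda q^1$, whose entropy acquires a $\mu\lambda\log\lambda$ contribution from the coordinates off that face. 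Both routes are sound; the two points you flag as needing care are indeed the only ones, and both hold (the entropy representation extends to $\partial\mc N(\Lambda)$ by biconjugacy, since the constrained entropy is a closed convex function of $s$ whose conjugate is $F$; and an interior point is the mean of a fully supported vector by mixing any representing vector with the uniform vector, which has mean $0$ by harmonicity, and correcting the mean using interiority). What your argument buys is an explicit rate: the difference quotient is at most a positive constant times $\log\varepsilon$ plus $O(1)$, which in particular makes concrete the function $K(\varepsilon)$ invoked in the proof of Theorem~\ref{thm:no_facets} (one may take $K(\varepsilon)$ of order $|\log\varepsilon|$). The paper's proof is shorter but leans on an unproved, if standard, duality fact; yours is self-contained modulo the log-sum-exp conjugacy.
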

\begin{proof} Note that $\sigma$ is continuous and convex, achieving its minimum value at $0$, and is negative except at the vertices of $\partial \mc N(\Lambda)$ where it is zero. Hence $\sigma(s_0+\varepsilon v) - \sigma(s_0)<0$ for all $\varepsilon>0$. The gradient $\nabla \sigma$ is a one-to-one correspondence from $\mc N(\Lambda)^\circ$ to $\mc A(\Lambda)^\circ$. Since $\mc A(\Lambda) = \m R^d$ while $\mc N(\Lambda)$ is compact, the result follows. 
\end{proof}
\begin{rem}
    For the standard dimer model (in two or three dimensions, with all edge weights equal to one), the surface tension does not blow-up if $s_0$ is a vertex of $\partial \mc N(\Lambda)$, but it does blow-up if $s_0$ is along an edge of $\partial \mc N(\Lambda)$. Correspondingly, if $(R,b)$ is flexible, limit shapes for the standard dimer model can have regions where they are valued at one of the vertices of $\mc N(\Lambda)$, but not along an edge.
\end{rem}

As a consequence of the blow-up property for all points on $\partial \mc N(\Lambda)$, we now show that the $N\to \infty$ multinomial limit shapes have no facets.

\begin{thm}[No facets]\label{thm:no_facets}
   For any $\Lambda$, if $(R,b)$ is flexible, then its limit shape is valued in the interior of $\mc N(\Lambda)$ almost everywhere on the interior of $R$.
\end{thm}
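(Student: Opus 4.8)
The plan is to argue by contradiction: suppose the limit shape $\omega$ takes values in $\partial\mc N(\Lambda)$ on a set $A\subset R^\circ$ of positive measure, and produce a competitor flow $\omega'$ with strictly smaller surface tension integral, contradicting that $\omega$ is the minimizer of $\int_R\sigma(\omega(x))\,\dd x$ over $\text{AF}(\Lambda,R,b)$ (Corollary~\ref{cor:limit_shape}). Since $(R,b)$ is flexible, by Lemma~\ref{lem:equivalent_flexible} there is some $\omega_0\in\text{AF}(\Lambda,R,b)$ with $\omega_0\!\mid_{R^\circ}\subset\mc N(\Lambda)^\circ$ almost everywhere. For $t\in[0,1]$ set $\omega_t = (1-t)\omega + t\omega_0$; this is an affine combination of two asymptotic flows with the same boundary value, hence $\omega_t\in\text{AF}(\Lambda,R,b)$ for all $t$, and by convexity of $\mc N(\Lambda)$ it is valued in the Newton polytope. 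The key point is that for each $x\in A$, $\omega_t(x)$ moves off the boundary $\partial\mc N(\Lambda)$ into the interior along the direction $\omega_0(x)-\omega(x)$, which is exactly the kind of inward direction to which Lemma~\ref{lem:blowup} applies.

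The main computation is to estimate $\frac{d}{dt}\big|_{t=0^+}\int_R\sigma(\omega_t(x))\,\dd x$ and show it is $-\infty$ (or at least strictly negative, which already suffices). First I would split $R = A\cup(R^\circ\setminus A)\cup\partial R$. On $R^\circ\setminus A$, where $\omega(x)\in\mc N(\Lambda)^\circ$, the surface tension $\sigma$ is smooth (Corollary~\ref{thm:strict_convex}, plus $\sigma$ being the Legendre dual of a smooth strictly convex $F$ implies $\sigma$ is $C^1$ on the interior), so the contribution to the derivative is $\int_{R^\circ\setminus A}\nabla\sigma(\omega(x))\cdot(\omega_0(x)-\omega(x))\,\dd x$, which is finite. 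On $A$, by Lemma~\ref{lem:blowup}, for each $x\in A$ the difference quotient $\big(\sigma(\omega_t(x))-\sigma(\omega(x))\big)/t \to -\infty$ as $t\to 0^+$ (taking $s_0 = \omega(x)$, $v = \omega_0(x)-\omega(x)$, $\varepsilon = t$). The cleanest way to conclude is via Fatou: $\sigma\le 0$ everywhere so $\sigma(\omega(x))-\sigma(\omega_t(x))\ge \sigma(\omega(x))$ is bounded below by an integrable function, and hence
\begin{align*}
\liminf_{t\to 0^+}\frac{1}{t}\int_A\big(\sigma(\omega(x))-\sigma(\omega_t(x))\big)\,\dd x \;\ge\; \int_A\liminf_{t\to 0^+}\frac{\sigma(\omega(x))-\sigma(\omega_t(x))}{t}\,\dd x \;=\; +\infty,
\end{align*}
so $\int_A\sigma(\omega_t)\,\dd x$ decreases without bound (to first order) while the rest stays finite. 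Thus for small $t>0$, $\int_R\sigma(\omega_t)\,\dd x < \int_R\sigma(\omega)\,\dd x$, contradicting minimality.

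I expect the main obstacle to be handling the measurability and integrability details carefully enough to invoke Fatou's lemma, in particular: (i) confirming the map $x\mapsto \liminf_{t\to 0^+}\big(\sigma(\omega(x))-\sigma(\omega_t(x))\big)/t$ is measurable (it is a countable liminf over rational $t$ of measurable functions, using continuity of $\sigma$ and measurability of $\omega,\omega_0$); (ii) making sure the direction $v(x)=\omega_0(x)-\omega(x)$ genuinely points into $\mc N(\Lambda)^\circ$ for a.e.\ $x\in A$, which holds because $\omega_0(x)\in\mc N(\Lambda)^\circ$ and $\omega(x)\in\partial\mc N(\Lambda)$ (so the open segment between them lies in the interior by convexity); and (iii) verifying the finite part $\int_{R^\circ\setminus A}\nabla\sigma(\omega(x))\cdot(\omega_0(x)-\omega(x))\,\dd x$ is indeed finite — this could in principle fail if $\omega$ approaches $\partial\mc N(\Lambda)$ on a subset of $R^\circ\setminus A$ where $\nabla\sigma$ blows up, but one can sidestep this by defining $A$ to be the full set $\{x\in R^\circ : \omega(x)\in\partial\mc N(\Lambda)\}$ from the start, so that $\omega$ is bounded away from $\partial\mc N(\Lambda)$ on compact subsets of $R^\circ\setminus A$; a routine exhaustion/monotone convergence argument then controls the finite part, or more simply one notes that $\sigma(\omega_t)\le\sigma(\omega_0\cdot t + \dots)$-type bounds keep everything $\le 0$ and the negative blow-up on $A$ alone forces the strict decrease without needing the finite part to be controlled precisely. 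Once the contradiction is set up this way, the averaging equivalence (Lemma~\ref{lem:equivalent_flexible}) ensures the flexible hypothesis is exactly what is needed, and the theorem follows.
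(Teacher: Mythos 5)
Your proposal follows essentially the same route as the paper: assume a positive-measure facet set $A$, use flexibility (Lemma~\ref{lem:equivalent_flexible}) to get an interior-valued competitor $\omega_0$ with the same boundary data, perturb along $\omega_t=(1-t)\omega+t\omega_0$, and use the surface tension blow-up at $\partial\mc N(\Lambda)$ (Lemma~\ref{lem:blowup}) to force a first-order decrease on $A$ that overwhelms the contribution from $R\setminus A$, contradicting minimality.

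There is, however, a misstated step in your Fatou application, and your handling of $R^\circ\setminus A$ is more complicated than it needs to be; both are repaired by the same convexity bound, which is what the paper uses. For Fatou you need a $t$-uniform integrable minorant for $f_t(x)=\bigl(\sigma(\omega(x))-\sigma(\omega_t(x))\bigr)/t$, but the bound you give, $\sigma(\omega(x))-\sigma(\omega_t(x))\ge\sigma(\omega(x))$, only yields $f_t(x)\ge\sigma(\omega(x))/t$, which tends to $-\infty$ as $t\to0^+$ wherever $\sigma(\omega(x))<0$ --- and $\sigma$ is strictly negative on $\partial\mc N(\Lambda)$ away from its vertices, so this happens on essentially all of $A$. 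The correct minorant comes from convexity of $\sigma$ (Corollary~\ref{thm:strict_convex}): $\sigma(\omega_t(x))\le(1-t)\sigma(\omega(x))+t\sigma(\omega_0(x))$ gives $f_t(x)\ge\sigma(\omega(x))-\sigma(\omega_0(x))\ge-M$ with $M=-\min_{\mc N(\Lambda)}\sigma$, which is a uniform constant. The same one-line convexity inequality also disposes of your worry (iii) about the finite part: on $R\setminus A$ you get $\int_{R\setminus A}\bigl(\sigma(\omega_t)-\sigma(\omega)\bigr)\le t\,M\,|R\setminus A|$ directly, with no need to differentiate $\sigma$ along $\omega$ or to exhaust $R^\circ\setminus A$ by sets where $\omega$ is bounded away from the boundary. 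With these two substitutions your argument closes exactly as the paper's does.
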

\begin{proof}
    Let $\omega$ denote the limit shape for $\Lambda, (R,b)$, which is unique by Corollary~\ref{cor:limit_shape}. Suppose for contradiction that there is a positive measure set $A\subset R$ such that $\omega\mid_A \subset \partial \mc N(\Lambda)$ almost everywhere {(throughout this proof, we use this restriction notation to mean almost everywhere)}. Since $(R,b)$ is flexible, there exists a flow $\omega_0\in \text{AF}(\Lambda,R,b)$ such that $\omega_0\mid_{R^\circ}\subset \mc N(\Lambda)^\circ$. We then define:
    \begin{align*}
        \omega_\varepsilon = (1-\varepsilon) \omega + \varepsilon \omega_0. 
    \end{align*}
    For any $\varepsilon>0$, $\omega_\varepsilon\in\text{AF}(\Lambda,R,b)$ and is valued in $\mc N(\Lambda)^\circ$ on $R^\circ$. We split $I(\omega_\varepsilon)$ into an integral over $R\setminus A$ and an integral over $A$. 
    
    Let $M = -\min_{s\in \mc N(\Lambda)} \sigma(s)>0$, this is finite since $\sigma$ is continuous and $\mc N(\Lambda)$ is compact. Since $\sigma$ is convex (Theorem~\ref{thm:strict_convex}), we have
    \begin{align*}
        \int_{R\setminus A} \sigma((1-\varepsilon) \omega(x) + \varepsilon \omega_0(x)) - \sigma(\omega(x)) \, \dd x &\leq \varepsilon \int_{R\setminus A} \sigma(\omega_0(x)) - \sigma(\omega(x)) \, \dd x \leq \varepsilon M |R\setminus A|.
    \end{align*}
    On the other hand, by Lemma~\ref{lem:blowup}, there is a function $K(\varepsilon)>0$ such that $\varepsilon K(\varepsilon)\to\infty$ as $\varepsilon\to 0$, and for $\varepsilon$ small enough given $s_0,v$ as in the statement, 
    \begin{align*}
        \sigma(s_0+\varepsilon v) - \sigma(s_0) \leq -\varepsilon K(\varepsilon). 
    \end{align*}
    Hence there is a constant $c>0$ such that
    \begin{align*}
        \int_A \sigma((1-\varepsilon) \omega(x) + \varepsilon \omega_0(x)) - \sigma(\omega(x))\, \dd x \leq -c \varepsilon K(\varepsilon). 
    \end{align*}
    Therefore
    \begin{align*}
        I(\omega_\varepsilon) - I(\omega) \leq  \varepsilon M |R\setminus A| - \varepsilon c K(\varepsilon).
    \end{align*}
    For $\varepsilon$ small enough this would be negative, which contradicts the claim that $\omega$ minimizes $I(\cdot)$. Therefore $A$ has measure zero and hence the limit shape $\omega$ has no facets.
\end{proof}

Theorem~\ref{thm:no_facets} allows us to leverage classical elliptic regularity results to show that $N\to \infty$ multinomial limit shapes in two dimensions are smooth. See e.g.\ \cite[Chapter 3]{XavierEllipticPDE} for an overview of regularity theorems for solutions to gradient variational problems. Extending this to higher dimensions would require other ad hoc estimates; see Question~\ref{q:always_smooth}.

\begin{cor}[Smoothness of limit shapes in 2D]\label{cor:2Dsmooth}
    If $\Lambda$ is a two dimensional lattice and $(R,b)$ is flexible, its limit shape height function $h$ is a smooth function, and as a consequence the limit shape divergence-free flow $\omega$ is a smooth vector field. 
\end{cor}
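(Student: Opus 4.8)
\textbf{Proof plan for Corollary~\ref{cor:2Dsmooth}.} The strategy is standard elliptic regularity bootstrapping, built on top of the two inputs we already have: Theorem~\ref{thm:no_facets} (the limit shape flow $\omega$, hence $\nabla h$, takes values in $\mc N(\Lambda)^\circ$ almost everywhere on $R^\circ$) and the strict convexity and analyticity of the surface tension (Corollary~\ref{thm:strict_convex}; together with the Legendre duality of Theorem~\ref{thm:legendre_duality}, which makes $\sigma$ real-analytic and with positive-definite Hessian on $\mc N(\Lambda)^\circ$). In two dimensions the limit shape height function $h$ solves the Euler--Lagrange equation $\mathrm{div}(\nabla\sigma(\nabla h)) = 0$ in the weak sense, and this is a uniformly elliptic (in the interior) equation in divergence form with analytic coefficients, so interior smoothness should follow by a De Giorgi--Nash--Moser / Schauder bootstrap.

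First I would record that $h$ is a Lipschitz function on $R^\circ$ (since $\nabla h \in \mc N(\Lambda)$, which is bounded) and that, being the minimizer of $\int_R \sigma(\nabla h)$ over Lipschitz functions with the given boundary data, $h$ is a weak solution of the Euler--Lagrange PDE on $R^\circ$; this is the 2D corollary to Theorem~\ref{thm:EL_equations} already stated in the excerpt. Next, fix any $x_0 \in R^\circ$. By Theorem~\ref{thm:no_facets}, $\nabla h(x_0) \in \mc N(\Lambda)^\circ$ for a.e.\ $x_0$, and since the integrand $\sigma$ is smooth and strictly convex (positive-definite Hessian) on the open polytope, the functional is, locally, a smooth uniformly convex variational problem. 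I would then invoke the classical interior regularity theory for minimizers of smooth uniformly convex functionals of the gradient in the plane: first that minimizers are $C^{1,\alpha}$ (this is where the two-dimensional hypothesis is genuinely used — in 2D one has the de Giorgi--type result that $\nabla h$ is Hölder continuous without extra assumptions, via Morrey's theorem for minimizers of convex functionals of the gradient in two variables; see e.g.\ \cite[Chapter 3]{XavierEllipticPDE}), and then, once $\nabla h$ is continuous, the coefficients $a_{ij}(x) := \partial^2\sigma/\partial s_i\partial s_j(\nabla h(x))$ of the linearized equation are continuous and uniformly elliptic on a neighborhood of $x_0$, so Schauder estimates give $h \in C^{2,\alpha}$; differentiating the equation and bootstrapping gives $h \in C^\infty$, and in fact real-analyticity by Morrey's analyticity theorem since $\sigma$ is analytic. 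Finally, $\omega = (-h_{x_2}, h_{x_1})$ by the relation $\ast\omega = \nabla h$, so smoothness of $h$ immediately gives smoothness of $\omega$.

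The one genuinely delicate point — and the reason the corollary is stated only in $d=2$ — is obtaining the initial $C^{1,\alpha}$ regularity (equivalently, continuity of $\nabla h$) from the mere weak-solution property plus interior non-degeneracy, \emph{before} any Schauder theory can be applied. In two dimensions this is a classical theorem (Morrey): minimizers of $\int F(\nabla u)$ with $F$ smooth and uniformly convex in the relevant region are automatically $C^{1,\alpha}_{\mathrm{loc}}$, because the derivatives $\partial_i h$ satisfy a scalar divergence-form elliptic equation in the plane and one can use the fact that $W^{1,2}$ solutions to such equations in two variables are Hölder continuous (De Giorgi--Nash) — this argument is special to $d=2$ since there $h \in W^{1,\infty} \subset W^{2,2}_{\mathrm{loc}}$ after a difference-quotient argument, and each $\partial_i h \in W^{1,2}$ in the plane. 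I would cite \cite[Chapter 3]{XavierEllipticPDE} (and standard references therein, e.g.\ Morrey, Giaquinta) for this step rather than reproduce it. One small bookkeeping issue to handle is that Theorem~\ref{thm:no_facets} gives $\nabla h \in \mc N(\Lambda)^\circ$ only a.e.; but because $h$ minimizes a functional that is $+\infty$-penalizing outside $\mc N(\Lambda)$ and strictly convex inside, and because the Euler--Lagrange equation is satisfied distributionally on all of $R^\circ$, once we know $\nabla h$ is continuous (from the 2D $C^{1,\alpha}$ result, which only needs a.e.\ non-degeneracy to localize the uniform-ellipticity constant) the a.e.\ statement upgrades to an everywhere statement, closing the loop. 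I would present the argument in this order: weak EL equation $\Rightarrow$ (2D, non-degenerate) $C^{1,\alpha}$ $\Rightarrow$ uniform ellipticity with continuous coefficients on interior balls $\Rightarrow$ Schauder + bootstrap $\Rightarrow$ $C^\infty$ (resp.\ analytic) $\Rightarrow$ smoothness of $\omega$.
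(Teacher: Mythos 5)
Your proposal is correct and follows essentially the same route as the paper: localize at an interior point using Theorem~\ref{thm:no_facets}, observe that $\sigma$ is smooth and uniformly convex on the local range of $\nabla h$ by Corollary~\ref{thm:strict_convex}, and invoke interior elliptic regularity for two-dimensional gradient variational problems (the paper likewise defers this to \cite[Chapter 3]{XavierEllipticPDE}). The only difference is that you unpack the elliptic-regularity black box (Morrey's 2D $C^{1,\alpha}$ step followed by a Schauder bootstrap) and explicitly flag the a.e.-versus-everywhere issue that the paper's proof passes over silently, which is a welcome addition rather than a divergence.
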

\begin{proof}

    By Theorem~\ref{thm:no_facets}, for any $x\in R^\circ$ there exists a neighborhood $U\ni x$ with $\overline{U}\subset R^\circ$, and where the limit shape height function $h$ has $\nabla h$ taking values contained in a closed set $C\subset \mc N(\Lambda)^\circ$.     By Corollary~\ref{thm:strict_convex}, $\sigma$ is strictly convex on the closed Newton polytope $\mc N(\Lambda)$, so on $C$ it is smooth and uniformly convex. Hence by elliptic regularity, $h$ is smooth on $U$. Since we can apply this for any interior point $x\in R^\circ$, it follows that $h$ and hence $\omega = \ast \nabla h$ are smooth on $R^\circ$. 
\end{proof}

\subsection{Dual Euler-Lagrange equations for the gauge}

There is also a system of Euler-Lagrange equations for minimizers of the free energy $F$. Since $\sigma$ and $F$ are Legendre duals, the gradients $\nabla \sigma: \mc N\to \mc A$ and $\nabla F:\mc A\to \mc N$ are inverses of each other and any flow $\omega\in \text{AF}(\Lambda,R,b)$ has a dual \textit{gauge flow} $\alpha:R\to \mc A$ related to it by: 
\begin{align}\label{eq:omega_to_alpha}
    \omega = \nabla F(\alpha) \iff \alpha = \nabla \sigma(\omega).
\end{align}
Using \eqref{eq:omega_to_alpha} one can see that the Euler-Lagrange equations for limit shape flow $\omega$ in Theorem~\ref{thm:EL_equations} are equivalent to the following Euler-Lagrange equations for $\alpha$. 

\begin{thm}[Euler-Lagrange equations for the gauge flow]\label{thm:EL_equations_gauge}
    Fix $R\subset \m R^d$ and boundary value for the gauge flow $b'$. The limiting gauge flow $\alpha:R\to \mc A=\m R^d$ is the unique solution to the following system of differential equations with boundary value $b'$: 
    \begin{itemize}
        \item the equation \begin{align}
            \text{div} (\nabla F(\alpha)) = \sum_{i=1}^d \pd{}{x_i} \pd{}{a_i} F(\alpha) = 0;
        \end{align}
        \item a collection of ${d\choose 2}$ equations of the form 
        \begin{align}\label{eq:alpha_curl}
            \pd{ \alpha_j}{x_i} - \pd{\alpha_i }{x_j} = 0,
        \end{align}
        for all $i\neq j$ between $1$ and $d$.
    \end{itemize}
\end{thm}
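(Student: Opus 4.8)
The plan is to deduce the statement directly from the Euler--Lagrange equations for the limit shape flow (Theorem~\ref{thm:EL_equations}) via the change of variables \eqref{eq:omega_to_alpha}, exploiting Legendre duality between $F$ and $\sigma$. First I would fix the correspondence carefully: let $\omega$ be the limit shape flow for $(R,b)$, which exists and is unique by Corollary~\ref{cor:limit_shape}; assuming $(R,b)$ is flexible, Theorem~\ref{thm:no_facets} gives $\omega(x)\in\mc N(\Lambda)^\circ$ for a.e.\ $x$, so $\alpha:=\nabla\sigma(\omega)$ is well-defined, and since $\nabla\sigma:\mc N(\Lambda)^\circ\to\mc A=\m R^d$ and $\nabla F:\m R^d\to\mc N(\Lambda)^\circ$ are mutually inverse (by the strict convexity of $F$ and $\sigma$, Corollaries~\ref{thm:free_energy_convex} and~\ref{thm:strict_convex}) we get $\omega=\nabla F(\alpha)$. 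I would take $b'$ to be the boundary value of this $\alpha$, i.e.\ the theorem is being asserted for $b'$ arising in this way from a flexible $(R,b)$.

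Next I would prove the two directions. For existence, substitute $\omega=\nabla F(\alpha)$ into the divergence equation for $\omega$: this is literally $\text{div}(\nabla F(\alpha))=0$. And since $\alpha_k=\partial_{s_k}\sigma(\omega)$, the mixed-partial equations \eqref{ELflow} say exactly $\partial_{x_i}\alpha_j-\partial_{x_j}\alpha_i=0$, which is \eqref{eq:alpha_curl}; hence $\alpha$ solves the gauge system. For uniqueness, run the substitution in reverse: if $\tilde\alpha$ is any solution of the gauge system with boundary value $b'$, set $\tilde\omega:=\nabla F(\tilde\alpha)$; then $\tilde\omega$ is divergence-free, and $\partial_{x_i}\partial_{s_j}\sigma(\tilde\omega)-\partial_{x_j}\partial_{s_i}\sigma(\tilde\omega)=\partial_{x_i}\tilde\alpha_j-\partial_{x_j}\tilde\alpha_i=0$, so $\tilde\omega$ solves the flow Euler--Lagrange system with the boundary value corresponding to $b'$ under $\nabla F$. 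By the uniqueness half of Theorem~\ref{thm:EL_equations} (equivalently Corollary~\ref{cor:limit_shape}), $\tilde\omega=\omega$, hence $\tilde\alpha=\nabla\sigma(\tilde\omega)=\alpha$.

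The main technical obstacle is regularity: to differentiate $\partial_{s_k}\sigma(\omega(x))$ and make the manipulations above rigorous one needs sufficient smoothness of $\omega$ (hence of $\alpha$), and $\alpha$ itself carries no a priori regularity. In dimension $d=2$ this is supplied by Corollary~\ref{cor:2Dsmooth}, which makes $\omega$ --- and therefore $\alpha=\nabla\sigma(\omega)$ --- smooth; in general dimension one must either assume the smoothness (as flagged in Question~\ref{q:always_smooth}) or state the gauge equations weakly, in which case the substitution argument should be carried out at the level of the weak formulations of the two variational problems, testing against smooth compactly supported functions and integrating by parts exactly as in the proof of Theorem~\ref{thm:EL_equations}. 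A secondary point to verify is compatibility of boundary values, namely that the trace of $\nabla F(\tilde\alpha)$ on $\partial R$ is determined by the trace $b'$ of $\tilde\alpha$; this is routine given the continuity of $\nabla F$ and the definitions of the boundary conditions.
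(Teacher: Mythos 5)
Your proposal is correct and follows exactly the route the paper intends: the theorem is obtained from Theorem~\ref{thm:EL_equations} by the Legendre substitution $\omega=\nabla F(\alpha)$, $\alpha=\nabla\sigma(\omega)$, under which the divergence equation for $\omega$ becomes $\mathrm{div}(\nabla F(\alpha))=0$ and the mixed-partial equations \eqref{ELflow} become the curl-free conditions \eqref{eq:alpha_curl}. Your additional care about uniqueness (running the substitution in reverse and invoking uniqueness of the limit shape) and about regularity and boundary values is more explicit than the paper's own one-line justification, and correctly identifies the caveats the paper itself flags in its remarks.
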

\begin{rem}
    There are some technicalities in relating the boundary conditions $b'$ for the gauge to boundary conditions $b$ of the divergence free flow. If $b$ has an extension $\omega$ such that $\omega\mid_{\partial R}$ is a.e.\ valued in $\mc N(\Lambda)^\circ$, then we can define $b' = T(\nabla \sigma(\omega))$ (recall that $T$ is the boundary value operator). However, if $\omega$ takes values in $\mc N(\Lambda)$ on $\partial R$ (which is the case e.g.\ for the Aztec diamond), then $\nabla \sigma$ will not be defined. We do not address this rigorously, but note that, for example, one could approximate the solution by replacing $b$ with $(1-\delta) b$ and taking $\delta$ to zero.
\end{rem}
The equations \eqref{eq:alpha_curl} imply that we can rephrase the Euler-Lagrange equations for the gauge as a gradient variational problem. 
\begin{cor}\label{cor:EL_gauge_cor}
    Fix $R\subset \m R^d$ and a suitable boundary function $H_b$ on $\partial R$. If $H$ is a weak solution to the differential equation 
    \begin{align}\label{eq:gauge_PDE}
         \text{div} (\nabla F(\nabla H)) = 0
    \end{align}
    with $H\mid_{\partial R} = H_b$, the limiting gauge flow is $\alpha = \nabla H$.
\end{cor}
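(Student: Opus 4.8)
\textbf{Proof proposal for Corollary~\ref{cor:EL_gauge_cor}.}

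The plan is to deduce this corollary directly from Theorem~\ref{thm:EL_equations_gauge} by observing that a solution $H$ of the scalar PDE \eqref{eq:gauge_PDE} produces, via $\alpha := \nabla H$, a vector field that automatically satisfies both families of equations in Theorem~\ref{thm:EL_equations_gauge}, and conversely that any limiting gauge flow $\alpha$ arises this way. First I would note that the ${d \choose 2}$ equations \eqref{eq:alpha_curl}, namely $\partial \alpha_j/\partial x_i - \partial \alpha_i/\partial x_j = 0$ for all $i \ne j$, say exactly that the $1$-form $\sum_i \alpha_i\, \dd x_i$ is closed. On a domain $R$ that is the closure of its interior and (as assumed throughout) is connected with piecewise smooth boundary — and, one should add, simply connected, or else one works locally — a closed $1$-form is exact, so there exists a function $H$, unique up to an additive constant, with $\alpha = \nabla H$. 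This is the Poincaré lemma; I would invoke it in the weak (distributional) sense so that no a priori regularity on $\alpha$ beyond $L^1_{\mathrm{loc}}$ is needed, which matches the remark in the paper that $H$ has no a priori regularity. Substituting $\alpha = \nabla H$ into the remaining equation $\mathrm{div}(\nabla F(\alpha)) = 0$ gives precisely \eqref{eq:gauge_PDE}, $\mathrm{div}(\nabla F(\nabla H)) = 0$, interpreted weakly: for every test function $\phi \in C_c^\infty(R^\circ)$, $\int_R \nabla F(\nabla H) \cdot \nabla \phi\, \dd x = 0$.

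The converse direction — that a weak solution $H$ of \eqref{eq:gauge_PDE} with the prescribed boundary data yields the limiting gauge flow — follows by reversing these steps: set $\alpha = \nabla H$; then \eqref{eq:alpha_curl} holds trivially since mixed partials of $H$ commute (again in the distributional sense, $\partial_i \partial_j H = \partial_j \partial_i H$), and $\mathrm{div}(\nabla F(\alpha)) = 0$ is the hypothesis. By Theorem~\ref{thm:EL_equations_gauge}, $\alpha$ is \emph{the} limiting gauge flow, provided the boundary values match; here I would use the correspondence $\omega = \nabla F(\alpha)$ from \eqref{eq:omega_to_alpha} to translate between the boundary condition $H_b$ on $H$ and the boundary asymptotic flow $b$ on $\partial R$ (this is the content of "suitable boundary function $H_b$", and I would phrase it as: $H_b$ is chosen so that $\nabla F(\nabla H_b)$, in the appropriate trace sense, has boundary flux $b$, modulo the degeneracy issue flagged in the remark after Theorem~\ref{thm:EL_equations_gauge} when $\omega$ touches $\partial \mc N(\Lambda)$ on $\partial R$). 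Uniqueness of $\alpha$, and hence of $H$ up to a constant, comes from the uniqueness statement in Theorem~\ref{thm:EL_equations_gauge}, which in turn rests on strict convexity of $\sigma$ (Corollary~\ref{thm:strict_convex}) — equivalently strict convexity of $F$ (Corollary~\ref{thm:free_energy_convex}) — so that the functional $\int_R F(\nabla H)\, \dd x$ that \eqref{eq:gauge_PDE} is the Euler-Lagrange equation of has a unique minimizer subject to the boundary data.

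The main obstacle is not the formal manipulation, which is essentially a restatement of Theorem~\ref{thm:EL_equations_gauge} via the Poincaré lemma, but rather making the passage to the scalar potential $H$ rigorous at the low regularity available: one must justify that the weak curl-free condition \eqref{eq:alpha_curl} on a field $\alpha$ that is only known to be, say, bounded and measurable (valued in $\mc A = \m R^d$) genuinely gives a potential $H \in W^{1,\infty}_{\mathrm{loc}}$, and that the weak formulation of \eqref{eq:gauge_PDE} is equivalent under $\alpha = \nabla H$ to the weak formulation of $\mathrm{div}(\nabla F(\alpha)) = 0$ together with \eqref{eq:alpha_curl}. The honest route is to work locally on simply connected balls $B \subset R^\circ$, define $H$ on $B$ by line integration of $\alpha$ along paths (well-defined by the closedness), check the glued function is consistent on overlaps up to constants, and then pin down the global additive constant using connectedness of $R$ and the boundary data. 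I would present this as a short lemma (Poincaré lemma for $L^\infty$ closed $1$-forms on $R^\circ$) and otherwise keep the corollary's proof to a paragraph, since the paper itself remarks that "the relationship between these equations is purely formal." The secondary subtlety, again flagged in the text, is the boundary-condition translation when $\nabla \sigma$ blows up, which I would handle only heuristically (approximating $b$ by $(1-\delta)b$) rather than rigorously.
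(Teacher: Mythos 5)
Your proposal is correct and follows essentially the same route the paper takes: the corollary is presented there as an immediate consequence of Theorem~\ref{thm:EL_equations_gauge}, with the curl equations \eqref{eq:alpha_curl} identifying $\alpha$ as a gradient and the divergence equation then becoming \eqref{eq:gauge_PDE}. The only difference is that you spell out the low-regularity Poincar\'e lemma, the simple-connectivity caveat, and the boundary-condition translation, all of which the paper leaves implicit or defers to the surrounding remarks; your observation that the direction actually needed for the corollary (from $H$ to $\alpha=\nabla H$) avoids the Poincar\'e lemma entirely is accurate.
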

\begin{rem}
    Note that we do not derive an explicit description for the suitable boundary conditions $H_b$ of $H$ (where ``suitable'' means e.g.\ that there exists a unique solution to the PDE). We note however that $H_b$ can be well-defined and finite even when $b'$ (the boundary condition for $\alpha = \nabla H$) is infinite, as is the case e.g.\ for the Aztec diamond (see Section~\ref{sec:limit_shapes}). 
\end{rem}
This \textit{gauge function limit shape} $H$ exists (at least weakly) in any dimension. Unlike the height function, which is Lipschitz and therefore continuous and differentiable almost everywhere, in general dimension, $H$ does not a priori have any additional regularity.

\section{Scaling limit of critical gauge functions}\label{sec:critical_gauge_scaling}

Here we show that the limit shape gauge function $H$ arises as the scaling limit of critical gauge functions on graphs $R_n\subset \frac{1}{n} \Lambda$ approximating $R$. Let $g_n,f_n$ denote the critical gauge function on the black, white vertices of $R_n$ respectively. Here we assume that all limiting vertex multiplicities are $1$, even on $\partial R_n$. Then these satisfy the \textit{critical gauge equations}: 
\begin{align*}
    \sum_{u : u\sim v} g_n(u) f_n(v) &= 1 \\
     \sum_{v : u\sim v} g_n(u) f_n(v) &= 1 
\end{align*}
for all $u\in B\cap R_n$ and $v\in W\cap R_n$. By solving for $f_n(v)$ in the first equation, we can combine these as a single equation in terms of only $g_n$. Recall that $e_1,\dots, e_D$ are the edge vectors incident to a white vertex in $\Lambda$, and correspondingly the edge vectors incident to a black vertex are $-e_1,\dots,-e_D$. 
The equation is:
\begin{align*}
    \frac{1}{g_n(u)} = \sum_{i=1}^D \bigg(\sum_{j=1}^D g_n(u - \frac1ne_i + \frac1n e_j)\bigg)^{-1}      \qquad \forall \, u\in B\cap R_n.
\end{align*}

\begin{thm}\label{thm:critical_gauge_scaling_limit}
    Fix a $d$-dimensional lattice $\Lambda$ and $R\subset \m R^d$ as before. Suppose that $H$ is $C^1$ and solves the gauge PDE: $\text{div}(\nabla F(\nabla H))=0$, and that $g_n$ is the critical gauge function on black vertices of $R_n$ with limiting multiplicity $1$ everywhere on $R_n$. Further suppose that
    \begin{align*}
        \sup_{u\in B\cap\partial R_n} |(1/n) \log g_n(u) - H(u)| = O(1/n).
    \end{align*}
    We equate $(1/n) \log g_n$ defined on $B\cap R_n$ with its linear interpolation to a continuous function. Then
    \begin{align*}
        \lim_{n\to \infty} (1/n) \log g_n 
    \end{align*}
    exists in the topology induced by the supremum norm on continuous functions and is equal to $H$.
\end{thm}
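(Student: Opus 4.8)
The plan is to reformulate the two critical gauge equations as a single nonlinear difference equation for $h_n:=\tfrac1n\log g_n$, and then to run a discrete maximum principle against $H$. Substituting $f_n(v)=\big(\sum_i g_n(v+\tfrac1n e_i)\big)^{-1}$ into the black equation and writing $g_n=e^{n h_n}$, the hypothesis becomes that the unique solution of
\begin{align*}
    G_n[h_n](u):=\sum_{i=1}^D\Big(\sum_{j=1}^D e^{\,n\big(h_n(u-\frac1n e_i+\frac1n e_j)-h_n(u)\big)}\Big)^{-1}=1,\qquad u\in B\cap R_n,
\end{align*}
whose boundary values converge to $H|_{\partial R}$ at rate $O(1/n)$, converges uniformly to $H$. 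Two structural features of $G_n$ drive everything: it is invariant under adding a constant to $\phi$ (only differences of $\phi$ appear), and it is monotone — increasing in the center value $\phi(u)$ and decreasing in each off-center value $\phi(u-\tfrac1n e_i+\tfrac1n e_j)$.

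The first main step is an a priori equicontinuity (Harnack-type) estimate: there is a constant $C=C(\Lambda)$ with $|h_n(u)-h_n(u')|\le C/n$ for lattice-adjacent $u,u'$, uniformly in $n$. This should come directly from the equation: keeping only the $i=i_0$ term gives $\sum_j g_n(u-\tfrac1n e_{i_0}+\tfrac1n e_j)\ge g_n(u)$, while a crude bound on the sum gives $\min_i\sum_j g_n(u-\tfrac1n e_i+\tfrac1n e_j)\le D\,g_n(u)$; iterating these two-sided comparisons over a bounded number of lattice steps — anchored near $\partial R_n$ by the boundary hypothesis — yields the claimed bound. The second step is the consistency expansion relating the discrete equation to the gauge PDE: for $\phi$ smooth, Taylor expansion together with the identity $\Hess F(\alpha)=\sum_j p_j(\alpha)\,e_je_j^{\top}-\nabla F(\alpha)\nabla F(\alpha)^{\top}$, where $p_j(\alpha)=e^{e_j\cdot\alpha-F(\alpha)}$, gives
\begin{align*}
    G_n[\phi](u)=1-\tfrac1n\,\text{div}\big(\nabla F(\nabla\phi)\big)(u)+O(1/n^2)
\end{align*}
uniformly on compact subsets of $R^\circ$. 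Since $H$ solves $\text{div}(\nabla F(\nabla H))=0$, this yields $G_n[H]=1+O(1/n^2)$ on the interior (if $H$ is only $C^1$, one first mollifies $H$ or, more robustly, replaces it by smooth strict sub- and supersolutions of the PDE and uses these as barriers).

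The conclusion then follows by a discrete maximum principle for $v_n:=h_n-H$. Writing $G_n[h_n]-G_n[H]=\int_0^1\tfrac{d}{dt}G_n[H+tv_n]\,dt=\sum_p a^n_p(u)\,v_n(p)$ and using the equicontinuity bound to keep the finite-difference quotients bounded along the segment $H+tv_n$, one finds $a^n_u(u)$ positive of order $n$, the off-center $a^n_p(u)$ negative, and (by constant-invariance of $G_n$) $\sum_p a^n_p(u)=0$; moreover the normalized weights satisfy $\tilde w^n_{u-\frac1n e_i+\frac1n e_j}(u)=p_i(\nabla H(u))p_j(\nabla H(u))+O(1/n)$, so $\sum_{i,j}\tilde w^n_{\cdot}\,(e_j-e_i)=O(1/n)$ to leading order, i.e.\ the associated stochastic kernel $\tilde P_n$ has $O(1/n^2)$ per-step drift and $O(1/n^2)$ per-step variance and hence exits $R$ in expected time $O(n^2)$. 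Since $G_n[h_n]=1$ and $G_n[H]=1+O(1/n^2)$, we get $v_n(u)=\sum_p\tilde w^n_p(u)v_n(p)+O(1/n^3)$ at interior black vertices; representing $v_n(u)$ as the expected value of $v_n$ at the first visit to $B\cap\partial R_n$ plus the accumulated local error, the boundary term is $O(1/n)$ by hypothesis and the error term is $O(n^2)\cdot O(1/n^3)=O(1/n)$, so $\|h_n-H\|_\infty=O(1/n)\to0$. The companion statement $\tfrac1n\log f_n\to -H$ then follows from $f_n(v)^{-1}=\sum_i g_n(v+\tfrac1n e_i)$ combined with the equicontinuity bound and the uniform convergence $h_n\to H$.

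I expect the a priori equicontinuity of $h_n$ to be the main obstacle: without it the quotients $n\big(h_n(u-\tfrac1n e_i+\tfrac1n e_j)-h_n(u)\big)$ need not be bounded, the weights of the linearized operator degenerate, and neither the consistency expansion nor the maximum-principle step applies. I anticipate this requires genuinely new input, combining the two-sided bounds above with the discrete maximum principle itself (and perhaps a doubling-type comparison of $h_n$ with its translates away from $\partial R_n$). Secondary technical points are making the maximum principle quantitative — the $O(n^2)$ expected exit-time bound for the nearly degenerate but irreducible kernel $\tilde P_n$ — and reducing the $C^1$ hypothesis on $H$ to the smooth case needed for the expansion, which I would handle by sandwiching $h_n$ between discrete solutions built from smooth strict sub/supersolutions of $\text{div}(\nabla F(\nabla\,\cdot\,))=0$.
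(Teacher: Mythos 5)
Your overall architecture (a maximum principle for the combined difference equation, plus a consistency expansion showing $G_n[\phi]=1-\tfrac1n\,\mathrm{div}(\nabla F(\nabla\phi))+O(1/n^2)$) matches the paper's in its first two ingredients, but your third step — linearizing $G_n$ along the segment $H+tv_n$ and running a random-walk/exit-time representation for $v_n=h_n-H$ — is a genuinely different mechanism from the paper's, and it contains a gap that you yourself flag and do not close: the a priori equicontinuity bound $|h_n(u)-h_n(u')|\le C/n$ for adjacent vertices. Without it the quotients $n\big(h_n(u-\tfrac1n e_i+\tfrac1n e_j)-h_n(u)\big)$ are uncontrolled, the coefficients $a^n_p(u)$ of your linearized operator can degenerate, and neither the sign/normalization claims for the kernel $\tilde P_n$ nor the $O(n^2)$ exit-time bound can be justified. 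Moreover, the derivation you sketch for this estimate does not work: the inequality obtained by keeping only the $i=i_0$ term, $\sum_j g_n(u-\tfrac1n e_{i_0}+\tfrac1n e_j)\ge g_n(u)$, is vacuous, since the $j=i_0$ summand already equals $g_n(u)$ and all terms are positive; and the companion bound $\min_i\sum_j g_n(\cdot)\le D\,g_n(u)$ only bounds the neighbors along one (unknown) direction $i^*$ from above, giving no lower bounds at all. So the "iteration over a bounded number of lattice steps" has nothing two-sided to iterate, and the Harnack-type estimate remains unproved.

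The paper's proof is engineered precisely to avoid needing any a priori regularity of $g_n$. Its maximum principle is applied to the \emph{ratio} $g_n/q_\pm$, where $q_\pm=\exp\big(n(H\pm\delta q_U)\big)$ are explicit discrete super/subsolutions built from $H$ perturbed by the quadratic $q_U(x)=\tfrac12\|x-x_0\|^2$ on balls of a fixed small radius $\delta_2$, with $\delta=n^{-1/2}$; the key algebraic step is the mediant inequality $\min\{c/e,d/f\}\le (c+d)/(e+f)\le\max\{c/e,d/f\}$, which propagates the ratio bound to a neighbor without ever differentiating $G_n$ or bounding increments of $h_n$. The conclusion is then obtained by chaining $O(\delta_2^{-d})$ such balls from any interior point to $\partial R_n$, accumulating an error $O(n^{-1/2}\delta_2^{2-d})$. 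If you want to keep your linearization route you must first supply the discrete Lipschitz estimate by some independent argument (it does not follow from the two crude inequalities you list); alternatively, replacing your Step 3 by the ratio comparison against explicit barriers removes the need for it entirely. A smaller point: mollifying $H$ does not produce a solution of the nonlinear PDE, so your fallback for the $C^1$ case should be the paper's route (a $C^1$ solution of this PDE is automatically real analytic by perturbative elliptic estimates, since $F$ is analytic and strictly convex) or an actual construction of smooth strict sub/supersolutions with matched boundary data, which you do not carry out.
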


\begin{rem}
    The limit $\lim_{n\to \infty}(1/n) \log f_n$ also exists, but differs by a sign (since all the edge directions per vertex have the opposite sign, etc). Analogous arguments show that $\lim_{n\to \infty}(1/n) \log f_n= -{H}$.
\end{rem}
\begin{rem}
    If $H$ is $C^1$, then standard perturbative estimates can be used to show that $H$ is real analytic. The technical issue here is that while $F$ is strictly convex, is not uniformly convex on all of $\m R^d$, and thus e.g.\ the classical elliptic regularity results of \cite{DeGiorgi,Nash1,Nash2} do not apply. In 2D, by Corollary~\ref{cor:2Dsmooth}, the limit shape height function $h$ is smooth, hence the flow $\omega$ and gauge flow $\alpha$ are smooth, and hence any weak solution $H$ to \eqref{eq:gauge_PDE} must be $C^1$. This argument is specific to two dimensions, since it uses that the dual equation to \eqref{eq:gauge_PDE} in terms of the height function is also a gradient variational problem. In higher dimensions it is not clear whether one can lift the $C^1$ assumption on $H$; see Question~\ref{q:always_smooth}.
\end{rem}
We use in the proof that the solution $H$ is real analytic. To simplify the next statement, we introduce the following ``operator'' notation  
\begin{align*}
    \mc D(q) := \text{div} \nabla F(\nabla q)
\end{align*}
and 
\begin{align*}
    \mc C_n(q(u)) :=   {q(u)} \sum_{i=1}^D \bigg(\sum_{j=1}^D q(u - (1/n) e_i + (1/n) e_j)\bigg)^{-1}
\end{align*}
for $u\in B\cap R_n$.

\begin{definition}
	We say that $q$ is a {subsolution} (resp. {supersolution}) to the equation $\mc D(\cdot)=0$ if $\mc D(q)<0$ (resp. $\mc D(q)>0$). Similarly we say that $q$ is a subsolution (resp. supersolution) to $\mc C_n(\cdot) = 1$ if $\mc C_n(q) < 1$ (resp. $\mc C_n(q)> 1$). 
\end{definition}

The proof of Theorem~\ref{thm:critical_gauge_scaling_limit} has three main steps: (1) we show that sub/supersolutions to critical gauge equations satisfy a version of the maximum principle (Proposition~\ref{prop:maximum_principle}), (2) we relate $\mc C_n$ to $\mc D$ by expanding the critical gauge equations as $1/n\to 0$ (Proposition~\ref{prop:expanding_critical_equations}), (3) we partition $R$ into small balls and construct perturbations of $H$ which are sub/supersolutions to the critical gauge equations on each ball, and iteratively apply the maximum principle to show that $(1/n) \log g_n$ being close to $H$ on $\partial R$ implies it is close on all of $R$. 

\begin{prop}[Maximum principle for the critical gauge equations]\label{prop:maximum_principle}
	Suppose that $q, q_+, q_-$ are a solution, supersolution, and subsolution respectively to the critical gauge equation $\mc C_n(\cdot)= 1$ on a region $U\subset B\cap \varepsilon\Lambda$. Then 
	\begin{enumerate}[(i)]
		\item the minimum value of $q/q_-$ over $U$ is realized on $\partial U$. 
		\item the maximum value of $q/q_+$ over $U$ is realized on $\partial U$;
	\end{enumerate}
\end{prop}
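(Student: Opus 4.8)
The plan is to mimic the classical proof of the maximum principle for discrete elliptic difference operators, adapted to the multiplicative structure of the critical gauge equation $\mc C_n(\cdot)=1$. The key observation is that, after dividing through, the equation $\mc C_n(q(u))=1$ can be read as a statement that $q(u)$ is a certain \emph{weighted harmonic-type average} of the values of $q$ at the vertices $u - \frac1n e_i + \frac1n e_j$ (the black neighbors-of-neighbors of $u$), but where the averaging is with respect to weights that depend on $q$ itself. Concretely, $\mc C_n(q(u))=1$ rearranges to
\begin{align}\label{eq:harmonic_form}
    \frac{1}{q(u)} = \sum_{i=1}^D \left(\sum_{j=1}^D q\!\left(u - \tfrac1n e_i + \tfrac1n e_j\right)\right)^{-1},
\end{align}
so $q(u)$ is the harmonic mean (up to the outer sum over $i$) of sums of neighboring values.

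First I would prove (i). Suppose the minimum of $q/q_-$ over $U$ is attained at an interior vertex $u_0 \in U \setminus \partial U$, with value $m$, so $q \ge m q_-$ everywhere on $U$ and $q(u_0) = m q_-(u_0)$. The strategy is to plug $q \ge m q_-$ into the right-hand side of \eqref{eq:harmonic_form} for $q$ at $u_0$, and compare with the corresponding (in)equality for the subsolution $q_-$. Because $q_-$ is a subsolution, $\mc C_n(q_-(u_0)) < 1$, which after the same rearrangement gives $1/q_-(u_0) < \sum_i (\sum_j q_-(\cdot))^{-1}$. Now the function $t \mapsto 1/t$ is decreasing and convex, so replacing each $q_-(u - \frac1n e_i + \frac1n e_j)$ by the larger quantity $q(u - \frac1n e_i + \frac1n e_j)/m$ decreases each inner sum's reciprocal; combined with $1/q(u_0) = 1/(m q_-(u_0))$, this should force the inequality $\mc C_n(q(u_0)) < 1$ unless $q = m q_-$ at every vertex appearing in the stencil at $u_0$. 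But $\mc C_n(q(u_0)) = 1$ since $q$ is a solution — contradiction — so equality propagates: $q = m q_-$ on the entire stencil of $u_0$. Iterating this propagation along a lattice path (using connectedness of $U$ and the fact that the stencil $\{u - \frac1n e_i + \frac1n e_j\}$ connects $U$, since $\{e_1,\dots,e_D\}$ spans $\R^d$ and the graph is connected) forces $q = m q_-$ somewhere on $\partial U$, so the minimum is also attained on the boundary. Statement (ii) follows by the symmetric argument: if $q/q_+$ attains an interior maximum $M$, then $q \le M q_+$, and plugging into \eqref{eq:harmonic_form} together with the supersolution inequality $\mc C_n(q_+(u_0)) > 1$ yields $\mc C_n(q(u_0)) > 1$ unless equality holds throughout the stencil, again propagating to the boundary.

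The main obstacle I anticipate is making the comparison step fully rigorous, i.e., verifying that substituting $q_- \le q/m$ (resp. $q \le M q_+$) into the nested sum-of-reciprocals in \eqref{eq:harmonic_form} actually produces a \emph{strict} inequality at the extremal vertex, with equality only in the degenerate case $q \equiv m q_-$ on the stencil. This requires care because the outer $\sum_i$ and the inner $\sum_j$ interact through the reciprocals, and one must check monotonicity of the composite map $(x_1,\dots,x_D) \mapsto \sum_i (\sum_j x_{i,j})^{-1}$ in each argument (which is clear: it is strictly decreasing in each $x_{i,j}$ on the positive orthant), and that this strict monotonicity, combined with the strict sub/supersolution inequality, leaves no room for the solution to satisfy $\mc C_n = 1$ unless all the substituted inequalities were equalities. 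A clean way to organize this is: if not all stencil inequalities are equalities, then at least one inner sum strictly increases, hence at least one reciprocal strictly decreases, hence the full right-hand side at $q$ is $< 1/q(u_0)$ (for (i)) or $> 1/q(u_0)$ (for (ii)), contradicting $\mc C_n(q(u_0)) = 1$. The rest — the path-connectivity/propagation argument — is routine discrete maximum-principle bookkeeping, and I would keep it brief.
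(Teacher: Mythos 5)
Your proposal is correct and rests on the same mechanism as the paper's proof: the strict monotonicity of $x\mapsto\sum_i(\sum_j x_{ij})^{-1}$ in each argument together with the strictness of the sub/supersolution inequality. The only presentational difference is that the paper localizes the comparison by applying the mediant inequality (if $a/b=(c+d)/(e+f)$ then $a/b\ge\min\{c/e,d/f\}$) twice to conclude that $q(u)/q_-(u)$ strictly exceeds the same ratio at some stencil neighbor, and then iterates to $\partial U$, whereas you substitute the global bound $q\ge m\,q_-$ at an assumed interior minimizer; these are equivalent.

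Two small corrections. First, one displayed step has the inequality reversed: $\mc C_n(q_-(u_0))<1$ rearranges to $\sum_i\bigl(\sum_j q_-(\cdot)\bigr)^{-1} < 1/q_-(u_0)$, not the other way around. Your subsequent reasoning uses the correct direction (it yields $\mc C_n(q(u_0))\le \mc C_n(q_-(u_0))<1$, contradicting $\mc C_n(q(u_0))=1$), so this is a slip rather than a gap. Second, the equality-propagation branch you worry about is vacuous: because the subsolution inequality is \emph{strict}, even if $q=m\,q_-$ at every stencil vertex of $u_0$ you still get $\mc C_n(q(u_0))=\mc C_n(q_-(u_0))<1$, an immediate contradiction. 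So no connectivity or propagation argument is needed, and the "main obstacle" you anticipate does not arise.
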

\begin{proof}
	We show (i), as the proof of (ii) is analogous. Since $q,q_-$ are a solution and subsolution respectively we have that for $u\in U$, 
	\begin{align*}
		\frac{1}{q(u)} &= \sum_{i=1}^D \bigg(\sum_{j=1}^D q(u - (1/n) e_i + (1/n) e_j)\bigg)^{-1}\\
		\frac{1}{q_-(u)} &> \sum_{i=1}^D \bigg(\sum_{j=1}^D q_-(u - (1/n) e_i + (1/n) e_j)\bigg)^{-1}.
	\end{align*}
Hence, dividing the second by the first,
\begin{align*}
	\frac{q(u)}{q_-(u)} >\frac{\sum_{i=1}^D \bigg(\sum_{j=1}^D q_-(u - (1/n) e_i + (1/n) e_j)\bigg)^{-1}}{ \sum_{i=1}^D \bigg(\sum_{j=1}^D q(u - (1/n) e_i + (1/n) e_j)\bigg)^{-1}}.
\end{align*}
On the other hand note that if $a,b,c,d,e,f>0$ then 
\begin{align*}
	\frac{a}{b} = \frac{c+d}{e+f} \quad \implies \quad \max\bigg\{ \frac{c}{e}, \frac{d}{f} \bigg\}\geq \frac{a}{b} \geq \min\bigg\{ \frac{c}{e}, \frac{d}{f} \bigg\}.
\end{align*}
We apply this twice to show that 
\begin{align*}
	\frac{q(u)}{q_-(u)} > \min_{i,j=1, \dots, D} \frac{q(u+(1/n) e_i - (1/n) e_j)}{q_-(u+(1/n) e_i - (1/n) e_j)},
\end{align*}
in other words, $q(u)/q_-(u)$ is bounded below by its value at at least one of its neighbors. Iterating this until we reach $\partial U$ proves (i). 
\end{proof}

We now relate $\mc C_n$ to $\mc D$ by expanding the critical gauge equations as $1/n\to 0$.
\begin{prop}\label{prop:expanding_critical_equations}
    Suppose $q$ is a $C^2$ function on $R$. For $u\in B\cap R_n$, we have the expansion
    \begin{align}
        \mc C_n(\exp(n  q(u)) = 1 - \mc D(q(u)) (1/n) + O(1/n^2)
    \end{align}
    as $1/n\to 0$.
\end{prop}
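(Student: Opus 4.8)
\textbf{Proof proposal for Proposition~\ref{prop:expanding_critical_equations}.}

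The plan is to substitute $q(u) = \exp(nq(u))$ into $\mc C_n$ and Taylor-expand the terms $q(u - \tfrac1n e_i + \tfrac1n e_j)$ in powers of $1/n$, keeping terms through order $1/n$. First, writing $v_{ij} := -\tfrac1n e_i + \tfrac1n e_j$ (a vector of length $O(1/n)$), I would Taylor-expand
\[
  q(u + v_{ij}) = q(u) + \nabla q(u)\cdot v_{ij} + \tfrac12 v_{ij}^\top \Hess q(u)\, v_{ij} + O(1/n^3),
\]
and hence
\[
  \exp\!\big(n\, q(u+v_{ij})\big) = \exp(n q(u)) \cdot \exp\!\Big( \nabla q(u)\cdot(e_j - e_i) + \tfrac{1}{2n}(e_j-e_i)^\top \Hess q(u)(e_j - e_i) + O(1/n^2)\Big).
\]
I then expand the second exponential factor to first order in $1/n$. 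This makes $\mc C_n(\exp(nq(u)))$ equal to $\exp(nq(u))$ times a ratio of two sums; the leading $\exp(nq(u))$ and the leading-order parts of the ratio must combine to give $1$, which will be exactly the statement that $q$ solves the continuum equation at leading order; the $1/n$-correction to the ratio produces the $-\mc D(q(u))/n$ term.

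The key algebraic step is to recognize $F$ inside the resulting expressions. Writing $\alpha := \nabla q(u)$, the inner sum $\sum_{j=1}^D \exp(n q(u - \tfrac1n e_i + \tfrac1n e_j))$ equals $\exp(nq(u)) \exp(-e_i\cdot\alpha)\big(\sum_{j} \exp(e_j\cdot\alpha)\big)(1 + O(1/n))$, i.e.\ $\exp(nq(u))\exp(-e_i\cdot\alpha)\, e^{F(\alpha)}(1+O(1/n))$ by the formula $F(\alpha) = \log\sum_j \exp(e_j\cdot\alpha)$ from Theorem~\ref{thm:free_energy_torus}/\eqref{Fdef}. Its reciprocal, summed over $i$, then gives $\exp(-nq(u)) e^{-F(\alpha)} \sum_i \exp(e_i\cdot\alpha) (1 + O(1/n)) = \exp(-nq(u))(1+O(1/n))$ at leading order, so that $\mc C_n(\exp(nq(u))) = 1 + O(1/n)$ immediately. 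To extract the precise $1/n$ coefficient I would carry the second-order Taylor terms (the Hessian terms and the cross terms from squaring the first-order expansion of the exponential) through this computation and collect them. The bookkeeping is cleaner if one first notes that $\nabla F(\alpha) = \big(\sum_j \exp(e_j\cdot\alpha) e_j\big)/\sum_k \exp(e_k\cdot\alpha)$ and $\pd{}{a_k}\pd{}{a_\ell}F(\alpha)$ appear naturally when differentiating these edge-probability weights, so that the assembled $1/n$ term is $\sum_{k,\ell}\big(\pd{}{a_k}\pd{}{a_\ell}F\big)(\nabla q(u))\, \pd{^2 q}{x_k \partial x_\ell}(u) + \big(\text{first-derivative terms}\big)$, which is precisely $\text{div}\,\nabla F(\nabla q)(u) = \mc D(q(u))$ by the chain rule. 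One should double-check the sign: the $-\tfrac1n e_i$ shift sits on the ``outer'' reciprocal and $+\tfrac1n e_j$ on the ``inner'' one, and tracking which exponential gets which sign is where an error is most likely.

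The main obstacle I expect is not conceptual but the care required in the two-layered expansion: there are two nested sums, a reciprocal of a reciprocal, and one must expand to \emph{second} order in the $1/n$-small vectors $v_{ij}$ (since first-order terms cancel after recognizing $F$) while discarding genuinely $O(1/n^2)$ contributions. A clean way to manage this is to introduce $\phi_i(\alpha,\tfrac1n) := \tfrac1n \log \sum_{j} \exp(n q(u - \tfrac1n e_i + \tfrac1n e_j))$, show $\phi_i = q(u) - \tfrac1n e_i\cdot\nabla q(u) + \tfrac1n F(\nabla q(u)) + O(1/n^2)$ uniformly (using $C^2$ regularity of $q$ and compactness of $R$ to control the error), then analyze $\mc C_n = \exp(nq(u))\sum_i \exp(-n\phi_i)$, expanding once more. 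The uniformity of the $O(1/n^2)$ error over $u\in B\cap R_n$ follows from the boundedness of $\|q\|_{C^2(R)}$; this is the only place regularity of $q$ is used. Once the expansion is organized this way, matching to $\mc D(q)$ via the chain rule applied to $F(\nabla q)$ is routine.
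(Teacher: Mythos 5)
Your proposal follows essentially the same route as the paper: Taylor-expand $q(u-\tfrac1n e_i+\tfrac1n e_j)$ to second order, factor out $\exp(nq(u))$, recognize the softmax structure $\exp(e_i\cdot\alpha)/\sum_k\exp(e_k\cdot\alpha)$ with $\alpha=\nabla q(u)$ so the leading term sums to $1$, and match the collected $1/n$ coefficient against the explicit formula for $\mathrm{div}\,\nabla F(\nabla q)$. One small correction to your bookkeeping: there are no residual ``first-derivative terms'' in the end --- the $1/n$ coefficient is exactly the contraction $\tfrac12\sum_{i,j}\frac{e^{(e_i+e_j)\cdot\alpha}}{(\sum_k e^{e_k\cdot\alpha})^2}(e_j-e_i)^\top \mathrm{Hess}\,q\,(e_j-e_i)$, which equals $\sum_{k,\ell}\partial_{a_k}\partial_{a_\ell}F(\nabla q)\,\partial_{x_k}\partial_{x_\ell}q=\mc D(q)$ with no extra terms, precisely the verification the paper carries out at the end of its proof.
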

\begin{proof} 
We expand in $1/n$:
    \begin{align*}
        \mc C_n(\exp(n q(u)) &= \sum_{i=1}^D \exp(n q(u)) \bigg( \sum_{j=1}^D \exp(n q(u -(1/n) e_i + (1/n) e_j)\bigg)^{-1}\\
        &= \sum_{i=1}^D \bigg( \sum_{j=1}^D \exp(n (q(u -(1/n) e_i + (1/n) e_j)-q(u))\bigg)^{-1} \\
        &=\sum_{i=1}^D \bigg( \sum_{j=1}^D \exp((e_j-e_i)\cdot \nabla q(u) + \frac{1}{2n}(e_j-e_i)^t D^2 q(u) (e_j-e_i) + O(1/n^2) )\bigg)^{-1} 
    \end{align*}
    where $D^2 q$ is the Hessian matrix of $q$. Let $v_{ij} = e_j-e_i$ and $Q_{ij}(u)= (e_j-e_i)^t D^2 q(u) (e_j-e_i)$.
    \begin{align*}
        \mc C_n(\exp(n q(u)) &= \sum_{i=1}^D \bigg( \sum_{j=1}^D \exp(v_{ij}\cdot \nabla q(u) + \frac{1}{2n}Q_{ij}(u) + O(1/n^2) )\bigg)^{-1} \\
        &=\sum_{i=1}^D \bigg( \sum_{j=1}^D \exp(v_{ij}\cdot \nabla q(u)) (1  + \frac{1}{2n}Q_{ij}(u) + O(1/n^2) )\bigg)^{-1}\\
        &= \sum_{i=1}^D\bigg[ \frac{1}{\sum_{k=1}^D \exp(v_{ik}\cdot \nabla q(u))} - \frac{1}{2n} \frac{\sum_{j=1}^D \exp(v_{ij}\cdot \nabla q(u)) Q_{ij}}{\bigg(\sum_{k=1}^D  \exp(v_{ik}\cdot \nabla q(u))\bigg)^2} + O(1/n^2)\bigg]\\
        &= 1 -\frac{1}{2n} \sum_{i=1}^D \frac{\sum_{j=1}^D \exp(v_{ij}\cdot \nabla q(u)) Q_{ij}}{\bigg(\sum_{k=1}^D  \exp(v_{ik}\cdot \nabla q(u))\bigg)^2} + O(1/n^2)\\
        &= 1 -\frac{1}{2n} \frac{\sum_{i=1}^D \sum_{j=1}^D\exp((e_i+e_j)\cdot \nabla q(u)) Q_{ij}}{\bigg(\sum_{k=1}^D  \exp(e_k\cdot \nabla q(u))\bigg)^2} + O(1/n^2).
    \end{align*}
    To see that the order $1/n$ term above is $-\mc D(q)$ recall from Theorem~\ref{thm:free_energy_torus} and the prescription for weights gauge equivalent to uniform in \eqref{eq:edge_weights_d} that $F(\nabla q)$ is given by: 
    \begin{align}
        F(\nabla q) = \log \bigg( \sum_{j=1}^D \exp(e_j \cdot \nabla q)\bigg).
    \end{align}
    These are the weights for the free energy whose Legendre dual is $\sigma(s)$, where $s$ is the average flow per {white} vertex. Let $\eta_1,\dots,\eta_d$ denote unit vectors parallel to the coordinates $(x_1,\dots,x_d)$ for $\m R^d$, which are the coordinates we take the divergence in $\mc D(q)$ with respect to. Define $a_j^\ell$ to be the coefficients $e_j = \sum_{\ell=1}^d a_j^\ell \eta_\ell$. Then
    \begin{align*}
        \nabla F(\nabla q)_\ell = \frac{\sum_{j=1}^D a_j^\ell \exp(e_j \cdot \nabla q)}{\sum_{k=1}^D \exp(e_k \cdot \nabla q)}.
    \end{align*}
    We verify the final equality by taking the divergence. 
    \begin{align*}
       \text{div} \nabla F(\nabla q) &= \sum_{\ell=1}^d \pd{}{x_\ell} \frac{\sum_{j=1}^D a_j^\ell \exp(e_j \cdot \nabla q)}{\sum_{k=1}^D \exp(e_k\cdot \nabla q)} \\
       &= \sum_{\ell=1}^d \frac{1}{(\sum_{k=1}^D \exp(e_k\cdot \nabla q))^2}\\ 
       &\quad\times\bigg[ \sum_{j=1}^D a_j^\ell \sum_{n=1}^d a_j^n \frac{\partial^2 q}{\partial x_\ell \partial x_n} \exp(e_j\cdot \nabla q) \sum_{i=1}^d \exp(e_i\cdot \nabla q) \\
       &\quad\qquad - \sum_{j=1}^D a_j^\ell \exp(e_j\cdot \nabla q) \sum_{i=1}^D \sum_{m=1}^d a_i^m \frac{\partial^2 q}{\partial x_\ell \partial x_m} \exp(e_i\cdot \nabla q)\bigg]
    \end{align*}
    which we further simply to find
    \begin{align*}
       &= \sum_{i=1}^D \sum_{j=1}^D\frac{ \exp(e_j\cdot \nabla q) \exp(e_i \cdot \nabla q)}{(\sum_{k=1}^D \exp(e_k\cdot \nabla q))^2} \bigg[\frac{\partial^2 q}{\partial e_j^2} - \frac{\partial^2 q}{\partial e_j \partial e_i}\bigg]\\
       &= \frac{1}{2} \sum_{i=1}^D \sum_{j=1}^D\frac{ \exp(e_j\cdot \nabla q) \exp(e_i \cdot \nabla q)}{(\sum_{k=1}^D \exp(e_k\cdot \nabla q))^2} Q_{ij}.
    \end{align*}
\end{proof}

\begin{proof}[Proof of Theorem~\ref{thm:critical_gauge_scaling_limit}]
  Fix $\delta,\delta_2>0$ to be specified later. Let $U\subset R$ be an open ball of radius $\delta_2$ centered at $x_0$ and define $q_U(x) = \frac{1}{2}||x-x_0||^2$ {for $x\in U$}, and consider the perturbations 
  \begin{align*}
    H\pm \delta q_U.
  \end{align*}
  Note that $\pd{q_U}{x_i}(x) \leq \delta_2$ and that $\frac{\partial^2 q_U}{\partial x_i \partial x_j}$ is $1$ if $i=j$ and $0$ otherwise. Hence expanding $H\pm \delta q_U$ as $\delta\to 0$ and using that $\mc D(H)=0$, we find for $x\in U$: 
   \begin{align}\label{eq:delta_expansion}
      \mc D(H+\delta q_U) &=\delta \bigg[ {\sum_{i,j=1}^{ d}} \frac{\partial^2}{\partial \alpha_i \partial \alpha_j} F(\nabla H) +  {\sum_{i,j,k=1}^{ d}} \frac{\partial^3}{\partial \alpha_i \partial \alpha_j \partial \alpha_k} F(\nabla H) \frac{\partial^2 H}{\partial x_i\partial x_j} \pd{q_U}{x_k}\bigg] + O(\delta^2)\\
      &{ \geq} \delta \bigg[ {\sum_{i,j=1}^{ d}} \frac{\partial^2}{\partial \alpha_i \partial \alpha_j} F(\nabla H) - \delta_2 {\sum_{i,j,k=1}^{ d}} \bigg\lvert\frac{\partial^3}{\partial \alpha_i \partial \alpha_j \partial \alpha_k} F(\nabla H) \frac{\partial^2 H}{\partial x_i\partial x_j}\bigg\rvert\bigg] + O(\delta^2).\nonumber
  \end{align} 
  Let $\mc D_2^+(H,q_U)$ denote the order $\delta$ term in \eqref{eq:delta_expansion}. Note that the second line of the above equation depends only on $F,H$, and $\delta_2$. Since $F$ is strictly convex on all of $\m R^d$ and $H$ is real analytic, there exists $\delta_2$ small enough given $H,F$ such that $\mc D_2^+(H,q_U)>c>0$ for all $U$. Therefore $\mc D(H+\delta q_U)>0$ on $U$, and hence $H+\delta q_U$ is a supersolution to $\mc D(\cdot)=0$ on $U$.
  
  We analogously have the bound 
  \begin{align*}
    \mc D(H-\delta q_U) \leq -\delta \bigg[ {\sum_{i,j=1}^{ d}} \frac{\partial^2}{\partial \alpha_i \partial \alpha_j} F(\nabla H) + \delta_2 {\sum_{i,j,k=1}^{ d}} \bigg\lvert\frac{\partial^3}{\partial \alpha_i \partial \alpha_j \partial \alpha_k} F(\nabla H) \frac{\partial^2 H}{\partial x_i\partial x_j}\bigg\rvert\bigg] + O(\delta^2).
  \end{align*}
  Let $\mc D_2^-(H,q_U)$ to be the order $\delta$ term in the expansion above. Again by taking $\delta_2$ small enough, we can ensure that $\mc D_2^-(H,q_U)<c<0$, and hence $\mc D(H-\delta q_U)<0$ on $U$. Therefore $H-\delta q_U$ is a subsolution to $\mc D(\cdot) = 0$ on $U$.

    We now apply Proposition~\ref{prop:expanding_critical_equations} to see that 
    \begin{align*}
        \mc C_n(\exp(n(H\pm\delta q_U))) &= 1 - (1/n) \mc D(H+\delta q_U) + O(1/n^2)\\
        &= 1 - (1/n)[\mc D[H]\pm \delta \mc D_2^\pm (H,q_U)+O(\delta^2)+O(1/n^2)\\
        &= 1 \mp (\delta/n) \mc D_2^\pm(H,q_U) + O(\delta^2/n)+O(1/n^2).
    \end{align*}
    Take $\delta = n^{-1/2}$, and let $U_n = \frac{1}{n}\Lambda\cap U$. Then we have that 
    \begin{itemize}
        \item $\mc C_n(\exp(n(H+n^{-1/2} q_U))) < 1$ on $B\cap U_n$, hence $\exp(n(H+n^{-1/2} q_U))$ is a subsolution to $\mc C_n(\cdot) =1$ on $U_n$;
        \item $\mc C_n(\exp(n(H-n^{-1/2} q_U))) > 1$ on $B\cap U_n$,  hence $\exp(n(H-n^{-1/2} q_U))$ is a supersolution to $\mc C_n(\cdot) =1$ on $U_n.$
    \end{itemize}

  Let $g_n$ denote the solution to the critical gauge equation on all of $R_n$. Now fix $x\in R_n$ and a $\delta_2$ neighborhood $U$ containing $x$. By the maximum principle (Proposition~\ref{prop:maximum_principle}), and since $0\leq q_U(x) \leq \delta_2^2/2$, 
  \begin{align*}
      (1/n) \log g_n(x) - H(x) &\geq \min_{y\in \partial U_n} (1/n) \log g_n(y) - H(y) -n^{-1/2}q_U(y)\\
      &\geq  (1/n) \log g_n(x_{\min}^U) - H(x_{\min}^U) - n^{-1/2}\delta_2^2/2.
  \end{align*}
  where $x_{\min}^U$ achieves the minimum value on $\partial U_n$. Similarly, 
  \begin{align*}
      (1/n) \log g_n(x) - H(x) &\leq \max_{y\in \partial U_n} (1/n) \log g_n(y) - H(y) +n^{-1/2}q_U(y)\\
      &\leq  (1/n) \log g_n(x_{\max}^U) - H(x_{\max}^U) + n^{-1/2}\delta_2^2/2.
  \end{align*}
We now iterate this, applying the same estimate with new neighborhoods $U_1,U_2$ containing $x_{\min}^U,x_{\max}^U$. We repeat this process until both sequences of neighborhoods hit $\partial R_n$. The number of neighborhoods passed through in each sequence is at most $O(\delta_2^{-d})$. Thus there exists $u,u'\in \partial R_n$ and constants $C_1,C_2>0$ such that: 
\begin{align*}
   (1/n) \log g_n(u) - H(u) - C_1n^{-1/2}\delta_2^{2-d} &\leq  (1/n) \log g_n(x) - H(x) \\
   &\leq (1/n) \log g_n(u') - H(u') + C_2n^{-1/2}\delta_2^{2-d}.
\end{align*}
Recall that $\delta_2$ is a constant determined by $F$ and $H$ which is independent of $n$. Therefore since $(1/n) \log g_n$ and $H$ differ by $O(1/n)$ on $\partial R_n$, 
\begin{align*}
    |(1/n) \log g_n(x) - H(x) | = O(n^{-1/2})
\end{align*}
for any $x\in R_n$, which completes the proof. 

\end{proof}

\section{Computing limit shapes}\label{sec:limit_shapes}

From the results in previous sections, we have a few approaches for computing limit shapes:
\begin{itemize}
    \item solve the Euler-Lagrange equations from Section~\ref{sec:euler_lagrange}, either for the flow or gauge;
    \item compute the critical gauge functions on a sequence of graphs and take the scaling limit, applying Theorem~\ref{thm:critical_gauge_scaling_limit};
    \item in two dimensions, solve the Euler-Lagrange equations for the limiting height function.
\end{itemize}

Here we compute examples where the critical gauge functions turn out have nice closed form expressions in $n$, and use this to explicitly compute all of the equivalent data of the limit shape by applying Theorem~\ref{thm:critical_gauge_scaling_limit}. We compute: the limiting gauge function, the limit shape divergence-free flow, the limit shape gauge flow, and (in two dimensions) the limit shape height function. 

These examples are in fact the only examples we know of families of growing graphs where the critical gauge has a closed form expression in terms of $n$. Finding other examples, and understanding in general the properties a graph must have for its critical weights to have a closed form expression, is an interesting open question; see Question~\ref{q:closed_form_critical_gauge}.

In Section~\ref{sec:tangent_plane}, for two-dimensional lattices where there is a height function, we use the tangent plane method of \cite{KenyonPrause} to describe the limit shape height function as an envelope of $\kappa$-harmonically moving planes, where $\kappa$ is determined by the surface tension.

\subsection{Aztec diamond}

The first example we consider is the Aztec diamond, where the critical gauge functions for fixed $n$ were computed in \cite{KenyonPohoata}. The Aztec diamond limit shape height function is displayed in the Introduction, see Figure~\ref{fig:aztecs}. We embed $\m Z^2$ in the plane rotated by $45^\circ$, i.e., the vertices of the lattice are:
\begin{itemize}
    \item A set $B$ which is a copy of the standard $\m Z^2$ lattice
    \item A set $W$ which is $B$ translated by $(\frac{1}{2},-\frac{1}{2})$. 
\end{itemize}
We then add edges connecting nearest neighbors; this constructs a rotated copy of $\m Z^2$ with vertices $B\cup W$. The free energy for $\m Z^2$ with this embedding is 
\begin{align*}
    F(\alpha_1,\alpha_2) = \log ((\mathrm{e}^{\alpha_1/2}+\mathrm{e}^{-\alpha_1/2})(\mathrm{e}^{\alpha_2/2}+\mathrm{e}^{-\alpha_2/2})). 
\end{align*}
The Newton polytope $\mc N = [-1/2,1/2]^2$ and the surface tension is 
\begin{align*}
    \sigma(s_1,s_2) &= \frac{1+2s_1}{2}\log\bigg(\frac{1+2s_1}{2}\bigg) + \frac{1-2s_1}{2}\log\bigg(\frac{1-2s_1}{2}\bigg) \\
    &+ \frac{1+2s_2}{2}\log\bigg(\frac{1+2s_2}{2}\bigg) + \frac{1-2s_2}{2}\log\bigg(\frac{1-2s_2}{2}\bigg).
\end{align*}
See Figure~\ref{fig:surface_tensions}. As such, the Euler-Lagrange equation for the limit shape divergence free flow $\omega=(\omega_1,\omega_2)$ (as given in Theorem~\ref{thm:EL_equations}) is 
\begin{align}\label{eq:z2_flow_euler}
    \frac{4}{1-4\omega_1^2} \pd{\omega_1}{y} - \frac{4}{1-4 \omega_2^2} \pd{\omega_2}{x}= 0.
\end{align}
Finally, the Euler-Lagrange equation for the limiting gauge function $H(x,y)$ (as in Theorem~\ref{cor:EL_gauge_cor}) is 
\begin{align}\label{eq:z2_gauge_euler}
    \frac{\mathrm{e}^{H_{x}}H_{xx}}{(1+\mathrm{e}^{H_{x}})^2} +  \frac{\mathrm{e}^{H_{y}}H_{yy}}{(1+\mathrm{e}^{H_{y}})^2} =0.
\end{align}
We will see below (by taking the scaling limit of the critical gauge) that the limit shape divergence free flow and limiting gauge function for the Aztec diamond satisfy these PDEs.

To describe the critical gauge on the Aztec diamond, we introduce some coordinates. Label vertices so that $b\in B$ is denoted by its standard coordinate $(i,j)$, and $w\in W$ is denoted by $(i,j)$ if its actual position is $(i',j')=(i+1/2, j-1/2)$. See Figure~\ref{fig:Z2_labeled_neighbors}.
\begin{figure}[H]
    \centering
    \includegraphics[scale=0.7]{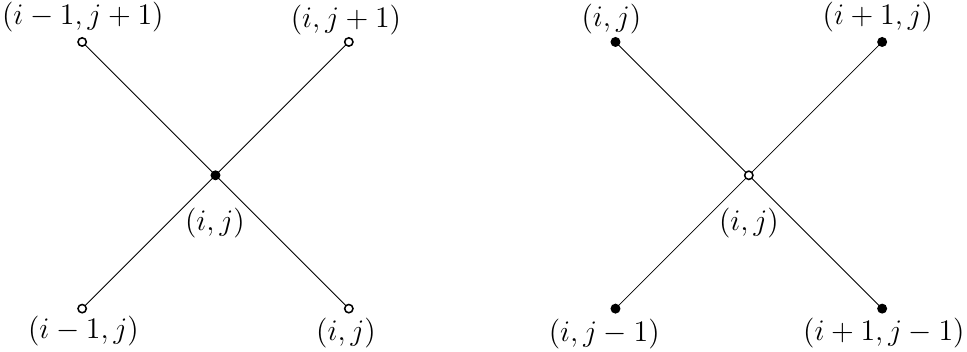}
    \caption{Neighbors of a black vertex (left) or white vertex (right) in these coordinates.}
    \label{fig:Z2_labeled_neighbors}
\end{figure}
The size $n$ Aztec diamond $\AD(n)$ has the vertices where $B_n\subset B$ is the $(n+1)\times n$ square $\{0,....,n\}\times \{0,....,n-1\}$ and $W_n\subset W$ is the $n\times (n+1)$ square $\{0,...,n-1\}\times \{0,...,n\}$. The critical gauge functions are
\begin{align}
    f_n(i,j) = \frac{n}{n+1} \frac{{n-1 \choose i}} {{n\choose j}}\qquad \text{and} \qquad g_n(i,j) = \frac{{n-1 \choose j}}{{n\choose i}}
\end{align}
at white and black vertices respectively. It is straightforward to check that these satisfy the critical gauge equations.

In this embedding of $\m Z^2$, the four edge directions are northeast, northwest, southeast, and southwest (NE, NW, SE, SW respectively) when viewed as a vector from white-to-black. The corresponding critical edge weights for the edge incident to the white vertex $(i,j)$ are:
\begin{align*}
   f_n(i,j) g_n(i+1,j) &= \frac{(i+1)(n-j)}{n(n+1)} \qquad \text{for NE edges}\\
    f_n(i,j)g_n(i,j) &= \frac{(n-i)(n-j)}{n(n+1)} \qquad \text{for NW edges}\\
    f_n(i,j) g_n(i+1,j-1) &= \frac{(i+1)j}{n(n+1)} \qquad \text{for SE edges}\\
    f_n(i,j) g_n(i,j-1) &= \frac{(n-i)j}{n(n+1)} \qquad \text{for SW edges}.
\end{align*}
To take the scaling limit, we multiply the points in $\AD(n)$ by ${1}/{n}$ so that they lie in the unit square $[0,1]^2$. If $(i_n,j_n)$ is a sequence of points such that $(i_n/n,j_n/n)\to (x,y)\in [0,1]$, then the edge weights at $(x,y)$ limit to
\begin{align*}
    (w_{\text{NE}}, w_{\text{NW}}, w_{\text{SE}}, w_{\text{SW}}) \to (x(1-y), (1-x)(1-y), xy, (1-x)y).
\end{align*}
Therefore the limiting divergence free flow $\omega=(\omega_1,\omega_2)$ is 
\begin{align}\label{eq:omega_aztec_diamond_direct}
    \omega &= 1/2\cdot (w_{\text{NE}}-w_{\text{NW}}+w_{\text{SE}}-w_{\text{SW}}, w_{\text{NE}}+w_{\text{NW}}-w_{\text{SE}}-w_{\text{SW}}) \\
    &= 1/2 \cdot (2x-1 , 1-2y). \nonumber
\end{align}
Since $\nabla h$ is the dual of $f$, $\nabla h = (-\omega_2,\omega_1) = 1/2\cdot (2y-1,2x-1)$. Hence the height function is $h(x,y) = \frac{1}{4} (2x-1) (2y-1)$ for $(x,y)\in [0,1]^2$. {Under a suitable the change of coordinates sending $[0,1]^2$ to $\{(u,v): |u|+|v|\leq 1\}$, $h(u,v) = u^2 - v^2$. See Figure~\ref{fig:aztecs}.}

We can also compute the scaling limit $H = \lim_{n\to \infty} \frac{1}{n} \log g_n$ and apply Theorem~\ref{thm:critical_gauge_scaling_limit}. We compute:
\begin{align*}
    H(x,y) = \lim_{n\to \infty} \frac{1}{n}\log g_n(nx, ny) = (1-x)\log(1-x) + x\log x -(1-y)\log(1-y) - y\log y,
\end{align*}
for $(x,y)\in[0,1]^2$. Note that this is $C^1$ on $(0,1)^2$. It is straightforward to check that this solves the gauge PDE in \eqref{eq:z2_gauge_euler}. From this, we find that the gauge flow is
\begin{align*}
    \alpha = \nabla H = (2\text{arctanh}(2x-1),2\text{arctanh}(1-2y)).
\end{align*}
Since $\nabla F(\alpha_1,\alpha_2) = (1/2\tanh(\alpha_1/2),1/2\tanh(\alpha_2/2))$ the limit shape divergence free flow is
\begin{align*}
    \omega(x,y) = \nabla F(\alpha(x,y)) = 1/2 \cdot (2x-1,1-2y),
\end{align*}
in agreement with \eqref{eq:omega_aztec_diamond_direct}. We also note that $\lim_{n\to \infty} \frac{1}{n} \log f_n = -H$.

\subsection{Weighted Aztec diamond}\label{weightedAD}
We can also consider the more general case of the Aztec diamond with nonconstant edge weights $a,b,c,d$ around each white vertex, so the free energy is 
\begin{align}\label{eq:free_energy_weighted}
    F(\alpha_1,\alpha_2) = \log(a \mathrm{e}^{\alpha_1/2+\alpha_2/2} + ba \mathrm{e}^{-\alpha_1/2+\alpha_2/2}+c \mathrm{e}^{-\alpha_1/2-\alpha_2/2}+d\mathrm{e}^{\alpha_1/2-\alpha_2/2})
\end{align}
While there is no simple expression for the critical gauge functions, the scaling limit $H$ should solve a gauge PDE $\text{div}(\nabla F(\nabla H))=0$, analogous to \eqref{eq:z2_gauge_euler}. From \eqref{eq:free_energy_weighted} we find that this PDE is
\begin{align}\label{eq:gauge_pde_weighted}
    (a\mathrm{e}^{H_y}+d \mathrm{e}^{-H_y})(b\mathrm{e}^{H_y}+c\mathrm{e}^{-H_y})H_{xx} + 2(ac-bd)H_{xy} + (a\mathrm{e}^{H_x}+b\mathrm{e}^{-H_x})(d\mathrm{e}^{H_x}+c\mathrm{e}^{-H_x})H_{yy}=0.
\end{align}
In particular this has a solution of the form $H(x,y) = f(x)+g(y)$ which happens to have the correct boundary values for the Aztec diamond. The limit shape height function $h(x,y)$ is related to $H$ solving \eqref{eq:gauge_pde_weighted} by $\nabla h(x,y) = \ast \nabla F(\nabla H)) = (-\omega_2,\omega_1)$, where $\omega= (\omega_1,\omega_2) = \nabla F(\nabla H)$. 

From this, for $\lambda := ac/bd >1$ we find the explicit formula for the height function on the unit square $(x,y)\in [\frac{\log(b/c)}{\log\lambda},\frac{\log(a/d)}{\log\lambda}]\times[\frac{\log(b/a)}{\log\lambda},\frac{\log(c/d)}{\log\lambda}]$ is (up to an additive constant)
\be\label{wAD}
 h(x,y)=-2\frac{\log\left(a\lambda^{-\frac{x}2+\frac{y}2}-b\lambda^{-\frac{x}2-\frac{y}2}+c\lambda^{\frac{x}2-\frac{y}2}-d\lambda^{\frac{x}2+\frac{y}2}\right)}{\log\lambda}.
\ee

\subsection{Aztec cuboid}\label{sec:discrete_3D}\label{sec:aztec_cuboid}

The \textit{Aztec cuboid} $\text{AC}(a,b,c)$ is a region in the three-dimensional $\bcc$ lattice (body-centered cubic lattice) which generalizes the Aztec diamond. See Figure~\ref{fig:cuboid} in the Introduction, which shows the flows lines limit shape divergence-free flow we calculate here. This construction can be generalized to body-centered cubic lattices in any dimension $d$. 

First we describe the $\bcc$ lattice. Take $B=\m Z^3$ and $W=\m Z^3 + (\frac{1}{2},\frac{1}{2},\frac{1}{2})$ and connect nearest neighbors (as we saw in the previous section, this construction applied to $\m Z^2$ produces a rotated copy of $\m Z^2)$. For this lattice, the free energy is 
\begin{align*}
    F(\alpha_1,\alpha_2,\alpha_3) = \log((\mathrm{e}^{\alpha_1/2}+\mathrm{e}^{-\alpha_1/2})(\mathrm{e}^{\alpha_2/2}+\mathrm{e}^{-\alpha_2/2})(\mathrm{e}^{\alpha_3/2}+\mathrm{e}^{-\alpha_3/2})).
\end{align*}
The Newton polytope is $\mc N = [-1/2,1/2]^3$ and the surface tension is 
\begin{align*}
    \sigma(s_1,s_2,s_3) = \sum_{i=1}^3 \frac{1+2s_i}{2}\log\bigg( \frac{1+2s_i}{2} \bigg) +  \frac{1-2s_i}{2}\log\bigg( \frac{1-2s_i}{2} \bigg).
\end{align*}
The three Euler-Lagrange equations for the divergence-free flow (as in Theorem~\ref{thm:EL_equations}) are
\begin{align}\label{eq:bcc_flow_euler}
    \frac{4}{1-4\omega_1^2} \pd{\omega_1}{y} - \frac{4}{1-4 \omega_2^2} \pd{\omega_2}{x}= 0, \quad \frac{4}{1-4\omega_1^2} \pd{\omega_1}{z} - \frac{4}{1-4 \omega_3^2} \pd{\omega_3}{x}= 0, \quad \frac{4}{1-4\omega_2^2} \pd{\omega_1}{z} - \frac{4}{1-4 \omega_3^2} \pd{\omega_3}{y}= 0.
\end{align}
The Euler-Lagrange equation for the gauge function $H(x,y,z)$ is 
\begin{align}\label{eq:bcc_gauge_euler}
    \frac{\mathrm{e}^{H_{x}}H_{xx}}{(1+\mathrm{e}^{H_{x}})^2} +  \frac{\mathrm{e}^{H_{y}}H_{yy}}{(1+\mathrm{e}^{H_{y}})^2} + \frac{\mathrm{e}^{H_{z}}H_{zz}}{(1+\mathrm{e}^{H_{z}})^2} =0.
\end{align}
To construct the Aztec cubiod, take positive integers $a,b,c$ satisfying\footnote{A general integer solution to \eqref{eq:abc} can be found by replacing $a,b,c$ with $\frac{x-1}{2}, \frac{x+y-3}{2}, \frac{x+z-3}{2}$ where $x$ is odd and $y,z$ are even. This reduces the equation to $yz = x^2-1$. Given an odd $x$, choose $y$ any divisor of $x^2-1$ such that $(x^2-1)/y$ is even.}
\begin{align}\label{eq:abc}
   (a+1)(b+1)(c+1 = a (b+2)(c+2).
\end{align}
We define the \textit{Aztec cuboid} $\text{AC}(a,b,c)\subset \bcc $ for integers satisfying \eqref{eq:abc} to be the region $B_{a,b,c}\cup W_{a,b,c}$, where
\begin{align*}
    B_{a,b,c} &= \{0,\dots, a+1\}\times \{0,\dots,b\} \times \{0,\dots, c\}\subset B\\
    W_{a,b,c} &= \{0',\dots, (a-1)'\} \times \{(-1)',0',\dots,b'\} \times \{(-1)',0',\dots,c'\}\subset W,
\end{align*}
where $(i',j',k') = (i+1/2,j+1/2,k+1/2)$. The critical gauge functions $g_{a,b,c},f_{a,b,c}$ for black and white vertices respectively are given by
\begin{align*}
    g_{a,b,c}(i,j,k) &= \frac{{a\choose i}}{{b\choose j}{c\choose k}}\\
    f_{a,b,c}(i,j,k) &= \frac{bc}{(b+1)(c+1)} \frac{{b-1\choose j}{c-1\choose k}}{{a+1\choose i+1}}
\end{align*}
Note that the black vertices adjacent to the white vertex $(i,j,k)$ are, in each direction $(\pm 1,\pm 1,\pm 1)$: 
\\

{\tiny\centerline{\begin{tabular}{c|c|c|c|c|c|c|c}
    $(-1,-1,-1)$ & $(-1,-1,1)$ & $(-1,1,-1)$ & $(1,-1,-1)$ & $(-1, 1, 1)$ & $(1,-1,1)$ & $(1,1,-1)$ & $(1,1,1)$\\
  \hline
   $(i,j,k)$ & $(i,j,k+1)$ & $(i,j+1,k)$ & $(i+1,j,k)$ & $(i,j+1,k+1)$ & $(i+1,j,k+1)$ & $(i+1,j+1,k)$ & $(i+1,j+1,k+1)$
\end{tabular}}
}
\vspace{0.25cm}

The corresponding critical edge weights for the eight directions incident to the white vertex $(i,j,k)$ are therefore: 
\begin{align*}
    f_{a,b,c}(i,j,k) g_{a,b,c}(i,j,k) &= \frac{(i+1)(b-j)(c-k)}{(a+1)(b+1)(c+1)} \quad \text{in the $(-1,-1,-1)$ direction}\\
    f_{a,b,c}(i,j,k) g_{a,b,c}(i,j,k+1) &= \frac{(i+1)(b-j)(k+1)}{(a+1)(b+1)(c+1)} \quad \text{in the $(-1,-1,1)$ direction}\\
    f_{a,b,c}(i,j,k) g_{a,b,c}(i,j+1,k) &= \frac{(i+1)(j+1)(c-k)}{(a+1)(b+1)(c+1)} \quad \text{in the $(-1,1,-1)$ direction}\\
    f_{a,b,c}(i,j,k) g_{a,b,c}(i+1,j,k) &= \frac{(a-i)(b-j)(c-k)}{(a+1)(b+1)(c+1)} \quad \text{in the $(1,-1,-1)$ direction}\\
    f_{a,b,c}(i,j,k) g_{a,b,c}(i,j+1,k+1) &= \frac{(i+1)(j+1)(k+1)}{(a+1)(b+1)(c+1)} \quad \text{in the $(-1,1,1)$ direction}\\
    f_{a,b,c}(i,j,k) g_{a,b,c}(i+1,j,k+1) &= \frac{(a-i)(b-j)(k+1)}{(a+1)(b+1)(c+1)} \quad \text{in the $(1,-1,1)$ direction}\\
    f_{a,b,c}(i,j,k) g_{a,b,c}(i+1,j+1,k) &= \frac{(a-i)(j+1)(c-k)}{(a+1)(b+1)(c+1)} \quad \text{in the $(1,1,-1)$ direction}\\
    f_{a,b,c}(i,j,k) g_{a,b,c}(i+1,j+1,k+1) &= \frac{(a-i)(j+1)(k+1)}{(a+1)(b+1)(c+1)} \quad \text{in the $(1,1,1)$ direction}
\end{align*}
To take the scaling limit, we choose a sequence $(a_n,b_n,c_n)$ satisfying \eqref{eq:abc} such that $(a_n/n,b_n/b,c_n/c)\to (A,B,C)$ as $n\to \infty$. By \eqref{eq:abc}, $A,B,C$ must satisfy ${1}/{A}={1}/{B}+{1}/{C}$. The limit shape for these regions $\omega=(\omega_1,\omega_2,\omega_3)$ is the flow where $\omega_1$ is the net flow in the $(1,0,0)$ direction, and so on. Taking the scaling limit of the edge weights above, we get that 
\begin{align}\label{eq:omega_bcc_direct}
    \omega = \left(1-\frac{2x}A, \frac{2y}B-1, \frac{2z}C-1\right).
\end{align}
This is divergence free because $1/A = 1/B+1/C$. See Figure~\ref{fig:cuboid} for an example. One can also check that $\omega$ satisfies the system of Euler-Lagrange equations for this lattice in \eqref{eq:bcc_flow_euler}.

We can also compute the scaling limit of the critical gauge functions $g_{a_n,b_n,c_n}$ and use this to compute $\omega$ via Theorem~\ref{thm:critical_gauge_scaling_limit}. If we take $(a_n,b_n,c_n)\approx (A n, B n, Cn)$, then 
\begin{align*}
    H(x,y,z) =& \lim_{n\to \infty}\frac{1}{n}g_{a_n,b_n,c_n}(xn,yn,zn) \\
    =\;& A \log A- x\log x - (A-x)\log(A-x) \\
    - &B\log B + y \log y + (B-y) \log (B-y) \\
    - &C\log C + z\log z + (C-z)\log(C-z).
\end{align*}
Note that this is $C^1$ on $(0,A)\times (0,B)\times (0,C)$. It is again straightforward to check that this solves the gauge PDE in \eqref{eq:bcc_gauge_euler}. From this we see that the gauge flow is 
\begin{align*}
    \alpha = \nabla H = (\log((A-x)/x), -\log((B-y)/y), -\log((C-z)/z)).
\end{align*}
Since $\nabla F(\alpha)=(1/2\tanh(\alpha_1/2),1/2\tanh(\alpha_2/2),1/2\tanh(\alpha_3/2))$, the limit shape divergence free flow is 
\begin{align*}
    \omega = \nabla F(\alpha) = \left(1-\frac{2x}A, \frac{2y}B -1, \frac{2z}C-1\right),
\end{align*}
in agreement with \eqref{eq:omega_bcc_direct}.

\subsection{Truncated orthants} 

For any dimension $d$, there is a $d-1$ dimensional lattice $\mc D_{d-1}$ which can be seen as a ``diagonal slice'' of $\m Z^d$. Namely, the vertices are 
\begin{align*}
    \mc D_{d-1}= \{(v_1,\dots,v_d) \in \m Z^d : \sum_i v_i = 0\text{ or 1}\}
\end{align*}
and the edges are those inherited from $\m Z^d$. In particular, $\mc D_2$ is the honeycomb lattice, and this family can be seen as generalization of the honeycomb lattice to arbitrary dimension. The three-dimensional case $\mc D_3$ is called the \textit{diamond cubic} lattice. 

While these are outside the scope of our main theorems (since we deal only with finite graphs), we give an example of a family of infinite graphs in $\mc D_d$ which have explicit critical gauge functions. We give these formulas explicitly for $d=2,3$.

\subsubsection{Example in the honeycomb lattice}\label{truncorth}

This is a generalization of an example from \cite{KenyonPohoata}. 
Here we take a linear image of the standard honeycomb lattice so that black vertices are at integer points, and the white vertices at $\Z^2+(\frac13,\frac13)$. Fix a positive integer $K$.

Consider the graph $G_K$ where the black vertices are $\{(i,j)~|~i,j\ge0, i+j\ge K\}$ and white vertices $\{(i+\frac13,j+\frac13)~|~i,j\ge0, i+j\ge K-1\}$, see Figure \ref{hexbinom}.
\begin{figure}[htbp]
\begin{center}
\includegraphics[width=3in]{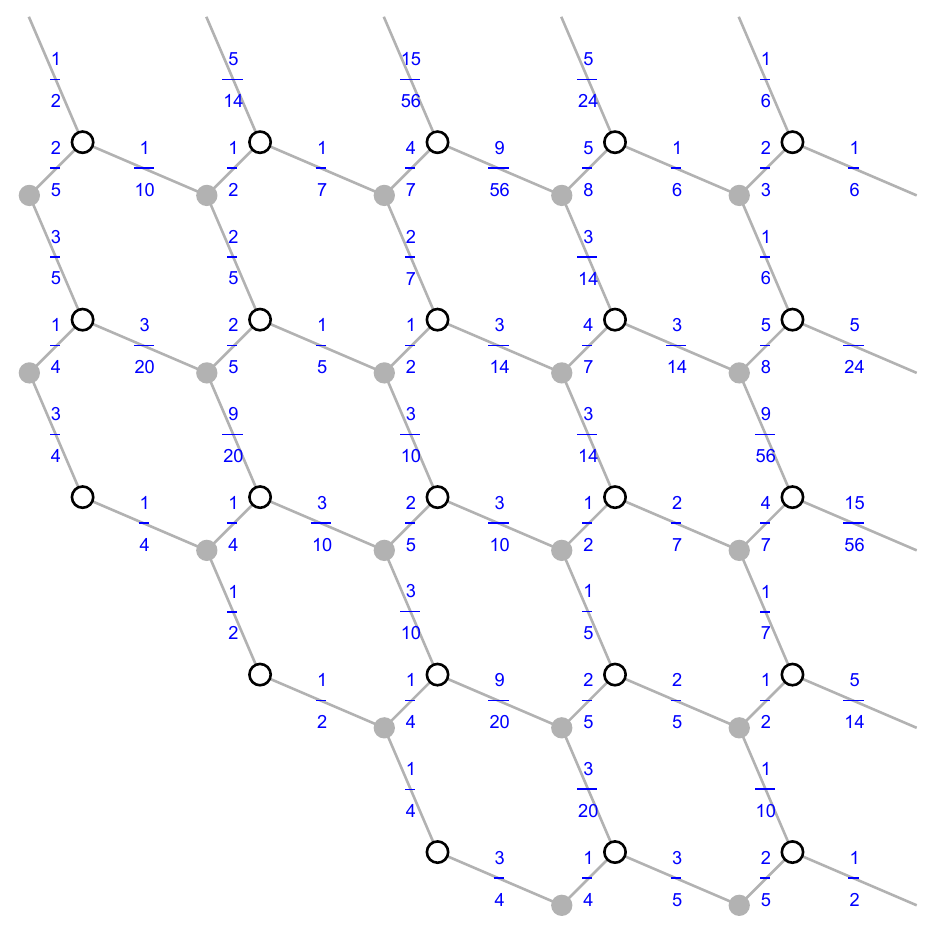}\hskip1cm\includegraphics[width=3in]{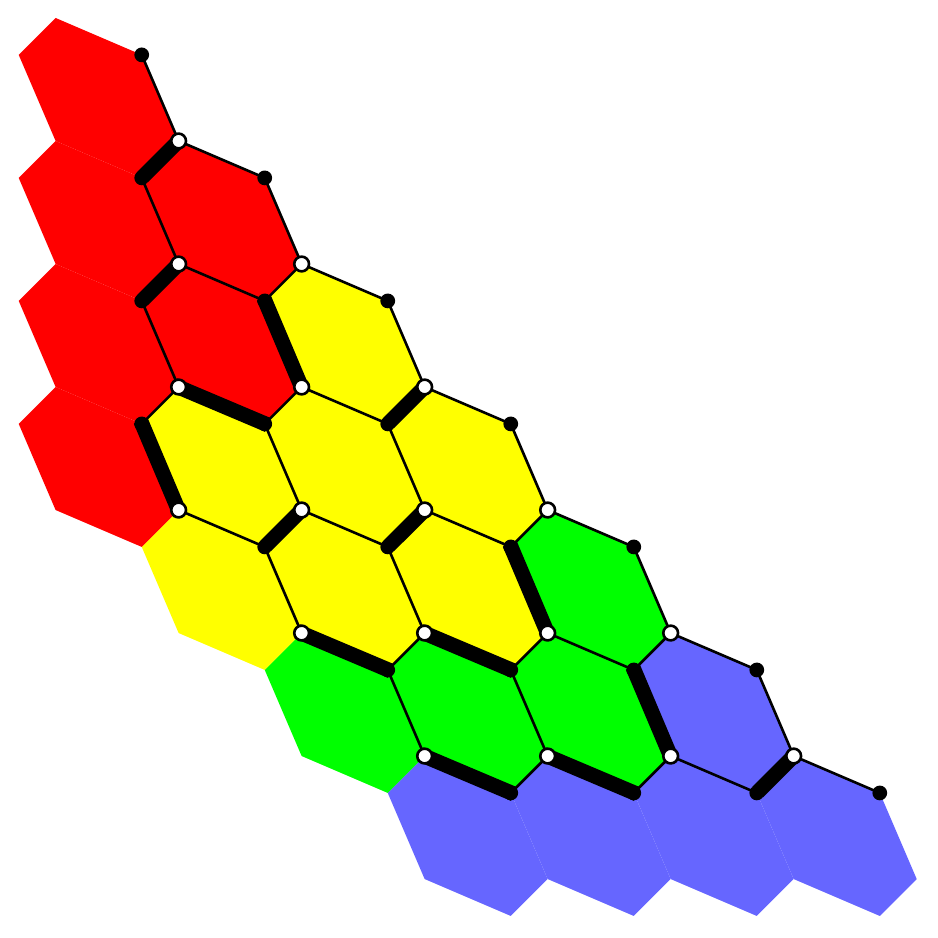}
\end{center}
\caption{\label{hexbinom} Truncated quadrant in the honeycomb graph, with $K=3$; the critical edge weights (left) and ``Polya corners process'' (right).}
\end{figure}

The critical gauge functions are
\begin{align*}b_{K}(i,j)&=\binom{i+j}{j}^{-1}\frac{K^{i+j}}{(i+j+1)(i+j-K)!}\\
w_K(i+\frac13,j+\frac13)&=\binom{i+j}{j}\frac{(i+j+1-K)!}{K^{i+j}}
\end{align*}
at black and white vertices respectively. 

This comes from a construction that relates dimer covers of these graphs to the classical \textit{Polya's urn}, as follows. Put $K+1$ balls of distinct colors in an urn; at each future time step, select one ball at random and replace it along with another ball of the same color.
Letting $n_i$ be the number of balls of color $i$ at time $t$, the distribution of $(n_1,\dots,n_{K+1})$ is always uniform over all possibilities $\{n_1+\dots+n_{K+1}=K+1+t\}$. 

Given an instance of this process, one can build a single dimer cover of $G_K$ as follows. First, we color the faces of the graph with colors $1,\dots,K+1$, monotonically increasing to the northwest, so that the $t$-th diagonal has $n_i$ faces of each color. Next, we place a dimer on the diagonal edges at $i+j=K+1+t$ between adjacent faces of the same color. (This rule determines all diagonal edges, and thus the entire dimer cover, see the figure.) The critical edge weights given above are exactly the edge probabilities for this dimer measure. Note that on a given diagonal row all diagonal edge weights are the same; this is a consequence of the uniformity of the urn process.

\subsubsection{Example in the diamond cubic lattice}

The Polya urn construction above can be generalized to any dimension; here we give the resulting formulas for $d=3$. We view the diamond cubic lattice $\mc D_3$ embedded in $\m R^3$ so that its vertices are the union of $B=\Z^3$ and $W=\Z^3+(\frac14,\frac14,\frac14)$, with edges connecting
$b_{i,j,k}$ to the four vertices
$w_{i,j,k}$, $w_{i-1,j,k}$, $w_{i,j-1,k}$, and $w_{i,j,k-1}$.

Let $\m N_K^3$ be the subgraph of $\Z^3$ consisting of black vertices $b_{i,j,k}=(i,j,k)\in\Z^3$ with $i,j,k\ge 0$ and $i+j+k\ge K$ and white vertices $w_{i,j,k}=(i+\frac14,j+\frac14,k+\frac14)$ with integer $i,j,k\ge 0$ and $i+j+k\ge K-1$ . This is the graph of the ``truncated orthant".
Let $L_K$ be the corresponding region in the diamond cubic lattice, namely, the black vertices in $\m N_{K}^3$ and white vertices connected to them by an edge of $\mc D_3$. 

The critical gauge functions $w_Kb_K$ on $L_K$ for white and black vertices respectively are
\begin{align*}
    w_K(w_{i,j,k}) &={i+j+k \choose{i,j}} u_{i+j+k}\\
        b_K(b_{i,j,k}) &= {i+j+k \choose{i,j}}^{-1} v_{i+j+k}
\end{align*}
where for $\ell\ge K$, the coefficients satisfy  
\begin{align*}(u_{\ell-1}+u_\ell)v_\ell&=1\\
u_\ell(v_\ell+\frac{\ell+3}{\ell+1}v_{\ell+1})&=1
\end{align*}
with $v_{K-1}=0$. This leads to 
$$u_\ell = \frac{(\ell-K+1)!(\ell+K+2)!}{K^{\ell}(K+1)^{\ell}(2K+1)!}$$
$$v_\ell = \frac{(2K+1)!K^{\ell}(K+1)^{\ell}}{(\ell+1)(\ell+2)(\ell+K+1)!(\ell-K)!}.$$

The scaling limit of these is a divergence free flow. To see this, let $i=xK, j=yK,k=zK$ so the domain becomes $\{(x,y,z)~|~ x+y+z\ge 1, x,y,z\ge 0\}$. As above, let $\ell = i+j+k$. Then in the scaling limit, the critical gauge goes to the vector field is $\omega = (\omega_1,\omega_2,\omega_3)$ given by
\begin{align*}\omega_1 &=  w_K(w_{i,j,k})b_K(b_{i+1,j,k}) =  \frac{(i+1)K(K+1)}{(\ell+1)(\ell+2)(\ell+3)}\to  \frac{x}{(x+y+z)^3}\\
\omega_2 &=  w_K(w_{i,j,k})b_K(b_{i,j+1,k}) =  \frac{(j+1)K(K+1)}{(\ell+1)(\ell+2)(\ell+3)}\to  \frac{y}{(x+y+z)^3}\\
\omega_3 &= w_K(w_{i,j,k})b_K(b_{i,j,k+1}) =  \frac{(k+1)K(K+1)}{(\ell+1)(\ell+2)(\ell+3)}\to  \frac{z}{(x+y+z)^3}.
\end{align*}
Note that $\omega$ divergence free.

\subsection{Tangent plane method for 2D limit shapes}\label{sec:tangent_plane}

The tangent plane method of \cite{KenyonPrause} gives a general method to find the solutions to two-dimensional gradient variational problems, describing the limit shape height function $h$ in the \textit{liquid region} $L = \{(x_1,x_2)\in R : \nabla h(x_1,x_2) \in \mc N^\circ\}$ as an envelope of planes. By Theorem~\ref{thm:no_facets}, $L = R^\circ$ for multinomial limit shapes. We apply the results of \cite{KenyonPrause} to give a general solution for 2D multinomial limit shapes as an envelope of planes satisfying the boundary conditions.

In this section we use the notation that $\nabla h=(s,t)$, and write the relavent functions (surface tension, etc) in these coordinates. If $h: R\to \m R$ is a smooth function, the graph of $h$ is the envelope of its family of tangent planes.
The corresponding family of planes $P_{(x,y)}$ is given by
\begin{align*}
    P_{(x,y)} = \{x_3 = sx+ty + c\}, \qquad c = h(x,y) - (sx+ty); \quad \nabla h(x,y) = (s,t).
\end{align*}
For our functions $h$, it is natural to consider the coefficients $s,t,c$ as functions of the underlying conformal coordinate 
$z$ rather than $x,y$. Then they are each $\kappa$-harmonic real functions of $z$.

Given a position-valued \textit{conductance function} $\kappa: \m R^2\to \m R$, a \emph{$\kappa$-harmonic function} is a function $w$ which solves $\text{div}(\kappa\, \nabla w) = 0$. A $\kappa$-harmonically moving family of planes is one where the coefficients $s=s(z),t=t(z)$, and $c=c(z)$ are $\kappa$-harmonic functions of a complex coordinate $z$. 

As an application of \cite[Theorem 3.2]{KenyonPrause} we get the following.
\begin{thm}
Fix $(R,b)\subset \m R^2$ flexible and let $h_b$ be the corresponding boundary height function. 
Let $\Lambda = \m Z^2$ or the honeycomb lattice. The multinomial limit shape height function with boundary condition $h\mid_{\partial R}=h_b$ is a smooth function which is the envelope of $\kappa$-harmonically moving planes, where $z$ is a conformal coordinate in the Riemannian metric determined by $\sigma$ and $\kappa = \sqrt{\det \text{Hess}(\sigma)}$. Concretely,
    \begin{itemize}
        \item For $\m Z^2$ embedded with edge vectors $(\pm 1/2, \pm 1/2)$, $\mc N = \{(s,t)\in [-1/2,1/2]^2\}$. A conformal coordinate $z = u+i v$ for $(u,v)\in[0,\pi]^2$ is given by $$(u,v) = (\arccos(2s),\arccos(2t))$$ and $\kappa(u,v) = \frac{4}{\sin(u)\sin(v)}$. 
        \item For the honeycomb lattice, $\mc N =\{(s,t): 0\leq s\leq 1, 0\leq t\leq s\}.$ A conformal coordinate $z = u+iv\in\H$ is given by 
        \begin{align}
            (u,v) = \bigg( \frac{s-t(1-s-t)}{(1-t)^2}, \frac{2\sqrt{st(1-s-t}}{(1-t)^2}\bigg)
        \end{align}
        and $\kappa(u,v) = \frac{(1+ \sqrt{u^2 + v^2} + \sqrt{(u-1)^2 + v^2})^2}{2 v}$.
    \end{itemize}
\end{thm}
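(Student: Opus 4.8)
The plan is to deduce the theorem as a direct application of \cite[Theorem 3.2]{KenyonPrause}, whose hypotheses are a $2d$ gradient variational problem with a strictly convex, smooth surface tension on the interior of the Newton polytope, a flexible boundary condition, and (crucially) the absence of frozen facets so that the liquid region fills $R^\circ$. The latter two we already have in hand: $\sigma$ is strictly convex on all of $\mc N(\Lambda)$ (Corollary~\ref{thm:strict_convex}), smooth on $\mc N(\Lambda)^\circ$ with non-degenerate Hessian there, and Theorem~\ref{thm:no_facets} together with Corollary~\ref{cor:2Dsmooth} guarantees that for flexible $(R,b)$ the limit shape $h$ is smooth and $\nabla h$ is valued in $\mc N(\Lambda)^\circ$ on all of $R^\circ$, i.e. $L = R^\circ$. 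So the first step is simply to quote \cite[Theorem 3.2]{KenyonPrause} to conclude that $h$ is the envelope of a family of planes whose coefficients $s(z), t(z), c(z)$ are $\kappa$-harmonic in a conformal coordinate $z$ for the Riemannian metric $\operatorname{Hess}(\sigma)$ on $\mc N(\Lambda)^\circ$, with $\kappa = \sqrt{\det \operatorname{Hess}(\sigma)}$; this part is essentially a black-box citation and requires no new work.

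The substantive content is the two explicit computations. For $\m Z^2$ embedded with edge vectors $(\pm\tfrac12,\pm\tfrac12)$, one has from \eqref{eq:multinomial_Z2} (in the $(s,t)$ notation) $\sigma(s,t) = \tfrac{1+2s}{2}\log\tfrac{1+2s}{2} + \tfrac{1-2s}{2}\log\tfrac{1-2s}{2} + (\text{same in }t)$. Computing the Hessian, $\sigma_{ss} = \tfrac{4}{1-4s^2}$, $\sigma_{tt} = \tfrac{4}{1-4t^2}$, $\sigma_{st}=0$, so $\operatorname{Hess}(\sigma)$ is diagonal and the metric splits as a product. Under the substitution $2s = \cos u$, $2t = \cos v$, one checks $\sigma_{ss}\,ds^2 = du^2$ and $\sigma_{tt}\,dt^2 = dv^2$ (up to constants absorbed into the coordinate), so $z = u + iv$ with $(u,v) = (\arccos 2s, \arccos 2t) \in [0,\pi]^2$ is conformal, and $\kappa = \sqrt{\sigma_{ss}\sigma_{tt}} = \tfrac{4}{|\sin u|\,|\sin v|}$, which on $[0,\pi]^2$ is $\tfrac{4}{\sin u \sin v}$. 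For the honeycomb lattice one instead takes $\sigma$ equal to the multinomial honeycomb surface tension (the restriction of the $\m Z^3$ surface tension to a triangular face, as noted after \eqref{Fdef}), writes $\operatorname{Hess}(\sigma)$ in the $(s,t)$ coordinates on $\mc N = \{0\le t\le s\le 1\}$, and verifies that the given rational change of variables $(u,v) = \big(\tfrac{s - t(1-s-t)}{(1-t)^2}, \tfrac{2\sqrt{st(1-s-t)}}{(1-t)^2}\big)$ pulls the metric back to a conformal multiple of $|dz|^2$ on the upper half-plane $\H$; the conformal factor then gives $\kappa(u,v) = \tfrac{(1 + \sqrt{u^2+v^2} + \sqrt{(u-1)^2+v^2})^2}{2v}$. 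In both cases the verification is a direct (if slightly tedious) computation: evaluate the two second derivatives of the stated coordinate map, form the pullback metric $\sum \sigma_{ij}\,dx^i dx^j$, and check that the off-diagonal term vanishes and the two diagonal terms agree — this certifies conformality — and then read off $\kappa$ either as $\sqrt{\det\operatorname{Hess}\sigma}$ directly or as the conformal factor.

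I expect the honeycomb computation to be the main obstacle: unlike the $\m Z^2$ case, the surface tension there is not a separable sum, its Hessian is not diagonal, and the conformal coordinate is a genuinely nontrivial algebraic map (it is essentially the one appearing in the classical honeycomb/Lozenge-tiling literature, so the cleanest route is probably to import the known conformal structure on the honeycomb amoeba rather than to grind out the pullback from scratch). A secondary technical point worth flagging is matching normalizations: the coordinate $z$ is only determined up to a conformal automorphism of the target domain, and the conductance $\kappa$ transforms accordingly, so one should fix the normalization (e.g. by specifying the coordinate map as stated and declaring $\kappa$ to be the resulting conductance) and note that the envelope description of $h$ is independent of this choice. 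Finally one should remark that \cite[Theorem 3.2]{KenyonPrause} also supplies the boundary-matching: the $\kappa$-harmonic functions $s,t,c$ are the ones whose associated envelope agrees with $h_b$ on $\partial R$, and uniqueness of the limit shape (Corollary~\ref{cor:limit_shape}) identifies this envelope with the multinomial limit shape.
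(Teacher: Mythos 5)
Your proposal is correct and follows essentially the same route as the paper: quote \cite[Theorem 3.2]{KenyonPrause} after using Theorem~\ref{thm:no_facets} and Corollary~\ref{cor:2Dsmooth} to guarantee the liquid region is all of $R^\circ$, then compute $\mathrm{Hess}(\sigma)$, the conformal coordinate, and $\kappa=\sqrt{\det\mathrm{Hess}(\sigma)}$ explicitly for each lattice. The explicit checks you outline (e.g.\ $\sigma_{ss}\,\dd s^2=\dd u^2$ under $2s=\cos u$, and the honeycomb change of variables) match the computations carried out in Section~\ref{sec:tangent_plane}.
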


In the next subsections we explain how to compute the conformal coordinate $z$ for each lattice, and use these coordinates to compute limit shapes in some examples. 
Note that $w(z)$ being $\kappa$-harmonic is equivalent to the statement that $\tilde w(z)$ solves the Schr\"odinger equation 
\begin{align}\label{eq:schrodinger}
    (-\Delta + q)(\tilde{w}) =0,
\end{align}
where $\psi = \kappa^{1/2}$, $q = \frac{\Delta \psi}{\psi}$ and $\tilde{w} = \psi w$. 

\subsubsection{Square lattice and Aztec diamond} 

We embed $\m Z^2$ in the plane so that its edges are parallel to the directions $(\pm 1/2,\pm 1/2)$. In these coordinates the Newton polygon $\mc N= [-1/2,1/2]^2$ and the multinomial dimer surface tension in terms of $\nabla h = (s,t)$ is
\begin{align*}
    \sigma(s,t) = \frac{1+2s}{2} \log\bigg(\frac{1+2s}{2}\bigg) + \frac{1-2s}{2} \log\bigg(\frac{1-2s}{2}\bigg) + \frac{1+2t}{2} \log\bigg(\frac{1+2t}{2}\bigg) +\frac{1-2t}{2} \log\bigg(\frac{1-2t}{2}\bigg).
\end{align*} 
The Hessian matrix is 
{\begin{align*}
    \text{Hess}(\sigma) = \bigg(\begin{array}{cc}
        \frac{4}{1-4s^2} & 0 \\
        0 & \frac{4}{1-4t^2}
    \end{array}\bigg).
\end{align*}}
The Riemannian metric on $\mc N$ determined by $\sigma$ is: 
$$
    m = \sigma_{ss} \dd s^2 + 2 \sigma_{st} \dd s \dd t + \sigma_{tt} \dd t^2 = \frac{4\dd s^2}{1-4s^2} + \frac{4\dd t^2}{1-4t^2}. 
$$
A conformal coordinate is $z = u+iv$ is one which makes $m = e^g(\dd u^2 + \dd v^2)$ for some function $g$. 
By observation we can take
\begin{align*}
    (u,v) = (\arccos(2s), \arccos(2t)) \in [0,\pi]^2. 
\end{align*}
In $(u,v)$ coordinates $\kappa$ is 
\begin{align*}
    \kappa = \sqrt{\det \text{Hess}(\sigma)} = \sqrt{\frac{16}{\sin^2(u) \sin^2(v)}} = \frac{4}{\sin(u) \sin(v)}.
\end{align*}
Therefore the $q$ in \eqref{eq:schrodinger} in this case is 
\begin{align*}
    q(u,v) = -\frac{1}{2} + \frac{3}{4\sin^2(u)} + \frac{3}{4\sin^2(v)}
\end{align*}
The associated Schr\"odinger equation $(-\Delta + q)\tilde{w}=0$ is a linear second-order equation which we can solve using separation of variables. Suppose that $\tilde{w}(u,v) = a(u) b(v)$. For any constant $\alpha$ we can then factor the Schr\"odinger equation as:
\begin{align*}
   \bigg[-a''(u)+\bigg(-\alpha+\frac{3}{4\sin^2(u)}\bigg)a(u) \bigg]b(v) + \bigg[-b''(v) + \bigg(-\frac{1}{2}+\alpha + \frac{3}{4\sin^2(v)}\bigg) b(v)\bigg]a(u) =0.
\end{align*}
Hence $\tilde{w}=a(u)b(v)$ satisfies \eqref{eq:schrodinger} if $a,b$ satisfy for some constant $\lambda$ the equations:
\begin{align*}
    a''(u)+\bigg(\lambda-\frac{3}{4\sin^2(u)}\bigg)a(u)&= 0\\
    b''(v) + \bigg(\frac{1}{2} - \lambda - \frac{3}{4\sin^2(v)}\bigg)b(v) &=0.
\end{align*}
The solutions for arbitrary $\lambda$ can be computed in terms of ${}_2F_1$ hypergeometric functions. Varying $\lambda$ would lead to more solutions, but we note that when $\lambda=1/4$, $a,b$ satisfy the same equation whose solution is 
\begin{align*}
    a(u) = \frac{c_1+c_2 \cos(u)}{\sqrt{\sin(u)}}.
\end{align*}
where $c_1,c_2$ are constants. From this solution we get a family of $\kappa$-harmonic functions $w=\tilde{w}/\kappa^{1/2}$ which can be written
\begin{align*}
    w = c_1 + c_2 \cos(u) + c_3 \cos(v) + c_4 \cos(u) \cos(v)
\end{align*}
for arbitrary constants $c_1,c_2,c_3,c_4$. In particular we verify that $s=\cos(u)/2$ and $t=\cos(v)/2$ are $\kappa$-harmonic. 

If we specialize to the case of the Aztec diamond, the limiting height function is $h(x,y) = {1}/{4}\cdot(2x-1)(2y-1)$. So then $\cos(u) = 2s = 2h_{x} = 2y-1$ and $\cos(v) = 2t = 2h_{y} = 2x-1$, and we get
\begin{align*}
    c = h(x,y)- (sx + ty) = st  - sx - ty= -\frac{1}{4}(\cos(u) + \cos(v)+\cos(u)\cos(v))
\end{align*}
which is indeed also $\kappa$-harmonic, as expected. The graph of the height function $(x,y,h(x,y))$ is hence the envelope of the planes
{\begin{align*}
    P_{z} = \{x_3 = \frac{1}{2}\cos(u) x + \frac{1}{2}\cos(v) y -\frac{1}{4}(\cos(u) + \cos(v)+\cos(u)\cos(v)\}.
\end{align*}}
for $u,v\in [0,\pi]$.

\subsubsection{Honeycomb lattice} 

Here the surface tension as a function of $\nabla h = (s,t)$ is 
\begin{align*}
    \sigma(s,t) = s \log s + t\log t + (1-s-t) \log(1-s-t)
\end{align*}
and hence 
\begin{align*}
    \Hess_\sigma = \bigg(\begin{array}{cc} 1/s + 1/(1-s-t) & 1/(1-s-t) \\ 1/(1-s-t) & 1/t + 1/(1-s-t)\end{array}\bigg).
\end{align*}
{The Euler-Lagrange equation for the height function $h=h(x,y)$, $\nabla h(x,y) = (s,t)$, is
\be\label{lozPDE}(h_y-h_y^2)h_{xx}+2h_xh_yh_{xy} + (h_x-h_x^2)h_{yy}=0.\ee}

We find a conformal coordinate $z=u+iv$ in the upper half plane $\H=\{\Im(z)>0\}$ to be given by:
\begin{align*}
    u(s,t) &= \frac{s-t(1-s-t)}{(1-t)^2}\\
    v(s,t) &= \frac{2\sqrt{st (1-s-t})}{(1-t)^2}.
\end{align*}
This gives 
$$\psi=(\det H)^{1/4} = \frac{1+\sqrt{(u-1)^2+v^2}+\sqrt{u^2+v^2}}{\sqrt{2v}}=\frac{1+|z-1|+|z|}{\sqrt{2v}}$$
from which we find 
\be\label{qhoney}q=\frac{\Delta\psi}{\psi} = \frac3{4v^2}.\ee
We need to solve
$$(-\Delta+\frac3{4v^2})\tilde w(u,v)=0.$$
Looking for a solution of the form $\tilde w(w,u)=a(u)b(v)$ leads to, for a constant $C$, 
\begin{align*}
-a''(u)+Ca(u)&=0\\
-b''(v)+(-C+\frac3{4v^2})b(v)&=0.
\end{align*}
Here $a$ is an exponential function and $b(v)$ is a Bessel function:
$$b(v) = c_1\sqrt{v}J_1(v\sqrt{C}) + c_2\sqrt{v}Y_1(v\sqrt{C})$$
where $J_1,Y_1$ are the Bessel functions of the first and second kind, resp.
In the special case $C=0$, we have
$b(v)=\frac{c_1+c_2v^2}{\sqrt{v}}$ and $a(u)=c_3+c_4u$. This leads to ``elementary'' $\kappa$-harmonic functions
$$w= \frac{c_1+c_2u+c_3v^2+c_4uv^2}{1+|z-1|+|z|}$$
for constants $c_1,c_2,c_3,c_4$.
Another family of $\kappa$-harmonic functions are those of the form 
$$\frac{|z-a|}{1+|z|+|z-1|}$$
for constant $a$ (and linear combinations of such).

The function $s,t$ are the $\kappa$-harmonic functions
\begin{align*} s &= \frac{1-|z-1|+|z|}{1+|z-1|+|z|}\\
t &= \frac{-1+|z-1|+|z|}{1+|z-1|+|z|}.
\end{align*}

As an example limit shape, take $c=-\frac{u}{1+|z-1|+|z|}$. 
Then solving the linear system
\begin{align*}x_3&=sx+ty+c\\
0&=s_ux+t_uy+c_u\\
0&=s_vx+t_vy+c_v
\end{align*}
for $x,y$ and $x_3=h$ gives 
\be\label{hcone}(x,y,h) = \left(\frac{|z|+|z-1|}2,\frac{|z|+u-1}2,\frac{|z-1|+u-1}2\right).\ee
 As $z=u+iv$ runs over $\H$ the upper half plane, this parameterizes the limit
 shape for the truncated orthant (see section \ref{truncorth}) with vertices $(\frac12,-\frac12,0)$ and $(\frac12,\frac12,0)$.
 On this region in (\ref{hcone}) we can solve for $h$ as a function of $x,y$:
 $$h(x,y) = \frac{(2x-1)(2y+1)}{2(2x+1)},$$
 which indeed satisfies (\ref{lozPDE}). 

\section{Open questions}\label{sec:questions}

\begin{question}\label{q:closed_form_critical_gauge}
    In Section~\ref{sec:limit_shapes} we saw various examples of growing families of graphs where the critical gauge has an explicit (in fact rational) closed form in terms of $n$. These are the only such examples we know. What are other families of graphs where the critical gauge are rational functions of $n$, or otherwise have a closed form in terms of $n$? What are the properties of these graphs? 
\end{question}

\begin{question}\label{q:always_smooth}
    Using the height function, we saw that multinomial limit shapes in 2D are smooth from the fact that they do not have facets (Corollary~\ref{cor:2Dsmooth}). Is it true that multinomial limit shapes are smooth in any dimension? A positive answer to this question would mean that the $C^1$ condition in Theorem~\ref{thm:critical_gauge_scaling_limit} is satisfied in any dimension.
\end{question}

\begin{question}
    On a finite graph as the multiplicity $N$ goes to infinity, the fluctuations of the critical edge weights are a Gaussian process, with covariances determined by a version of the graph Laplacian \cite{KenyonPohoata}. What are the scaling limits of these processes?
\end{question}


\begin{question}\label{q:multinomial_finiteN}
    Here we have studied the $N\to\infty$ limit of the multinomial dimer model; the $N=1$ case is the standard dimer model. Can anything be said for small $N$, e.g.\ $N=2$ or $N=3$? For example, do the $N$ multinomial limit shapes for the Aztec diamond have facets, which decrease in size as $N$ grows?
\end{question}

\begin{question}
    Overlaying two single dimer covers gives a collection of non-intersecting loops and infinite paths, built by alternating between the two dimer covers. Overlaying two dimer covers of the $N$-fold blow up graph, what can be said about ``geography'' of the infinite paths, either in the blow up graph or in their projection? Can this be used to say anything about ergodic Gibbs measures for this model?
\end{question}


\begin{question}
    The multinomial tiling model, of which the multinomial dimer model is a special case, allows for any collection of tiles. What can be said about the ``large $N$ limits'' of other statistical mechanics models, e.g.\ the square ice model, using the multinomial framework? 
\end{question}

\bibliographystyle{alpha}
\bibliography{multinomial}

\begin{thebibliography}{WMDB02}

\bibitem[ADPZ23]{astala2023dimermodelsconformalstructures}
Kari Astala, Erik Duse, István Prause, and Xiao Zhong.
\newblock Dimer models and conformal structures, 2023.

\bibitem[AMS11]{Ambrosio}
Luigi Ambrosio, Edoardo Mainini, and Sylvia Serfaty.
\newblock Gradient flow of the {C}hapman-{R}ubinstein-{S}chatzman model for
  signed vortices.
\newblock {\em Ann. Inst. H. Poincar\'{e} C Anal. Non Lin\'{e}aire},
  28(2):217--246, 2011.

\bibitem[BBS24]{bobenko2024dimersmcurves}
Alexander~I. Bobenko, Nikolai Bobenko, and Yuri~B. Suris.
\newblock Dimers and m-curves, 2024.

\bibitem[BCSK24]{BCSlargeN}
Jacopo Borga, Sky Cao, and Jasper Shogren-Knaak.
\newblock Surface sums for lattice yang-mills in the large-$n$ limit.
\newblock 2024.

\bibitem[BDMW23]{Borga_LDP_permuations}
Jacopo Borga, Sayan Das, Sumit Mukherjee, and Peter Winkler.
\newblock {Large Deviation Principle for Random Permutations}.
\newblock {\em International Mathematics Research Notices}, page rnad096, 05
  2023.

\bibitem[BLR20]{Berestycki-imaginary}
Nathana{\"e}l Berestycki, Benoit Laslier, and Gourab Ray.
\newblock {Dimers and imaginary geometry}.
\newblock {\em The Annals of Probability}, 48(1):1 -- 52, 2020.

\bibitem[BPZ84]{BPZ}
A.A. Belavin, A.M. Polyakov, and A.B. Zamolodchikov.
\newblock Infinite conformal symmetry in two-dimensional quantum field theory.
\newblock {\em Nuclear Physics B}, 241(2):333--380, 1984.

\bibitem[CEP96]{CohnElkiesPropp}
Henry Cohn, Noam Elkies, and James Propp.
\newblock {Local statistics for random domino tilings of the Aztec diamond}.
\newblock {\em Duke Mathematical Journal}, 85(1):117 -- 166, 1996.

\bibitem[Cha19]{ChatterjeelargeN}
Sourav Chatterjee.
\newblock {Rigorous solution of strongly coupled $SO(N)$ lattice gauge theory
  in the large $N$ limit}.
\newblock {\em Commun. Math. Phys.}, 366(1):203--268, 2019.

\bibitem[CKP01]{cohn2001variational}
Henry Cohn, Richard Kenyon, and James Propp.
\newblock A variational principle for domino tilings.
\newblock {\em Journal of the American Mathematical Society}, 14(2):297--346,
  2001.

\bibitem[CSW23]{3Ddimers}
Nishant Chandgotia, Scott Sheffield, and Catherine Wolfram.
\newblock Large deviations for the 3d dimer model.
\newblock {\em arXiv preprint arXiv:2304.08468v2}, 2023.

\bibitem[dM25]{demarreiros2025}
Raphael de~Marreiros.
\newblock Domino tilings of three-dimensional cylinders: regularity of
  hamiltonian disks, 2025.

\bibitem[Dub14]{Dubedat2014DoubleDC}
Julien Dub{\'e}dat.
\newblock Double dimers, conformal loop ensembles and isomonodromic
  deformations.
\newblock {\em Journal of the European Mathematical Society}, 2014.

\bibitem[DZ09]{dembo2009large}
A.~Dembo and O.~Zeitouni.
\newblock {\em Large Deviations Techniques and Applications}.
\newblock Stochastic Modelling and Applied Probability. Springer Berlin
  Heidelberg, 2009.

\bibitem[FHNQ11]{freedman2011weakly}
Michael Freedman, Matthew~B Hastings, Chetan Nayak, and Xiao-Liang Qi.
\newblock Weakly coupled non-{A}belian anyons in three dimensions.
\newblock {\em Physical Review B}, 84(24):245119, 2011.

\bibitem[FKMS22]{freire2022connectivity}
Juliana Freire, Caroline~J. Klivans, Pedro~H. Milet, and Nicolau~C. Saldanha.
\newblock On the connectivity of spaces of three-dimensional tilings.
\newblock {\em Transactions of the American Mathematical Society},
  375:1579--1605, 2022.

\bibitem[FRRO22]{XavierEllipticPDE}
Xavier Fern{\'a}ndez-Real and Xavier Ros-Oton.
\newblock {\em Regularity Theory for Elliptic PDE}.
\newblock Zurich Lectures in Advanced Mathematics. EMS Press, 2022.

\bibitem[Gio57]{DeGiorgi}
Ennio~De Giorgi.
\newblock Sulla differenziabilita e l’analiticita delle estremali degli
  integrali multipli regolari.
\newblock {\em Memorie della Accademia delle Scienze di Torino. Classe di
  Scienze Fisiche, Matematicahe e Naturali}, 3:25--43, 1957.

\bibitem[Gor21]{gorin2021lectures}
Vadim Gorin.
\newblock {\em Lectures on random lozenge tilings}, volume 193.
\newblock Cambridge University Press, 2021.

\bibitem[Hal35]{hallmarriage}
P.~Hall.
\newblock On representatives of subsets.
\newblock {\em Journal of the London Mathematical Society}, s1-10(1):26--30,
  1935.

\bibitem[HKV22]{HonglerCFT}
Cl\'ement Hongler, Kalle Kyt\"ol\"a, and Fredrik~Johansson Viklund.
\newblock {Conformal Field Theory at the Lattice Level: Discrete Complex
  Analysis and Virasoro Structure}.
\newblock {\em Commun. Math. Phys.}, 395(1):1--58, 2022.

\bibitem[Hoo74]{thooft}
G.'t Hooft.
\newblock A planar diagram theory for strong interactions.
\newblock {\em Nuclear Physics B}, 72(3):461--473, 1974.

\bibitem[JPS98]{jockusch1998random}
William Jockusch, James Propp, and Peter Shor.
\newblock Random domino tilings and the {A}rctic circle theorem.
\newblock {\em arXiv preprint arXiv:math/9801068}, 1998.

\bibitem[Kas61]{Kasteleyn}
Pieter~Willem Kasteleyn.
\newblock The statistics of dimers on a lattice. i. the number of dimer
  arrangements on a quadratic lattice.
\newblock {\em Physica}, 27:1209–1225, 1961.

\bibitem[Ken00]{kenyon2000conformal}
Richard Kenyon.
\newblock Conformal invariance of domino tiling.
\newblock {\em Annals of probability}, pages 759--795, 2000.

\bibitem[Ken01]{dimersGFF2000}
Richard Kenyon.
\newblock {Dominos and the Gaussian Free Field}.
\newblock {\em The Annals of Probability}, 29(3):1128 -- 1137, 2001.

\bibitem[Ken09]{kenyon2009lectures}
Richard Kenyon.
\newblock Lectures on dimers.
\newblock {\em aXiv preprint arXiv:0910.3129}, 2009.

\bibitem[Ken14]{kenyon2014conformal}
Richard Kenyon.
\newblock Conformal invariance of loops in the double-dimer model.
\newblock {\em Communications in Mathematical Physics}, 326(2):477--497, 2014.

\bibitem[KKRW20]{KenyonWinkler_permuations}
Richard Kenyon, Daniel Kr\'{a}\v{l}, Charles Radin, and Peter Winkler.
\newblock Permutations with fixed pattern densities.
\newblock {\em Random Structures Algorithms}, 56(1):220--250, 2020.

\bibitem[KLRR22]{dimerscircles}
Richard Kenyon, Wai~Yeung Lam, Sanjay Ramassamy, and Marianna Russkikh.
\newblock Dimers and circle patterns.
\newblock {\em Annales scientifiques de l'École Normale Supérieure},
  55:865--903, 07 2022.

\bibitem[KO07]{burgers2007}
Richard Kenyon and Andrei Okounkov.
\newblock Limit shapes and the complex burgers equation.
\newblock {\em Acta Mathematica}, 199(2):263 -- 302, 2007.

\bibitem[KOS06]{kenyon2006dimers}
Richard Kenyon, Andrei Okounkov, and Scott Sheffield.
\newblock Dimers and amoebae.
\newblock {\em Annals of mathematics}, pages 1019--1056, 2006.

\bibitem[Koz07]{kozma}
Gady Kozma.
\newblock {The scaling limit of loop-erased random walk in three dimensions}.
\newblock {\em Acta Mathematica}, 199(1):29 -- 152, 2007.

\bibitem[KP22a]{KenyonPohoata}
Richard Kenyon and Cosmin Pohoata.
\newblock The multinomial tiling model.
\newblock {\em Ann. Probab.}, 50(5):1986--2012, 2022.

\bibitem[KP22b]{KenyonPrause}
Richard Kenyon and Istv{\'a}n Prause.
\newblock {Gradient variational problems in ${\mathbb{R}^{2}}$}.
\newblock {\em Duke Mathematical Journal}, 171(14):3003 -- 3022, 2022.

\bibitem[KS22]{klivans2020domino}
Caroline~J. Klivans and Nicolau~C. Saldanha.
\newblock Domino tilings and flips in dimensions 4 and higher.
\newblock {\em Algebr. Comb.}, 5(1):163--185, 2022.

\bibitem[Kuc22]{kuchumov22}
Nikolai Kuchumov.
\newblock A variational principle for domino tilings of multiply connected
  domains.
\newblock {\em arXiv preprint arXiv:2110.06896}, 2022.

\bibitem[Lam21]{Lammers}
Piet Lammers.
\newblock A generalisation of the honeycomb dimer model to higher dimensions.
\newblock {\em Ann. Probab.}, 49(2):1033--1066, 2021.

\bibitem[Lit75]{LITTLE1975187}
C.H.C Little.
\newblock A characterization of convertible (0, 1)-matrices.
\newblock {\em Journal of Combinatorial Theory, Series B}, 18(3):187--208,
  1975.

\bibitem[LMN01]{LindaMooreNordahl}
Joakim Linde, Cristopher Moore, and Mats~G. Nordahl.
\newblock An {$n$}-dimensional generalization of the rhombus tiling.
\newblock In {\em Discrete models: combinatorics, computation, and geometry
  ({P}aris, 2001)}, Discrete Math. Theor. Comput. Sci. Proc., AA, pages
  023--042. Maison Inform. Math. Discr\`et. (MIMD), Paris, 2001.

\bibitem[LP13]{gaugetheorylargeNsurvey}
Biagio Lucini and Marco Panero.
\newblock {SU(N) gauge theories at large N}.
\newblock {\em Phys. Rept.}, 526:93--163, 2013.

\bibitem[MS14]{milet2014domino}
Pedro~H. Milet and Nicolau~C. Saldanha.
\newblock Domino tilings of three-dimensional regions: flips, trits and twists.
\newblock {\em arXiv preprint arXiv:1410.7693}, 2014.

\bibitem[Nas57]{Nash1}
John~Forbes Nash.
\newblock Parabolic equations.
\newblock {\em Proc. Nat. Acad. Sci. U.S.A}, 43:754--758, 1957.

\bibitem[Nas58]{Nash2}
John~Forbes Nash.
\newblock Continuity of solutions of parabolic and elliptic equations.
\newblock {\em American Journal of Mathematics}, 80:931--954, 1958.

\bibitem[PR14]{Piccoli2014-vy}
Benedetto Piccoli and Francesco Rossi.
\newblock Generalized {W}asserstein distance and its application to transport
  equations with source.
\newblock {\em Arch. Ration. Mech. Anal.}, 211(1):335--358, January 2014.

\bibitem[PR16]{Piccoli2016-fe}
Benedetto Piccoli and Francesco Rossi.
\newblock On properties of the generalized {W}asserstein distance.
\newblock {\em Arch. Ration. Mech. Anal.}, 222(3):1339--1365, December 2016.

\bibitem[PRT19]{piccoli_signed}
Benedetto Piccoli, Francesco Rossi, and Magali Tournus.
\newblock A {W}asserstein norm for signed measures, with application to
  nonlocal transport equation with source term.
\newblock {\em arXiv preprint arXiv:1910.05105}, 2019.

\bibitem[QT23a]{quitmann2022macroscopicdimer}
Alexandra Quitmann and Lorenzo Taggi.
\newblock Macroscopic loops in the $3 d $ double-dimer model.
\newblock {\em Electron. Commun. Probab.}, 28:12, 2023.
\newblock Id/No 31.

\bibitem[QT23b]{quitmann2022macroscopicloops}
Alexandra Quitmann and Lorenzo Taggi.
\newblock Macroscopic loops in the {B}ose gas, spin ${O}(n)$ and related
  models.
\newblock {\em Communications in Mathematical Physics}, pages 1--56, 2023.

\bibitem[She05]{AST_2005__304__R1_0}
Scott Sheffield.
\newblock Random surfaces.
\newblock {\em Ast\'{e}risque}, 304, 2005.

\bibitem[Sin64]{Sinkhorn}
Richard Sinkhorn.
\newblock {A Relationship Between Arbitrary Positive Matrices and Doubly
  Stochastic Matrices}.
\newblock {\em The Annals of Mathematical Statistics}, 35(2):876 -- 879, 1964.

\bibitem[SY14]{SheffieldYadin}
Scott Sheffield and Ariel Yadin.
\newblock Tricolor percolation and random paths in 3{D}.
\newblock {\em Electron. J. Probab.}, 19:no. 4, 23, 2014.

\bibitem[Tag22]{taggi2022uniformly}
Lorenzo Taggi.
\newblock Uniformly positive correlations in the dimer model and macroscopic
  interacting self-avoiding walk in $\mathbb{Z}^d$, $d\geq 3$.
\newblock {\em Communications on Pure and Applied Mathematics},
  75(6):1183--1236, 2022.

\bibitem[TF61]{TemperleyFisher}
Neville Temperley and Michael Fisher.
\newblock Dimer problem in statistical mechanics--an exact result.
\newblock {\em Philosophical Magazine}, 6:1061--1063, 1961.

\bibitem[Thu90]{Thurston}
William~P. Thurston.
\newblock Conway's tiling groups.
\newblock {\em The American Mathematical Monthly}, 97(8):757--773, 1990.

\bibitem[Var16]{varadhan2016large}
S.R.S. Varadhan.
\newblock {\em Large deviations}.
\newblock Courant Lecture Notes. Courant Institute of Mathematical Sciences,
  2016.

\bibitem[Vil09]{Villani2009}
C{\'e}dric Villani.
\newblock {\em The {W}asserstein distances}, pages 93--111.
\newblock Springer Berlin Heidelberg, Berlin, Heidelberg, 2009.

\bibitem[WMDB02]{arctic_oct_MC}
M.~Widom, R.~Mosseri, N.~Destainville, and F.~Bailly.
\newblock Arctic octahedron in three-dimensional rhombus tilings and related
  integer solid partitions.
\newblock {\em J. Statist. Phys.}, 109(5-6):945--965, 2002.

\end{thebibliography}

\end{document}